\documentclass[a4paper,11pt]{article}

\usepackage{mathrsfs}
\usepackage{amsmath,amscd}

\usepackage{graphicx}
\usepackage{color}
\usepackage{CJK}
\usepackage{amsmath,amsthm,amssymb,amsfonts}
\usepackage[numbers,sort&compress]{natbib}

\usepackage{mathtools}
\mathtoolsset{showonlyrefs=true}

\usepackage[
            a4paper,colorlinks,breaklinks,unicode]{hyperref}

\allowdisplaybreaks

\newcommand{\R}{\mathbb R}

\newtheorem{definition}{Definition}[section]
\newtheorem{theorem}{Theorem}[section]

\newtheorem{lemma}{Lemma}[section]
\newtheorem{remark}{Remark}[section]

\newtheorem{claim}{Claim}[section]
\numberwithin{equation}{section}

\author{Bin Li\textsuperscript{a}\quad   Tian Xiang\textsuperscript{b}\thanks{E-mail:  \href{mailto:txiang@ruc.edu.cn}{\tt txiang@ruc.edu.cn}}\\
\emph{\small \textsuperscript{a}School of Statistics and Data Science, Ningbo University of Technology, Ningbo 315211, PR China;}\\
\emph{\small \textsuperscript{b}School of Mathematics and Institute for  Mathematical Sciences,}\\
\emph{\small Renmin University of China, Beijing 100872, PR China.}\\
  }

\title{Roles of logistic source in a singular  chemotaxis urban crime model}

\date{}

\begin{document}
\maketitle
\begin{minipage}{6.2in}\vskip 1mm
 {\small \textbf{Abstract:}
 Recently, Heihoff introduced logistic source into the Short \textit{et al}  urban crime model  to account for  fierce competition among criminals. The resulting model reads as
 \begin{equation*}
\left\{
\begin{split}
&u_t= \Delta u-\chi\nabla\cdot\left(u\nabla\ln v\right)- uv+ru-\mu u^2+ h_1,\\
&v_t=\Delta v- v+ uv+h_2,
\end{split}
\right.
\end{equation*}
where  the parameters $\chi,\mu>0$  and $r\in\R$, and  the given functions $h_1,h_2\geq0$. In this paper, we systematically exhibit the roles of logistic source on global boundedness  and convergence of classical solutions, and global existence and eventual smoothness as well as convergence of generalized solutions for this crime model. More specifically,
we first show  that for any $\chi>0$ there exists an explicit $\mu_0(\chi, n)>0$ such that whenever $\mu>\mu_0(\chi,n)$,   the corresponding
 no-flux initial-boundary value problem posed on an $n$-dimensional bounded and smooth domain possesses a unique global and bounded classical solution, and then,  we show that these bounded solutions converge  uniformly  to  its semi-trivial  equilibria under some additional conditions. Furthermore, for $n\leq3$, we show that  for any $\mu>0$  such  problem admits a generalized solution, which becomes smooth and converges eventually. Roughly speaking, this work shows that suitably strong logistic damping  ensures boundedness and convergence of classical solutions, and  weak logistic damping ensures global existence of generalized solution and its eventual smoothness as well as convergence  in two and three  dimensions. The former in particular implies that strong logistic damping can prevent the formation of crime hotspots (spatial patterns). Previous known results in this respect are thus   significantly improved and extended.
}

\vspace{1.5mm}

\textbf{Keywords:}{ Chemotaxis crime model, classical solution,  boundedness,  stability, generalized solution, eventual smoothness.}

\vspace{1.5mm}

\textbf{2020 MSC:}{ Primary 35Q91; Secondary 35K55, 35D99, 35B65, 35B35.}

\end{minipage}

 \section{Introduction and main results}

This paper  investigates the Neumann initial-boundary value problem (IBVP) for  an adaptation  of  logarithmic chemotaxis-type  system  arising
in crime modelling.  In its original version, as proposed by Short  \textit{et al} in \cite{Short2010Pnas,Short2008} to approximate the spatial-temporal urban criminal activity, this
model takes the form
\begin{equation}\label{eq-r0}
\left\{
\begin{split}
&u_t= \Delta u-\chi\nabla\cdot\left(u\nabla\ln v\right)-uv+h_1,\\
&v_t=\Delta v-v+uv+h_2,
\end{split}
\right.
\end{equation}
with a typical choice of $\chi=2$. In this model, $u=u(x,t)$ denotes the density of criminals, $v=v(x,t)$ characterizes the attractiveness  of criminal activities,  $h_1$ describes  the density of  influx of additional criminal agents and $h_2$ denotes external sources of attractiveness.
For more details, we refer the interested readers to \cite{Rodriguez2010,Pan2018} for   Short \textit{et al}  model, and \cite{Pitcher2010,Zipkin2014,Rodriguez2013,Gu2017,Jones2010M3as} for  related models, as well as \cite{D'Orsogna2015,Bellomo2022M3as} for  reviews.

The crime model \eqref{eq-r0} is nonlinearly coupled in the second order and the zeroth order. Since its appearance, it  has attracted much attention of
scholars, because it not only  provides the
information on crime  hotspots, such as   the emergence (\cite{Cantrell2012,Gu2017,Short2010Pnas,Short2010b,Tse2016Ejam}) and the large-amplitude peaks (\cite{Berestycki2014Siam,Berestycki2010,David2013physD,MWM3as2020}) of crime
hotspots,
 but also, it features a nonlinear production term $+uv$, which,  in comparison to the linear production term $+u$ in
 the logarithmic Keller-Segel system (\cite{Keller1970,Keller1971}), remains poorly understood, resulting in limited progress in the mathematical analysis of \eqref{eq-r0}.   Thanks to the latter,
    there are only few  results on the well-posedness: The classical solution  is global provided that either $n=1$ (\cite{WangQ2020JDE,R-Winkler2019Ejam}), or
 $n\geq2$ and $\chi<\frac2n$ (\cite{Freitag2018,ShenJ2019m2as}), or the initial data and the given functions $h_1$ and $h_2$  are assumed to be   small (\cite{TW2021CMS,Ahn2021JDE}). The restriction on $\chi$ can be (slightly) relaxed in the  radial setting  (\cite{Winkler2019Poin,Jiang2022AAm}), and in the generalized framework (\cite{LIXIE2022}). Here, we also  remark that  \cite{LIXIE2022} derived only the global existence of the  generalized  super-solution of  \eqref{eq-r0} in the two-dimensional setting,  and the  global existence of generalized  solution has been  established only for certain modified versions, see \cite{TW2025SCM} in which  $+uv$ in \eqref{eq-r0} is replaced by $+h(u,v)uv$ with $h(u,v)\rightarrow0$ as $\min\{u,v\}\rightarrow\infty$, and see \cite{Heihoff2020zamp} in which  logistic source is incorporated into \eqref{eq-r0}.
 There also exist various results on the qualitative properties of crime
hotspots (e.g., \cite{B-Wei2020Ejam,R-Wang2021,Tse2018siam}) and on the well-posedness of the other variants of  Short \textit{et al} model (e.g., \cite{Manasevich2013,Rodriguez2013,LX2024CVPDE,LX2022Dcdsb,LWX2023MBE}).

 We observe that the crime  model \eqref{eq-r0} can effectively simulate the formation of crime hotspots (cf. \cite{Short2010Pnas,Berestycki2014Siam}), which is supported by the blow-up result in certain sense  with $h_1=0=h_2$ \cite{Fuest2024ccm}. However, from a practical perspective,
  suppressing such formations become imperative because  they pose significant threats to both personal safety and property security.
Therefore,   various mechanisms have been introduced into system \eqref{eq-r0} to prevent the formation of crime
hotspots, such as the law enforcement actions (e.g., \cite{Jones2010M3as,Zipkin2014}), the nonlinear diffusion (\cite{R-Winkler2020M3as,YangX2022NA})
 as well as the generalized logistic source (\cite{Heihoff2020zamp,WangD2022M2as}). In particular, when the generalized  logistic source is incorporated into system \eqref{eq-r0}, the resulting system (\cite{Heihoff2020zamp}) becomes
 \begin{equation}\label{eq-r0C}
\left\{
\begin{split}
&u_t= \Delta u-\chi\nabla\cdot\left(u\nabla\ln v\right)-uv+ru-\mu u^{\beta}+h_1,\\
&v_t=\Delta v-v+ uv+h_2,
\end{split}
\right.
\end{equation}
 where $\chi>0$, $r\in\R$, $\beta\geq2$ and $\mu>0$. If $\beta>2$, then the corresponding  IBVP admits a global and bounded classical solution in the two-dimensional setting (\cite{Heihoff2020zamp}); while the restriction on $\beta$ needs  be strengthened to that $\beta>\max\{2,\frac n4+1\}$ in $n$-dimensional settings with $n\geq3$ (\cite{WangD2022M2as}). As for the typical case $\beta=2$, the IBVP for  \eqref{eq-r0C} possesses a generalized solution in two-dimensional setting (\cite{Heihoff2020zamp}), which becomes smooth and bounded at least eventually (\cite{QLI2023ERA}). In contrast, when  $h_1=0=h_2$, $-uv$ is absent in the first eqution and  $+uv$ in the second equation is replaced by $+u$, the resulting system  reduces to the the logarithmic
Keller-Segel system:
  \begin{equation}\label{eq-ck-log}
\left\{
\begin{split}
&u_t= \Delta u-\chi\nabla\cdot\left(u\nabla\ln v\right)+ru-\mu u^{\beta},\\
&v_t=\Delta v-v+ u.
\end{split}
\right.
\end{equation}
 The corresponding IBVP for \eqref{eq-ck-log} admits a global classical solution, see \cite{Aida2005,Zhao2017} for  $\beta=2$ in the two-dimensional setting and  \cite{DWZ2019NA} for $\beta>2$ in $n$-dimensional settings  with $n\geq3$. It is also shown in   \cite{DWZ2019NA} that the IBVP for \eqref{eq-ck-log} admits global weak solution for $\beta=2$ and $n\geq 3$. It is yet not known whether this weak solution will become smooth or not?

To illustrate our motivation, we also recall some results on the standard chemotaxis-logistic system without singular sensitivity  obtained when changing $\ln v$ to $v$ and specifying $\beta=2$ in \eqref{eq-ck-log}:
 \begin{equation}\label{eq-r0L}
\left\{
\begin{split}
&u_t= \Delta u-\chi\nabla\cdot\left(u\nabla v\right) +ru-\mu u^2,\\
&v_t=\Delta v-v+  u.
\end{split}
\right.
\end{equation}
It is well-known that if either  $\mu>0$ for $n=2$, or $\mu>\widehat{\mu}$ with some $\widehat{\mu}>0$ for $n\geq3$ (\cite{Winkler2010CPDE}), then
 the standard logistic damping is enough to suppress the singularities of solution that observed in \cite{HorWa2001Ejam,HerVel1997Asnspcs,SS2001maa,Winkler2010,Winkler2013JMPA}
   for the minimal Keller-Segel system (\cite{Nanjundiah1973}) corresponding to $r=\mu=0$; further
qualitative findings on logistic damping versus chemotatic aggregation
 can be found in \cite{JinX2018Crmasp,Xiang2018jmaa,Xiang2018Siam,Xiang2015jde} for  the refinements with respect to parameter setting, in \cite{Cao2017Dcds,Winkler2014Jde} for the global asymptotic stability of the spatial homogenous equilibria.    We also remark that weak logitsic damping ensures  the global existence of weak solutions for $n\geq 3$, and also  warrants its eventual smoothness provided that $r>0$ is sufficiently small and $n=3$ (\cite{Lankeit2015jde}).

Motivated by the results on \eqref{eq-r0C},  \eqref{eq-ck-log} and \eqref{eq-r0L}, it is quite natural to ask  whether standard logistic damping can prevent the formation of crime hotspots for the chemotaxis  urban crime system?
Here, we shall provide some satisfactory  answers. Beyond that, we shall study the effect of logistic damping  versus  singular chemotatic aggregation on the dynamics of classical solutions as well as generalized weak solutions to  the
IBVP for \eqref{eq-r0C} with $\beta=2$:
\begin{equation}\label{eq-r01}
\left\{
\begin{split}
&u_t= \Delta u-\chi\nabla\cdot\left(u\nabla\ln v\right)- uv+ru-\mu u^2+h_1,&\ x\in \Omega,\ t>0, \\
&v_t=\Delta v-v+uv+h_2, &\ x\in \Omega,\ t>0,\\
&\nabla u\cdot\nu=\nabla v\cdot\nu=0,&\ x\in\partial\Omega,\ t>0,\\
&u(x,0)=u_0(x),\ v(x,0)=v_0(x),&\ x\in\Omega.
\end{split}
\right.
\end{equation}
Here and below, $\Omega\subset\R^n$ ($n\ge2$) is a bounded domain  with smooth boundary $\partial\Omega$,  $\nu$ denotes the unit exterior normal vector to the boundary $\partial\Omega$ and the initial data $(u_0,v_0)$ fulfills that
\begin{equation}\label{eq-iva}
u_0\in \mathcal{C}^0(\overline{\Omega}) \ \ \text{ and  }\  u_0>0 \ \text{ on } \ \overline\Omega,\ \ \ v_0\in W^{1,\infty}(\overline{\Omega}) \ \text{ and  }\  u_0>0 \ \text{ on } \ \overline\Omega.
\end{equation}
As for the given source functions, we conveniently assume throughout this paper that
\begin{align}\label{eq-iva0}
0\leq h_i\in \mathcal{C}^1(\overline{\Omega}\times[0,\infty))\cap L^\infty(\Omega\times([0,\infty)),\quad i=1,2.
\end{align}

In such setup, we   state our main findings on the global dynamics of \eqref{eq-r01}.  We first state that suitably strong logistic damping can  prevent blow-up of classical solutions in any dimensions.

\begin{theorem}\label{th-global}
Let $\chi>0$, $r\in\R$ and  $n\geq2$, and let \eqref{eq-iva}-\eqref{eq-iva0} hold. There exists an explicit $\mu_0=\mu_0(\chi, n)>0$ (See (R2) or \eqref{mu0-def} for its formula) such that
whenever $\mu>\mu_0$, then  the IBVP \eqref{eq-r01} possesses a unique classical solution $(u,v)$ on $\overline{\Omega}\times[0,\infty)$, with the properties that  $u(x,t)> 0$ and $v(x,t)>0$ for any $x\in\overline{\Omega}$ and any $t>0$, and for any $T>0$,
\begin{align}\label{eq-esm}
&u\in \mathcal{C}^0(\overline{\Omega}\times[0,T])\cap \mathcal{C}^{2,1}(\overline{\Omega}\times(0,T]),\quad v\in \mathcal{C}^0(\overline{\Omega}\times[0,T])\cap \mathcal{C}^{2,1}(\overline{\Omega}\times(0,T]).
\end{align}
Moreover, under the additional assumption that
  \begin{align}
 & \inf_{t>0}\int_\Omega h_2(x,t)dx>0,\label{eq-iva1}
 \end{align}
there exists $C=C(u_0, v_0, \chi,r,\mu, h_1,h_2)>0$ independent of $t$ such that
\begin{align}\label{eq-bedds}
\|u(\cdot,t)\|_{L^\infty}+\|v^{-1}(\cdot,t)\|_{L^\infty}+\|v(\cdot,t)\|_{W^{1,\infty}}\leq C,\quad t>0.
\end{align}
\end{theorem}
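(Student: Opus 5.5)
We follow the standard route: local existence with an extensibility criterion, a positive lower bound for $v$, an $L^p$ estimate for $u$ with $p>n/2$ obtained from a coupled functional, and Moser/semigroup bootstrapping to $L^\infty$.

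\emph{Local theory and positivity.} By a fixed-point argument in $\mathcal{C}^0(\overline\Omega)\times W^{1,\infty}(\Omega)$ on the set where $v$ stays away from $0$, together with parabolic Schauder theory, we obtain a unique classical solution on a maximal interval $[0,T_{\max})$. It satisfies the regularity \eqref{eq-esm}, and $u,v>0$ by the strong maximum principle. Moreover, if $T_{\max}<\infty$, then
\[
\limsup_{t\nearrow T_{\max}}\bigl(\|u(\cdot,t)\|_{L^\infty}+\|v^{-1}(\cdot,t)\|_{L^\infty}+\|v(\cdot,t)\|_{W^{1,\infty}}\bigr)=\infty.
\]

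\emph{Lower bound for $v$.} Since $v_t\ge\Delta v-v$, comparison gives $v\ge e^{-t}\min v_0$ on every finite interval. Under \eqref{eq-iva1} we get a uniform bound instead. The Neumann heat semigroup satisfies $e^{t\Delta}h_2\ge c\int_\Omega h_2$ for $t\ge1$, so the variation-of-constants formula yields $v\ge\delta>0$ for all $t$. Hence $\nabla\ln v$ is controlled by $\nabla v$, and $1/v$ is bounded.

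\emph{$L^1$ bounds.} Integrating the first equation gives $\frac{d}{dt}\int u\le r\int u-\frac{\mu}{|\Omega|}(\int u)^2+\int h_1$, so $\int u$ is bounded. Adding the two equations cancels the term $uv$, so $\frac{d}{dt}\int(u+v)+\int v\le(r+1)\int u-\mu\int u^2+C$. This bounds $\int v$, and also $\int_t^{t+1}\int u^2$.

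\emph{Key step: the coupled functional.} This is the main obstacle. Test the $u$-equation with $u^{p-1}v^{-q}$; this is the standard device for logarithmic sensitivity, with $q$ in a suitable range depending on $p$ and $\chi$, such as $q\in\bigl(\frac{(p-1)(\chi-1)}{2},\ldots\bigr)$ chosen by quadratic-form positivity. In the resulting identity, the diffusion and cross-diffusion terms combine into a form in $\nabla u$ and $\nabla v$. When $\chi$ is large this form is not negative definite, and the leftover is a positive multiple of $\int u^{p}v^{-q-2}|\nabla v|^2$.

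The nonlinear production $+uv$ in the $v$-equation contributes $-q\int u^{p}v^{1-q}$ after testing. The $-uv$ term in the $u$-equation contributes $-\int u^pv^{1-q}$, which has a favourable sign. The remaining gradient term must be absorbed. I would pair $\int u^pv^{-q}$ with $\int |\nabla v|^{2m}$ or $\int v^{-k}|\nabla v|^{2}$ terms, then use Young's inequality together with the damping $-\mu(p-1)\int u^{p+1}v^{-q}$. Absorption works precisely when $\mu$ exceeds an explicit constant depending on $\chi$ and $p$. Choosing $p$ slightly larger than $n/2$ gives $\mu_0(\chi,n)$.

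If pairing with the $v$-gradient term fails because the term $uv$ generates $\int u^{2}|\nabla v|^{2m-2}v$-type terms, the fallback is different. I would estimate $\int u^{p+1}$ against $\int|\nabla v|^{2(p+1)}$ using maximal Sobolev regularity for $v_t-\Delta v+v=uv+h_2$. On finite intervals this already yields an $L^p$ bound for every $T$, since $v$ is bounded below by $e^{-T}\min v_0$. With \eqref{eq-iva1} the bound becomes uniform in time because $v\ge\delta$.

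\emph{Bootstrap.} From $u\in L^\infty((0,T);L^p)$ with $p>n/2$, semigroup estimates for the $v$-equation give $v\in L^\infty$ and then $\nabla v\in L^{\infty}((0,T);L^{s})$ for some $s>n$; here $uv\in L^{p'}$ with $p'>n/2$ after first bounding $v$ in $L^\infty$ through iteration. Then a Moser–Alikakos iteration, or the semigroup representation of the $u$-equation with $u\nabla\ln v\in L^{s'}$, $s'>n$, bounds $\|u\|_{L^\infty}$, and in turn $\|v\|_{W^{1,\infty}}$. The extensibility criterion then gives $T_{\max}=\infty$. When \eqref{eq-iva1} holds, all constants are independent of $t$, which gives \eqref{eq-bedds}.
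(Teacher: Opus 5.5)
Your peripheral steps match the paper: local theory, the lower bound for $v$, the $L^1$ bounds, and the semigroup bootstrap once $\sup_t\|u(\cdot,t)\|_{L^p}<\infty$ for some $p>\frac n2$. The key step, however, has a genuine gap: it is only announced, and the version you sketch cannot be closed.

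Take the test function $u^{p-1}v^{-q}$ with $q>0$, as in your range $q>\frac{(p-1)(\chi-1)}{2}$ for $\chi>1$. Young's inequality against the damping $\int_\Omega u^{p+1}v^{-q}$ turns the leftover $\int_\Omega u^pv^{-q-2}|\nabla v|^2$ into $\int_\Omega|\nabla v|^{2p+2}v^{-Q-2}$ with $Q=2p+q$. You would therefore need a dissipative estimate for $\int_\Omega|\nabla v|^{2p}v^{-Q}$. In its evolution, the cross term $2pQ\int_\Omega|\nabla v|^{2p-2}v^{-Q-1}\nabla|\nabla v|^2\cdot\nabla v$ is absorbed by the Hessian dissipation only for $Q<2p-1$; this is where the coefficient $\frac{2p(2p-Q-1)}{Q+1}$ in Lemma~\ref{le-inequ2} comes from. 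For $Q>2p-1$ the underlying pointwise quadratic form in $D^2v$ and $v^{-1}\nabla v\otimes\nabla v$ is indefinite.

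Counting homogeneity under $v\mapsto\lambda v$ shows that no other splitting rescues this:
\begin{itemize}
\item the bad term has degree $-q$;
\item the structural absorbing terms $\int_\Omega u^{p+1}v^{-q}$, $\int_\Omega u^pv^{1-q}$ and $\int_\Omega u^{p-2}v^{-q}|\nabla u|^2$ have degree $\ge -q$;
\item a Hessian-controlled term $\int_\Omega|\nabla v|^{2m+2}v^{-Q'-2}$ has degree $2m-Q'>1$ in the range where Lemma~\ref{le-inequ2} provides it.
\end{itemize}
So any degree-consistent Young splitting puts zero weight on the only factor carrying $|\nabla v|$.

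Even a bound on $\int_\Omega u^pv^{-q}$ would only give $u\in L^{p/(q+1)}$, via H\"{o}lder and the $L^1$ bound of $v$, since no upper bound on $v$ exists yet. Your fallback is circular for the same reason. Maximal Sobolev regularity for $v_t-\Delta v+v=uv+h_2$ needs control of $\int_0^T\!\int_\Omega u^{p+1}v^{p+1}$, i.e.\ an upper bound on $v$. The paper obtains that bound only \emph{after} the $L^p$ bound on $u$; this is exactly the obstruction noted in (R4). Incidentally, $+uv$ contributes $-q\int_\Omega u^{p+1}v^{-q}$, not $-q\int_\Omega u^pv^{1-q}$.

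The missing idea is the paper's exponent matching with a \emph{positive} power of $v$. One evolves $\int_\Omega u^pv^{2p-q}$ together with $\int_\Omega|\nabla v|^{2p}v^{-q}$, where $q\in[p,2p-1)$ (Lemmas~\ref{le-inequ} and \ref{le-inev2}).
\begin{itemize}
\item Young maps the leftover $\int_\Omega u^pv^{2p-q-2}|\nabla v|^2$ exactly onto $\int_\Omega u^{p+1}v^{2p-q}$ and $\int_\Omega|\nabla v|^{2p+2}v^{-q-2}$, all of degree $2p-q>1$.
\item The gradient term is dominated by the Hessian dissipation via $\int_\Omega|\nabla v|^{2p+2}v^{-q-2}\le g(p,q,\beta_+)\int_\Omega|\nabla v|^{2p-2}|D^2v|^2v^{-q}$ (Lemma~\ref{le-inequ3}). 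The boundary integral is handled by Lemma~\ref{le-ineqbi}.
\item Every term generated by $+uv$ splits the same way: $(2p-q)\int_\Omega u^{p+1}v^{2p-q}$, $\int_\Omega u|\nabla v|^{2p}v^{-q}$, and $\int_\Omega u^2|\nabla v|^{2p-2}v^{2-q}$, the last obtained by integrating $\int_\Omega|\nabla v|^{2p-2}v^{1-q}\nabla v\cdot\nabla u$ by parts.
\end{itemize}
The logistic term $-\frac{p\mu}{2}\int_\Omega u^{p+1}v^{2p-q}$ absorbs all of this once $\mu\ge\frac2p(B_1+B_2+B_3+2p-q)$. Since $2p-q>0$, the lower bound on $v$ then gives $\int_\Omega u^p\le C\int_\Omega u^pv^{2p-q}$, which is what feeds your bootstrap.
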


\begin{remark}[Notes on strong  logistic damping  prevents blow-up of classical solutions for \eqref{eq-r01}]
\
\
\
\begin{itemize}
\item [(R1)]For $n=1$,  the global existence and boundnesness of classical solutions for \eqref{eq-r01} with $r=\mu=0$ have been studied in \cite{WangQ2020JDE,R-Winkler2019Ejam}.  This is the reason why we assume $n\geq 2$.
\item[(R2)]To illustrate the explicit role of logistic damping on global existence and boundedness of classical solutions, we compute two rough bounds of $\mu_0$ for $n\in \{2, 3\}$ as follows: if
\[
\mu>\tilde\mu_0(\chi,n):=
\begin{cases}
\begin{split}
 20666.6+1.1232\left[(\chi-2)^2+12\right]^\frac32,  &\  \text{if }   n=2, \\
26477.2+1.2757\left[(\chi-2)^2+12\right]^\frac32,  &\  \text{if }   n=3,
\end{split}
\end{cases}
\]
then  the IBVP  \eqref{eq-r01} has a unique global classical solution, which  is bounded provided that \eqref{eq-iva1} is satisfied. Coincidentally, such bound $\tilde\mu_0(\chi,n)$ (see Remark \ref{bdd-rem} for its computations) achieves its global minimum at $\chi=2$, which appears in the original model (\cite{Short2010Pnas,Short2008}).  In a future work, it is quite interesting to see whether there exists or not a bound for $\mu_0(\chi,n)$ with the property that $\mu_0(0,n)=0$?

\item[(R3)] Our findings significantly improve  \cite{Heihoff2020zamp, WangD2022M2as}  on super-quadratic damping preventing blow-up for \eqref{eq-r0C}, and improve  \cite{DWZ2019NA}  on super-quadratic degradation preventing blow-up for \eqref{eq-ck-log} with $n\geq 3$ as well, since our arguments surely work for less complex models like \eqref{eq-ck-log}.
\item[(R4)] Although the $v$-component has  \textit{a-priori} positive lower bound (which may lead one to think that  \eqref{eq-r01} is similar to \eqref{eq-r0L}), the second equation in \eqref{eq-r01}  possesses a more complex nonlinear mechanism $+uv$ than that in \eqref{eq-r0L}, which leads to that the strategies used for the latter (e.g., \cite{Winkler2010CPDE,JinX2018Crmasp}) are unavailable and thereby the mathematical analyses of \eqref{eq-r01} are  much more challenging. Here, to compensate  comparison type  argument,  one of our  key steps is  to track the
time evolution of a combined functional, denoted by
\[
\mathcal{E}(t)=\int_{\Omega}u^{p}v^{2p-q}dx+\int_{\Omega} \frac{|\nabla v|^{2p}}{v^q}dx,
\]
with some well-selected $p,q>0$. Through this process, we derive several delicate \textit{a-priori} estimates (see, e.g., Lemmas \ref{le-inequ},  \ref{le-inequ2},  \ref{le-ineqbi}, \ref{le-inev2},  \ref{le-inequi0} and \ref{le-inequ4}) by means of embedding theorems and subtle arguments, which play a crucial role in the subsequent analysis.
\end{itemize}
\end{remark}

Having  boundedness at hand,  we  naturally investigate the longtime behavior of those bounded solutions, which evidently depends on the asymptotic properties of the source  functions $h_1$ and $h_2$. For the nonnegative source function $h_1$, we  assume that
  \begin{equation}\label{eq-iva2}
\int_0^\infty\int_\Omega h_1(x,t)dxdt<\infty;
  \end{equation}
for the nonnegative source function $h_2$, we assume, for some $b>0$,  that
 \begin{equation}\label{eq-iva3}
 \int_0^\infty\int_\Omega|h_2(x,t)-b|^2dxdt<\infty.
 \end{equation}
In such framework,  the IBVP \eqref{eq-r01}  has two possible homogeneous steady states depending on the value of parameters, which  can be classified as follows:
\begin{itemize}
  \item The semi-trivial steady state: $(0,b)$;
  \item The unique positive steady state: $(u_*,v_*)$, which is given by
\begin{align}
u_*=\frac{r+\mu-\sqrt{(\mu-r)^2+4b\mu}}{2\mu}, \  \  \ v_*=\frac{r-\mu+\sqrt{(\mu-r)^2+4b\mu}}{2},\label{eq-sss}
\end{align}
where $r>b$ and $(u_*, v_*)$ is the unique positive solution of the algebraic system:
 \begin{align}\label{eq-ssse}
r-\mu u_*-v_*=0, \ \ \  b-(1-u_*)v_*=0.
\end{align}
\end{itemize}
On the basis of classifications  of semi-trivial   and positive steady states,
our  stability results about these steady states  can be stated as follows.

\begin{theorem}\label{th-long}
Let $(u,v)$ be a bounded solution of \eqref{eq-r01} provided by Theorem \ref{th-global}, and additionally, let $h_1$ and $h_2$ enjoy the spacetime integrability in  \eqref{eq-iva2} and \eqref{eq-iva3}, respectively.
\begin{enumerate}
  \item [(i)] If $r\leq\eta_0$ with $\eta_0$ being the positive lower bound of $v$ given in \eqref{eq-0bddc}, then
\begin{align}\label{eq-smdeu}
\|u(\cdot,t)\|_{L^\infty}+\|v(\cdot,t)-b\|_{W^{1,\infty}}\rightarrow0, \quad \mathrm{as}\,\,\, t\rightarrow\infty;
\end{align}
  \item [(ii)] If $r>b$ with $b>0$ satisfying \eqref{eq-iva3}, define
  \begin{equation}\label{hatmu-def}
      \hat{\mu}_1(\chi)=\inf\left\{\mu>0: \ \ 16b\mu^2-4(r-2b)\chi^2\mu+\left[4r^2-(r-b)\chi^2\right]\chi^2>0\right\},
  \end{equation}
  and $\mu_1=\max\{\hat{\mu}_1(\chi), \ \ \mu_0(\chi,n)\}$ with $\mu_0(\chi,n)$ determined in Theorem \ref{th-global}, then for $\mu>\mu_1$, it holds that
\begin{align}\label{eq-smdeu2}
\|u(\cdot,t)-u_*\|_{L^\infty}+\|v(\cdot,t)-v_*\|_{W^{1,\infty}}\rightarrow0, \quad \mathrm{as}\,\,\, t\rightarrow\infty,
\end{align}
where $(u_*,v_*)$ is given by \eqref{eq-sss}, and  it  solves \eqref{eq-ssse}.
\end{enumerate}
\end{theorem}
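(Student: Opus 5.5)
The proof will rest on energy (Lyapunov-type) functionals and on the uniform bounds \eqref{eq-bedds} from Theorem \ref{th-global}. These bounds, together with parabolic Hölder regularity, give relative compactness of $\{(u(\cdot,t),v(\cdot,t))\}_{t\ge1}$ in $C^0(\overline\Omega)\times C^1(\overline\Omega)$. Consequently, convergence in $L^2(\Omega)$ (or even $L^1$) along $t\to\infty$ upgrades to the $L^\infty\times W^{1,\infty}$ convergence claimed in \eqref{eq-smdeu} and \eqref{eq-smdeu2}. Here the $W^{1,\infty}$ part of $v$ is obtained by interpolating between $L^2$ convergence and a uniform $C^{1+\theta}$ bound. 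The remaining task in each case is to prove that a suitable dissipation is integrable in time, i.e. lies in $L^1((1,\infty))$. Combined with uniform continuity of $t\mapsto\int_\Omega(\cdot)$, this forces $\|u-u_*\|_{L^2}+\|v-v_*\|_{L^2}\to0$, with $u_*=0$, $v_*=b$ in case (i).

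\emph{Case (i).} I would integrate the $u$-equation over $\Omega$. Since $v\ge\eta_0$ by \eqref{eq-0bddc} and $r\le\eta_0$, this gives
\[
\frac{d}{dt}\int_\Omega u+\mu\int_\Omega u^2\le\int_\Omega u(r-v)+\int_\Omega h_1\le\int_\Omega h_1 .
\]
By \eqref{eq-iva2}, $\int_1^\infty\int_\Omega u^2<\infty$, and by uniform Hölder continuity $\|u(\cdot,t)\|_{L^2}\to0$. For $v$, set $w=v-b$, which satisfies $w_t=\Delta w-w+uv+(h_2-b)$. Testing by $w$ and using Young's inequality gives
\[
\frac{d}{dt}\int_\Omega w^2+\int_\Omega w^2\le C\int_\Omega u^2+C\int_\Omega|h_2-b|^2 ,
\]
using the boundedness of $v$. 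Hence $\int_1^\infty\int_\Omega w^2<\infty$ by \eqref{eq-iva3}, and therefore $w\to0$ in $L^2$.

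\emph{Case (ii).} I would use the functional
\[
F(t)=\int_\Omega\Big(u-u_*-u_*\ln\frac{u}{u_*}\Big)+\frac{\alpha}{2}\int_\Omega(v-v_*)^2 ,
\]
or possibly a relative-entropy term in $v$, i.e. $\int_\Omega(v-v_*-v_*\ln\frac v{v_*})$, with $\alpha>0$ to be tuned. Using \eqref{eq-ssse}, the reaction terms produce
\[
-\mu\int_\Omega(u-u_*)^2-\int_\Omega(u-u_*)(v-v_*)
\]
from the first part, plus contributions from $h_1$ and from $h_2-b$. The second part yields
\[
-\alpha\int_\Omega|\nabla v|^2-\alpha(1-u_*)\int_\Omega(v-v_*)^2+\alpha v\int_\Omega(u-u_*)(v-v_*)\ \text{(up to bounded factors)}.
\]
The chemotactic term contributes
\[
-u_*\int_\Omega\frac{|\nabla u|^2}{u^2}+\chi u_*\int_\Omega\frac{\nabla u\cdot\nabla v}{uv} ,
\]
which is controlled by Young's inequality by $\frac{\chi^2u_*}{4}\int_\Omega|\nabla v|^2/v^2$. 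This in turn is absorbed by $\alpha\int_\Omega|\nabla v|^2$ if $\alpha\ge\chi^2u_*/(4\eta_0^2)$; with a $v$-entropy one gets $\int_\Omega|\nabla v|^2/v^2$ directly, avoiding $\eta_0$. The source terms are harmless: $h_1(1-u_*/u)$ is problematic where $u$ is small, so I would use $\int_\Omega h_1$ together with a lower bound for $u$, or keep it as $-u_*h_1/u\le0$, which it is. The $(h_2-b)$ term is handled by Young's inequality against $\int_\Omega(v-v_*)^2$ and \eqref{eq-iva3}.

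The main obstacle is the resulting quadratic form in $(u-u_*,v-v_*)$. It is positive definite precisely when a discriminant condition holds after optimizing $\alpha$ subject to the lower constraint coming from the chemotaxis term. I expect this condition to reduce exactly to $16b\mu^2-4(r-2b)\chi^2\mu+[4r^2-(r-b)\chi^2]\chi^2>0$, i.e. $\mu>\hat\mu_1(\chi)$, after substituting $u_*,v_*$ from \eqref{eq-sss}. Verifying that the algebra lands on \eqref{hatmu-def} is where I would spend effort: I would first try the $v$-entropy version and express $u_*$ and $1-u_*=b/v_*$ via \eqref{eq-ssse}. Once $F'\le-\varepsilon\int_\Omega[(u-u_*)^2+(v-v_*)^2]+g(t)$ with $g\in L^1$, the convergence follows as above, and $\mu>\mu_0$ supplies the boundedness needed throughout.
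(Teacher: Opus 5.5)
Your case (i) is essentially the paper's argument: integrate the $u$-equation using $v\ge\eta_0\ge r$, test the $v$-equation by $v-b$, then apply a Barbalat-type lemma. The paper upgrades to $L^\infty\times W^{1,\infty}$ via Neumann semigroup estimates; your compactness-plus-interpolation upgrade is an equally valid alternative.

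The gap is in case (ii). You attribute the threshold $\hat\mu_1(\chi)$ to a discriminant condition for a quadratic form in $(u-u_*,v-v_*)$ after tuning $\alpha$. With your primary functional $\frac{\alpha}{2}\int_\Omega(v-v_*)^2$ this cannot produce \eqref{hatmu-def}. Testing the $v$-equation by $\alpha(v-v_*)$ and using $-v+uv+b=-(1-u_*)(v-v_*)+(u-u_*)v$ gives the cross term $\alpha\int_\Omega v(u-u_*)(v-v_*)$. The total cross coefficient is therefore the non-constant $\alpha v-1$. Combined with the requirement $\alpha\ge\chi^2u_*/(4\eta_0^2)$, any positivity condition you obtain involves $\eta_0$ and $\|v\|_{L^\infty}$, which themselves depend on $\mu$, not only $b,r,\chi,\mu$. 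The same happens for $\mathcal{E}_u+\alpha\mathcal{E}_v$ with $\alpha\neq1$: the cross term $(\alpha-1)\int_\Omega(u-u_*)(v-v_*)$ must be beaten by the $v$-dependent dissipation $\alpha(1-u_*)\int_\Omega(v-v_*)^2/v$. So ``optimizing a discriminant'' is the wrong mechanism, and the step you defer (``verifying that the algebra lands on \eqref{hatmu-def}'') would not succeed as you set it up.

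The missing idea is that with the two relative entropies taken with \emph{equal} weights, $\mathcal{E}_u+\mathcal{E}_v$, the cross terms cancel identically. The $u$-entropy produces $-\int_\Omega(u-u_*)(v-v_*)-\mu\int_\Omega(u-u_*)^2$ (via $r=v_*+\mu u_*$). The $v$-entropy produces $+\int_\Omega(u-u_*)(v-v_*)-(1-u_*)\int_\Omega(v-v_*)^2/v$. No quadratic form is left to analyse. The only condition is that the chemotactic remainder $\frac{\chi^2u_*}{4}\int_\Omega|\nabla\ln v|^2$ be absorbed by the $v$-entropy dissipation $v_*\int_\Omega|\nabla\ln v|^2$, i.e. $\chi^2u_*\le4v_*$.

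To see where \eqref{hatmu-def} comes from, write $S=\sqrt{(\mu-r)^2+4b\mu}$. Then $u_*=\frac{2(r-b)}{r+\mu+S}$ and $v_*=\frac{r-\mu+S}{2}$, so the condition becomes $\frac{\chi^2}{2}(r-b)\le r^2+rS-(r-2b)\mu$. Isolating $rS$ and squaring (when the other side is nonnegative) yields $\frac{r-b}{4}\bigl[16b\mu^2-4(r-2b)\chi^2\mu+(4r^2-(r-b)\chi^2)\chi^2\bigr]\ge0$, which is exactly the polynomial in \eqref{hatmu-def}.

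The rest of your plan then goes through. Since $1-u_*=b/v_*>0$ and $v\le M$, one has $(1-u_*)\int_\Omega(v-v_*)^2/v\ge\frac{1-u_*}{M}\int_\Omega(v-v_*)^2$. The $h_2-b$ term is absorbed by Young's inequality, and the $L^2$ decay follows as in case (i).
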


\begin{remark}[Notes on the main idea and procedures on convergence of bounded solutions]

\noindent
\begin{itemize}
\item [(R5)]
One of the key steps in the proof of uniform convergence in Theorem \ref{th-long} is the derivation of the $L^2$
  decay estimate for the differences $u-a_u$ and $v-a_v$, where $(a_u,a_v)$ is either $(0,b)$ or $(u_*,v_*)$. In the case of (i),  this follows readily from spacetime integration of the $u$-equation; in the case of (ii),  we shall examine the time evolution of the following functional
\begin{equation*}
\mathcal{E}(t):= \int_{\Omega} \left(u+v-u_*-v_*-u_*\ln \frac{u}{u_*}-v_*\ln \frac{v}{v_*}\right)dx, \quad t>0.
\end{equation*}
Then the condition $\mu>\hat \mu_1(\chi)$ enables us to derive the said $L^2$-decay,  see Lemma \ref{le-dUulnu}.  Thereafter, by combining interpolation inequalities with semigroup estimates, we establish the desired decay estimates, as detailed in Lemmas \ref{le-smdeu0}, \ref{le-usmdeu0} and \ref{le-pusmde}.
Surely, the convergence rate in Theorem   \ref{th-long}  depends on the asymptotic behavior of $h_1$ and $h_2$. It will also be meaningful  to explore the convergence rate.
\end{itemize}
\end{remark}

In Theorems \ref{th-global} and \ref{th-long}, we have illustrated that  strong logistic damping enhances global boundedness and convergence  of classical solutions to the IBVP \eqref{eq-r01}. Next,  we extend to study the  role of weak  logistic damping on the global existence of weak solutions  and their eventual smoothness as well as convergence. In this respect, we shall show that for any $\mu>0$, the IBVP  \eqref{eq-r01} admits a generalized solution, which becomes eventually bounded and smooth, and converges to a steady state when $n\leq3$, as stated more specifically below.

\begin{theorem}\label{th-ggs}
Let $\chi>0$, $r\in\R$ and  $n\geq2$, and let \eqref{eq-iva}-\eqref{eq-iva0} hold. Then for any $\mu>0$, the  IBVP \eqref{eq-r01} admits at least one global generalized solution $(u,v)$
in the sense of Definitions \ref{def-gs} below. Moreover, if   \eqref{eq-iva1} and \eqref{eq-iva2} hold, $n\leq3$ and  $r<\widetilde{\eta}$ with $\widetilde{\eta}>0$ (being a uniform lower bound for the $v$-component) given in \eqref{eq-0bddca},  then the following statements hold:
\begin{itemize}
  \item (eventual smoothness) There exists $T_0>0$   such that
\begin{align}\label{eq-esmes}
&u\in  \mathcal{C}^{2,1}(\overline{\Omega}\times[T_0,\infty)),\quad v\in  \mathcal{C}^{2,1}(\overline{\Omega}\times[T_0,\infty)),
\end{align}
which $(u,v)$ solves the boundary
value problem in \eqref{eq-r01} classically on $\Omega\times[T_0,\infty)$;
  \item (convergence) If   \eqref{eq-iva3} is valid, then the convergence property
\eqref{eq-smdeu} also holds.
\end{itemize}
\end{theorem}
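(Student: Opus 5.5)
We will prove Theorem \ref{th-ggs} in three stages: construct generalized solutions by approximation, show they become regular for large times, and then reuse the convergence argument of Theorem \ref{th-long}(i).

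\emph{Construction.} We consider the regularized problems
\[
u_{\varepsilon t}=\Delta u_\varepsilon-\chi\nabla\cdot\Big(\frac{u_\varepsilon}{1+\varepsilon u_\varepsilon}\nabla\ln v_\varepsilon\Big)-u_\varepsilon v_\varepsilon+ru_\varepsilon-\mu u_\varepsilon^2+h_1,
\qquad
v_{\varepsilon t}=\Delta v_\varepsilon-v_\varepsilon+\frac{u_\varepsilon v_\varepsilon}{1+\varepsilon u_\varepsilon}+h_2 .
\]
For each $\varepsilon>0$ these are globally solvable classically, since the taxis and production terms have been tamed. We then collect estimates uniform in $\varepsilon$:
\begin{itemize}
\item integrating the $u$-equation gives $\int_\Omega u_\varepsilon\le C(T)$ and $\int_0^T\int_\Omega u_\varepsilon^2+u_\varepsilon v_\varepsilon\le C(T)$;
\item by comparison, $v_\varepsilon\ge \eta(T)>0$ on $[0,T]$, and an $L^1$ bound for $v_\varepsilon$ follows from $\int u_\varepsilon v_\varepsilon$;
\item testing the $u$-equation against $u_\varepsilon^{-1}$ (i.e.\ evolving $\int\ln u_\varepsilon$) gives $\int_0^T\int|\nabla\ln u_\varepsilon|^2\le C(T)+C\int_0^T\int|\nabla \ln v_\varepsilon|^2$;
\item testing the $v$-equation against $v_\varepsilon^{-1}$ controls $\int_0^T\int|\nabla\ln v_\varepsilon|^2$ using $\int u_\varepsilon$;
\item bounds such as $\int_0^T\int|\nabla (u_\varepsilon+1)^{\alpha}|^2$ for some small $\alpha$ and $\int_0^T\int|\nabla v_\varepsilon|^{s}$ for small $s$ give time-derivative bounds in duals of Sobolev spaces.
\end{itemize}
Aubin--Lions then yields a.e.\ limits. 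Following \cite{Heihoff2020zamp,LIXIE2022}, Definition \ref{def-gs} presumably asks for a weak $v$-equation together with a supersolution inequality for $\ln(u+1)$ (or some $\phi(u)$) and an $L^1$ mass inequality. The quadratic damping supplies the $L^2$ integrability of $u$ needed to pass to the limit in $uv$ via Vitali, using $\int u_\varepsilon v_\varepsilon\ln$-type bounds or an $L^{1+}$ bound on $u_\varepsilon v_\varepsilon$ obtained from $\mu u^2$. This step works for all $n\ge2$ and all $\mu>0$.

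\emph{Eventual smoothness for $n\le3$, $r<\widetilde\eta$.} Using \eqref{eq-iva1}, we will show that $v_\varepsilon\ge\widetilde\eta$ uniformly for $t\ge1$ (this is \eqref{eq-0bddca}). Then the $u$-equation has effective growth rate $r-v_\varepsilon\le r-\widetilde\eta<0$, so $\frac{d}{dt}\int u_\varepsilon\le -(\widetilde\eta-r)\int u_\varepsilon-\mu\int u_\varepsilon^2+\int h_1$. By \eqref{eq-iva2}, this gives $\int_\Omega u_\varepsilon(t)\to0$ and $\int_t^{t+1}\int_\Omega u_\varepsilon^2\to0$ uniformly in $\varepsilon$.

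The key and hardest step is converting this smallness into regularity. The plan is a conditional energy estimate: consider a functional like $\int u_\varepsilon^p v_\varepsilon^{-q}+\int|\nabla v_\varepsilon|^{2p}v_\varepsilon^{-q}$, in the spirit of the functional $\mathcal E$ in (R4), but with small $p$ (say $p$ slightly larger than $n/2$, feasible for $n\le3$). The dangerous terms should be absorbable once $\sup_t\int u_\varepsilon$ is small, by Gagliardo--Nirenberg, in the spirit of \cite{Lankeit2015jde}. One needs a uniform-in-$\varepsilon$ time $T_0$ beyond which $\|u_\varepsilon\|_{L^1}$ and $\int_t^{t+1}\|u_\varepsilon\|_{L^2}^2$ are below a threshold. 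After that, the functional is bounded on $[T_0,\infty)$. Moser iteration and semigroup estimates then give $\|u_\varepsilon\|_{L^\infty}+\|v_\varepsilon\|_{W^{1,\infty}}\le C$, and parabolic Schauder theory gives uniform $C^{2+\theta,1+\theta/2}$ bounds. Passing $\varepsilon\to0$ along the subsequence, the limit coincides with the generalized solution and is classical on $[T_0,\infty)$, giving \eqref{eq-esmes}.

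\emph{Convergence.} On $[T_0,\infty)$ the solution is bounded and classical, with $v\ge\widetilde\eta>r$. The proof of Theorem \ref{th-long}(i) (the $L^2$-decay from spacetime integration of the $u$-equation, followed by Lemmas \ref{le-smdeu0} and following) uses only boundedness, the lower bound $r\le$ inf $v$, and \eqref{eq-iva2}--\eqref{eq-iva3}. We will apply it verbatim, restarted at time $T_0$, to obtain \eqref{eq-smdeu}.
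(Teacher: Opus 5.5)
Your overall architecture (approximation, uniform decay of $\int_\Omega u_\varepsilon$ and $\int_t^{t+1}\!\int_\Omega u_\varepsilon^2$ when $r<\widetilde\eta$, eventual bounds, then convergence) is reasonable. However, two steps have genuine gaps.

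\textbf{Construction for $n\ge3$.} Your limit passage in $uv$ does not work.

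The uniform information available is $u_\varepsilon\in L^2(\Omega\times(0,T))$ and, from testing the $v$-equation with $v_\varepsilon^{q-1}$, $q<1$, only $v_\varepsilon\in L^{s}(\Omega\times(0,T))$ for $s<1+\frac2n$. H\"older then gives no $L^{1+}$ (not even $L^1$) bound on $u_\varepsilon v_\varepsilon$, and the $\ln$-type bound you allude to is never derived. The 2D argument of \cite{Heihoff2020zamp} rests on $u\in L^2$ forcing $v\in L^q$ for all $q<\infty$, which the paper stresses is unavailable for $n\ge3$.

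This is why Definition~\ref{def-gs} is a super-solution framework with no weak $v$-equation. There, $uv$ enters only the mass inequality \eqref{eq-wdef1-1} on the side where Fatou suffices, as $\frac{uv}{1+u}$, and as $u\varphi$ in \eqref{eq-wdef4}. Even for $\frac{uv}{1+u}$ you need $v_\varepsilon\to v$ in $L^1(\Omega\times(0,T))$. The paper gets this from $\int_0^T\!\int_\Omega v_\varepsilon^{q+2/n}\le C$ plus Vitali; an $L^1$ bound plus a.e.\ convergence, which is all you list, is not enough.

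A second idea is also missing. An inequality for $\ln(1+u)$ alone cannot be obtained, because the cross term $\chi\frac{u\varphi\nabla u\cdot\nabla v}{(1+u)^2v}$ is a product of two merely weakly convergent gradients. The paper adds $\frac{\chi^2}{4}$ times the $\ln v_\varepsilon$-identity. All gradient terms then combine into
$\int_0^T\!\int_\Omega\bigl|\sqrt\varphi\,\nabla\ln(1+u_\varepsilon)-\frac{\chi u_\varepsilon\sqrt\varphi}{2(1+u_\varepsilon)}\nabla\ln v_\varepsilon\bigr|^2+\int_0^T\!\int_\Omega\bigl|\frac{\chi\sqrt{\varphi(1+2u_\varepsilon)}}{2(1+u_\varepsilon)}\nabla\ln v_\varepsilon\bigr|^2$,
and weak lower semicontinuity yields \eqref{eq-wdef2}.

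\textbf{Eventual smoothness in 3D.} The claim that the dangerous terms become absorbable once $\sup_t\int_\Omega u_\varepsilon$ is small is false for $n=3$.

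For small $\mu$, the cross terms of your functional must be split by Young into $\int_\Omega u^{p+1}(\cdots)$ and $\int_\Omega|\nabla v|^{2p+2}(\cdots)$ with fixed, non-small constants; this is exactly what forces $\mu>\mu_0$ in Theorem~\ref{th-global}. So $\int_\Omega u^{p+1}$ must be absorbed by $\|\nabla u^{p/2}\|_{L^2}^2$. In 3D the scale-invariant inequality is $\int_\Omega u^{p+1}\le C\|\nabla u^{p/2}\|_{L^2}^2\|u\|_{L^{3/2}}+C\|u\|_{L^1}^{p+1}$. Using only $\|u\|_{L^1}$ instead produces $\|\nabla u^{p/2}\|_{L^2}^{6p/(3p-1)}$, a superlinear power that no smallness can absorb.

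What you need is $\|u_\varepsilon(\cdot,t)\|_{L^{3/2}}$ (the paper uses $L^2$) small for all $t\ge T_0$. The decay facts give this only at a single time $t_0$, via the mean value theorem. Propagating it is the core of the proof, and your sketch has no mechanism for it. Nor does it control the part $\int_\Omega|\nabla v_\varepsilon|^{2p}v_\varepsilon^{-q}$ with $2p>3$ at $t_0$ uniformly in $\varepsilon$.

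The paper closes this with a continuity argument:
\begin{itemize}
\item Let $T_\varepsilon$ be the exit time from $\{\|u_\varepsilon\|_{L^2}^2\le4\delta\}$.
\item On $(t_0,T_\varepsilon)$, the semigroup bounds $\|\nabla v_\varepsilon\|_{L^q}$, $q\in(3,6)$, from $L^k$-data of $v_\varepsilon(t_0)$ and $\|u_\varepsilon v_\varepsilon\|_{L^2}\le\|u_\varepsilon\|_{L^2}(C\|\nabla v_\varepsilon\|_{L^q}^\theta+C)$ (Lemma~\ref{le-gvL2}).
\item The $L^2$ inequality of Lemma~\ref{le-gu2L2} has error term $(1+\sup\|\nabla v_\varepsilon\|_{L^q}^{2q/(q-3)})\int_{t_1}^t\!\int_\Omega u_\varepsilon^2$. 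It therefore gives $\|u_\varepsilon(\cdot,T_\varepsilon)\|_{L^2}^2<4\delta$, a contradiction (Claim~\ref{le-claim}).
\end{itemize}

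Once this is in place, your bootstrap is fine. Your convergence step, restarting the proof of Theorem~\ref{th-long}(i) at $T_0$ using $v\ge\widetilde\eta>r$, is also fine. It is a legitimate and somewhat simpler alternative to the paper's $\varepsilon$-level argument in Lemma~\ref{le-de2}.
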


\begin{remark}[Notes on weak logistic damping  on generalized solutions for \eqref{eq-r01}]
\
\
\
\begin{itemize}
\item[(R6)] For $n=2$, the generalized solvability and   eventual smoothness were established respectively in \cite{Heihoff2020zamp} and \cite{QLI2023ERA}, where $u\in L^2(\Omega\times(0,\infty))$  ensuring  $v\in L^q(\Omega\times (0, \infty))$ for any $q<\infty$  played a crucial role. However, this is unavailable for  $n\geq3$. Here,  to confront the emerging difficulty, our novelty consists  of  establishing a
series of convergence and asymptotic properties for the approximate solutions, see, e.g.,  Lemmas \ref{le-couvL1}-\ref{le-Vubed}.
\item[(R7)] Theorem \ref{th-ggs} extends and  improves \cite{DWZ2019NA} for \eqref{eq-ck-log} with $\beta=2$ by showing eventual smoothness and convergence for the derived global weak solution for $n=3$.
\item[(R8)] We underline that, for small  $\mu>0$, the possible  singularities in finite time for (generalized) solutions of
\eqref{eq-r01} can not be ruled out by Theorems \ref{th-global} and  \ref{th-ggs} even for $n\in\{2,3\}$. It is an interesting open problem to  rule out such singularities in future works.
\item[(R9)] To establish the global existence of generalized solutions as stated in Theorem \ref{th-ggs}, our strategy consists of three main steps. The first is to choose an appropriate generalized framework. The second is to derive a series of uniform a priori estimates that ensure the desired convergence (see, e.g., Lemmas \ref{le-coL1} and  \ref{le-couvL1}). The final step is to pass to the limit in the approximate problem, thereby obtaining the existence of generalized solutions (see, e.g., Lemma \ref{le-vw}).
\item[(R10)] The central issue in the proof of the 3D eventual smoothness of generalized solutions consists in finding a uniform waiting time $t_0>0$ for which $\|u_\varepsilon(\cdot,t)\|_{L^2}$ becomes and stays appropriately small for all $t\geq t_0$ and for all $\varepsilon\in (0,1)$. To this end, we proceed in two steps. The first is to find $t_0>0$ such that $\|u_\varepsilon(\cdot,t_0)\|_{L^2}$ is suitably small (Lemma \ref{le-ul2int}). The second is to apply a contradiction argument together with the continuity method to prove that $\|u_\varepsilon(\cdot,t)\|_{L^2}$ remains relatively small for every $t>t_0$ (see Claim \ref{le-claim}); in this step, we shall establish delicate estimates for $\|u_\varepsilon(\cdot,t)\|_{L^2}$ for $t\geq s$ with arbitrary $s>0$ (see Lemma \ref{le-gu2L2}), as well as for $\|\nabla v_\varepsilon(\cdot,t)\|_{L^q}$ with $q\in(3,6)$ for $t\in(t_0,T_\varepsilon)$ with $T_\varepsilon$ being the supremum of time within which $\|u_\varepsilon(\cdot,t)\|_{L^2}$ is under control (see Lemma \ref{le-gvL2}).
\end{itemize}
\end{remark}

The rest of this paper is arranged as follows. Global existence and boundedness of classical solutions are established in  Section \ref{se-pre} and  their long-term behaviors are investigated   in Section \ref{se-ape}.
Section \ref{se-esm} is devoted to showing the global existence and eventual smoothness of generalized solutions.
In the  following texts, for the sake of convenience and without introduction, we shall use $C$ and $C_i$, $i=1,2,...$, to denote positive generic constants depending at most on $u_0,  v_0, \chi, r, n,\mu, \Omega, h_1, h_2$;  in many places, we shall omit integration variables like $dx$, $dt$ etc.,  and shall use frequently abbreviations like, for a function $f$,
	$$
		\|f\|_{L^p}=\|f(\cdot, t)\|_{L^p}=\left(\int_\Omega|f(x,t)|^pdx\right)^\frac{1}{p}.
	$$

\section{Global existence and boundedness of classical solutions}\label{se-pre}

In this section, we shall establish the global existence and boundedness of classical solutions to the  IBVP  \eqref{eq-r01}. We begin with the local well-posedness, which is ensured by the contraction mapping principle and  the well-known properties of the Neumann heat
semigroup as well as parabolic regularity,   as illustrated e.g. in \cite{Winkler2011, Rodriguez2010, Aida2005, Freitag2018} in smiliar settings.

\begin{lemma}\label{le-lolex}
Let the assumptions \eqref{eq-iva}-\eqref{eq-iva0} hold.
There exist a time $T_{\max}\leq\infty$ and a unique pair $(u,v)$ of functions, with the properties that $u>0$ and $v>0$ in $\overline{\Omega}\times[0,T_{\max})$, and that for any   $q>n$
 \begin{equation*}
\left\{
\begin{split}
&u\in \mathcal{C}^0\big(\overline{\Omega}\times[0,T_{\max})\big)\cap \mathcal{C}^{2,1}\big(\overline{\Omega}\times(0,T_{\max})\big),\\
&v\in  \mathcal{C}^0\big(0,T_{\max}; W^{1,q}(\Omega)\big)\cap \mathcal{C}^{2,1}\big(\overline{\Omega}\times(0,T_{\max})\big),
\end{split}
\right.
  \end{equation*}
such that $(u,v)$ solves the IBVP \eqref{eq-r01} classically in $\overline{\Omega}\times(0,T_{\max})$. Moreover, if $T_{\max}<\infty$, then for any $q>n$ it holds that
\begin{align}\label{eq-blowup}
\limsup\limits_{t\rightarrow T_{\max}}\Bigr(\|u(\cdot,t)\|_{L^\infty}+\|v(\cdot,t)\|_{W^{1,q}}+\|v^{-1}(\cdot,t)\|_{L^\infty}\Bigr)=\infty.
\end{align}
\end{lemma}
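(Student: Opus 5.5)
The plan is a standard Banach fixed point argument, after first removing the singularity at $v=0$. Set $\eta:=\min_{\overline\Omega}v_0>0$ and $R:=\|u_0\|_{L^\infty}+\|v_0\|_{W^{1,q}}+\eta^{-1}+1$ for a fixed $q>n$. For small $T\in(0,1)$ we work in the closed set
\[
S_T:=\Bigl\{(u,v)\in \mathcal C^0(\overline\Omega\times[0,T])\times \mathcal C^0([0,T];W^{1,q}(\Omega)) :\ \|u\|_{L^\infty}\le R,\ \|v\|_{W^{1,q}}\le R,\ v\ge \tfrac{\eta}{2}\Bigr\}.
\]
On $S_T$ we define $\Phi(u,v)=(\tilde u,\tilde v)$ through the variation-of-constants formulas with the Neumann heat semigroup:
\begin{align}
\tilde u(t)&=e^{t\Delta}u_0-\chi\int_0^t e^{(t-s)\Delta}\nabla\cdot\Bigl(\frac{u\nabla v}{v}\Bigr)ds+\int_0^t e^{(t-s)\Delta}\bigl(-uv+ru-\mu u^2+h_1\bigr)ds,\\
\tilde v(t)&=e^{t(\Delta-1)}v_0+\int_0^t e^{(t-s)(\Delta-1)}\bigl(uv+h_2\bigr)ds.
\end{align}

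\textbf{Step 1: contraction.} We use the smoothing estimates $\|e^{t\Delta}\nabla\cdot w\|_{L^\infty}\le C(1+t^{-\frac12-\frac n{2q}})\|w\|_{L^q}$ and $\|\nabla e^{t\Delta}w\|_{L^q}\le C(1+t^{-\frac12})\|w\|_{L^\infty}$. Because $q>n$, both singularities $t^{-\frac12-\frac n{2q}}$ and $t^{-\frac12}$ are integrable. The nonlinearities $u\nabla v/v$, $uv$ and $u^2$ are Lipschitz on $S_T$, since $v\ge\eta/2$ there. So $\Phi$ maps $S_T$ into itself and is a contraction for $T$ small. The one nontrivial part of the self-map property is $\tilde v\ge\eta/2$. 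For this we use $e^{t(\Delta-1)}v_0\ge e^{-t}\eta$, and note that the Duhamel term is nonnegative since $u\ge0$. Nonnegativity of $u$ is not yet known for the fixed point, so we instead bound the Duhamel term below by $-CRT$ and choose $T$ small.

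\textbf{Step 2: regularity and positivity.} Standard parabolic Schauder theory, bootstrapped as in \cite{Winkler2011,Freitag2018}, upgrades the mild solution to the classical class stated. Positivity of $u$ follows from the strong maximum principle, because the $u$-equation can be written as $u_t=\Delta u+b\cdot\nabla u+cu+h_1$ with $h_1\ge0$ and $u_0>0$. Once $u\ge0$, the comparison principle gives $v\ge e^{-t}\min v_0>0$. Uniqueness comes from the same Lipschitz estimates combined with a Gronwall argument on the difference of two solutions.

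\textbf{Step 3: extension and blow-up criterion.} The existence time $T$ depends only on an upper bound for $\|u_0\|_{L^\infty}+\|v_0\|_{W^{1,q}}+\|v_0^{-1}\|_{L^\infty}$. Restarting the construction from $(u(t_0),v(t_0))$ therefore extends the solution as long as this quantity stays bounded. This yields the maximal time $T_{\max}$ and the alternative \eqref{eq-blowup}. Independence of $q$ follows since for $q'>q>n$ the $W^{1,q'}$ norm is controlled by parabolic smoothing on bounded time intervals.

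The main obstacle is controlling the singular factor $1/v$ uniformly in Step 1, which is exactly why $\|v^{-1}\|_{L^\infty}$ must appear in \eqref{eq-blowup}. The choice of the lower bound $\eta/2$ inside $S_T$, together with the smallness of $T$ in Step 1, handles this issue.
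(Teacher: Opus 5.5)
Your proposal is correct and follows essentially the same route as the paper, which gives no detailed proof and only appeals to the contraction mapping principle, Neumann heat semigroup estimates and parabolic regularity as in the cited works on singular chemotaxis; your set $S_T$ with $v\ge \eta/2$, the lower bound on the Duhamel term for $\tilde v$ that avoids using $u\ge 0$, and the restart argument for the blow-up alternative are exactly the standard ingredients. Two small details need adjusting: $R$ should absorb the domain-dependent constant in $\|e^{t(\Delta-1)}v_0\|_{W^{1,q}}\le C\|v_0\|_{W^{1,q}}$, and positivity of $u$ should be argued by continuity near $t=0$ followed by comparison on $[\tau,T]$, because the zeroth-order coefficient contains $\Delta\ln v$, which is bounded only away from $t=0$.
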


Thanks to the  extensibility  criterion \eqref{eq-blowup},
 we shall dedicate ourselves to showing there exists $q>n$ such that
\begin{align}\label{eq-bpm}
\limsup\limits_{t\rightarrow T_{\max}}\Bigr(\|u(\cdot,t)\|_{L^\infty}+\|v(\cdot,t)\|_{W^{1,q}}+\|v^{-1}(\cdot,t)\|_{L^\infty}\Bigr)<\infty,
\end{align}
which enables us to infer  $T_{\max}=\infty$.
Consequently, we first focus on  the bound of $\|v^{-1}(\cdot,t)\|_{L^\infty}$, which will play an essential role throughout our analysis. In the sequel, we shall tacitly proceed with the conditions of Lemma \ref{le-lolex} so that we have a unique local classical solution of \eqref{eq-r01}.

\begin{lemma}\label{le-glex0}
For any $T>0$ there exists $C_T>0$ such that for $\widetilde{T}:=\min\{T,T_{\max}\}$ it holds that
\begin{align}\label{eq-0bdd}
v(x, t)\geq  C_T \quad\mathrm{for\,\,all}\,\,\, x\in\Omega\,\,\,\mathrm{and}\,\,\, t\in(0,\widetilde{T}),
\end{align}
with the property that
\begin{align}\label{eq-0bddc}
\eta_0:=\inf_{(x,t)\in \Omega\times (0, \widetilde{T})}v(x,t)>0,\quad  \mathrm{if}\,\,\, \eqref{eq-iva1}\,\,\,\mathrm{is\,\, valid}.
\end{align}
That is, under \eqref{eq-iva1}, $v$ has a uniform positive lower bound on $\bar\Omega\times [0,T_{\max})$.
\end{lemma}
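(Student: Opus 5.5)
The plan is to compare $v$ from below with a solution of a linear problem, using only the sign structure of the $v$-equation. Since $u>0$ and $v>0$ on $\overline{\Omega}\times[0,T_{\max})$ by Lemma \ref{le-lolex}, and $h_2\geq0$, the second equation of \eqref{eq-r01} gives
\[
v_t\geq \Delta v - v + h_2 \quad\text{in } \Omega\times(0,T_{\max}),
\]
with homogeneous Neumann data. The cross-diffusion and logistic terms never enter, because the $v$-equation is not chemotactic. Let $(e^{t\Delta})_{t\geq0}$ denote the Neumann heat semigroup. The parabolic comparison principle then yields
\[
v(\cdot,t)\geq e^{-t}e^{t\Delta}v_0+\int_0^t e^{-(t-s)}e^{(t-s)\Delta}h_2(\cdot,s)\,ds .
\]

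For \eqref{eq-0bdd} I drop the integral term. By \eqref{eq-iva}, $v_0$ is continuous and positive on the compact set $\overline{\Omega}$, so $v_0\geq\min_{\overline\Omega}v_0=:m_0>0$. The heat semigroup preserves order and constants, so $e^{t\Delta}v_0\geq m_0$. Hence $v\geq m_0e^{-t}\geq m_0e^{-T}=:C_T$ on $(0,\widetilde T)$.

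For \eqref{eq-0bddc} we need a bound that does not depend on $T$, and this is where \eqref{eq-iva1} enters. I would use the well-known pointwise lower bound for the Neumann heat kernel on a smooth bounded domain: there is $\gamma>0$, depending only on $\Omega$, such that
\[
e^{\tau\Delta}w\geq \gamma\int_\Omega w \quad\text{for all } \tau\geq 1 \text{ and all } 0\leq w\in C^0(\overline\Omega).
\]
Set $\kappa:=\inf_{t>0}\int_\Omega h_2(\cdot,t)>0$. For $t\geq2$, restrict the integral to $s\in(0,t-1)$, where $t-s\geq1$. This gives
\[
v(\cdot,t)\geq \gamma\kappa\int_0^{t-1}e^{-(t-s)}\,ds=\gamma\kappa\,(e^{-1}-e^{-t})\geq \gamma\kappa\,(e^{-1}-e^{-2}).
\]
For $t\in(0,2]$, the first step already gives $v\geq m_0e^{-2}$. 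Taking the minimum of these two constants provides a positive lower bound $\eta_0$ that is independent of $T$ and of $T_{\max}$.

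The only step that needs care is the heat kernel lower bound for $\tau\geq1$. One can justify it in two ways. The first is via strict positivity and continuity of the Neumann heat kernel on the compact set $\overline\Omega\times\overline\Omega\times\{1\}$, combined with the semigroup property and mass conservation for larger $\tau$. The second is to cite known results, e.g. Fujie's lemma used in singular chemotaxis papers.

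A further technical point is the validity of the comparison. The time regularity of $h_2$ in \eqref{eq-iva0} ensures the Duhamel term is a classical subsolution-type comparison function. One can also argue directly with the variation-of-constants formula for $v$: there the nonnegative term $\int_0^t e^{-(t-s)}e^{(t-s)\Delta}(uv)(\cdot,s)\,ds$ is simply discarded, since $e^{\tau\Delta}$ preserves nonnegativity.
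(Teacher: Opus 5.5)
Your proof is correct and follows essentially the same route as the paper: you get the short-time bound $v\ge(\inf v_0)e^{-t}$ by comparison, and the uniform bound from the variation-of-constants formula for $v$, discarding the nonnegative $uv$ contribution and combining a pointwise lower bound for the Neumann heat semigroup with \eqref{eq-iva1}. Your split at $t=2$, using a kernel bound valid for $\tau\ge1$, is a concrete instance of the paper's estimate $v\ge\sup_{\sigma}\min\{\xi(\sigma)\inf_{s>0}\int_\Omega h_2,\,(\inf v_0)e^{-\sigma}\}$.
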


\begin{proof} Using the nonnegativity of $u,v$ and $h_2$, and then applying  the comparison principle and Hopf boundary point lemma to the $v$-equation in \eqref{eq-r01}, we find that
\[
v(x,t)\geq \Bigr(\inf_{x\in\Omega} v_0(x)\Bigr) e^{-t}\geq \Bigr(\inf_{x\in\Omega} v_0(x)\Bigr) e^{-T}, \ \ \  \forall (x,t)\in \Omega \times (0, \tilde{T}),
\]
yielding directly \eqref{eq-0bdd}.

Notice that the the variation-of-constants formula  applied to $v$ yields
\begin{align*}
    v(\cdot,t)=e^{t(\Delta-1)}v_0+\int_0^te^{(t-s)(\Delta -1)}(uv+h_2)ds.
\end{align*}
Then by a slight adaptation of the proof of \cite[Lemma 3.1]{LX2023M3as}, it is  easy to get the uniform lower bound \eqref{eq-0bddc} by employing \eqref{eq-iva1} and the well-known pointwise
positivity property of the Neumann heat semigroup  (e.g., \cite[Lemma 2.3]{XiangT2022Jde} and \cite[Lemma 3.1]{WinklerH2013m3as}). Indeed, there exists a positive and increasing  function $\xi:(0,\infty)\rightarrow (0,\infty)$ such that, for all $(x,t)\in \Omega \times (0, T_{\max})$,
\[v(x,t)\geq \sup_{\sigma\in (0, T_{\max})}\min\left\{\xi(\sigma)\inf_{s>0}\int_\Omega h_2(x,s)dx, \ \  \Bigr(\inf_{x\in\Omega} v_0(x)\Bigr) e^{-\sigma}\right\},
\]
simply giving rise to \eqref{eq-0bddc}.
\end{proof}

Next, we  derive the starting $L^1$-boundedness of solutions.

\begin{lemma}\label{le-uL1}
 There exists $C>0$ independent of $t$ such that
\begin{align}\label{eq-uL1}
\|u(\cdot, t)\|_{L^1}+\|v(\cdot, t)\|_{L^1}\leq C(1+\mu^{-1}),\quad \forall t\in(0,T_{\max}).
\end{align}
\end{lemma}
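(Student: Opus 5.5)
Integrating the equations over $\Omega$ and using the no-flux boundary conditions, the diffusion and chemotaxis terms drop out. This leaves
\begin{align}
\frac{d}{dt}\int_\Omega u&=-\int_\Omega uv+r\int_\Omega u-\mu\int_\Omega u^2+\int_\Omega h_1,\\
\frac{d}{dt}\int_\Omega v&=-\int_\Omega v+\int_\Omega uv+\int_\Omega h_2 .
\end{align}
The key observation is that the cross terms $\mp uv$ cancel when we add the two identities. So I would set $y(t):=\int_\Omega u+\int_\Omega v$, which gives
\[
y'(t)+y(t)=(r+1)\int_\Omega u-\mu\int_\Omega u^2+\int_\Omega (h_1+h_2).
\]

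Next I handle the logistic term. By Cauchy--Schwarz, $\int_\Omega u^2\ge |\Omega|^{-1}(\int_\Omega u)^2$. Writing $s=\int_\Omega u\ge0$, we get
\[
(r+1)s-\mu|\Omega|^{-1}s^2\le \frac{(r+1)_+^2|\Omega|}{4\mu},
\]
by maximizing the quadratic in $s$. Together with $h_i\in L^\infty(\Omega\times(0,\infty))$ from \eqref{eq-iva0}, this yields
\[
y'+y\le \frac{(r+1)_+^2|\Omega|}{4\mu}+|\Omega|\big(\|h_1\|_{L^\infty}+\|h_2\|_{L^\infty}\big)=:K_1\mu^{-1}+K_2 .
\]
An ODE comparison then gives $y(t)\le \max\{y(0),\,K_1\mu^{-1}+K_2\}$ for all $t\in(0,T_{\max})$. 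Since $u,v>0$ by Lemma \ref{le-lolex}, $y$ controls both $\|u\|_{L^1}$ and $\|v\|_{L^1}$. Choosing $C\ge \max\{y(0)+K_2,K_1\}$ produces \eqref{eq-uL1}.

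I see no real obstacle in this argument. The only step that needs care is spotting the cancellation of $uv$ in the sum of the two equations. Without it, the $-\int_\Omega uv$ term could not be controlled. With it, the remaining steps are a routine quadratic-absorption estimate and an ODE comparison. The factor $\mu^{-1}$ in the bound comes precisely from absorbing the linear growth $(r+1)\int_\Omega u$ into the logistic damping $-\mu\int_\Omega u^2$.
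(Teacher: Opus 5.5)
Your proof is correct and follows the paper's argument: you add the two equations so that the $uv$ terms cancel, absorb $(r+1)\int_\Omega u$ into $-\mu\int_\Omega u^2$ to get a constant of order $\mu^{-1}$, and conclude by ODE comparison. The only cosmetic difference is that you absorb via Cauchy--Schwarz on $\int_\Omega u$, whereas the paper uses the pointwise bound $(r+1)u-\mu u^2\le (r+1)_+^2/(4\mu)$ (written there with the cruder constant $(r+1)_+^2/(2\mu)$); both give the same estimate.
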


\begin{proof}
Adding  the first two equations in \eqref{eq-r01} and then integrating over $\Omega$  by parts, one has
\begin{align*}
\begin{split}
 \frac{d}{dt}\int_\Omega (u+v) +  \int_\Omega (u+v)
 &=\int_\Omega \left((r+1)u-\mu u^2\right)+\int_\Omega (h_1+h_2)\\
&\leq \frac{(r+1)_+^2}{2\mu}|\Omega|+\int_\Omega (h_1+h_2).
\end{split}
\end{align*}
Multiplying it by $e^t$  and then integrating, one has
$$
\int_\Omega (u+v)\leq \max\left\{ \frac{(r+1)_+^2}{2\mu}|\Omega|+\|h_1+h_2\|_{L^\infty(\Omega\times(0,\infty)}|\Omega|, \  \  \int_\Omega (u_0+v_0)\right\},
$$
from which \eqref{eq-uL1} follows.
\end{proof}

\subsection{\textit{A-priori} estimates on solutions}
For $\alpha,\beta\in\R$, we now track the time evolution of  the following key mixed-type functional:
 \begin{align}\label{eq-inequ}
\int_{\Omega} u^\alpha (\cdot,t)v^\beta(\cdot,t),   \quad \forall t\in(0,T_{\max}),
\end{align}
which will enable us to lift the $L^1$-bounds of solutions to $L^p$-bounds.
\begin{lemma}\label{le-inequ}
Let $p>1$ and $q\in[p,2p)$. For all $T>0$ there exists $C_T>0$ such that for $\widetilde{T}:=\min\{T,T_{\max}\}$ it holds that
 \begin{align}\label{eq-ineLu}
&\frac{d}{dt}\int_{\Omega}u^{p}v^{2p-q} +(2p-q)\int_{\Omega}u^pv^{2p-q}+\frac{p(p-1)}{2}\int_\Omega u^{p-2}v^{2p-q}|\nabla u|^2\nonumber\\
\leq&\left\{(2p-q)(p\chi-2p+q+1)+\frac{p}{2(p-1)}[\chi(p-1)-2(2p-q)]^2\right\}\int_\Omega u^pv^{2p-q-2}|\nabla v|^2\nonumber\\
&+\left(-\frac p2\mu+2p-q\right)\int_{\Omega}u^{p+1}v^{2p-q}+C_T(1+\mu^{-1}),\quad \forall t\in(0,\widetilde{T}),
\end{align}
 where $\sup_{T>0}C_T<\infty$ if  \eqref{eq-iva1} is valid.
 \end{lemma}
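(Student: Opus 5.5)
The plan is a direct differentiation of $\int_\Omega u^pv^{\beta}$ with $\beta:=2p-q\in(0,p]$, followed by integration by parts using the no-flux boundary conditions, one Young inequality on the cross gradient term, and absorption of all lower-order terms into the two good reaction terms $-p\int_\Omega u^pv^{\beta+1}$ and $-p\mu\int_\Omega u^{p+1}v^\beta$. The ingredients from earlier are the lower bound of Lemma \ref{le-glex0} and the $L^1$ bound of Lemma \ref{le-uL1}.

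\emph{Differentiation.} Writing $\frac{d}{dt}\int_\Omega u^pv^\beta=p\int_\Omega u^{p-1}v^\beta u_t+\beta\int_\Omega u^pv^{\beta-1}v_t$ and integrating by parts gives the following contributions.
\begin{itemize}
\item Diffusion of $u$ gives $-p(p-1)\int_\Omega u^{p-2}v^\beta|\nabla u|^2-p\beta\int_\Omega u^{p-1}v^{\beta-1}\nabla u\cdot\nabla v$.
\item The taxis term gives $p(p-1)\chi\int_\Omega u^{p-1}v^{\beta-1}\nabla u\cdot\nabla v+p\beta\chi\int_\Omega u^pv^{\beta-2}|\nabla v|^2$.
\item Diffusion of $v$ gives $-p\beta\int_\Omega u^{p-1}v^{\beta-1}\nabla u\cdot\nabla v-\beta(\beta-1)\int_\Omega u^pv^{\beta-2}|\nabla v|^2$.
\end{itemize}
The mixed terms thus carry the total coefficient $p[\chi(p-1)-2\beta]$. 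Young's inequality bounds them by
\[
\frac{p(p-1)}2\int_\Omega u^{p-2}v^\beta|\nabla u|^2+\frac{p[\chi(p-1)-2\beta]^2}{2(p-1)}\int_\Omega u^pv^{\beta-2}|\nabla v|^2 .
\]
Collecting the $|\nabla v|^2$-coefficients, $p\beta\chi-\beta(\beta-1)=\beta(p\chi-2p+q+1)$, gives exactly the bracket in \eqref{eq-ineLu}.

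\emph{Reaction terms.} These are
\[
-p\int_\Omega u^pv^{\beta+1}+pr\int_\Omega u^pv^\beta-p\mu\int_\Omega u^{p+1}v^\beta+p\int_\Omega h_1u^{p-1}v^\beta-\beta\int_\Omega u^pv^\beta+\beta\int_\Omega u^{p+1}v^\beta+\beta\int_\Omega h_2u^pv^{\beta-1}.
\]
Here $-\beta\int_\Omega u^pv^\beta$ moves to the left-hand side, and $\beta\int_\Omega u^{p+1}v^\beta$ stays as the $+(2p-q)$ part of the coefficient $-\frac p2\mu+2p-q$. It remains to show that the three terms with $r$, $h_1$ and $h_2$ are bounded by $\frac p2\mu\int_\Omega u^{p+1}v^\beta+p\int_\Omega u^pv^{\beta+1}+C_T(1+\mu^{-1})$.

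\emph{Absorbing the lower-order terms.} I expect this to be the main obstacle, because $\int_\Omega v^\beta$ with $\beta>1$ is not controlled a priori. I would proceed as follows.
\begin{itemize}
\item For $h_2$, use $v\ge C_T$ from \eqref{eq-0bdd}, so that $h_2u^pv^{\beta-1}\le \|h_2\|_\infty C_T^{-1}u^pv^\beta$. This reduces it to the same type as the $r$-term.
\item For a term $Ku^pv^\beta$, split $\Omega$ into $\{v\ge 2K\}$ and $\{v<2K\}$. On the first set, $Ku^pv^\beta\le\frac12u^pv^{\beta+1}$, which is absorbed by $-p\int_\Omega u^pv^{\beta+1}$. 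On the second set, $Ku^pv^\beta\le K(2K)^\beta u^p$. Young's inequality together with $v^\beta\ge C_T^\beta$ then gives the bound $\frac{p\mu}{4}u^{p+1}v^\beta+C(\mu)$.
\item For $h_1u^{p-1}v^\beta$, apply Young with exponents $\frac p{p-1}$ and $p$ to the splitting $u^{p-1}v^{(\beta+1)(p-1)/p}\cdot v^{\beta-(\beta+1)(p-1)/p}$. This gives $\varepsilon u^pv^{\beta+1}+C_\varepsilon v^{\beta+1-p}$. Since $\beta\le p$, the exponent satisfies $\beta+1-p\le1$. If it lies in $[0,1]$, $\int_\Omega v^{\beta+1-p}$ is bounded by Lemma \ref{le-uL1}, which supplies the factor $(1+\mu^{-1})$. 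If it is negative, the lower bound of Lemma \ref{le-glex0} bounds it instead.
\end{itemize}

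\emph{Dependence of the constants.} All constants depend on $T$ only through $C_T$ from \eqref{eq-0bdd}. Under \eqref{eq-iva1}, $C_T$ can be replaced by $\eta_0$ from \eqref{eq-0bddc}, which gives $\sup_TC_T<\infty$. The remaining bookkeeping point is to ensure that the $\mu$-dependence coming from the region $\{v<2K\}$ fits the stated form $C_T(1+\mu^{-1})$, or else is allowed into $C_T$.
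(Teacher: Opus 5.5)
Your proposal is correct and is essentially the paper's proof. It uses the same differentiation and the same Young step on the cross term. It absorbs the $r$-, $h_1$- and $h_2$-terms into $\int_\Omega u^pv^{2p-q+1}$ and $\frac{p\mu}{4}\int_\Omega u^{p+1}v^{2p-q}$ via the lower bound of $v$ and the $L^1$ bound, as the paper does. Your splitting at $\{v\ge 2K\}$ is just an explicit form of the paper's Young inequality $pr\int_\Omega u^pv^{2p-q}\le\frac p4\int_\Omega u^pv^{2p-q+1}+C\int_\Omega u^p$, and your $h_1$-term reduces to $\int_\Omega v^{p-q+1}$ exactly as there.

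The $\mu$-bookkeeping issue you flag is present in the paper too. Its proof also produces $C\mu^{-p}\int_\Omega v^{-p(2p-q)}$ and absorbs it into $C_T(1+\mu^{-1})$ only through its convention that constants may depend on $\mu$. This is immaterial for $\mu\ge1$, where $\mu^{-p}\le\mu^{-1}$.
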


\begin{proof}
Invoking \eqref{eq-r01} and integrating by parts, we  find  that
 \begin{align*}
\frac{d}{dt}\int_{\Omega}u^pv^{2p-q}=&p\int_{\Omega}u^{p-1}v^{2p-q}\left\{\Delta u-\chi\nabla\cdot\left(u\nabla\ln v\right)- uv+ru-\mu u^2+h_1\right\}\\
&+(2p-q)\int_{\Omega}u^pv^{2p-q-1}\left\{\Delta v-v+ uv+h_2\right\}\\
=&-p(p-1)\int_\Omega u^{p-2}v^{2p-q}|\nabla u|^2\\
&+p[\chi(p-1)-2(2p-q)]\int_\Omega u^{p-1}v^{2p-q-1}\nabla u\cdot\nabla v\\
&+[p(2p-q)\chi-(2p-q)(2p-q-1)]\int_\Omega u^pv^{2p-q-2}|\nabla v|^2\\
&-p\int_{\Omega}u^{p-1}v^{2p-q}(uv+\mu u^2)+p\int_{\Omega}u^{p-1}v^{2p-q}(ru+h_1)\\
&-(2p-q)\int_{\Omega}u^pv^{2p-q}+(2p-q)\int_{\Omega}u^pv^{2p-q-1}(uv+h_2).
\end{align*}
Since $2p-q>0$ and $p>1$, the Young inequality allows us to deduce
 \begin{align*}
pr\int_{\Omega}u^pv^{2p-q}
\leq& \frac p4\int_{\Omega}u^pv^{2p-q+1}+C\int_\Omega u^p\\
\leq& \frac p4\int_{\Omega}u^pv^{2p-q+1}+\frac{p\mu}{4}\int_\Omega u^{p+1}v^{2p-q}+C\mu^{-p}\int_\Omega v^{-p(2p-q)}.
\end{align*}
Using H\"{o}lder's inequality, the boundedness of $h_1$ in \eqref{eq-iva0} and Young's inequality again, it holds that
 \begin{align*}
p\int_{\Omega}u^{p-1}v^{2p-q} h_1 
\leq&p \|h_1\|_{L^\infty(\Omega\times(0,\infty))}\int_{\Omega}u^{p-1}v^{2p-q}
\leq\frac p4\int_{\Omega}u^pv^{2p-q+1}+C\int_\Omega v^{p-q+1}.
\end{align*}
In similar manners, upon twice uses of Young's inequality it follows that
\begin{align*}
(2p-q)\int_{\Omega}u^pv^{2p-q-1}h_2\leq&(2p-q)\|h_2\|_{L^\infty(\Omega\times(0,\infty))}\int_{\Omega}u^pv^{2p-q-1}\\
\leq& \frac p4\int_{\Omega}u^pv^{2p-q+1}+\frac{p\mu}{4}\int_\Omega u^{p+1}v^{2p-q}+C\mu^{-p}\int_\Omega v^{-p(2p-q)}.
\end{align*}

Since  $q\in[p,2p)$, the $L^1$-bound of $v$ and the lower bound of $v$   from \eqref{eq-0bdd}-\eqref{eq-uL1} imply  that
 \begin{align*}
\int_\Omega v^{p+1-q}\leq C_T(1+\mu^{-1})\quad \mathrm{and}\quad \int_\Omega v^{-p(2p-q)}\leq C_T,
\end{align*}
where $
\sup_{T>0}C_T<\infty$ if \eqref{eq-iva1} is valid.

By means of Young's inequality again, we conclude that
 \begin{align*}
&p[\chi(p-1)-2(2p-q)]\int_\Omega u^{p-1}v^{2p-q-1}\nabla u\cdot\nabla v\\
\leq&\frac{p(p-1)}{2} \int_\Omega u^{p-2}v^{2p-q}|\nabla u|^2+\frac{p}{2(p-1)}[\chi(p-1)-2(2p-q)]^2\int_\Omega u^pv^{2p-q-2}|\nabla v|^2.
\end{align*}
Collecting these estimates guarantees  the desired  differential inequality \eqref{eq-ineLu}.
\end{proof}
To control the bad terms on the right hand of \eqref{eq-ineLu} in Lemma \ref{le-inequ}, we now consider the time evolution of $\int_{\Omega} \frac{|\nabla v|^{2p}}{v^q}$ for some suitable $p, q>0$.

\begin{lemma}\label{le-inequ2}
For  $p>1$ and $q\in [0, 2p-1]$,   the local-in-time solution of \eqref{eq-r01} fulfills  that
 \begin{align}\label{eq-ineLu2}
&\frac{d}{dt}\int_{\Omega} \frac{|\nabla v|^{2p}}{v^q}+\frac{2p(2p-q-1)}{q+1}\int_\Omega\frac{|\nabla v|^{2(p-1)}|D^2v|^2}{v^q}+(2p-q)\int_\Omega\frac{|\nabla v|^{2p}}{v^q}\nonumber\\
\leq&p\int_{\partial\Omega}\frac{|\nabla v|^{2(p-1)}\nabla|\nabla v|^2\cdot\nu}{v^q}dS+(2p-q)\int_\Omega\frac{u|\nabla v|^{2p}}{v^q}\nonumber\\
&+2p\int_\Omega\frac{|\nabla v|^{2(p-1)}\nabla v\cdot\nabla u}{v^{q-1}}+2p\int_\Omega\frac{|\nabla v|^{2(p-1)}\nabla v\cdot\nabla h_2}{v^{q}},\quad \forall t\in(0,T_{\max}).
\end{align}
 \end{lemma}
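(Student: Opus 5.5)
Differentiate $\int_\Omega |\nabla v|^{2p}v^{-q}$ directly and use the $v$-equation in \eqref{eq-r01}. Write $w:=|\nabla v|^2$, so the functional is $\int_\Omega w^p v^{-q}$ and
\begin{align*}
\frac{d}{dt}\int_\Omega \frac{w^p}{v^q}=p\int_\Omega \frac{w^{p-1}}{v^q}\,w_t-q\int_\Omega\frac{w^p}{v^{q+1}}\,v_t .
\end{align*}
Using $w_t=2\nabla v\cdot\nabla v_t$ and $\nabla v_t=\nabla\Delta v-\nabla v+u\nabla v+v\nabla u+\nabla h_2$, we invoke the pointwise identity $2\nabla v\cdot\nabla\Delta v=\Delta w-2|D^2v|^2$. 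This produces $p\int w^{p-1}v^{-q}\Delta w-2p\int w^{p-1}v^{-q}|D^2v|^2$. The first-order terms give
\[
-2p\int w^pv^{-q}+2p\int uw^pv^{-q}+2p\int w^{p-1}v^{1-q}\nabla v\cdot\nabla u+2p\int w^{p-1}v^{-q}\nabla v\cdot\nabla h_2 .
\]
The second part, with $v_t=\Delta v-v+uv+h_2$, gives $-q\int w^pv^{-q-1}\Delta v+q\int w^pv^{-q}-q\int uw^pv^{-q}-q\int w^pv^{-q-1}h_2$. The last term is $\le0$ since $h_2\ge0$ and can be dropped. The zeroth-order pieces combine to $-(2p-q)\int w^pv^{-q}$ and $(2p-q)\int uw^pv^{-q}$, and the two $\nabla u,\nabla h_2$ terms are exactly those in \eqref{eq-ineLu2}.

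Next, integrate $p\int w^{p-1}v^{-q}\Delta w$ by parts. This yields the boundary integral $p\int_{\partial\Omega}w^{p-1}v^{-q}\nabla w\cdot\nu$ plus the bulk terms $-p(p-1)\int w^{p-2}v^{-q}|\nabla w|^2+pq\int w^{p-1}v^{-q-1}\nabla v\cdot\nabla w$. Likewise, $-q\int w^pv^{-q-1}\Delta v$ becomes $qp\int w^{p-1}v^{-q-1}\nabla w\cdot\nabla v-q(q+1)\int w^{p+1}v^{-q-2}$, with no boundary term because $\partial_\nu v=0$.

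What remains is a quadratic form in the quantities $X=w^{\frac{p-2}{2}}v^{-q/2}|\nabla w|$, $Y=w^{\frac{p+1}{2}}v^{-\frac{q+2}{2}}$ and $Z=w^{\frac{p-1}{2}}v^{-q/2}|D^2v|$:
\[
-2pZ^2-p(p-1)X^2+2pq\,w^{p-1}v^{-q-1}\nabla v\cdot\nabla w-q(q+1)Y^2 .
\]
The cross term is bounded by $2pq\,XY\le \frac{p^2q^2}{q(q+1)}X^2+q(q+1)Y^2$, which absorbs the $Y^2$ term completely. This leaves $-\bigl(p(p-1)-\frac{p^2q}{q+1}\bigr)X^2=-\frac{p(p-q-1)}{q+1}X^2$. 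Since $|\nabla w|^2\le 4w|D^2v|^2$, we have $X^2\le4Z^2$, and the total $Z$-coefficient becomes $-2p+\frac{4p(p-q-1)}{q+1}$ when $p-q-1<0$. When $p-q-1\ge0$ the $X^2$ term is simply dropped.

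The main obstacle is making the constant match $\frac{2p(2p-q-1)}{q+1}$ across both cases. To do so, take the cross term with a weight $\varepsilon$ chosen so that the negative $X^2$ multiple is converted via $X^2\le4Z^2$ exactly into the stated coefficient. For $q\le 2p-1$ this coefficient is nonnegative, and one checks that $2p-\frac{4p(q+1-p)}{q+1}=\frac{2p(2p-q-1)}{q+1}$ in the case $q+1>p$. In the case $q+1\le p$ one instead uses $2p\ge\frac{2p(2p-q-1)}{q+1}$, which needs $q+1\ge 2p-q-1$; failing that, one also lets the $X^2$ term help via $X^2\le4Z^2$ only in the favorable direction. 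In each case one selects the decomposition that leaves exactly this coefficient in front of $\int w^{p-1}|D^2v|^2v^{-q}$, and collecting all terms gives \eqref{eq-ineLu2}.
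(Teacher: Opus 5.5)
For $q\in[p-1,2p-1]$ your argument is the paper's argument, and it is correct. You use the same exact identity and the same two integrations by parts. You apply Young's inequality with weight $p/(q+1)$ so that all of $q(q+1)\int Y^2$ is absorbed. You then use $|\nabla w|^2\le 4w|D^2v|^2$ to trade the excess $X^2$ for $Z^2$, which gives $2p-\frac{4p(q+1-p)}{q+1}=\frac{2p(2p-q-1)}{q+1}$. One small slip: your intermediate expression ``$-2p+\frac{4p(p-q-1)}{q+1}$'' has the sign of the second term flipped, as your later formula shows.

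The gap is the case $q\in[0,p-1)$, and it cannot be closed. There the target coefficient is $\frac{2p(2p-q-1)}{q+1}=2p+\frac{4p(p-q-1)}{q+1}>2p$. What you actually have is $2p\int Z^2$ plus the leftover dissipation $\frac{p(p-q-1)}{q+1}\int X^2$.

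\begin{itemize}
\item $X^2\le 4Z^2$ is an \emph{upper} bound on $X^2$, so it can never turn $X^2$-dissipation into $Z^2$-dissipation. There is no ``favorable direction''. You would need $|D^2v\,\nabla v|\gtrsim |D^2v|\,|\nabla v|$, which fails when $D^2v$ is large in directions orthogonal to $\nabla v$.
\item Re-weighting Young's inequality only makes things worse. Absorbing all of $Y^2$ forces a weight of at least $p/(q+1)$, which shrinks the $X^2$ dissipation further.
\end{itemize}

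In fact the claim is false in this range. Your identity is exact, and the boundary, $u$- and $\nabla h_2$-terms appear identically on both sides. So for $q<p-1$ the claim is equivalent to
\[
\frac{4p(p-q-1)}{q+1}\int_\Omega Z^2\le p(p-1)\int_\Omega X^2+q(q+1)\int_\Omega Y^2-2pq\int_\Omega\frac{w^{p-1}\nabla w\cdot\nabla v}{v^{q+1}}+q\int_\Omega\frac{w^ph_2}{v^{q+1}} .
\]
Take smooth data $v_0=M+\phi+k^{-3/2}\psi(x)\cos(kx_2)$, where $\psi$ is an interior cutoff, $\phi$ is compactly supported with $\nabla\phi=e_1$ near the support of $\psi$, and $M$ is large. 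Then $|\nabla v_0|\approx 1$ and $|D^2v_0\nabla v_0|=O(1)$, but $|D^2v_0|^2=k\psi^2\cos^2(kx_2)+O(1)$. The left side grows like $k$ while the right side stays bounded. With smooth compatible data this persists for small $t>0$ by continuity.

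So the lemma is false for $q\in[0,p-1)$. The paper's proof has the same defect: for $q\le p-1$ it drops the $X^2$ term, keeps the coefficient $2p$, and asserts \eqref{eq-ineLu2} anyway. This is harmless for the paper, because the lemma is only ever applied with $q\ge p$. The correct statement has coefficient $\min\{2p,\frac{2p(2p-q-1)}{q+1}\}$, or equivalently restricts to $q\in[p-1,2p-1]$.
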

\begin{proof}
 Straightforward computations from \eqref{eq-r01} show that
 \begin{align*}
\frac{d}{dt}\int_{\Omega} \frac{|\nabla v|^{2p}}{v^q}=&2p\int_\Omega\frac{|\nabla v|^{2(p-1)}}{v^q}\nabla v\cdot\nabla \left(\Delta v-v+ uv+h_2\right)
\\&-q\int_\Omega\frac{|\nabla v|^{2p}}{v^{q+1}}(\Delta v-v+ uv+h_2).
\end{align*}
Recalling the point-wise identity that $2\nabla v\cdot\nabla\Delta v=\Delta|\nabla v|^2-2|D^2v|^2$, we find
 \begin{align*}
2p\int_\Omega\frac{|\nabla v|^{2(p-1)}\nabla v\cdot\nabla \Delta v}{v^q}=p\int_\Omega\frac{|\nabla v|^{2(p-1)}\Delta|\nabla v|^2}{v^q}-2p\int_\Omega\frac{|\nabla v|^{2(p-1)}|D^2v|^2}{v^q}.
\end{align*}
The integration by parts  formula enables us to  see that
 \begin{align*}
p\int_\Omega\frac{|\nabla v|^{2(p-1)}\Delta|\nabla v|^2}{v^q}=&p\int_{\partial\Omega}\frac{|\nabla v|^{2(p-1)}\nabla|\nabla v|^2\cdot\nu}{v^q}dS-p(p-1)\int_\Omega\frac{|\nabla v|^{2(p-2)}|\nabla|\nabla v|^2|^2}{v^q}\\
&+pq\int_\Omega\frac{|\nabla v|^{2(p-1)}\nabla|\nabla v|^2\cdot\nabla v}{v^{q+1}}.
\end{align*}
Similarly, we have
 \begin{align*}
-q\int_\Omega\frac{|\nabla v|^{2p}}{v^{q+1}}\Delta v=pq\int_\Omega\frac{|\nabla v|^{{2(p-1)}}\nabla|\nabla v|^2\cdot\nabla v}{v^{q+1}}-q(q+1)\int_\Omega\frac{|\nabla v|^{2(p+1)}}{v^{q+2}}.
\end{align*}
Collecting these identities together, we deduce that
 \begin{align*}
&\frac{d}{dt}\int_{\Omega} \frac{|\nabla v|^{2p}}{v^q}+2p\int_\Omega\frac{|\nabla v|^{2(p-1)}|D^2v|^2}{v^q}+
p(p-1)\int_\Omega\frac{|\nabla v|^{2(p-2)}|\nabla|\nabla v|^2|^2}{v^q}\\
&+q\int_\Omega\frac{|\nabla v|^{2p}}{v^{q+1}}h_2+(2p-q)\int_\Omega\frac{|\nabla v|^{2p}}{v^q}+q(q+1)\int_\Omega\frac{|\nabla v|^{2(p+1)}}{v^{q+2}}\\
=&p\int_{\partial\Omega}\frac{|\nabla v|^{2(p-1)}\nabla|\nabla v|^2\cdot\nu}{v^q}dS+2pq\int_\Omega\frac{|\nabla v|^{2(p-1)}\nabla|\nabla v|^2\cdot\nabla v}{v^{q+1}}
+(2p-q)\int_\Omega\frac{|\nabla v|^{2p}u}{v^q}\\
&+2p\int_\Omega\frac{|\nabla v|^{2(p-1)}\nabla v\cdot\nabla u}{v^{q-1}}+2p\int_\Omega\frac{|\nabla v|^{2(p-1)}\nabla v\cdot\nabla h_2}{v^{q}}.
\end{align*}

The Young's inequality results   the second term  as
 \begin{align*}
2pq\int_\Omega\frac{|\nabla v|^{2(p-1)}\nabla|\nabla v|^2\cdot\nabla v}{v^{q+1}}\leq& \frac{p^2q}{q+1}\int_\Omega\frac{|\nabla v|^{2(p-2)}|\nabla|\nabla v|^2|^2}{v^q}+q(q+1)\int_\Omega\frac{|\nabla v|^{2(p+1)}}{v^{q+2}}.
\end{align*}
Here, for $p>1$ we also note that if $q\in[0,p-1]$, then $\frac{p^2q}{q+1}\leq p(p-1)$, and thereby
\begin{align*}
&\frac{p^2q}{q+1}\int_\Omega\frac{|\nabla v|^{2(p-2)}|\nabla|\nabla v|^2|^2}{v^q}\leq p(p-1)\int_\Omega\frac{|\nabla v|^{2(p-2)}|\nabla|\nabla v|^2|^2}{v^q};
\end{align*}
if $q>p-1$, then $\frac{p^2q}{q+1}> p(p-1)$, and thereby
\begin{align*}
&\frac{p^2q}{q+1}\int_\Omega\frac{|\nabla v|^{2(p-2)}|\nabla|\nabla v|^2|^2}{v^q}\\
\leq&4p\left(\frac{pq}{q+1}-p+1\right)\int_\Omega\frac{|\nabla v|^{2(p-1)}|D^2v|^2}{v^q}+p(p-1)\int_\Omega\frac{|\nabla v|^{2(p-2)}|\nabla|\nabla v|^2|^2}{v^q}.
\end{align*}
Invoking these estimates  and the nonnegativity of $v$ and $h_2$, we obtain
\eqref{eq-ineLu2},  as desired.
\end{proof}

To make use of the dissipative term    in \eqref{eq-ineLu2}, we now derive the following estimate.

\begin{lemma}\label{le-inequ3}
Let $\Omega\subset \mathbb{R}^n$ be a bounded and smooth domain,  $w\in\mathcal{C}^2(\overline{\Omega})$ be such that $w>0$ in $\Omega$ and $\frac{\partial w}{\partial\nu}=0$ on $\partial\Omega$, and for any $p\geq1$ and $q>0$ let
 \begin{align}
 \label{g-def}
g(p,q,\beta_+):=\frac{n\beta_++4 p\beta_+^2}{(2 q+1)\beta_+-p},\quad \beta_{+}:=\frac{p}{2q+1}+\frac{\sqrt{4p^2+(2q+1)n}}{2(2q+1)}.
\end{align}
It holds that
 \begin{align}\label{eq-inequ}
\int_{\Omega} \frac{|\nabla w|^{2 p+2}}{w^{q+2}} \leq g(p,q,\beta_{+}) \int_{\Omega} \frac{\left|D^2 w\right|^2|\nabla w|^{2 p-2}}{w^q}.
\end{align}
 \end{lemma}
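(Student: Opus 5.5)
The plan is to integrate by parts once, moving one derivative from $|\nabla w|^{2p+2}$ onto a second-order quantity, and then absorb with Young's inequality. Set
\[
I:=\int_\Omega \frac{|\nabla w|^{2p+2}}{w^{q+2}},\qquad J:=\int_\Omega \frac{|D^2w|^2|\nabla w|^{2p-2}}{w^q}.
\]
The starting identity comes from writing $\frac{\nabla w}{w^{q+2}}=-\frac{1}{q+1}\nabla\big(w^{-(q+1)}\big)$ and integrating by parts against the vector field $|\nabla w|^{2p}\nabla w$. The boundary term carries the factor $|\nabla w|^{2p}\frac{\partial w}{\partial\nu}$, which vanishes by the Neumann condition. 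This gives
\[
(q+1) I=\int_\Omega \frac{|\nabla w|^{2p}\Delta w}{w^{q+1}}+p\int_\Omega\frac{|\nabla w|^{2p-2}\,\nabla|\nabla w|^2\cdot\nabla w}{w^{q+1}}.
\]
If $w$ fails to be strictly positive up to the boundary, I would first run the argument with $w+\delta$ and let $\delta\downarrow0$.

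Next I will bound the two terms pointwise, using $|\Delta w|^2\le n|D^2w|^2$ and $|\nabla|\nabla w|^2\cdot\nabla w|\le 2|D^2w|\,|\nabla w|^2$. For the second term, I would rather not estimate it crudely. A sharper constant should come from keeping part of it and re-integrating: $|\nabla w|^{2p-2}\nabla|\nabla w|^2=\frac1p\nabla|\nabla w|^{2p}$, and integrating by parts again produces $-\int|\nabla w|^{2p}\Delta w\,w^{-q-1}$ together with another multiple of $I$. This is where the combination $2q+1$ in the denominator of $g$ should arise: a convex split of the second term between its direct form and its re-integrated form leaves $(2q+1)$-type coefficients in front of $I$.

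After that, each remaining mixed integral of the form $\int |\nabla w|^{2p}|D^2 w|\,w^{-q-1}$ (or the same with $\Delta w$) will be handled by Young's inequality with a free parameter $\beta>0$:
\[
\frac{|\nabla w|^{p+1}}{w^{(q+2)/2}}\cdot\frac{|\nabla w|^{p-1}|D^2w|}{w^{q/2}}\le \frac{1}{4\beta}\,\frac{|\nabla w|^{2p+2}}{w^{q+2}}+\beta\,\frac{|\nabla w|^{2p-2}|D^2w|^2}{w^q}.
\]
The factor $n$ enters through $|\Delta w|^2\le n|D^2w|^2$. This yields an inequality of the shape
\[
\big((2q+1)\beta-p\big)I\le (n\beta+4p\beta^2)J,
\]
valid whenever $(2q+1)\beta>p$. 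Minimizing $\beta\mapsto \frac{n\beta+4p\beta^2}{(2q+1)\beta-p}$ over $\beta>p/(2q+1)$ leads to the quadratic $4p(2q+1)\beta^2-8p^2\beta-np=0$. Its positive root is exactly $\beta_+=\frac{p}{2q+1}+\frac{\sqrt{4p^2+(2q+1)n}}{2(2q+1)}$, and substituting it gives $g(p,q,\beta_+)$.

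I expect the main obstacle to be the bookkeeping in the middle step: choosing how to split the $\nabla|\nabla w|^2\cdot\nabla w$ term so that the coefficients come out precisely as $(2q+1)\beta-p$ and $n\beta+4p\beta^2$, rather than a cruder constant such as $\big(\frac{\sqrt n+2p}{q+1}\big)^2$. The crude constant would already follow from the first identity and Cauchy--Schwarz. My fallback is that crude version, with the Young parameter then tuned to reproduce the stated $g$.
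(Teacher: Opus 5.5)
Your first identity $(q+1)I=A+T_2$ is correct; here $A=\int_\Omega|\nabla w|^{2p}\Delta w\,w^{-q-1}$ and $T_2$ is the second term. The mechanism you propose for producing $2q+1$, however, fails. Re-integrating $T_2$ by parts just inverts the integration by parts that created it: $T_2=\int_\Omega\nabla|\nabla w|^{2p}\cdot\nabla w\,w^{-q-1}=-A+(q+1)I$. So any convex split of $T_2$ between its two forms returns a multiple of the same identity, and no new coefficient appears.

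Likewise, one free Young parameter on both mixed terms gives $\bigl(q+1-\frac{\sqrt n+2p}{4\beta}\bigr)I\le(\sqrt n+2p)\beta J$, not the shape you claim. The inequality the paper takes from \cite[Theorem 2]{WangD2022M2as} comes instead from doubling the identity. One puts a fixed Young weight on the Laplacian term and the free weight only on $T_2$:
\[
2(q+1)I=2A+2T_2\le\bigl(I+nJ\bigr)+\Bigl(\frac{p}{\beta}I+4p\beta J\Bigr).
\]
This is exactly $\bigl((2q+1)\beta-p\bigr)I\le(n\beta+4p\beta^2)J$. The paper simply quotes it and then minimizes in $\beta$, as you do.

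More importantly, your ``crude'' fallback is not weaker than the target. It is stronger and already proves the lemma; what the proposal lacks is the check of this. At the critical point, $g(p,q,\beta_+)=\frac{n+8p\beta_+}{2q+1}$, which simplifies to
\[
g(p,q,\beta_+)=\Bigl(\frac{2p+\sqrt{4p^2+(2q+1)n}}{2q+1}\Bigr)^2 .
\]
The comparison $\frac{2p+\sqrt n}{q+1}\le\frac{2p+\sqrt{4p^2+(2q+1)n}}{2q+1}$ is equivalent to $2pq+(2q+1)\sqrt n\le(q+1)\sqrt{4p^2+(2q+1)n}$. After squaring, this reduces to $(2q+1)(2p-q\sqrt n)^2\ge0$.

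Hence Cauchy--Schwarz in $(q+1)I\le(\sqrt n+2p)\int_\Omega|\nabla w|^{2p}|D^2w|\,w^{-q-1}\le(\sqrt n+2p)I^{1/2}J^{1/2}$ finishes the proof. Your $w+\delta$ regularization ensures $I<\infty$, and monotone convergence removes it. This gives $I\le\frac{(\sqrt n+2p)^2}{(q+1)^2}J\le g(p,q,\beta_+)J$ with no parameter tuning, so the main-route bookkeeping can be dropped.

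Compared with the paper's cite-and-optimize proof, this route is self-contained. Its constant is never larger, with equality only when $q\sqrt n=2p$. Since $B_1,B_2,B_3$ increase with this constant, it would also slightly lower $\mu_0$.
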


 \begin{proof}
 For any $p\geq1$, $q>0$ and $\beta>\frac{p}{2q+1}$, it follows from \cite[Theorem 2]{WangD2022M2as} that
\begin{align*}
\int_{\Omega} \frac{|\nabla w|^{2 p+2}}{w^{q+2}} \leq \frac{n\beta+4 p\beta^2}{(2 q+1)\beta-p} \int_{\Omega} \frac{\left|D^2 w\right|^2|\nabla w|^{2 p-2}}{w^q}=g(p,q,\beta) \int_{\Omega} \frac{\left|D^2 w\right|^2|\nabla w|^{2 p-2}}{w^q}.
\end{align*}
To find the minimum of $g$ defined by \eqref{g-def}, we compute  that
\begin{align*}
\frac{\partial g(p,q,\beta)}{\partial\beta}=\frac{\left[4(2q+1)\beta^2-8p\beta-n\right]p}{[(2 q+1)\beta-p]^2},
\end{align*}
and deduce that if $\beta\in (\frac{p}{2q+1}, \beta_+)$, then
 $\frac{\partial g(p,q,\beta)}{\partial\beta}<0$; if $\beta>\beta_+$, then  $\frac{\partial g(p,q,\beta)}{\partial\beta}>0$.
 This directly tells us the global minimum of $g$ is achieved at $\beta=\beta_+$:
\[
g(p,q,\beta)\geq g(p,q,\beta_{+}),\quad \forall \beta>\frac{p}{2q+1}.
\]
This is enough to ensure  \eqref{eq-inequ}.
\end{proof}

Now, a useful observation for our global existence and boundedness is  the  following estimate  on the boundary integrals appearing  in \eqref{eq-ineLu2} of Lemma \ref{le-inequ2}.

\begin{lemma}\label{le-ineqbi}
Let $w\in\mathcal{C}^2(\overline{\Omega})$ be positive   in $\overline{\Omega}$ with $\frac{\partial w}{\partial\nu}=0$ on $\partial\Omega$.
For any $p>1$ and $\kappa,q>0$, there exists $C=C(p,\kappa,q)>0$ such that
 \begin{align}\label{eq-ineqbi}
\int_{\partial\Omega}\frac{|\nabla w|^{2(p-1)}\nabla|\nabla w|^2\cdot\nu}{w^q}dS
 \leq& \kappa \int_\Omega\frac{|\nabla w|^{2(p-1)}|D^2w|^2}{w^q}+ \kappa \int_\Omega \frac{|\nabla w|^{2(p+1)}}{w^{q+2}}+C\int_\Omega w^{2p-q}.
\end{align}
 \end{lemma}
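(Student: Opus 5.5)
We will follow the standard route for Neumann boundary terms, going back to Mizoguchi--Souplet. The first step is the pointwise inequality
\[
\frac{\partial |\nabla w|^2}{\partial\nu}\le 2\kappa_\Omega |\nabla w|^2\quad\text{on }\partial\Omega,
\]
valid for every $w\in\mathcal C^2(\overline\Omega)$ with $\partial_\nu w=0$, where $\kappa_\Omega>0$ depends only on the curvatures of $\partial\Omega$. This reduces the left-hand side of \eqref{eq-ineqbi} to
\[
2\kappa_\Omega\int_{\partial\Omega} \frac{|\nabla w|^{2p}}{w^q}\,dS .
\]
Setting $\varphi:=|\nabla w|^p w^{-q/2}$, this boundary integral is exactly $2\kappa_\Omega\|\varphi\|_{L^2(\partial\Omega)}^2$.

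Next we control the trace. We use the trace embedding $W^{s,2}(\Omega)\hookrightarrow L^2(\partial\Omega)$ for some $s\in(\frac12,1)$, combined with fractional Gagliardo--Nirenberg interpolation between $W^{1,1}$ and $L^1$ (or between $W^{1,2}$ and $L^2$). Together with Young's inequality, this gives for every $\delta>0$ a $C_\delta>0$ such that
\[
\|\varphi\|_{L^2(\partial\Omega)}^2\le \delta\int_\Omega|\nabla\varphi|^2+C_\delta\Big(\int_\Omega\varphi\Big)^2 .
\]
A cleaner alternative is to apply the elementary trace inequality $\int_{\partial\Omega}|f|\le \varepsilon\int_\Omega|\nabla f|+C_\varepsilon\int_\Omega|f|$ directly to $f=\varphi^2$.

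We then compute the gradient. Since
\[
\nabla(\varphi^2)=\frac{\nabla|\nabla w|^{2p}}{w^q}-q\frac{|\nabla w|^{2p}\nabla w}{w^{q+1}},
\]
and $\big|\nabla|\nabla w|^{2p}\big|\le 2p|\nabla w|^{2p-1}|D^2w|$, we obtain
\[
\int_\Omega|\nabla (\varphi^2)|\le 2p\int_\Omega\frac{|\nabla w|^{2p-1}|D^2w|}{w^q}+q\int_\Omega\frac{|\nabla w|^{2p+1}}{w^{q+1}}.
\]
Write $|\nabla w|^{2p-1}|D^2w|w^{-q}=\big(|\nabla w|^{p-1}|D^2w|w^{-q/2}\big)\big(|\nabla w|^{p}w^{-q/2}\big)$. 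Young's inequality then bounds the first integral by
\[
\int_\Omega\frac{|\nabla w|^{2(p-1)}|D^2w|^2}{w^q}+\int_\Omega\frac{|\nabla w|^{2p}}{w^q}.
\]
Similarly, the second integral is bounded by
\[
\int_\Omega\frac{|\nabla w|^{2p+2}}{w^{q+2}}+\int_\Omega\frac{|\nabla w|^{2p}}{w^q}.
\]
The term $\varepsilon C_\varepsilon\int_\Omega|f|=C_\varepsilon\int_\Omega |\nabla w|^{2p}w^{-q}$ appears in the same form. Choosing $\varepsilon$ small relative to $\kappa/(\kappa_\Omega(2p+q))$ therefore yields
\[
\text{LHS}\le \frac\kappa2\int_\Omega\frac{|\nabla w|^{2(p-1)}|D^2w|^2}{w^q}+\frac\kappa2\int_\Omega\frac{|\nabla w|^{2p+2}}{w^{q+2}}+C\int_\Omega\frac{|\nabla w|^{2p}}{w^q}.
\]

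The last step is to absorb the lower-order term $\int_\Omega|\nabla w|^{2p}w^{-q}$. We interpolate by Young's inequality with exponents $\frac{p+1}{p}$ and $p+1$:
\[
\frac{|\nabla w|^{2p}}{w^q}=\Big(\frac{|\nabla w|^{2p+2}}{w^{q+2}}\Big)^{\frac p{p+1}}w^{\frac{2p-q}{p+1}}\le \eta\frac{|\nabla w|^{2p+2}}{w^{q+2}}+C_\eta w^{2p-q}.
\]
The exponent check confirms this: $(q+2)\frac{p}{p+1}-q=\frac{2p-q}{p+1}$, and raising $w^{\frac{2p-q}{p+1}}$ to the power $p+1$ gives $w^{2p-q}$. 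Choosing $\eta C=\kappa/2$ gives \eqref{eq-ineqbi}. (Lemma \ref{le-inequ3} could be used instead to convert the gradient term into the Hessian term, but it is not needed.)

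The main obstacle is the trace step, specifically making sure the trace inequality on $f=\varphi^2$ is legitimate and that all arising terms close. Since $w>0$ on $\overline\Omega$ and $w\in\mathcal C^2(\overline\Omega)$, $f$ is Lipschitz, so the $W^{1,1}$ trace inequality (obtained by integrating against a smooth extension of $\nu$) applies. The only remaining care is tracking that $C$ depends on $p,q,\kappa$ and $\Omega$ alone.
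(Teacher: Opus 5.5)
Your overall architecture matches the paper's: the Mizoguchi--Souplet bound $\nabla|\nabla w|^2\cdot\nu\le C|\nabla w|^2$ on $\partial\Omega$, a $W^{1,1}$ trace estimate for $f=|\nabla w|^{2p}w^{-q}$, Young's inequality on the terms of $|\nabla f|$, and the final absorption $|\nabla w|^{2p}w^{-q}\le\eta|\nabla w|^{2p+2}w^{-q-2}+C_\eta w^{2p-q}$ (your exponent check is correct). However, the step that supplies the smallness in your version is false. The inequality $\int_{\partial\Omega}|f|\le\varepsilon\int_\Omega|\nabla f|+C_\varepsilon\int_\Omega|f|$ does not hold in $W^{1,1}$ for $\varepsilon<1$.

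To see this, take $f_h:=\max\{0,1-\mathrm{dist}(x,\partial\Omega)/h\}$. For small $h$ this function is Lipschitz and $f_h=1$ on $\partial\Omega$. Moreover $\int_\Omega|\nabla f_h|=h^{-1}|\{\mathrm{dist}(\cdot,\partial\Omega)<h\}|\to|\partial\Omega|$ and $\int_\Omega f_h\to0$, so the inequality would force $1\le\varepsilon$; the $W^{1,1}$ trace is not compact. Your own justification shows the same thing. Integrating $\nabla\cdot(|f|N)$ for a smooth extension $N$ of $\nu$ gives $\int_{\partial\Omega}|f|\le\|N\|_{L^\infty}\int_\Omega|\nabla f|+\|\nabla\cdot N\|_{L^\infty}\int_\Omega|f|$, and $\|N\|_{L^\infty}\ge1$ because $N=\nu$ on $\partial\Omega$. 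Your subsequent Young inequalities are unweighted. So with a fixed trace constant you end up with a fixed multiple of $\int_\Omega|\nabla w|^{2(p-1)}|D^2w|^2w^{-q}$ and of $\int_\Omega|\nabla w|^{2p+2}w^{-q-2}$, and that multiple cannot be made $\le\kappa$.

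The repair is exactly what the paper does. Keep the fixed-constant trace inequality $\int_{\partial\Omega}|f|\le C\big(\int_\Omega|\nabla f|+\int_\Omega|f|\big)$ and put the small parameter into Young's inequality:
\[
\frac{|\nabla w|^{2p-1}|D^2w|}{w^q}\le\kappa_1\frac{|\nabla w|^{2(p-1)}|D^2w|^2}{w^q}+\frac{1}{4\kappa_1}\frac{|\nabla w|^{2p}}{w^q},\qquad
\frac{|\nabla w|^{2p+1}}{w^{q+1}}\le\kappa_2\frac{|\nabla w|^{2p+2}}{w^{q+2}}+\frac{1}{4\kappa_2}\frac{|\nabla w|^{2p}}{w^q}.
\]
After this, the large coefficients sit only on the lower-order term $\int_\Omega|\nabla w|^{2p}w^{-q}$. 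Your last Young step then absorbs that term into $\eta\int_\Omega|\nabla w|^{2p+2}w^{-q-2}+C_\eta\int_\Omega w^{2p-q}$.

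Your first-mentioned route, via $\varphi=|\nabla w|^pw^{-q/2}$ and $L^2(\partial\Omega)$, would also work. There the trace $H^1(\Omega)\to L^2(\partial\Omega)$ is compact, so the $\delta$-inequality is genuine. But the computation you actually carried out follows the $W^{1,1}$ route.
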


\begin{proof}
Thanks to the geometric pointwise inequality $\nabla |\nabla w|^2\cdot \nu\leq C|\nabla w|^2$ on $\partial \Omega$ for some $C=C(\Omega)>0$ (cf. \cite[Lemma 4.2]{MS2014poin}), it follows that
\[
\int_{\partial\Omega}\frac{|\nabla w|^{2(p-1)}\nabla|\nabla w|^2\cdot\nu}{w^q} dS \leq C\int_{\partial\Omega}\frac{|\nabla w|^{2p}}{w^q},
\]
which, together with the boundary trace embedding  $\|w\|_{L^1(\partial\Omega)}\leq C\|w\|_{W^{1,1}(\Omega)}$, leads to
 \begin{align*}
\int_{\partial\Omega}\frac{|\nabla w|^{2(p-1)}\nabla|\nabla w|^2\cdot\nu}{w^q} dS  \leq&C \int_{\Omega}\left|\nabla\left(\frac{|\nabla w|^{2p}}{w^q}\right)\right|+C \int_{\Omega}\frac{|\nabla w|^{2p}}{w^q}\\
\leq&C \int_{\Omega}\left|\frac{p|\nabla w|^{2(p-1)}\nabla|\nabla w|^2}{w^q}-q\frac{|\nabla w|^{2p}\nabla w}{w^{q+1}}\right|+C \int_{\Omega}\frac{|\nabla w|^{2p}}{w^q}\\
\leq&C \int_{\Omega}\frac{|\nabla w|^{2p-1}|D^2w|}{w^q}+C\int_{\Omega}\frac{|\nabla w|^{2p+1}}{w^{q+1}}+C \int_{\Omega}\frac{|\nabla w|^{2p}}{w^q}.
\end{align*}
 In light of Young's inequality, for any $\kappa_1>0$ it holds that
  \begin{align*}
&\int_{\partial\Omega}\frac{|\nabla w|^{2(p-1)}\nabla|\nabla w|^2\cdot\nu}{w^q}dS\\
\leq&\kappa_1 \int_\Omega\frac{|\nabla w|^{2(p-1)}|D^2w|^2}{w^q} +C\int_{\Omega}\frac{|\nabla w|^{2p+1}}{w^{q+1}}+C\left(1+\frac{1}{\kappa_1}\right)\int_{\Omega}\frac{|\nabla w|^{2p}}{w^q}.
\end{align*}
 Similarly, for any $\kappa_2,\kappa_3>0$ we have
   \begin{align*}
C\int_{\Omega}\frac{|\nabla w|^{2p+1}}{w^{q+1}}\leq \kappa_2 \int_\Omega \frac{|\nabla w|^{2(p+1)}}{w^{q+2}}+C(\kappa_2)\int_\Omega w^{2p-q},
\end{align*}
and
 \begin{align*}
C\left(1+\frac{1}{\kappa_1}\right) \int_{\Omega}\frac{|\nabla w|^{2p}}{w^q}\leq \kappa_3 \int_\Omega \frac{|\nabla w|^{2(p+1)}}{w^{q+2}}+C(\kappa_1,\kappa_3)\int_\Omega w^{2p-q}.
\end{align*}
 Hence, collecting these inequalities, we arrive at
  \begin{align*}
&\int_{\partial\Omega}\frac{|\nabla w|^{2(p-1)}\nabla|\nabla w|^2\cdot\nu}{w^q}dS\\
 \leq& \kappa_1 \int_\Omega\frac{|\nabla w|^{2(p-1)}|D^2w|^2}{w^q} +(\kappa_2+\kappa_3) \int_\Omega \frac{|\nabla w|^{2(p+1)}}{w^{q+2}}+C(\kappa_1,\kappa_2,\kappa_3)\int_\Omega w^{2p-q}.
\end{align*}
 Since $\kappa_i>0$, $i=1,2,3$, are arbitrary, this gives rise to the sought estimate
 \eqref{eq-ineqbi}.
\end{proof}

With Lemmas \ref{le-inequ3}  and \ref{le-ineqbi} at hand, we further   estimate for the evolution of $\int_\Omega\frac{|\nabla v|^{2p}}{v^q}$.
\begin{lemma}\label{le-inev2}
Let $g(p,q,\beta_+)$ be defined in \eqref{g-def} with $p>1$ and $q\in [0, 2p-1)$. Then for all $T>0$ there exists $C_T>0$ such that for $\widetilde{T}:=\min\{T,T_{\max}\}$ it holds that
 \begin{align}\label{eq-inev2}
&\frac{d}{dt}\int_{\Omega} \frac{|\nabla v|^{2p}}{v^q}+\frac{p(2p-q-1)}{2(q+1)g(p,q,\beta_+)}\int_\Omega\frac{|\nabla v|^{2(p+1)}}{v^{q+2}} +\frac{2p-q}{2}\int_\Omega\frac{|\nabla v|^{2p}}{v^q}\nonumber\\
\leq&\left(B_1+B_2\right)\int_\Omega u^{p+1}v^{2p-q}+C_T(1+\mu^{-1})^{2p-q},\quad \forall t\in(0,\widetilde{T}),
\end{align}
where $\sup_{T>0}C_T<\infty$ if  \eqref{eq-iva1} is valid, and $B_1, B_2$ are explicit functions of $p,q,n$:
 \begin{equation}
 \label{eq-AB}
 \begin{cases}
B_1:=B_1(p,q,n)=\frac{2}{p+1}\left\{\frac{p(p+1)(2p-q-1)}{4(q+1)(p-1)g(p,q,\beta_+)}\right\}^{-\frac{p-1}{2}}\left\{\frac{4p(q+1)(\sqrt{n}+2p-2)^2}{2p-q-1}\right\}^\frac{p+1}{2},\\[0.25cm]
B_2:=B_2(p,q,n)=\frac{(2pq-q)^{p+1}}{p+1}\left\{\frac{(p+1)(2p-q-1)}{4(q+1)g(p,q,\beta_+)}\right\}^{-p}.
 \end{cases}
\end{equation}
 \end{lemma}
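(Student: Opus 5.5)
Starting from \eqref{eq-ineLu2} of Lemma \ref{le-inequ2}, the plan is to absorb every term on its right-hand side into the dissipation. There are two dissipative quantities: $D:=\int_\Omega \frac{|\nabla v|^{2(p-1)}|D^2v|^2}{v^q}$, with coefficient $\frac{2p(2p-q-1)}{q+1}$, and $E:=\int_\Omega\frac{|\nabla v|^{2(p+1)}}{v^{q+2}}$, which Lemma \ref{le-inequ3} bounds by $g(p,q,\beta_+)D$. We split the $D$-dissipation roughly into halves. One half stays as $D$ and absorbs terms. The other half is converted via Lemma \ref{le-inequ3} into $\frac{p(2p-q-1)}{(q+1)g}E$. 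Of this, one half is kept on the left as the claimed coefficient $\frac{p(2p-q-1)}{2(q+1)g}$, and the remaining half is used as absorption.

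We then treat the right-hand side term by term.

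The boundary integral is handled by Lemma \ref{le-ineqbi} with $\kappa$ small. This costs $\kappa D+\kappa E+C\int_\Omega v^{2p-q}$.

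For the term $(2p-q)\int_\Omega \frac{u|\nabla v|^{2p}}{v^q}$, we apply Young's inequality with exponents $\frac{p+1}{p}$ and $p+1$, writing the integrand as $\left(\frac{|\nabla v|^{2p}}{v^{p(q+2)/(p+1)}}\right)\left(u\,v^{(2p-q)/(p+1)}\right)$; the powers of $v$ match since $\frac{p(q+2)}{p+1}-\frac{2p-q}{p+1}=q$. This gives $\varepsilon E+C_\varepsilon\int_\Omega u^{p+1}v^{2p-q}$. Optimizing $\varepsilon$ against the available $E$-coefficient produces the explicit constant $B_2$; its structure $\frac{(\cdot)^{p+1}}{p+1}\{\cdot\}^{-p}$ is exactly that of Young's inequality.

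For the cross term $2p\int_\Omega\frac{|\nabla v|^{2(p-1)}\nabla v\cdot\nabla u}{v^{q-1}}$, we first integrate by parts. The boundary contribution vanishes because $\partial_\nu v=0$. This moves the derivative onto $\frac{|\nabla v|^{2(p-1)}\nabla v}{v^{q-1}}$, whose divergence is pointwise bounded by $(\sqrt n+2p-2)\frac{|\nabla v|^{2(p-1)}|D^2v|}{v^{q-1}}+(q-1)\frac{|\nabla v|^{2p}}{v^q}$, using $|\Delta v|\le\sqrt n|D^2v|$. Then:
\begin{itemize}
\item The $D^2v$ part is bounded via Young by $\delta D+\frac{C}{\delta}\int_\Omega u^2|\nabla v|^{2(p-1)}v^{2-q}$. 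A second Young/Hölder step with exponents $\frac{p+1}{2}$ and $\frac{p+1}{p-1}$ yields $\delta' E+C\int_\Omega u^{p+1}v^{2p-q}$; the exponents of $v$ again balance. This two-stage Young accounts for the mixed powers $-\frac{p-1}{2}$ and $\frac{p+1}{2}$ appearing in $B_1$, together with the factor $(\sqrt n+2p-2)^2$.
\item The lower-order part is handled as in the $u|\nabla v|^{2p}$ term. It may be more convenient to split it within the same Young step, so that $B_2$ carries the factor $(2pq-q)^{p+1}=((2p-q)+2p(q-1))^{p+1}$-type combination. I would merge the two $u|\nabla v|^{2p}v^{-q}$ contributions, with total coefficient $2p-q+2p(q-1)=2pq-q$, which explains the appearance of $2pq-q$ in $B_2$.
\end{itemize}

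The $\nabla h_2$ term is $\le C\int_\Omega |\nabla v|^{2p-1}v^{-q}$, since $h_2\in C^1$ and is bounded. Young's inequality bounds it by $\kappa E+C\int_\Omega v^{2p-q}$.

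Finally, half of the term $(2p-q)\int_\Omega|\nabla v|^{2p}v^{-q}$ on the left is moved to the right, leaving the claimed $\frac{2p-q}{2}$.

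It remains to bound $\int_\Omega v^{2p-q}$. Since $0<2p-q\le$ well, this requires care when $2p-q>1$. I would again use Young to write $\int_\Omega v^{2p-q}$ in terms of terms we can absorb, for instance via Gagliardo–Nirenberg on $v^{(2p-q)/(2p+2)}$-type quantities; this gives $\kappa E+C\|v\|_{L^1}^{2p-q}$, and Lemma \ref{le-uL1} then yields $C_T(1+\mu^{-1})^{2p-q}$. The uniformity $\sup_T C_T<\infty$ under \eqref{eq-iva1} comes from Lemma \ref{le-glex0}, which is used only for negative powers of $v$ arising in lower-order terms.

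The main obstacle is the bookkeeping of constants: choosing the Young weights so that exactly the stated $D$- and $E$-coefficients survive and the explicit $B_1,B_2$ emerge. Controlling $\int_\Omega v^{2p-q}$ by $E$ plus $L^1$ norms is the delicate analytic point; I would try the interpolation $\int_\Omega v^{2p-q}\le \epsilon\int_\Omega\left|\nabla v^{\frac{2p-q}{2p+2}}\right|^{2p+2}+C\|v\|_{L^1}^{2p-q}$. Note that $\left|\nabla v^{\frac{2p-q}{2p+2}}\right|^{2p+2}$ is a multiple of $\frac{|\nabla v|^{2p+2}}{v^{q+2}}$, since $\left(\frac{2p-q}{2p+2}-1\right)(2p+2)=-(q+2)$.
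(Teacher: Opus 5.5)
Your proposal is essentially the paper's proof. You use the same starting inequality \eqref{eq-ineLu2}, Lemma \ref{le-inequ3} to convert $D$ into $E$, and Lemma \ref{le-ineqbi} for the boundary integral. Your closing interpolation for $\int_\Omega v^{2p-q}$ is exactly the paper's Gagliardo--Nirenberg step. The cross term gets the same integration by parts followed by a two-stage Young producing $B_1$, and the merging $(2p-q)+2p(q-1)=2pq-q$ producing $B_2$ is also the paper's.

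\textbf{The $\nabla h_2$ term.} This is the one step that is wrong as written. Young's inequality with exponents $\frac{2p+2}{2p-1}$ and $\frac{2p+2}{3}$ gives
\begin{align*}
\int_\Omega\frac{|\nabla v|^{2p-1}}{v^{q}}\leq \kappa\int_\Omega\frac{|\nabla v|^{2p+2}}{v^{q+2}}+C_\kappa\int_\Omega v^{\frac{4p-2-3q}{3}},
\end{align*}
not $C\int_\Omega v^{2p-q}$. The exponent $\frac{4p-2-3q}{3}$ always lies below $2p-q$. It is negative whenever $q>\frac{4p-2}{3}$, for instance for every $q\in[p,2p-1)$ when $p<2$, which is the range used later.

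The repair is easy:
\begin{itemize}
\item for a nonnegative exponent, use $v^m\le 1+v^{2p-q}$ and then the Gagliardo--Nirenberg step;
\item for a negative exponent, use the lower bound from Lemma \ref{le-glex0}.
\end{itemize}
This is exactly where $C_T$, and its uniformity under \eqref{eq-iva1}, enters your argument. In your write-up as it stands no negative power of $v$ ever appears, so your remark about Lemma \ref{le-glex0} is not actually attached to anything.

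The paper handles this term differently: it absorbs it into half of the zeroth-order dissipation,
$$2p\int_\Omega\frac{|\nabla v|^{2p-1}|\nabla h_2|}{v^q}\le\frac{2p-q}{2}\int_\Omega\frac{|\nabla v|^{2p}}{v^q}+C\|\nabla h_2\|_{L^{2p}}^{2p}\|v^{-1}\|_{L^\infty}^q .$$
That is why only $\frac{2p-q}{2}$ survives in \eqref{eq-inev2}. Both versions tacitly need $\nabla h_2$ bounded uniformly in time to get $\sup_T C_T<\infty$, which \eqref{eq-iva0} does not literally provide.

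\textbf{Bookkeeping.} The stated $B_1$ and $B_2$ correspond to Young weights $\frac{p(2p-q-1)}{4(q+1)g(p,q,\beta_+)}$ on $E$ each. Together these use up your entire ``half of the converted half''. The $\kappa E$ costs from the boundary term, the $h_2$ term and the Gagliardo--Nirenberg step must therefore come from converting more of $D$. There is room for this: the cross term needs only $\frac{p(2p-q-1)}{4(q+1)}D$ to produce the stated $B_1$.

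The paper's allocation is:
\begin{itemize}
\item reserve $\frac{p(2p-q-1)}{4(q+1)}D$ each for the boundary and cross terms;
\item convert the remaining $\frac{3p(2p-q-1)}{2(q+1)}D$ into $E$;
\item spend $\frac{p(2p-q-1)}{4(q+1)g}E$ on each of the boundary term, the interpolation, $B_1$ and $B_2$.
\end{itemize}
Lemma \ref{le-ineqbi} and the interpolation both allow arbitrarily small weights, so the budget closes either way.
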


\begin{proof}
We shall estimate the terms on the right-hand side of \eqref{eq-ineLu2}. Firstly, for any $p>1$ and $q\in[0,2p-1)$, by means of \eqref{eq-ineqbi},    there exists $C>0$ such that
 \begin{align*}
p\int_{\partial\Omega}\frac{|\nabla v|^{2(p-1)}\nabla|\nabla v|^2\cdot\nu}{v^q}dS
\leq&\frac{p(2p-q-1)}{4(q+1)}\int_\Omega\frac{|\nabla v|^{2(p-1)}|D^2v|^2}{v^q}+C\int_\Omega v^{2p-q}\\
&+\frac{p(2p-q-1)}{4(q+1)g(p,q,\beta_+)}\int_\Omega\frac{|\nabla v|^{2(p+1)}}{v^{q+2}}.
\end{align*}
An application of
the Gagliardo-Nirenberg inequality   and the $L^1$-bound of $v$ in   \eqref{eq-uL1} implis that
\begin{align*}
\int_\Omega v^{2p-q} =&\left\|v^{\frac{2p-q}{2(p+1)}}\right\|_{L^{2(p+1)}}^{2(p+1)}\\
\leq& C\left\|v^{\frac{2p-q}{2(p+1)}}\right\|_{L^\frac{2(p+1)}{2p-q}}^{2(p+1)(1-\theta)}\left\|\nabla v^{\frac{2p-q}{2(p+1)}}\right\|_{L^{2(p+1)}}^{2(p+1)\theta}+C\left\|v^{\frac{2p-q}{2(p+1)}}\right\|_{L^\frac{2(p+1)}{2p-q}}^{2(p+1)}\\
\leq&C(1+\mu^{-1})^{(2p-q)(1-\theta)}\left\|\nabla v^{\frac{2p-q}{2(p+1)}}\right\|_{L^{2(p+1)}}^{2(p+1)\theta}+C(1+\mu^{-1})^{2p-q},
\end{align*}
where $\theta=\frac{n(2p-1-q)}{n(2p-1-q)+2(p+1)}\in (0,1)$ due to $q<2p-1$. Then  Young's inequality entails
\begin{align*}
C\int_\Omega v^{2p-q}
\leq&\frac{p(2p-q-1)}{4(q+1)g(p,q,\beta_+)}\int_\Omega\frac{|\nabla v|^{2(p+1)}}{v^{q+2}} +C(1+\mu^{-1})^{2p-q}.
\end{align*}
Collecting these estimates, we arrive at
 \begin{align}\label{eq-gvLpq}
p\int_{\partial\Omega}\frac{|\nabla v|^{2(p-1)}\nabla|\nabla v|^2\cdot\nu}{v^q}dS
\leq&\frac{p(2p-q-1)}{4(q+1)}\int_\Omega\frac{|\nabla v|^{2(p-1)}|D^2v|^2}{v^q} \nonumber\\
&+\frac{p(2p-q-1)}{2(q+1)g(p,q,\beta_+)}\int_\Omega\frac{|\nabla v|^{2(p+1)}}{v^{q+2}} +C(1+\mu^{-1})^{2p-q}.
\end{align}
For the third term on the right-hand side of \eqref{eq-ineLu2}, upon integration by parts,  one can see that
  \begin{align*}
&2p\int_\Omega\frac{|\nabla v|^{2(p-1)}\nabla v\cdot\nabla u}{v^{q-1}}\\
=&-2p\int_\Omega\frac{u|\nabla v|^{2(p-1)}\Delta v }{v^{q-1}}-2p(p-1)\int_\Omega\frac{u|\nabla v|^{2(p-2)}\nabla v\cdot \nabla|\nabla v|^2}{v^{q-1}}+2p(q-1)\int_\Omega\frac{u|\nabla v|^{2p}}{v^{q}}\\
\leq&-2p\int_\Omega\frac{u|\nabla v|^{2(p-1)}\Delta v }{v^{q-1}}+4p(p-1)\int_\Omega\frac{u|\nabla v|^{2(p-1)}|D^2 v|}{v^{q-1}}+2p(q-1)\int_\Omega\frac{u|\nabla v|^{2p} }{v^{q}},
\end{align*}
which along with Young's inequality and the inequality $|\Delta v|\leq \sqrt{n}|D^2v|$ implies that
  \begin{align}\label{eq-gvLpq2}
2p\int_\Omega\frac{|\nabla v|^{2(p-1)}\nabla v\cdot\nabla u}{v^{q-1}}\leq&\frac{p(2p-q-1)}{4(q+1)}\int_\Omega\frac{|\nabla v|^{2(p-1)}|D^2v|^2}{v^q}  +2p(q-1)\int_\Omega\frac{u|\nabla v|^{2p}}{v^{q}}\nonumber\\
&+\frac{4p(q+1)(\sqrt{n}+2p-2)^2}{2p-q-1}\int_\Omega\frac{u^2|\nabla v|^{2(p-1)}}{v^{q-2}}.
\end{align}
Using Young's inequality again, it follows that
  \begin{align}\label{eq-gvLpq3}
&\frac{4p(q+1)(\sqrt{n}+2p-2)^2}{2p-q-1}\int_\Omega\frac{u^2|\nabla v|^{2(p-1)}}{v^{q-2}}\nonumber\\
\leq&\frac{p(2p-q-1)}{4(q+1)g(p,q,\beta_+)}\int_\Omega\frac{|\nabla v|^{2(p+1)}}{v^{q+2}}+B_1\int_\Omega u^{p+1}v^{2p-q},
\end{align}
where $B_1$ is given in \eqref{eq-AB}.
By \eqref{eq-ineLu2} and \eqref{eq-gvLpq2},  we use Young's inequality again to get that
 \begin{align}\label{eq-gvLpq4}
&\{2p(q-1)+(2p-q)\}\int_\Omega\frac{u|\nabla v|^{2p}}{v^q}\nonumber\\
\leq& \frac{p(2p-q-1)}{4(q+1)g(p,q,\beta_+)}\int_\Omega\frac{|\nabla v|^{2(p+1)}}{v^{q+2}}+B_2\int_\Omega u^{p+1}v^{2p-q},
\end{align}
where  $B_2$ is given in \eqref{eq-AB}.
Similarly, based on the regularity of $h_2$ in \eqref{eq-iva0} and the lower bound of $v$ in  \eqref{eq-0bddc} and  \eqref{eq-0bdd}, one can use  Young's inequality again to pick $C>0$ such that
 \begin{align*}
2p\int_\Omega\frac{|\nabla v|^{2(p-1)}\nabla v\cdot\nabla h_2}{v^{q}} \leq&\frac{2p-q}{2}\int_\Omega\frac{|\nabla v|^{2p}}{v^q}+C\int_\Omega\frac{|\nabla h_2|^{2p}}{v^q}\nonumber\\
\leq&\frac{2p-q}{2}\int_\Omega\frac{|\nabla v|^{2p}}{v^q} +C\|\nabla h_2\|^{2p}_{L^{2p}}\|v^{-1}\|_{L^\infty}^q\nonumber\\
\leq&\frac{2p-q}{2}\int_\Omega\frac{|\nabla v|^{2p}}{v^q} +C_T,
 \end{align*}
 where $\sup_{T>0}C_T<\infty$ if  \eqref{eq-iva1} holds.

Consequently, combining with \eqref{eq-gvLpq}, \eqref{eq-gvLpq2}, \eqref{eq-gvLpq3} and \eqref{eq-gvLpq4}, we conclude  that
 \begin{align*}
&p\int_{\partial\Omega}\frac{|\nabla v|^{2(p-1)}\nabla|\nabla v|^2\cdot\nu}{v^q}dS +(2p-q)\int_\Omega\frac{u|\nabla v|^{2p}}{v^q}\nonumber\\
&+2p\int_\Omega\frac{|\nabla v|^{2(p-1)}\nabla v\cdot\nabla u}{v^{q-1}}+2p\int_\Omega\frac{|\nabla v|^{2(p-1)}\nabla v\cdot\nabla h_2}{v^{q}}\\
\leq&\frac{p(2p-q-1)}{2(q+1)}\int_\Omega\frac{|\nabla v|^{2(p-1)}|D^2v|^2}{v^q}  +\frac{p(2p-q-1)}{(q+1)g(p,q,\beta_+)}\int_\Omega\frac{|\nabla v|^{2(p+1)}}{v^{q+2}}\\
&+\left(B_1+B_2\right)\int_\Omega u^{p+1}v^{2p-q}+\frac{2p-q}{2}\int_\Omega\frac{|\nabla v|^{2p}}{v^q} +C_T(1+\mu^{-1})^{2p-q},
\end{align*}
which, upon being substituted into \eqref{eq-ineLu2}, yields
  \begin{align*}
&\frac{d}{dt}\int_{\Omega} \frac{|\nabla v|^{2p}}{v^q}+\frac{3p(2p-q-1)}{2(q+1)}\int_\Omega\frac{|\nabla v|^{2(p-1)}|D^2v|^2}{v^q}+\frac{2p-q}{2}\int_\Omega\frac{|\nabla v|^{2p}}{v^q}\nonumber\\[0.25cm]
\leq&\frac{p(2p-q-1)}{(q+1)g(p,q,\beta_+)}\int_\Omega\frac{|\nabla v|^{2(p+1)}}{v^{q+2}} +\left(B_1+B_2\right)\int_\Omega u^{p+1}v^{2p-q}+C_T(1+\mu^{-1})^{2p-q}.
\end{align*}
 Note that $2p-q-1>0$. This along with  \eqref{eq-inequ} directly gives rise to  \eqref{eq-inev2}.
\end{proof}

\subsection{Global existence and boundedness under large logistic damping}\label{se-gbedd}

By a suitable linear combination of \eqref{eq-ineLu} and \eqref{eq-inev2}, we derive the following favorable  estimate.

 \begin{lemma}\label{le-inequi0}
Let $p>1$ and $q\in[p,2p-1)$. For all $T>0$ and $\chi>0$ there exist $C_T>0$ and $\hat\mu_0=\hat\mu_0(\chi, p,q,n)>0$ such that for $\widetilde{T}:=\min\{T,T_{\max}\}$ it holds that whenever $\mu\geq \hat \mu_0$, then
   \begin{align}\label{eq-ineLu3}
\int_{\Omega}u^{p}v^{2p-q}+\int_{\Omega} \frac{|\nabla v|^{2p}}{v^q} \leq C_T(1+\mu^{-1})^{2p-q},\quad \forall t\in(0,\widetilde{T}),
\end{align}
where $C_T>0$ is independent of $\chi$ and  $
\sup_{T>0}C_T<\infty$ if \eqref{eq-iva1} is valid.
 \end{lemma}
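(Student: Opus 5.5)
The idea is to combine \eqref{eq-ineLu} and \eqref{eq-inev2} linearly, with a weight $\lambda>0$ on the first one. The positive weight on the $|\nabla v|^{2(p+1)}v^{-q-2}$ dissipation in \eqref{eq-inev2} will absorb the bad gradient term of \eqref{eq-ineLu}. After that, the only remaining bad term is a multiple of $\int_\Omega u^{p+1}v^{2p-q}$, which is beaten by $\mu$ once $\mu$ is large.

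Write
\[
A:=(2p-q)(p\chi-2p+q+1)+\frac{p}{2(p-1)}[\chi(p-1)-2(2p-q)]^2
\]
for the coefficient in \eqref{eq-ineLu}, which may be taken positive after enlarging it, and set $\delta:=\frac{p(2p-q-1)}{2(q+1)g(p,q,\beta_+)}$. The first step is to handle the mismatched weight in $\int_\Omega u^pv^{2p-q-2}|\nabla v|^2$. By Young's inequality with exponents $\frac{p+1}{p}$ and $p+1$, splitting
\[
u^pv^{2p-q-2}|\nabla v|^2=\bigl(u^pv^{\frac{p(2p-q)}{p+1}}\bigr)\cdot\bigl(|\nabla v|^2v^{\frac{2p-q}{p+1}-2}\bigr),
\]
I get, for any $\varepsilon>0$,
\[
\lambda A\int_\Omega u^pv^{2p-q-2}|\nabla v|^2\le \frac{\delta}{2}\int_\Omega\frac{|\nabla v|^{2(p+1)}}{v^{q+2}}+C_\ast(\lambda A)^{\frac{p+1}{p}}\delta^{-\frac1p}\int_\Omega u^{p+1}v^{2p-q}.
\]
The exponents check out: $v^{(\frac{2p-q}{p+1}-2)(p+1)}=v^{-q-2}$ and $v^{\frac{p(2p-q)}{p+1}\cdot\frac{p+1}{p}}=v^{2p-q}$. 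Here $C_\ast=C_\ast(p)$ is explicit.

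Summing $\lambda\cdot$\eqref{eq-ineLu} with \eqref{eq-inev2} then gives, for $\mathcal{E}(t)=\lambda\int_\Omega u^pv^{2p-q}+\int_\Omega\frac{|\nabla v|^{2p}}{v^q}$,
\[
\mathcal{E}'+\frac{2p-q}{2}\mathcal{E}\le\Bigl[-\frac{\lambda p}{2}\mu+\lambda(2p-q)+C_\ast(\lambda A)^{\frac{p+1}{p}}\delta^{-\frac1p}+B_1+B_2\Bigr]\int_\Omega u^{p+1}v^{2p-q}+C_T(1+\mu^{-1})^{2p-q}.
\]
I take $\lambda=1$ for definiteness, since any fixed choice works. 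Define $\hat\mu_0(\chi,p,q,n)$ as the value of $\mu$ at which the bracket vanishes; this is explicit and depends on $\chi$ only through $A$. For $\mu\ge\hat\mu_0$ the bracket is nonpositive. Note also that $\mu^{-1}$ factors such as $C_T(1+\mu^{-1})$ are dominated by $C_T(1+\mu^{-1})^{2p-q}$ because $2p-q>1$.

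The ODE comparison $y'+cy\le K$ then yields $\mathcal{E}(t)\le\max\{\mathcal{E}(0),2K/(2p-q)\}$. Since $v_0\in W^{1,\infty}$ and $v_0$ is bounded below, $\mathcal{E}(0)$ is finite, and it can be absorbed into $C_T$. In Lemma~\ref{le-lolex}, $u$ is only continuous up to $t=0$ and $v\in C^0([0,T);W^{1,q})$. To apply the differential inequality rigorously, I would run it on $[t_1,\widetilde T)$ for small $t_1>0$ and use continuity of $\mathcal{E}$ at $0$, which holds after choosing $q$-integrability large.

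The uniformity claims go as follows. The constants from Lemmas~\ref{le-inequ} and \ref{le-inev2} are uniform in $T$ under \eqref{eq-iva1}, so $\sup_TC_T<\infty$. They do not involve $\chi$, because the $\chi$-dependence sits only in the bracket, which is handled by $\mu\ge\hat\mu_0$.

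The main obstacle is the Young step above. Matching the $v$-weights and tracking the explicit constant is what makes $\hat\mu_0$ explicit, and hence later $\mu_0$, after optimizing over $p,q$ with $p>n/2$. The rest is bookkeeping.
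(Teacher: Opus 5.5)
Your proposal is correct and follows essentially the paper's proof. Like you, the paper uses Young's inequality with exponents $\frac{p+1}{p}$ and $p+1$ to absorb the $u^pv^{2p-q-2}|\nabla v|^2$ term of \eqref{eq-ineLu} into the $|\nabla v|^{2(p+1)}v^{-q-2}$ dissipation of \eqref{eq-inev2}, after first bounding the coefficient by $p(2p-q)\chi+\frac{p}{2(p-1)}[\chi(p-1)-2(2p-q)]^2$; this produces $B_3$. It then adds the two inequalities with unit weight, takes $\hat\mu_0=\frac2p(B_1+B_2+B_3+2p-q)$ so that the coefficient of $\int_\Omega u^{p+1}v^{2p-q}$ is nonpositive, and integrates the resulting linear ODE. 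Your differences are cosmetic: you absorb into half rather than all of the dissipation, which changes $B_3$ by a factor $2^{1/p}$. Your remark on continuity of the functional at $t=0$ fills in a step the paper leaves implicit.
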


 \begin{proof}
Let $p>1$ and $q\in[p,2p-1)$. An application of Young's inequality gives us
\begin{align*}
&\left\{p(2p-q)\chi+\frac{p}{2(p-1)}[\chi(p-1)-2(2p-q)]^2\right\}\int_\Omega u^pv^{2p-q-2}|\nabla v|^2\\
\leq&\frac{p(2p-q-1)}{2(q+1)g(p,q,\beta_+)}\int_\Omega\frac{|\nabla v|^{2(p+1)}}{v^{q+2}} +B_3 \int_\Omega u^{p+1}v^{2p-q},
\end{align*}
where $B_3=B_3(\chi,p,q,n)$ with
\begin{align}\label{B3-def}
B_3:=\frac{p}{p+1}\left\{\frac{p(p+1)(2p-q-1)}{2(q+1)g(p,q,\beta_+)}\right\}^{-\frac{1}{p}}\left\{p(2p-q)\chi+\frac{p}{2(p-1)}[\chi(p-1)-2(2p-q)]^2\right\}^{\frac{p+1}{p}}.
\end{align}
Substituting the above estimate into  \eqref{eq-ineLu}, we obtain that
 \begin{align*}
&\frac{d}{dt}\int_{\Omega}u^{p}v^{2p-q} +(2p-q)\int_{\Omega}u^pv^{2p-q}+\frac{p(p-1)}{2}\int_\Omega u^{p-2}v^{2p-q}|\nabla u|^2\nonumber\\
\leq&\left(-\frac p2\mu-q+2p+B_3\right)\int_{\Omega}u^{p+1}v^{2p-q}+\frac{p(2p-q-1)}{2(q+1)g(p,q,\beta_+)}\int_\Omega\frac{|\nabla v|^{2(p+1)}}{v^{q+2}}+C_T(1+\mu^{-1}),
\end{align*}
where $\sup_{T>0}C_T<\infty$ if \eqref{eq-iva1} holds.
This, combining with \eqref{eq-inev2} and the fact $2q-p>1$, entails
 \begin{align*}
&\frac{d}{dt}\left\{\int_{\Omega}u^{p}v^{2p-q}+\int_{\Omega} \frac{|\nabla v|^{2p}}{v^q}\right\}+ (2p-q)\int_{\Omega}u^{p}v^{2p-q}+\frac{2p-q}{2}\int_\Omega\frac{|\nabla v|^{2p}}{v^q}\\
\leq &K\int_{\Omega}u^{p+1}v^{2p-q} +C_T(1+\mu^{-1})^{2p-q},
\end{align*}
where
$$
K:=-\frac{p\mu}{2}-q+2p+B_1+B_2+B_3
$$ with $B_1, B_2$ and $B_3$ given in \eqref{eq-AB} and \eqref{B3-def}, and
$\sup_{T>0}C_T<\infty$ if \eqref{eq-iva1} is  valid. Now, define
\begin{align}\label{hatmu0-def}
\hat\mu_0=\hat\mu_0(\chi, p,q,n):=\frac{2}{p}\left(B_1+B_2+B_3+2p-q\right).
\end{align}
According to the  definitions of $B_i$  and $g(p,q,\beta_+)$ in \eqref{eq-AB}, \eqref{B3-def} and \eqref{g-def}, one quickly finds that  $\hat\mu_0$ is explicit and  that
$K\leq0$  whenever $\mu\geq \hat \mu_0$. This along further  with $2p-q>1$ shows that
 \begin{align*}
\frac{d}{dt}\left\{\int_{\Omega}u^{p}v^{2p-q}+\int_{\Omega} \frac{|\nabla v|^{2p}}{v^q}\right\}+ \frac{1}{2}\left\{\int_{\Omega}u^{p}v^{2p-q}+\int_{\Omega} \frac{|\nabla v|^{2p}}{v^q}\right\}
\leq C_T(1+\mu^{-1})^{2p-q},
\end{align*}
which upon being multiplied by $e^{\frac12 t}$ and then integrated on $(0,t)$ entails directly  \eqref{eq-ineLu3}.  Moreover, the upper bound $C_T$ is independent of $\chi$. since  only $B_3$ involves the parameter $\chi$, which doesn't appear in the above estimate.
\end{proof}

Based on Lemma \ref{le-inequi0}, under an explicit logistic damping rate, we  next derive the spatial  $L^\infty$-bound of $v$, which is independent of   $\chi$.

\begin{lemma}\label{le-inequ4}
For $\chi>0$,  with  $B_i$  and $g(p,q,\beta_+)$ given in \eqref{eq-AB}, \eqref{B3-def} and \eqref{g-def}, define
\begin{align}\label{mu0-def}
\mu_0=\mu_0(\chi,n):=\inf\left\{\frac{2}{p}\left(B_1+B_2+B_3+2p-q\right): \  p>\frac n2,  \ q\in [p, 2p-1)\right\}.
\end{align}
Then for $T>0$ there exists $C_T>0$ such that for $\widetilde{T}:=\min\{T,T_{\max}\}$ it holds  whenever $\mu>\mu_0$ that
 \begin{align}\label{eq-ineLu4}
\|v(\cdot,t)\|_{L^\infty} \leq C_T,\quad \forall t\in(0,\widetilde{T}),
\end{align}
where  $C_T>0$ is independent of $\chi$ and  $\sup_{T>0}C_T<\infty$ if  \eqref{eq-iva1} is valid.
 \end{lemma}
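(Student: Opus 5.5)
Since $\mu>\mu_0$ and $\mu_0$ is defined as an infimum, we first pick $p>\frac n2$ and $q\in[p,2p-1)$ such that $\mu\ge \hat\mu_0(\chi,p,q,n)=\frac2p(B_1+B_2+B_3+2p-q)$. Lemma \ref{le-inequi0} then applies with this pair $(p,q)$ and gives
\[
\int_\Omega u^pv^{2p-q}+\int_\Omega\frac{|\nabla v|^{2p}}{v^q}\le C_T(1+\mu^{-1})^{2p-q}\quad\text{on }(0,\widetilde T),
\]
where $C_T$ does not depend on $\chi$, and $\sup_T C_T<\infty$ under \eqref{eq-iva1}. The goal is to convert this weighted information into an $L^\infty$ bound for $v$. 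We do this in one of two ways: through a gradient bound for a power of $v$ combined with Sobolev embedding, or through the $v$-equation combined with semigroup estimates.

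\textbf{Step 1 (gradient of a power of $v$).} Set $w:=v^{\frac{2p-q}{2p}}$. Then $|\nabla w|^{2p}=c\,|\nabla v|^{2p}v^{-q}$, so $\|\nabla w\|_{L^{2p}}^{2p}\le C_T$. Moreover $\|w\|_{L^{2p/(2p-q)}}$ is controlled by $\|v\|_{L^1}$, which is bounded by Lemma \ref{le-uL1}. The Poincaré--Sobolev inequality then gives $\|w\|_{W^{1,2p}}\le C_T$. Since $2p>n$, the embedding $W^{1,2p}\hookrightarrow L^\infty$ yields $\|w\|_{L^\infty}\le C_T$. As $2p-q>1>0$, this is exactly $\|v\|_{L^\infty}\le C_T^{2p/(2p-q)}$. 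The constant depends only on $C_T$, $\Omega$, $p$, $q$, $n$, $\mu$ and the $L^1$ bound, none of which involves $\chi$.

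\textbf{Step 2 (bookkeeping).} We check that the constant from Lemma \ref{le-uL1} does not depend on $\chi$; it indeed involves only $r$, $\mu$, $h_i$ and the initial data. We also check that under \eqref{eq-iva1} all constants are uniform in $T$, which follows from the corresponding uniformity in Lemmas \ref{le-glex0} and \ref{le-inequi0}.

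\textbf{Main obstacle.} The main subtlety is the use of the infimum in \eqref{mu0-def}. One must make sure that a pair $(p,q)$ with $p>n/2$ and $\hat\mu_0(\chi,p,q,n)\le\mu$ exists; this follows directly from the strict inequality $\mu>\mu_0$. One must also note that the estimate of Lemma \ref{le-inequi0} for $u^pv^{2p-q}$ is not needed here: only the $\nabla v$ part is used.

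Should the embedding route fail, for example if $q=2p-1$ caused degeneracy, we would instead bound $uv$ in some $L^{s}$ with $s>n/2$ using $u^pv^{2p-q}$ together with the lower bound of $v$. We would then apply Neumann semigroup smoothing to the variation-of-constants formula for $v$. That alternative, however, brings in $C_T$ from Lemma \ref{le-glex0} and more bookkeeping. Since $q<2p-1$ holds strictly, the primary plan suffices.
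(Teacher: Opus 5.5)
Your argument is correct, but it follows a different route from the paper. The paper uses the \emph{first} part of the functional in \eqref{eq-ineLu3}. Combined with the lower bound for $v$ from Lemma~\ref{le-glex0}, it gives $\|u\|_{L^{p_0}}\le C_T$ with $p_0>\frac n2$; this is \eqref{eq-ineLu42}. The paper then bounds $\|uv\|_{L^{p_1}}\le C_T\|v\|_{L^\infty}^{\lambda}$ with $p_1\in(\frac n2,p_0)$ and $\lambda<1$, using H\"older's inequality and $L^1$--$L^\infty$ interpolation. It closes with the $L^{p_1}\to L^\infty$ smoothing of $e^{t(\Delta-1)}$ in the variation-of-constants formula and an absorption argument for $\sup_t\|v\|_{L^\infty}$. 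This is essentially the fallback you sketch.

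You instead use the \emph{second} part of the functional. With $\alpha=\frac{2p-q}{2p}\in(0,\frac12]$ one has $|\nabla v^{\alpha}|^{2p}=\alpha^{2p}|\nabla v|^{2p}v^{-q}$ and $\|v^\alpha\|_{L^{1/\alpha}}^{1/\alpha}=\|v\|_{L^1}$. Poincar\'e--Sobolev and Morrey's embedding (since $2p>n$) then bound $v^\alpha$ in $L^\infty$ directly.

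What your route buys:
\begin{itemize}
\item It is shorter: no semigroup estimate and no absorption step.
\item It does not use the lower bound of $v$ in this step; that bound enters only through the constant of Lemma~\ref{le-inequi0}.
\item It gives for free $\int_\Omega|\nabla v|^{2p}\le\|v\|_{L^\infty}^q\int_\Omega|\nabla v|^{2p}v^{-q}\le C_T$ with $2p>n$. This is exactly the $W^{1,q}$, $q>n$, control required in the extensibility criterion.
\end{itemize}

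What the paper's route buys is the intermediate bound \eqref{eq-ineLu42} on $\|u\|_{L^{p_0}}$, which it reuses in Lemma~\ref{le-inequi}. With your proof you would record that bound separately; it follows in one line from the $u$-part of \eqref{eq-ineLu3} and Lemma~\ref{le-glex0}.

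Two minor remarks:
\begin{itemize}
\item Your constant is independent of $\chi$ in exactly the same sense as the paper's: in both proofs $\chi$ enters only through the choice of $(p,q)$.
\item Your worry about $q=2p-1$ is unfounded. The embedding route degenerates only at $q=2p$, where $\alpha=0$.
\end{itemize}
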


\begin{proof}
Since $\mu>\mu_0$, it follows easily from \eqref{hatmu0-def} and \eqref{mu0-def} there exist  $p_0>\frac n2$ and $q_0\in [p_0, 2p_0-1)$ such that $\mu\geq \hat\mu_0(\chi, p_0, q_0,n)$. Therefore,   we  conclude from \eqref{eq-ineLu3}, \eqref{eq-0bddc} and \eqref{eq-0bdd} that for $T>0$ there exist $C_T>0$  and $\hat C_T>0$ (both $C_T$ and $\hat C_T$ are independent of $\chi$) such that for $\widetilde{T}:=\min\{T,T_{\max}\}$ it holds that
 \begin{align}\label{eq-ineLu42}
 \hat C_T \int_{\Omega}u^{p_0}\leq  \int_{\Omega}u^{p_0} v^{2p_0-q_0} \leq C_T,  \ \ \ \ \forall  t\in(0,\widetilde{T}),
\end{align}
where $\sup_{T>0}C_T<\infty$ and   $\sup_{T>0}\hat C_T<\infty$ provided that \eqref{eq-iva1} is  valid.

Then for  $p_1\in (\frac n2, p_0)$, the  H\"{o}lder  interpolation inequality shows that
\begin{align*}
\|uv\|_{L^{p_1}}\leq\|u\|_{L^{p_0}}\|v\|_{L^{\frac{p_1p_0}{p_0-p_1}}}
\leq\|u\|_{L^{p_0}}\|v\|_{L^1}^{\frac{p_0-p_1}{p_1p_0}}\|v\|_{L^\infty}^{1-\frac{p_0-p_1}{p_1p_0}},
\end{align*}
which, together with $(L^{p_0}, L^1)$-boundedness of $(u,v)$  in \eqref{eq-ineLu42} and  \eqref{eq-uL1},  leads to
 \begin{align}\label{eq-ineLu41}
\|uv\|_{L^{p_1}}\leq& C_T\|v\|_{L^\infty}^{1-\frac{p_0-p_1}{p_1p_0}},\ \ \ \  \forall  t\in(0, \widetilde{T}).
\end{align}
According to
 the variation-of-constants formula for $v$ and the well-known  estimates  of the Neumann heat semigroup (e.g., \cite[Lemma 1.3]{Winkler2010}, \cite[Lemma 2.1]{Cao2015} or \cite[Lemma 2.1]{CLX-25}),  we see that
\begin{align*}
\|v(\cdot, t)\|_{L^\infty}\leq & \|e^{t(\Delta-1)}v_{0}\|_{L^\infty}+\int_{0}^{t} \left\|e^{(t-s)(\Delta-1)}\left(uv+ h_2\right)\right\|_{L^\infty} d s\\
\leq&C+C\int_0^t\left(1+(t-s)^{-\frac{n}{2p_1}}\right)e^{-(t-s)}\|uv+h_2\|_{L^{p_1}}ds,
\end{align*}
which, combining with \eqref{eq-ineLu41} and the regularity of $h_2$ in \eqref{eq-iva0}, entails
\begin{align*}
\|v(\cdot, t)\|_{L^\infty}\leq&C+C_T\int_0^t\left(1+(t-s)^{-\frac{n}{2p_1}}\right)e^{-(t-s)}
\left(\|v(\cdot, s)\|_{L^\infty}+1\right)^{1-\frac{p_0-p_1}{p_1p_0}} ds.
\end{align*}
Due to $\frac{n}{2p_1}\in(0,1)$, for any $\widehat{T}\in(0,\widetilde{T})$, letting
\[
M(\widehat{T}):=\sup_{t\in(0,\widehat{T})}\|v(\cdot,t)\|_{L^\infty}+1,
\]
it follows that
\[
M(\widehat{T})\leq C+C_TM^{1-\frac{p_0-p_1}{p_1p_0}}(\widehat{T}),\quad \quad \widehat{T}\in(0,\widetilde{T}).
\]
Then, due to  $1-\frac{p_0-p_1}{p_1p_0}\in(0,1)$, the Young's inequality simply gives
\[
M(\widehat{T})\leq C_T,\quad \quad \forall \widehat{T}\in(0,\widetilde{T}),
\]
which on taking $\widehat{T}\nearrow  \widetilde{T}$ results in
 \begin{align*}
\|v(\cdot, t)\|_{L^\infty}
\leq &C_T,\ \ \  \forall t\in(0,\widetilde{T}),
\end{align*}
which,  together with   the fact that $\sup_{T>0}C_T<\infty$ if  \eqref{eq-iva1} is valid, concludes the bound \eqref{eq-ineLu4}.
\end{proof}

Next, for any $q\in (1,\infty]$, we proceed to show the spatial $(L^\infty, L^q)$-boundedness of $(u,\nabla v)$.
\begin{lemma}\label{le-inequi}
Under Lemma  \ref{le-inequ4}, for all $T>0$ and $q\in[1,\infty]$, there exists $C_T>0$ such that for $\widetilde{T}:=\min\{T,T_{\max}\}$ it holds that
 \begin{align}\label{eq-ineLui}
\|u(\cdot,t)\|_{L^\infty} +\|\nabla v(\cdot,t)\|_{L^q} \leq C_T,\quad \forall t\in(0,\widetilde{T}),
\end{align}
where  $
\sup_{T>0}C_T<\infty$ if \eqref{eq-iva1} is valid.
 \end{lemma}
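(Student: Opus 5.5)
We already have, from Lemma \ref{le-inequi0} and Lemma \ref{le-inequ4}, uniform-in-$(0,\widetilde T)$ bounds on $\|u\|_{L^{p_0}}$ for some $p_0>\frac n2$, on $\|v\|_{L^\infty}$, and on $\|v^{-1}\|_{L^\infty}$ (via \eqref{eq-0bdd}, or \eqref{eq-0bddc} under \eqref{eq-iva1}). The plan is a standard bootstrap through semigroup estimates: first raise the regularity of $\nabla v$ to $L^q$ with $q>n$, then get $u\in L^\infty$ by a Duhamel argument for the $u$-equation in which the singular term $\nabla\ln v$ is now harmless, and finally conclude $\nabla v\in L^\infty$.

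\textbf{Step 1: $\nabla v$ in $L^q$ for every $q<\frac{np_0}{(n-p_0)_+}$.}
Since $uv+h_2$ is bounded in $L^{p_0}$, the Duhamel formula
\[
v(t)=e^{t(\Delta-1)}v_0+\int_0^te^{(t-s)(\Delta-1)}(uv+h_2)\,ds
\]
together with the smoothing estimate $\|\nabla e^{t\Delta}f\|_{L^q}\le C(1+t^{-\frac12-\frac n2(\frac1{p_0}-\frac1q)})\|f\|_{L^{p_0}}$ gives a bound on $\|\nabla v\|_{L^q}$. This works whenever $\frac12+\frac n2(\frac1{p_0}-\frac1q)<1$. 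Because $p_0>\frac n2$, some $q>n$ is admissible. The initial-data term is controlled using $v_0\in W^{1,\infty}$.

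\textbf{Step 2: $u\in L^\infty$.}
Write the $u$-equation as
\[
u_t=\Delta u-u-\chi\nabla\cdot\Big(u\frac{\nabla v}{v}\Big)+f,\qquad f=(1+r-v)u-\mu u^2+h_1\le (1+|r|)u+h_1 .
\]
We use comparison with the solution whose source is the positive part of $f$, or equivalently apply Duhamel to $u$ and use positivity of the semigroup. The drift term is handled by
\[
\|e^{t\Delta}\nabla\cdot\phi\|_{L^\infty}\le C(1+t^{-\frac12-\frac n{2\rho}})\|\phi\|_{L^\rho}\quad\text{for }\rho>n,
\]
with $\phi=u\nabla v/v$. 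Let $M(T')=\sup_{(0,T')}\|u\|_{L^\infty}$. By Hölder and interpolation between $L^{p_0}$ and $L^\infty$,
\[
\|u\nabla v/v\|_{L^\rho}\le C\|u\|_{L^{\rho q/(q-\rho)}}\|\nabla v\|_{L^q}\le CM^{\theta}
\]
with $\theta<1$, choosing $n<\rho<q$. The linear source term is bounded similarly, giving
\[
M\le C+CM^\theta,
\]
hence $M\le C_T$.

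\textbf{Step 3: $\nabla v$ in $L^\infty$.}
With $u$ bounded, $uv+h_2\in L^\infty$, and the same gradient semigroup estimate with exponent $\frac12$ yields $\|\nabla v\|_{L^\infty}\le C_T$. The bounds for every $q\in[1,\infty]$ then follow since $\Omega$ is bounded.

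All constants depend only on the previously obtained bounds. These are uniform in $T$ under \eqref{eq-iva1}, and the semigroup integrals carry the factor $e^{-(t-s)}$, so $\sup_T C_T<\infty$ in that case.

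\textbf{Main obstacle.} The delicate point is Step 2. We must make sure the interpolation exponent $\theta$ is strictly less than $1$, which needs $p_0>\frac n2$ and $q>n$. If Step 1 only gives $q$ slightly above $n$, we would first iterate an $L^p$ bound for $u$ using the $u$-equation and Step 1 to raise $p_0$, and only then close the $L^\infty$ argument.
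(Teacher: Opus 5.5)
Your proposal is correct and follows the paper's proof essentially step by step. You first use a semigroup estimate to lift the $L^{p_0}$ bound ($p_0>\frac n2$) on $uv+h_2$ to an $L^q$ bound on $\nabla v$ for some $q>n$. You then run a Duhamel estimate for $u$, dropping $-uv-\mu u^2$ via positivity of the semigroup, in which $\|u\nabla v/v\|_{L^\rho}$ with $\rho>n$ is bounded by a sublinear power of $\sup\|u\|_{L^\infty}$ and absorbed. Finally you read off $\nabla v\in L^\infty$.

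The obstacle you flag does not actually arise. For any $\rho\in(n,q)$ the interpolation exponent $\theta=\max\{0,1-p_0\frac{q-\rho}{\rho q}\}$ is automatically below $1$, so no extra iteration is needed.
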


\begin{proof}
By  \eqref{eq-ineLu4} and \eqref{eq-ineLu42},   there exists  $p_0>\frac n2$ such that
\begin{align}\label{u-lp-bdd}
\|u\|_{L^{p_0}}+\|uv+h_2\|_{L^{p_0}}\leq C_T,\quad \forall t\in(0,\widetilde{T}).
\end{align}
where $\sup_{T>0}C_T<\infty$ if \eqref{eq-iva1} holds (All the remaining $C_T$ fulfills this property, thus omitted).

 We proceed with  $\frac n2<p_0<n$ (the otherwise case is much simpler); in such case,  one can see  that
 \[
 \forall q\in \left(n, \  \frac{np_0}{n-p_0}\right)\Longrightarrow 0<\frac12+\frac n2(\frac{1}{p_0}-\frac{1}{q})<1.
 \]
 Then one can readily  infer from the smoothing effect of the Neumann semigroup $(e^{t\Delta})_{t\geq0}$  and \eqref{u-lp-bdd} that
\begin{equation}\label{eq-ineLi1}
\begin{split}
\|\nabla v(\cdot, t)\|_{L^q} \leq&\|\nabla e^{t(\Delta-1)} v_{0}\|_{L^q}+\int_{0}^{t} \left\|\nabla e^{(t-s)(\Delta-1)}\left(uv+ h_2\right)\right\|_{L^q}d s\\
\leq& C+C\int_{0}^{t} \left(1+(t-s)^{-\frac12-\frac n2(\frac{1}{p_0}-\frac{1}{q})}\right)e^{-(t-s)}\|uv+h_2\|_{L^{p_0}}ds,\\
\leq &C+C_T\int_{0}^{t} \left(1+(t-s)^{-\frac12-\frac n2(\frac{1}{p_0}-\frac{1}{q})}\right)e^{-(t-s)}ds\\
\leq & C_T, \quad \forall t\in(0,\widetilde{T}).
\end{split}
\end{equation}
For any such $q$, one can see that
\[
 \forall r>\max\{\frac{np_0}{2p_0-n}, \  p_0\}=\frac{np_0}{2p_0-n} \Longrightarrow \frac1p:=\frac1r+\frac1q<\frac1n.
 \]
By \eqref{u-lp-bdd}, \eqref{eq-ineLi1} and the lower bound of $v$, we use interpolation inequality to deduce that
\begin{align*}
 \left\|\frac uv\nabla v\right\|_{L^p}&\leq  \|u\|_{L^r}\|v^{-1}\|_{L^\infty}\|\nabla v\|_{L^q}\\
 &\leq \|u\|_{L^\infty}^\frac{r-p_0}{r}\|u\|_{L^{p_0}}^\frac{p_0}{r}\|v^{-1}\|_{L^\infty}\|\nabla v\|_{L^q}\leq C_T \|u\|_{L^\infty}^\frac{r-p_0}{r}, \quad \forall t\in(0,\widetilde{T}).
\end{align*}
 Based on this and \eqref{u-lp-bdd},  using the maximum principle and the $L^p$-$L^q$ estimates  of the Neumann heat semigroup again, we quickly infer that
\begin{align*}
\|u(\cdot, t)\|_{L^\infty} \leq&\|e^{t(\Delta-1)} u_{0}\|_{L^\infty}+\int_{0}^{t} \left\|e^{(t-s)(\Delta-1)}\left( \chi\nabla\cdot(\frac uv\nabla v) + h_1+(r+1)u\right)\right\|_{L^\infty}d s\\
\leq& C+C\int_{0}^{t} \left(1+(t-s)^{-\frac12-\frac{n}{2p}}\right)e^{-(t-s)}\left\|\frac uv\nabla v\right\|_{L^p}ds\\
&+C\int_{0}^{t} \left(1+(t-s)^{-\frac{n}{2p_0}}\right)e^{-(t-s)}\|h_1+(r+1)u\|_{L^{p_0}}ds\\
\leq & C_T+C_T\int_{0}^{t} \left(1+(t-s)^{-\frac12-\frac{n}{2p}}\right)e^{-(t-s)}\|u\|_{L^\infty}^\frac{r-p_0}{r}ds,  , \quad \forall t\in(0,\widetilde{T}).
\end{align*}
For any $T\in(0,\widetilde{T})$, setting  $S(T):=\sup\{\|u(\cdot,t)\|_{L^\infty}: \  t\in(0, T)\}$, it quickly follows  that
\[
S(T)\leq C_T+C_TS^{\frac{r-p_0}{r}}(T)\leq \frac12 S(T)+C_T,
\]
 enforcing immediately
\[
\sup\{\|u(\cdot,t)\|_{L^\infty}: \  t\in(0, T)\}=S(T)\leq C_T,\quad \quad \forall T\in(0,\widetilde{T}).
\]
Now, we can set $p_0=q=\infty$ in \eqref{eq-ineLi1}, and thus, we establish the  estimate \eqref{eq-ineLui}.
\end{proof}

Our  global existence and boundedness has  actually been completed already.

\begin{proof}[Proof of Theorem \ref{th-global}]
 In fact, combining with Lemma \ref{le-glex0},  Lemma \ref{le-inequ4} and Lemma \ref{le-inequi}, yields a contradiction of evidence  \eqref{eq-bpm},   which means that we must have $T_{\max}=\infty$ in Lemma \ref{le-lolex}. This also implies the statement on global existence described in Theorem \ref{th-global}. Finally, the
boundedness  in \eqref{eq-bedds} follows from \eqref{eq-0bdd}, \eqref{eq-ineLu4} and \eqref{eq-ineLui} directly.
\end{proof}

\begin{remark}\label{bdd-rem}
For $n\in\{2,3\}$, we  choose $p=2=q$ to compute from \eqref{eq-AB}, \eqref{B3-def} and \eqref{g-def} that
\[
B_1=
\begin{cases}
\begin{split}
64\sqrt{3}(\sqrt2+2)^3\frac{(4+\sqrt{26})}{5},\quad&\text{if}\,\,\,n=2,\\
64\sqrt{3}(\sqrt3+2)^3\frac{(4+\sqrt{31})}{5},\quad&\text{if}\,\,\,n=3,
\end{split}
\end{cases}, \  \
B_2=
\begin{cases}
\begin{split}
\frac{1152}{625}\left(\sqrt{26}+4\right)^4,\quad&\text{if}\,\,\,n=2,\\
\frac{1152}{625}\left(\sqrt{31}+4\right)^4,\quad&\text{if}\,\,\,n=3,
\end{split}
\end{cases}
\]
and
\[
B_3=
\begin{cases}
\begin{split}
\frac{2(4+\sqrt{26})}{15}\left[4\chi+(\chi-4)^2\right]^\frac32,\quad&\text{if}\,\,\,n=2,\\
\frac{2(4+\sqrt{31})}{15}\left[4\chi+(\chi-4)^2\right]^\frac32,\quad&\text{if}\,\,\,n=3,
\end{split}
\end{cases}
\]
so that we have
\[
 \frac{2}{p}\left(B_1+B_2+B_3+2p-q\right)\approx \tilde\mu_0(\chi,n)=:
 \begin{cases}
\begin{split}
20666.6+1.2132\left[(\chi-2)^2+12\right]^\frac32,\quad \text{if}\,\,\,n=2,\\
26477.2+1.2757\left[(\chi-2)^2+12\right]^\frac32,\quad\text{if}\,\,\,n=3.
\end{split}
\end{cases}
\]
Then, by \eqref{mu0-def} of Lemma \ref{le-inequ4}, we see that $\mu_0(\chi,n)\leq \tilde\mu_0(\chi,n)$, and therefore, thanks to  Lemma \ref{le-inequ4} and Lemma \ref{le-inequi}, when $n=2,3$ and $\mu>\hat\mu_0(\chi,n)$, the local classical solution of the IBVP  \eqref{eq-r01} exists globally and is bounded provided that \eqref{eq-iva1} is satisfied.
 \end{remark}

\section{Longtime behaviors of bounded classical solutions}\label{se-ape}

In this section, we shall always assume that $(u,v)$ is a global classical solution of \eqref{eq-r01}, which is bounded in the sense of \eqref{eq-bedds}. Then for any $p>1$, one has, for some $C_p>0$, that
\begin{align}\label{ulp-diff-bdd}
\frac1p\frac{d}{dt}\int_\Omega u^p \leq \frac{\chi^2}{4}(p-1)\int_\Omega u^pv^{-2}|\nabla v|^2 +\int_\Omega u^{p-1}(ru+h_1)\leq C_p.
\end{align}
To get the global stability of bounded solutions, we first recall  the following  elementary  fact. 
\begin{lemma}\label{le-ODId2}
Let $\tau\geq 0, f(t)\geq0$ and $\int_\tau^\infty f(t)dt<\infty$, and, for some $a>0$,
\begin{equation}\label{lip-os}
f(t)-f(s)\leq a(t-s), \ \ \  \forall t\geq s\geq \tau.
\end{equation}
Then
\begin{equation*}
f(t)\rightarrow0 \quad\textrm{as}\quad t\rightarrow \infty.
\end{equation*}
\end{lemma}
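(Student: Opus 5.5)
We argue by contradiction. Suppose $f(t)\not\to0$. Then there exist $\varepsilon>0$ and a sequence $t_k\to\infty$ with $t_k\geq\tau$ and $f(t_k)\geq\varepsilon$ for all $k$. Passing to a subsequence, we may assume that $t_{k+1}\geq t_k+\frac{\varepsilon}{2a}$, so that the intervals $I_k:=[t_k-\frac{\varepsilon}{2a},t_k]$ are pairwise disjoint. Discarding finitely many $k$, we may also assume $t_k-\frac{\varepsilon}{2a}\geq\tau$, so that each $I_k\subset[\tau,\infty)$.

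The key step is to use the one-sided Lipschitz bound \eqref{lip-os} backwards in time. For $s\in I_k$ we apply \eqref{lip-os} with $t=t_k\geq s\geq\tau$ and obtain
\[
f(s)\geq f(t_k)-a(t_k-s)\geq \varepsilon-a\cdot\frac{\varepsilon}{2a}=\frac{\varepsilon}{2}.
\]
The direction matters here. Inequality \eqref{lip-os} only bounds how fast $f$ can increase. It therefore bounds $f$ from below on times preceding a large value, and not on times following it. This is why the intervals $I_k$ are placed to the left of $t_k$.

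Since $f\geq0$ and the $I_k$ are disjoint subsets of $[\tau,\infty)$, we get
\[
\int_\tau^\infty f(t)\,dt\geq\sum_{k}\int_{I_k}f(s)\,ds\geq\sum_k\frac{\varepsilon}{2}\cdot\frac{\varepsilon}{2a}=\infty.
\]
This contradicts $\int_\tau^\infty f(t)\,dt<\infty$, and hence $f(t)\to0$ as $t\to\infty$.

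There is no real obstacle beyond choosing the orientation of the intervals correctly and spacing the $t_k$ so that the intervals are disjoint. Measurability of $f$ is implicit in the assumption $\int_\tau^\infty f<\infty$.

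In the applications later in the paper, $f$ will be $\int_\Omega u^p$ or a similar quantity. The hypothesis \eqref{lip-os} then follows by integrating a differential inequality such as \eqref{ulp-diff-bdd} over $(s,t)$, and the integrability of $f$ comes from spacetime bounds on the solution.
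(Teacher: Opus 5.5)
Your proof is correct and complete. You choose $\varepsilon$ with $f(t_k)\ge\varepsilon$ along a sequence $t_k\to\infty$, and apply the one-sided bound with $t=t_k$ to get $f\ge\varepsilon/2$ on $[t_k-\frac{\varepsilon}{2a},t_k]$. Spacing the $t_k$ so these intervals overlap at most in endpoints then forces $\int_\tau^\infty f=\infty$.

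The paper gives no proof of this lemma; it only ``recalls'' it as an elementary fact. So there is nothing to compare against, and your argument is the standard one.

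If you prefer a direct version, average $f(t)\le f(s)+a(t-s)$ over $s\in[t-h,t]$. For $t\ge\tau+h$ this gives $f(t)\le\frac1h\int_{t-h}^{t}f(s)\,ds+\frac{ah}{2}$, hence $\limsup_{t\to\infty}f(t)\le\frac{ah}{2}$ for every $h>0$. This avoids the subsequence extraction and gives an explicit bound on $f(t)$ in terms of the tail of $\int f$.
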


\subsection{Stability of the semi-trivial steady state $(0,b)$.}

Now, we deduce  the stability of the semi-trivial steady state.

\begin{lemma}\label{le-UL1de}
Let \eqref{eq-iva2} hold and $r\leq \eta_0$ with $\eta_0$ being the lower bound of $v$ given in \eqref{eq-0bddc}.  Then
 \begin{align}\label{eq-1ul1e0}
\int_\Omega  u^2(\cdot,t) \rightarrow0\quad\mathrm{as}\,\,\, t\rightarrow\infty.
\end{align}
\end{lemma}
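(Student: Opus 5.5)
Integrating the $u$-equation of \eqref{eq-r01} over $\Omega$ makes the chemotactic flux vanish by the no-flux condition, leaving
\begin{align*}
\frac{d}{dt}\int_\Omega u+\int_\Omega u(v-r)+\mu\int_\Omega u^2=\int_\Omega h_1 .
\end{align*}
Since $r\leq\eta_0\leq v$ by \eqref{eq-0bddc}, the middle term is nonnegative. Integrating over $(0,t)$ and using $u\geq0$ together with \eqref{eq-iva2} gives
\begin{align*}
\mu\int_0^\infty\int_\Omega u^2\,dx\,dt\leq\int_\Omega u_0+\int_0^\infty\int_\Omega h_1<\infty .
\end{align*}
So $f(t):=\int_\Omega u^2(\cdot,t)$ is nonnegative and integrable on $(0,\infty)$.

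To pass from integrability to decay, I will apply Lemma \ref{le-ODId2} with $\tau=0$. This requires the one-sided Lipschitz bound \eqref{lip-os}, which follows from \eqref{ulp-diff-bdd} with $p=2$: it gives $\frac{d}{dt}\int_\Omega u^2\leq 2C_2$ for all $t>0$. Integrating from $s$ to $t$ then yields $f(t)-f(s)\leq 2C_2(t-s)$.

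The constant $C_2$ is uniform in time because $\|u\|_{L^\infty}$, $\|v^{-1}\|_{L^\infty}$ and $\|\nabla v\|_{L^\infty}$ are bounded by \eqref{eq-bedds}, and $h_1$ is bounded by \eqref{eq-iva0}. To justify \eqref{ulp-diff-bdd} itself, one tests the $u$-equation with $u^{p-1}$, absorbs the cross term $\chi(p-1)\int u^{p-1}v^{-1}\nabla u\cdot\nabla v$ into the diffusion term, and discards $-\int u^p v-\mu\int u^{p+1}\leq0$.

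Lemma \ref{le-ODId2} then gives $f(t)\to0$, which is \eqref{eq-1ul1e0}. The only delicate point is the sign of $\int_\Omega u(v-r)$, which is exactly where the assumption $r\leq\eta_0$ enters. Everything else is routine.
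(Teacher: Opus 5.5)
Your proposal is correct and matches the paper's proof step by step. You integrate the $u$-equation, use $v\geq\eta_0\geq r$ to obtain $\mu\int_0^\infty\int_\Omega u^2\leq\int_\Omega u_0+\int_0^\infty\int_\Omega h_1<\infty$, and then combine the one-sided Lipschitz bound coming from \eqref{ulp-diff-bdd} with $p=2$ with Lemma \ref{le-ODId2}; your derivation of \eqref{ulp-diff-bdd} by testing with $u^{p-1}$ is also exactly the computation the paper leaves implicit.
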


\begin{proof}
Since $v\geq \eta_0$, we integrate the $u$-equation in \eqref{eq-r01} to find that
\begin{align*}
\frac{d}{dt}\int_\Omega u + \eta_0 \int_\Omega u +\mu\int_\Omega u^2 \leq r\int_\Omega u  dx+\int_\Omega h_1,
\end{align*}
which along with the condition  $r\leq \eta_0$ shows  that
\begin{align*}
\frac{d}{dt}\int_\Omega u +\mu\int_\Omega u^2 \leq  \int_\Omega h_1.
\end{align*}
Upon integration,  this along with  \eqref{eq-iva2} shows
 \begin{align}\label{eq-il1e0}
\mu\int_0^\infty \int_\Omega u^2 \leq \int_\Omega u_0 +\int_0^\infty\int_\Omega h_1<\infty,
\end{align}
yielding directly  \eqref{eq-il1e0}.  On the other hand, by \eqref{ulp-diff-bdd}, the mean value theorem shows that $f(t):=\int_\Omega u^2$ satisfies the one-sided Lipschitz condition  \eqref{lip-os}. So Lemma \ref{le-ODId2} simply  shows \eqref{eq-1ul1e0}.
\end{proof}

A straightforward consequence  of Lemma \ref{le-UL1de} is the following
decay for  the component $v$.

\begin{lemma}\label{le-VgL2} Under  Lemma \ref{le-UL1de}, it holds that
 \begin{align}\label{eq-VgL2d}
\int_\Omega  (v(\cdot,t)-b)^2  \rightarrow0\quad\mathrm{as}\,\,\, t\rightarrow\infty.
\end{align}
\end{lemma}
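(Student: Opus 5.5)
We work with the difference $w:=v-b$. It satisfies
\[
w_t=\Delta w-w+uv+(h_2-b),
\]
with homogeneous Neumann boundary conditions. The plan is to use the $L^2$-decay of $u$ from Lemma \ref{le-UL1de} and the spacetime integrability \eqref{eq-iva3} of $h_2-b$. These should give $\int_0^\infty\int_\Omega w^2<\infty$ together with a one-sided Lipschitz bound, so that Lemma \ref{le-ODId2} applies.

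First we test the $w$-equation by $w$:
\[
\frac12\frac{d}{dt}\int_\Omega w^2+\int_\Omega|\nabla w|^2+\int_\Omega w^2=\int_\Omega uvw+\int_\Omega (h_2-b)w .
\]
By \eqref{eq-bedds}, $v$ is bounded, and hence so is $w$. Young's inequality then gives
\[
\frac{d}{dt}\int_\Omega w^2+\int_\Omega w^2\le C\int_\Omega u^2+C\int_\Omega (h_2-b)^2 .
\]
From the proof of Lemma \ref{le-UL1de} we have $\int_0^\infty\int_\Omega u^2<\infty$ (see \eqref{eq-il1e0}), and \eqref{eq-iva3} controls the second term. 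Integrating in time therefore yields $\int_0^\infty\int_\Omega w^2<\infty$.

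Next we verify the hypothesis \eqref{lip-os} for $f(t):=\int_\Omega w^2$. Dropping the nonnegative terms on the left of the energy identity gives $f'(t)\le 2\int_\Omega uv|w|+2\int_\Omega|h_2-b||w|$. The right-hand side is bounded uniformly in $t$, because $u$ and $v$ are bounded by \eqref{eq-bedds} and $h_2$ is bounded by \eqref{eq-iva0}. So $f(t)-f(s)\le a(t-s)$ for all $t\ge s$, and Lemma \ref{le-ODId2} gives $f(t)\to0$, which is \eqref{eq-VgL2d}.

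The only point that needs care is the integrability of $u^2$ in time. Lemma \ref{le-UL1de} states only the decay of $\int_\Omega u^2$, but its proof already provides the spacetime bound \eqref{eq-il1e0}, and that bound is exactly what the integration step uses. If we preferred to rely only on the stated decay, we would instead pass to the integral form $f(t)\le e^{-(t-s)}f(s)+C\int_s^t e^{-(t-\sigma)}\big(\|u\|_{L^2}^2+\|h_2-b\|_{L^2}^2\big)d\sigma$. Here the tail of $\|h_2-b\|_{L^2}^2$ is small by \eqref{eq-iva3}, the $u$-term is small for large $s$ by Lemma \ref{le-UL1de}, and letting $t\to\infty$ and then $s\to\infty$ again gives the result.
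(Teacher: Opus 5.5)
Your proposal is correct and is essentially the paper's proof. Like the paper, you test the $v$-equation by $2(v-b)$, use the boundedness of $v$ and Young's inequality, integrate in time using the spacetime bound \eqref{eq-il1e0} and \eqref{eq-iva3} to get $\int_0^\infty\int_\Omega (v-b)^2<\infty$, and conclude via the one-sided Lipschitz bound and Lemma \ref{le-ODId2} (you rightly cite \eqref{eq-iva3}, where the paper's text writes \eqref{eq-iva2} by a typo). Your fallback argument through the integral form $f(t)\le e^{-(t-s)}f(s)+C\int_s^t e^{-(t-\sigma)}(\cdots)\,d\sigma$ is also valid and slightly more economical, since it needs only the stated decay of $\int_\Omega u^2$ and the tail of \eqref{eq-iva3}, with no Lipschitz condition.
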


\begin{proof}
Testing the first equation of \eqref{eq-r01} by $2(v-b)$ and  using Young's inequality, we have
 \begin{align*}
\frac{d}{dt}\int_\Omega(v-b)^2+\int_\Omega (v-b)^2 \leq 2\int_\Omega (h_2-b)^2+2\int_\Omega u^2v^2.
\end{align*}
This along with boundedness of solutions first ensures that $f'(t):=\frac{d}{dt}\int_\Omega(v-b)^2$ is bounded from above, and then,  by \eqref{eq-iva2} and \eqref{eq-il1e0}, upon integration, it further shows
 \begin{align*}
\int_0^\infty\int_\Omega(v-b)^2\leq \int_\Omega(v_0-b)^2+2\int_0^\infty(h_2-b)^2+2\|v\|_{L^\infty(\Omega\times (0, \infty)}^2\int_\Omega u^2<\infty.
\end{align*}
An easy use of  Lemma \ref{le-ODId2} entails  \eqref{eq-VgL2d}.
\end{proof}

Based on  the $(L^2, L^2)$-convergence of $(u,v-b)$ in \eqref{eq-1ul1e0} and  \eqref{eq-VgL2d}, we next employ  interpolation inequality and  semigroup estimate to establish  the $ W^{1,\infty}$-convergence of $v-b$ as described in \eqref{eq-smdeu} of  Theorem \ref{th-long}.

\begin{lemma}\label{le-smdeu0}
 Under  Lemma \ref{le-UL1de}, it holds that
\begin{align}\label{eq-smdeu0}
\|v(\cdot,t)-b\|_{W^{1,\infty}}\rightarrow0, \quad \mathrm{as}\,\,\, t\rightarrow\infty.
\end{align}
\end{lemma}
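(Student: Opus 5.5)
The idea is to upgrade the $L^2$-convergence of $v-b$ in \eqref{eq-VgL2d} to $W^{1,\infty}$-convergence. I would combine a uniform higher-regularity bound on $v$ with Gagliardo--Nirenberg interpolation. Since we are in the bounded setting \eqref{eq-bedds}, $u$, $v$, $v^{-1}$ and $\nabla v$ are bounded in $L^\infty(\Omega\times(0,\infty))$. The plan has two steps.

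\textbf{Step 1 (uniform H\"older bound on $\nabla v$).} I aim for
\[
\sup_{t>1}\|v(\cdot,t)\|_{W^{2,\infty}}\le C, \quad\text{or at least}\quad \sup_{t>1}\|v(\cdot,t)\|_{C^{1+\theta}(\overline\Omega)}\le C
\]
for some $\theta\in(0,1)$. Write the $v$-equation as $v_t=\Delta v - v + f$ with $f:=uv+h_2\in L^\infty(\Omega\times(0,\infty))$. Then use the variation-of-constants formula on $(t-1,t)$ together with the standard smoothing estimate $\|e^{s\Delta}w\|_{W^{1+\theta',\infty}}\le C(1+s^{-\frac{1+\theta'}{2}})\|w\|_{L^\infty}$ for $\theta'<1$. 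Because $\frac{1+\theta'}{2}<1$, the Duhamel integral converges, and this gives a time-independent bound on $\|v(\cdot,t)\|_{C^{1+\theta}}$ for $t>1$. The bound uses only $\|f\|_{L^\infty}$ and $\|v\|_{L^\infty}$, both uniform in $t$. An alternative is maximal Sobolev regularity in $W^{2,1}_p$ on unit time windows with $p>n+2$, followed by the parabolic embedding into $C^{1+\theta,(1+\theta)/2}$.

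\textbf{Step 2 (interpolation).} Apply Gagliardo--Nirenberg to $w:=v(\cdot,t)-b$. Note that $\nabla w=\nabla v$ and $\|w\|_{C^{1+\theta}}\le C$ uniformly for $t>1$. Interpolating between $L^2$ and $C^{1+\theta}$ (or $W^{1,\infty}$ and $W^{2,p}$ with $p$ large) gives
\[
\|w\|_{W^{1,\infty}}\le C\|w\|_{C^{1+\theta}}^{1-a}\|w\|_{L^2}^{a}\le C\|v(\cdot,t)-b\|_{L^2}^{a}
\]
for some $a\in(0,1)$. By \eqref{eq-VgL2d} the right-hand side tends to $0$, which yields \eqref{eq-smdeu0}.

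\textbf{Main obstacle.} Step 1 is the only delicate point, because the Neumann semigroup does not map $L^\infty$ into $W^{2,\infty}$ with integrable singularity. That is why I would work with $C^{1+\theta}$ or $W^{1+\theta',q}$ with $q>n$, rather than with $W^{2,\infty}$. If that route gave trouble, I would avoid higher regularity altogether. Instead I would use the representation $\nabla v(t)=\nabla e^{(\Delta-1)}(v(t-1)-b)+\int_{t-1}^t\nabla e^{(t-s)(\Delta-1)}\big(uv+h_2-b\big)\,ds$, which holds since $\nabla b=0$ and $e^{(\Delta-1)}b$ is constant. The first term is bounded by $C\|v(t-1)-b\|_{L^2}$. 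For the second term, interpolate $\|uv+h_2-b\|_{L^q}$ between its $L^\infty$ bound and $L^2$-smallness. Here $\|uv\|_{L^2}\to0$ by \eqref{eq-1ul1e0}, but $h_2-b$ is only known to be square-integrable in spacetime via \eqref{eq-iva3}, so a bound on $\int_{t-1}^t\|h_2-b\|_{L^2}^2$ tending to zero would be needed. Because of this extra issue, I prefer the interpolation route.
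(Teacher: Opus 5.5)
Your proposal is correct, but it is organized differently from the paper's proof.

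The paper uses neither Lemma \ref{le-VgL2} nor any regularity beyond \eqref{eq-bedds}. Its argument runs as follows:
\begin{itemize}
\item It writes the Duhamel formula for $v-b$ on all of $(0,t)$ and applies the $L^{4n}\to W^{1,\infty}$ smoothing estimate, whose singularity is $(t-s)^{-5/8}$.
\item It interpolates $\|uv+h_2-b\|_{L^{4n}}\le C\bigl(\|u\|_{L^2}^{1/(2n)}+\|h_2-b\|_{L^2}^{1/(2n)}\bigr)$ between $L^2$ and $L^\infty$.
\item It splits the time integral at $t/2$. The early part is killed by the factor $e^{-(t-s)}$. The late part is controlled by $\sup_{s\in(t/2,t)}\|u(\cdot,s)\|_{L^2}\to0$ from \eqref{eq-1ul1e0}, and by H\"older in time against $\int_{t/2}^t\|h_2-b\|_{L^2}^2\,ds$.
\end{itemize}

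This is essentially your fallback, and the obstacle you raised there is not real. The bound $\int_{t-1}^t\int_\Omega|h_2-b|^2\le\int_{t-1}^\infty\int_\Omega|h_2-b|^2\to0$ is just the tail of the convergent integral in \eqref{eq-iva3}. Your preferred route does not avoid \eqref{eq-iva3} either. It only hides it inside Lemma \ref{le-VgL2}, whose proof integrates $\int_\Omega(h_2-b)^2$ over $(0,\infty)$.

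What your route buys is modularity. Your Step 1 is standard, for instance via $\|(-\Delta+1)^\beta e^{s(\Delta-1)}w\|_{L^p}\le Cs^{-\beta}\|w\|_{L^p}$ with $\beta<1$, $p$ large and $2\beta-\frac np>1+\theta$. Once that uniform $C^{1+\theta}$ bound is in hand, any $L^2$-convergence upgrades to $W^{1,\infty}$-convergence by interpolation. This is exactly how the paper itself argues later in Lemma \ref{le-de2}. The paper's route here instead needs no extra parabolic regularity and no prior $L^2$-decay of $v-b$. It extracts the decay directly from that of $u$ and $h_2-b$.
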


\begin{proof}
To start off, we rewrite the $v$-equation as
\[(v-b)_t=\Delta v-(v-b)+uv+h_2-b,
\]
 and then, we use the well-known  Neumann heat semigroup estimates to deduce that
 \begin{align*}
\| v(\cdot,t)-b\|_{W^{1,\infty}}
&\leq\| e^{t(\Delta-1)}(v_0-b)\|_{W^{1,\infty}}+ \int_0^t\left\|e^{(t-s)(\Delta-1)} \left(uv+h_2-b\right)\right\|_{W^{1,\infty}}ds\\
&\le Ce^{-t}\|v_0-b\|_{W^{1,\infty}}+C\int_{0}^t\left(1+(t-s)^{- \frac{1}{2}-\frac{1}{8}}\right)e^{-(t-s)}\left\|uv+h_2-b\right\|_{L^{4n}}ds.
\end{align*}
 H\"{o}lder's inequality, combined with the boundedness \eqref{eq-bedds},  \eqref{eq-iva0}  and interpolation inequality, entails
\begin{align*}
\left\|uv+h_2-b\right\|_{L^{4n}}\leq & \|v\|_{L^\infty}\|u\|_{L^{4n}}+\|h_2-b\|_{L^{4n}}\\
\leq&\|v\|_{L^\infty}\|u\|_{L^2}^{\frac{1}{2n}}\|u\|_{L^\infty}^{1-\frac{1}{2n}}+\|h_2-b\|_{L^2}^{\frac{1}{2n}}\|h_2-b\|_{L^\infty}^{1-\frac{1}{2n}}\\
\leq& C\left(\|u\|_{L^2}^{\frac{1}{2n}}+\|h_2-b\|_{L^2}^{\frac{1}{2n}}\right).
\end{align*}
By the boundedness   \eqref{eq-bedds}  and \eqref{eq-iva0} again,  we have
 \begin{align*}
V_{1}&:= \int_{0}^\frac t2\left(1+(t-s)^{-\frac58}\right)e^{-(t-s)}(\|u\|_{L^2}^{\frac{1}{2n}}+\|h_2-b\|_{L^2}^{\frac{1}{2n}})ds\\
&\leq C\int_{\frac t2}^{t}\left(1+\tau^{-\frac58}\right)e^{-\tau}d\tau\to 0,  \ \   \mathrm{as} \ \  t\to \infty.
\end{align*}
Thanks to  \eqref{eq-1ul1e0} and \eqref{eq-iva3}, we see from H\"{o}lder's inequality  that
\begin{align*}
V_2:=&\int_{\frac t2}^t\left(1+(t-s)^{-\frac58}\right)e^{-(t-s)}(\|u\|_{L^2}^{\frac{1}{2n}}+\|h_2-b\|_{L^2}^{\frac{1}{2n}})ds\\
\leq& \sup\limits_{s\in (\frac t2,t)}\|u(\cdot,s)\|_{L^2}^\frac{1}{2n}\int_0^{\frac t2}\left(1+\tau^{-\frac58}\right)e^{-\tau}d\tau\\
&+C\left\{\int_0^{\frac t2}\left(1+\tau^{-\frac{5n}{2(4n-1)}}\right)e^{-\tau}ds\right\}^{\frac{4n-1}{4n}}\left\{\int_{\frac t2}^t\|h_2-b\|_{L^2}^2ds\right\}^\frac{1}{4n},\\
\leq&C\sup\limits_{s\in (\frac t2,t)}\|u(\cdot,s)\|_{L^2}^\frac{1}{2n}+C\left\{\int_{\frac t2}^t\int_\Omega (h_2-b)^2dxds\right\}^\frac{1}{2n}\to 0,  \ \   \mathrm{as} \ \  t\to \infty.
\end{align*}
Collecting these verifies   \eqref{eq-smdeu0}, as desired.
\end{proof}

Similar to Lemma \ref{le-smdeu0}, we proceed to show the $L^\infty$-convergence  of the $u$-solution component.

\begin{lemma}\label{le-usmdeu0}
Under  Lemma \ref{le-UL1de}, it holds that
\begin{align}\label{eq-usmdeu0}
\|u(\cdot,t)\|_{L^\infty}\rightarrow0, \quad \mathrm{as}\,\,\, t\rightarrow\infty.
\end{align}
\end{lemma}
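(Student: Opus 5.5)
We will mimic the proof of Lemma \ref{le-smdeu0}, now applied to the $u$-equation, which we rewrite in the form
\[
u_t=(\Delta-1)u-\chi\nabla\cdot\Big(\frac uv\nabla v\Big)+(r+1-v-\mu u)u+h_1 .
\]
The variation-of-constants formula then gives
\[
u(\cdot,t)=e^{t(\Delta-1)}u_0-\chi\int_0^te^{(t-s)(\Delta-1)}\nabla\cdot\Big(\frac uv\nabla v\Big)ds+\int_0^te^{(t-s)(\Delta-1)}\big[(r+1-v-\mu u)u+h_1\big]ds.
\]
We estimate the three parts in $L^\infty$.

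The first part decays like $e^{-t}$.

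For the second part, we use the Neumann semigroup estimate $\|e^{\sigma(\Delta-1)}\nabla\cdot w\|_{L^\infty}\le C(1+\sigma^{-\frac12-\frac{n}{2m}})e^{-\sigma}\|w\|_{L^m}$ with $m=4n$. The kernel exponent is then $\frac58<1$. By the boundedness \eqref{eq-bedds}, i.e.\ the bounds on $u$, $v^{-1}$ and $\nabla v$ in $L^\infty$, together with interpolation, we have
\[
\Big\|\frac uv\nabla v\Big\|_{L^{4n}}\le C\|u\|_{L^{4n}}\le C\|u\|_{L^2}^{\frac1{2n}}\|u\|_{L^\infty}^{1-\frac1{2n}}\le C\|u\|_{L^2}^{\frac1{2n}}.
\]
We treat the third part the same way with the plain $L^{4n}\to L^\infty$ smoothing kernel $(1+\sigma^{-\frac18})e^{-\sigma}$. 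The bracket $(r+1-v-\mu u)$ is bounded, so its contribution is likewise bounded by $C\|u\|_{L^2}^{1/(2n)}$. The only remaining piece is the source term $\int_0^t\|e^{(t-s)(\Delta-1)}h_1\|_{L^\infty}ds$.

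We then split each time integral into $(0,\frac t2)$ and $(\frac t2,t)$, as was done for $V_1$ and $V_2$ in Lemma \ref{le-smdeu0}. On $(0,\frac t2)$ the integrands are uniformly bounded and the kernels carry the factor $e^{-(t-s)}\le e^{-t/2}$, so this piece tends to zero. On $(\frac t2,t)$ we bound by $\sup_{s\in(t/2,t)}\|u(\cdot,s)\|_{L^2}^{1/(2n)}$ times an integrable kernel, and this tends to zero by Lemma \ref{le-UL1de}.

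The main obstacle is the $h_1$ term, since \eqref{eq-iva2} only gives $h_1\in L^1(\Omega\times(0,\infty))$ and provides no $L^2$-type decay. We plan to use interpolation together with the boundedness of $h_1$ from \eqref{eq-iva0}:
\[
\|h_1\|_{L^{4n}}\le\|h_1\|_{L^1}^{\frac1{4n}}\|h_1\|_{L^\infty}^{1-\frac1{4n}}\le C\|h_1(\cdot,s)\|_{L^1}^{\frac1{4n}}.
\]
Hölder's inequality in time with exponents $4n$ and $\frac{4n}{4n-1}$ then gives
\[
\int_{t/2}^t(1+(t-s)^{-\frac18})e^{-(t-s)}\|h_1\|_{L^1}^{\frac1{4n}}ds\le C\Big(\int_{t/2}^t\int_\Omega h_1\Big)^{\frac1{4n}}\to0 .
\]
The conjugate kernel $(1+\sigma^{-1/8})^{\frac{4n}{4n-1}}$ is integrable, so the constant $C$ is finite. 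Collecting the pieces yields \eqref{eq-usmdeu0}.

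Alternatively, one could bypass the semigroup argument. Boundedness plus parabolic Hölder regularity give a uniform Hölder bound on $u$ on $[1,\infty)$. Then $\|u\|_{L^2}\to0$ from Lemma \ref{le-UL1de} combined with the Gagliardo--Nirenberg interpolation $\|u\|_{L^\infty}\le C\|u\|_{C^\gamma}^{a}\|u\|_{L^2}^{1-a}$ would already give \eqref{eq-usmdeu0}. We would keep this as a fallback.
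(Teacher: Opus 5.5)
Your proof is correct. Its skeleton matches the paper's: the Duhamel formula for $u$, the split into $(0,\frac t2)$ and $(\frac t2,t)$, and for $h_1$ the interpolation between $L^1$ and $L^\infty$ followed by H\"{o}lder in time against \eqref{eq-iva2}.

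The genuine difference is how you handle the taxis flux.
\begin{itemize}
\item The paper bounds $\|\frac uv\nabla v\|_{L^\infty}\le C\|\nabla v\|_{L^\infty}$ and uses the $L^\infty$--$L^\infty$ kernel $(t-s)^{-1/2}$. It then invokes the already established decay $\|v-b\|_{W^{1,\infty}}\to0$ from Lemma~\ref{le-smdeu0}.
\item You put the smallness on $u$ instead, via $\|\frac uv\nabla v\|_{L^{4n}}\le C\|u\|_{L^2}^{1/(2n)}$ and the $L^{4n}\to L^\infty$ kernel $(t-s)^{-5/8}$.
\end{itemize}

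Your route is self-contained. It uses only Lemma~\ref{le-UL1de}, the boundedness \eqref{eq-bedds}, \eqref{eq-iva0} and \eqref{eq-iva2}. So the decay of $u$ no longer depends on Lemma~\ref{le-smdeu0}, and hence not on the hypothesis \eqref{eq-iva3} on $h_2$. This fits the lemma's literal hypothesis (``under Lemma~\ref{le-UL1de}'') better than the paper's proof, which tacitly needs \eqref{eq-iva3} through the $v$-convergence.

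The paper's route saves one interpolation step for the flux. The price is that the $u$-result becomes logically downstream of the $v$-result.

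A minor difference: you keep the absorptive terms $-(v+\mu u)u$ as bounded multiples of $u$, whereas the paper silently drops them by order preservation of the semigroup. Either works.

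Your H\"{o}lder-regularity fallback would also succeed, but it is not needed.
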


\begin{proof}
Using the widely known properties the Neumann heat semigroup, it follows that
\begin{align*}
\|u(\cdot, t)\|_{L^\infty}\leq& Ce^{-t}+C\int_{0}^{t} \left(1+(t-s)^{-\frac12}\right)e^{-(t-s)}\left\|\frac uv\nabla v\right\|_{L^\infty}ds\\
&+C\int_{0}^{t} \left(1+(t-s)^{-\frac{1}{4}}\right)e^{-(t-s)}\|h_1+(r+1)u\|_{L^{2n}}ds.
\end{align*}
The combined boundedness in  \eqref{eq-bedds} bounds the second term as
\begin{align*}
\left\|\frac uv\nabla v\right\|_{L^\infty}\leq\|u\|_{L^\infty}\|v^{-1}\|_{L^\infty}\|\nabla v\|_{L^\infty}\leq C \|\nabla v\|_{L^\infty}
\end{align*}
Similarly, recalling the regularity of $h_1$ in  \eqref{eq-iva0}, it shows that
\begin{align*}
\|h_1+(r+1)u\|_{L^{2n}}\leq \|h_1\|_{L^1}^\frac{1}{2n}\|h_1\|_{L^\infty}^{1-\frac{1}{2n}}+(r+1)\|u\|_{L^2}^\frac{1}{n}\|u\|_{L^\infty}^{1-\frac{1}{n}}\leq C\|h_1\|_{L^1}^\frac{1}{2n}+C\|u\|_{L^2}^\frac1n.
\end{align*}
Invoking these, we arrive at
\begin{align*}
\|u(\cdot, t)\|_{L^\infty}\leq& Ce^{-t}+C\int_{0}^{t} \left(1+(t-s)^{-\frac12}\right)e^{-(t-s)}\|\nabla v\|_{L^\infty}ds\\
&+C\int_{0}^{t} \left(1+(t-s)^{-\frac{1}{4}}\right)e^{-(t-s)}\left(\|u\|_{L^2}^\frac1n+\|h_1\|_{L^1}^\frac{1}{2n}\right)ds.
\end{align*}
By  recalling  the  spacetime integrability of $h_1$ in \eqref{eq-iva2} and the convergence in \eqref{eq-1ul1e0} and \eqref{eq-smdeu0},  a slight adaptation of the proof of Lemma \ref{le-smdeu0} gives us the desired decay
\eqref{eq-usmdeu0}.
\end{proof}

\subsection{Stability of the positive steady state $(u_*,v_*)$.}

In the sequel, we will show the stability of the positive steady state  by investigating  the time evolution of
the following functional:
\begin{equation}\label{eq-entrf}
\mathcal{E}_u(t):= \int_{\Omega} u-u_*-u_*\ln \frac{u}{u_*}, \quad t>0.
\end{equation}
Then simple analysis shows that $\mathcal{E}_u(t)\geq 0$  and that $\mathcal{E}_u(t)=0$ if and only if $u=u_*$.

\begin{lemma}\label{le-Uulnu}
Let $(u_*, v_*)$ be the steady state of  \eqref{eq-r01} defined by \eqref{eq-ssse}. Then
 \begin{align}\label{eq-Uulnu}
\mathcal{E}'_u(t) +\mu\int_\Omega \left(u-u_*\right)^2
\leq&\frac{\chi^2 u_*}{4}\int_\Omega |\nabla\ln v|^2-\int_\Omega \left(u-u_*\right) \left(v-v_*\right)+\int_\Omega  h_1.
\end{align}
\end{lemma}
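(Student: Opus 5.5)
We differentiate $\mathcal{E}_u$ along the classical solution and integrate by parts. Since $\frac{d}{dt}\left(u-u_*-u_*\ln\frac{u}{u_*}\right)=\left(1-\frac{u_*}{u}\right)u_t$, the $u$-equation in \eqref{eq-r01} gives
\begin{align*}
\mathcal{E}_u'(t)=\int_\Omega\Bigl(1-\frac{u_*}{u}\Bigr)\Bigl\{\Delta u-\chi\nabla\cdot(u\nabla\ln v)-uv+ru-\mu u^2+h_1\Bigr\}.
\end{align*}
The homogeneous Neumann conditions make the boundary terms vanish. For the diffusion part we get $\int_\Omega(1-\frac{u_*}{u})\Delta u=-u_*\int_\Omega\frac{|\nabla u|^2}{u^2}$. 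For the taxis part we get $-\chi\int_\Omega(1-\frac{u_*}{u})\nabla\cdot(u\nabla\ln v)=\chi u_*\int_\Omega\frac{\nabla u\cdot\nabla \ln v}{u}$.

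Young's inequality bounds the taxis contribution by
$$u_*\int_\Omega\frac{|\nabla u|^2}{u^2}+\frac{\chi^2u_*}{4}\int_\Omega|\nabla\ln v|^2.$$
The first piece cancels the diffusion term exactly, and the second is the first term on the right of \eqref{eq-Uulnu}.

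The reaction terms combine into $\int_\Omega(u-u_*)(r-v-\mu u)$. Using the steady-state relation $r=\mu u_*+v_*$ from \eqref{eq-ssse}, we have $r-v-\mu u=-\mu(u-u_*)-(v-v_*)$. This produces exactly $-\mu\int_\Omega(u-u_*)^2-\int_\Omega(u-u_*)(v-v_*)$.

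The source term contributes $\int_\Omega(1-\frac{u_*}{u})h_1$. Since $h_1\ge0$ and $u>0$, this is at most $\int_\Omega h_1$. Collecting the pieces yields \eqref{eq-Uulnu}.

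The argument is routine. The only points needing care are the exact cancellation of the gradient terms with constant $\frac{\chi^2u_*}{4}$, and the use of the algebraic identity \eqref{eq-ssse} to rewrite the reaction terms. Positivity of $u$ from Lemma \ref{le-lolex} makes $\ln u$ and $1/u$ legitimate.
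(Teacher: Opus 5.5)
Your proof is correct and follows the paper's argument: you test the $u$-equation with $1-\frac{u_*}{u}$, use Young's inequality to absorb the taxis term into the diffusion term, rewrite the reaction terms via $r=\mu u_*+v_*$, and discard the nonpositive term $-u_*\int_\Omega h_1/u$. One ingredient goes unstated, as it also does in the paper: $u_*>0$, which holds exactly because $r>b$ and is needed both for the sign of the Young step and for $-u_*\int_\Omega|\nabla u|^2/u^2$ to be dissipative.
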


\begin{proof}
Multiplying the first equation in \eqref{eq-r01} by $(1-\frac{u_*}{u})$, integrating
by parts and using the fact $r=v_*+\mu u_*$ in \eqref{eq-ssse}, we find that
\begin{align*}
\mathcal{E}'_u(t)+u_*\int_\Omega\frac{|\nabla u|^2}{u^2}=&\chi u_*\int_\Omega \frac{\nabla u}{u}\cdot \nabla\ln v +\int_\Omega  \left(1-\frac{u_*}{u}\right)h_1\\
&-\int_\Omega \left(u-u_*\right)(v-v_*)-\mu\int_\Omega  \left(u-u_*\right)^2.
\end{align*}
By  Young's inequality, it follows that
\begin{align*}
\chi u_*\int_\Omega \frac{\nabla u}{u}\cdot \nabla\ln v \leq u_*\int_\Omega\frac{|\nabla u|^2}{u^2}+\frac{u_*\chi^2}{4}\int_\Omega |\nabla \ln v|^2.
\end{align*}
Combining this estimate and dropping the obvious nonnegative term, we  conclude  \eqref{eq-Uulnu}.
\end{proof}

To absorb the first term on the right hand of \eqref{eq-Uulnu}, we estimate a similar functional:
\begin{equation}\label{eq-ventrf}
\mathcal{E}_v(t):= \int_{\Omega} v-v_*-v_*\ln \frac{v}{v_*}, \quad t>0.
\end{equation}
Similarly, it holds that $\mathcal{E}_v(t)\geq0$ and that $\mathcal{E}_v(t)=0$ if and only if $v=v_*$.
\begin{lemma}\label{le-vUulnu}
Let $(u_*, v_*)$ be the steady state of  \eqref{eq-r01} defined by \eqref{eq-ssse}. Then
 \begin{align}\label{eq-vUulnu}
&\mathcal{E}'_v(t) +v_*\int_\Omega |\nabla \ln v|^2+(1-u_*)\int_\Omega  \left(1-\frac{v_*}{v}\right)(v-v_*)\nonumber\\
=&\int_\Omega  \left(v-v_*\right)(u-u_*)+\int_\Omega \left(1-\frac{v_*}{v}\right) \left(h_2-b\right).
\end{align}
\end{lemma}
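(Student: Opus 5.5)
The identity \eqref{eq-vUulnu} is an exact computation, with no inequality involved. I would mirror the proof of Lemma \ref{le-Uulnu}: differentiate $\mathcal{E}_v$, insert the $v$-equation of \eqref{eq-r01}, integrate the diffusion term by parts, and reorganize the zeroth-order terms using the steady-state relation $b=(1-u_*)v_*$ from \eqref{eq-ssse}. Throughout, $v$ is a positive classical solution (Lemma \ref{le-lolex} and Lemma \ref{le-glex0}), so dividing by $v$ and differentiating under the integral are justified.

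\emph{Step 1: differentiate and handle diffusion.} Since $\partial_t\bigl(v-v_*-v_*\ln\frac{v}{v_*}\bigr)=\bigl(1-\frac{v_*}{v}\bigr)v_t$, we have
\[
\mathcal{E}_v'(t)=\int_\Omega\Bigl(1-\frac{v_*}{v}\Bigr)\bigl(\Delta v-v+uv+h_2\bigr).
\]
The constant part of the weight integrates $\Delta v$ to zero by the Neumann condition. For the other part, integration by parts gives
\[
-v_*\int_\Omega\frac{\Delta v}{v}=-v_*\int_\Omega\frac{|\nabla v|^2}{v^2}=-v_*\int_\Omega|\nabla\ln v|^2 .
\]
This produces the gradient term on the left of \eqref{eq-vUulnu}.

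\emph{Step 2: rewrite the reaction terms.} Using $b=(1-u_*)v_*$, I would write
\[
-v+uv+h_2=-(1-u)v+b+(h_2-b)=-(1-u_*)(v-v_*)+(u-u_*)v+(h_2-b).
\]
This holds because $-(1-u)v=-(1-u_*)v+(u-u_*)v$ and $-(1-u_*)v+b=-(1-u_*)(v-v_*)$.

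\emph{Step 3: multiply by the weight.} Multiply the expression from Step 2 by $1-\frac{v_*}{v}=\frac{v-v_*}{v}$. The middle term becomes exactly $(u-u_*)(v-v_*)$. The first term gives $-(1-u_*)\bigl(1-\frac{v_*}{v}\bigr)(v-v_*)$, which moves to the left-hand side. The last term stays as $\bigl(1-\frac{v_*}{v}\bigr)(h_2-b)$ on the right. Collecting Steps 1 through 3 yields \eqref{eq-vUulnu} as an equality.

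There is no genuine obstacle here. The only point needing care is the algebraic splitting in Step 2: the cross term must come out as $(u-u_*)(v-v_*)$ with coefficient exactly one, so that it cancels against the term $-\int_\Omega(u-u_*)(v-v_*)$ in \eqref{eq-Uulnu} when the two functionals are added later.
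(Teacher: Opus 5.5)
Your proposal is correct and follows the paper's proof essentially verbatim. You test the $v$-equation with $1-\frac{v_*}{v}$, integrate the diffusion term by parts to obtain $v_*\int_\Omega|\nabla\ln v|^2$, and use $b=(1-u_*)v_*$ from \eqref{eq-ssse} to rewrite $\bigl(1-\frac{v_*}{v}\bigr)(-v+uv+b)$ as $-(1-u_*)\bigl(1-\frac{v_*}{v}\bigr)(v-v_*)+(u-u_*)(v-v_*)$; your Step 2 just spells out the algebra the paper states in one line.
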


\begin{proof}
Similar to the proof of Lemma \ref{le-Uulnu}, we multiply  the second equation in \eqref{eq-r01} by $(1-\frac{v_*}{v})$ and  integrate
by parts to get
\begin{align*}
\mathcal{E}'_v(t)+v_*\int_\Omega |\nabla \ln v|^2=\int_\Omega \left(1-\frac{v_*}{v}\right) \left(-v+u v +b\right)+\int_\Omega \left(1-\frac{v_*}{v}\right) \left(h_2-b\right).
\end{align*}

Note from \eqref{eq-ssse}  that $b=v_*-u_*v_*$, and so
\begin{align*}
&\int_\Omega \left(1-\frac{v_*}{v}\right) \left(-v+u v +b\right)=-(1-u_*)\int_\Omega  \left(1-\frac{v_*}{v}\right)(v-v_*) +\int_\Omega  \left(v-v_*\right)(u-u_*).
\end{align*}
Substituting this into the above identity, we achieve \eqref{eq-vUulnu}.
\end{proof}

Combining Lemmas  \ref{le-Uulnu} and  \ref{le-vUulnu}, we now derive the $L^2$-convergence of $(u-u_*, v-v_*)$.

\begin{lemma}\label{le-dUulnu}
Under Theorem \ref{th-long} (ii), it holds that
 \begin{align}\label{eq-dUulnu}
\int_\Omega \left(u-u_*\right)^2+\int_\Omega \left(v-v_*\right)^2\rightarrow0, \quad  \mathrm{as}\,\,\, t\rightarrow\infty.
\end{align}
\end{lemma}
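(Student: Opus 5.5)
Add the two inequalities with a weight: for $\lambda>0$, consider $\mathcal{F}(t):=\mathcal{E}_u(t)+\lambda\mathcal{E}_v(t)$. By \eqref{eq-Uulnu} and \eqref{eq-vUulnu},
\begin{align*}
\mathcal{F}'(t)+\mu\int_\Omega(u-u_*)^2+\Big(\lambda v_*-\frac{\chi^2u_*}{4}\Big)\int_\Omega|\nabla\ln v|^2+\lambda(1-u_*)\int_\Omega\frac{(v-v_*)^2}{v}\\
\leq (\lambda-1)\int_\Omega(u-u_*)(v-v_*)+\int_\Omega h_1+\lambda\int_\Omega\Big(1-\frac{v_*}{v}\Big)(h_2-b).
\end{align*}
Here I use $(1-\frac{v_*}{v})(v-v_*)=\frac{(v-v_*)^2}{v}$ and $1-u_*=b/v_*>0$ by \eqref{eq-ssse}. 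I would choose $\lambda\ge\frac{\chi^2u_*}{4v_*}$ so that the gradient term is nonnegative and can be dropped.

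Next I bound the cross term by Young's inequality. Since $v\le\|v\|_{L^\infty}$ is bounded by \eqref{eq-bedds}, the term $\frac{(v-v_*)^2}{v}$ is not directly comparable with a constant multiple of $(v-v_*)^2$. I therefore rather keep it as is and write
\[
|\lambda-1||u-u_*||v-v_*|\le \frac{(\lambda-1)^2v}{4\lambda(1-u_*)}(u-u_*)^2+\lambda(1-u_*)\frac{(v-v_*)^2}{v}\cdot(1-\delta)+\dots
\]
This pointwise $v$-weight is awkward. The cleaner route is to pick $\lambda$ so that positivity of the quadratic form in $(u-u_*,v-v_*)$ is guaranteed, using the explicit structure. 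I expect this to reproduce exactly the condition $16b\mu^2-4(r-2b)\chi^2\mu+[4r^2-(r-b)\chi^2]\chi^2>0$ defining $\hat\mu_1$ in \eqref{hatmu-def}: take $\lambda=\frac{\chi^2u_*}{4v_*}$ and require a discriminant-type inequality $4\mu\lambda(1-u_*)/\bar v>(\lambda-1)^2$, with $v$ handled via $\frac{1}{v}$ weights. Rewriting $u_*,v_*$ through $r,b,\mu$ via \eqref{eq-ssse} gives the polynomial condition in $\mu$. This algebraic matching, in particular treating the weight $1/v$ (possibly by testing the $v$-equation differently or using the $L^\infty$-bound of $v$), is the main obstacle. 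Whatever the exact choice, the outcome should be that for $\mu>\hat\mu_1$ there is $\varepsilon>0$ with
\[
\mathcal{F}'+\varepsilon\int_\Omega\big[(u-u_*)^2+(v-v_*)^2\big]\le \int_\Omega h_1+C\int_\Omega(h_2-b)^2,
\]
where the last term comes from Young's inequality applied to $\lambda(1-\frac{v_*}{v})(h_2-b)=\lambda\frac{v-v_*}{v}(h_2-b)$ together with $v\ge\eta_0$ from \eqref{eq-0bddc}.

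Integrating in time, using $\mathcal{F}\ge0$, $\mathcal{F}(0)<\infty$ (by positivity of $u_0,v_0$) and \eqref{eq-iva2}, \eqref{eq-iva3}, gives
\[
\int_0^\infty\!\!\int_\Omega\big[(u-u_*)^2+(v-v_*)^2\big]<\infty .
\]
Finally, set $f(t):=\int_\Omega(u-u_*)^2+\int_\Omega(v-v_*)^2$. By the boundedness \eqref{eq-bedds}, the estimate \eqref{ulp-diff-bdd}-type computations, and the analogous bound for the $v$-equation (as in Lemma \ref{le-VgL2}), $f'$ is bounded from above. Hence $f$ satisfies \eqref{lip-os}, and Lemma \ref{le-ODId2} yields \eqref{eq-dUulnu}.
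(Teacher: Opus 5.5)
There is a genuine gap. The decisive step is the claim that for $\mu>\hat\mu_1(\chi)$ the combined functional is dissipative, i.e. your display with some $\varepsilon>0$. You only assert it (``whatever the exact choice, the outcome should be\ldots''), and the route you sketch cannot produce the stated hypothesis. For $\lambda\neq1$ the cross term $(\lambda-1)\int_\Omega(u-u_*)(v-v_*)$ has to be absorbed by $\mu\int_\Omega(u-u_*)^2$ and $\lambda(1-u_*)\int_\Omega\frac{(v-v_*)^2}{v}$. Pointwise this requires $(\lambda-1)^2v<4\mu\lambda(1-u_*)$, which is a smallness condition on $\sup v$. That quantity depends on the solution: the constant in \eqref{eq-bedds} depends on the data and on $\mu$ itself. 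So no discriminant condition of this type can coincide with the purely parametric condition \eqref{hatmu-def}. With your choice $\lambda=\frac{\chi^2u_*}{4v_*}$, this obstruction is present unless that ratio happens to equal $1$.

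The missing idea is to take $\lambda=1$, as the paper does. Then the term $-\int_\Omega(u-u_*)(v-v_*)$ from \eqref{eq-Uulnu} and the term $+\int_\Omega(v-v_*)(u-u_*)$ from \eqref{eq-vUulnu} cancel identically, so no quadratic form needs to be controlled. Your constraint $\lambda\ge\frac{\chi^2u_*}{4v_*}$ becomes $\frac{\chi^2u_*}{4}\le v_*$, and this is exactly what \eqref{hatmu-def} encodes. With $S:=\sqrt{(\mu-r)^2+4b\mu}$ one has $u_*=\frac{2(r-b)}{r+\mu+S}$ and $v_*=\frac{r-\mu+S}{2}$. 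Hence $\frac{\chi^2u_*}{4}\le v_*$ is equivalent to $rS\ge X:=\frac{\chi^2}{2}(r-b)-r^2+(r-2b)\mu$. A direct expansion gives $r^2S^2-X^2=\frac{r-b}{4}\bigl(16b\mu^2-4(r-2b)\chi^2\mu+[4r^2-(r-b)\chi^2]\chi^2\bigr)$, and positivity of this bracket yields $rS>|X|\ge X$. Once the gradient term is dropped, the weight $\frac1v$ is harmless, because it only appears in the good term, where $\frac{(v-v_*)^2}{v}\ge\frac{(v-v_*)^2}{M}$ with $M\ge\|v\|_{L^\infty}$ from \eqref{eq-bedds}. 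The rest of your plan then matches the paper's proof: Young's inequality for the $h_2$-term using $v\ge\eta_0$, time integration with \eqref{eq-iva2}--\eqref{eq-iva3}, and Lemma \ref{le-ODId2}.
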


\begin{proof}
With the explicit formula for $(u_*, v_*)$ in \eqref{eq-sss} and  the definition of $\hat\mu_1(\chi)$ in \eqref{hatmu-def}, using the assumption $\mu>\hat\mu_1(\chi)$, upon several algebraic  computations, we find  that
$$
\frac{\chi^2u_*}{4}\leq v_*\Longleftrightarrow \frac{\chi^2}{2}(r-b)\leq r^2+r\sqrt{(\mu-r)^2+4b\mu}-(r-2b)\mu.
$$ Then an immediate  combination of \eqref{eq-Uulnu} and \eqref{eq-vUulnu} yields
 \begin{align*}
&\left(\mathcal{E}_u(t)+\mathcal{E}_v(t)\right)'+\mu \int_\Omega \left(u-u_*\right)^2 +(1-u_*)\int_\Omega  \left(1-\frac{v_*}{v}\right)(v-v_*) \nonumber\\
\leq&\int_\Omega  h_1+\int_\Omega \left(1-\frac{v_*}{v}\right) \left(h_2-b\right).
\end{align*}
Since $v_*>0$,  it follows from \eqref{eq-ssse} that $1-u_*>0$. Hence, Young's inequality gives
 \begin{align*}
\int_\Omega \left(1-\frac{v_*}{v}\right) \left(h_2-b\right)
\leq&\frac{1-u_*}{2}\int_\Omega  \left(1-\frac{v_*}{v}\right)(v-v_*)+\frac{1}{2(1-u_*)}\int_\Omega \frac{\left(h_2-b\right)^2}{v}.
\end{align*}
Due to \eqref{eq-bedds} and  \eqref{eq-0bdd}, it follows for some $M>0$ that  $M\geq v\geq \eta_0$; these enable us to deduce that
 \begin{align*}
\left(\mathcal{E}_u(t)+\mathcal{E}_v(t)\right)'+\mu \int_\Omega \left(u-u_*\right)^2+\frac{(1-u_*)}{2M}\int_\Omega  \left(v-v_*\right)^2
\leq&\int_\Omega h_1+\frac{1}{2(1-u_*)\eta_0}\int_\Omega \left(h_2-b\right)^2.
\end{align*}
Integrating this differential inequality, using  the non-negativity of $\mathcal{E}_u$ and $\mathcal{E}_v$, and the spacetime integrability of $h_1$ and $h_2$, we deduce that
\begin{equation}\label{uv-lt-nonc}
\begin{split}
&\mu\int_1^\infty \int_\Omega \left(u-u_*\right)^2 +\frac{(1-u_*)}{2M}\int_1^\infty\int_\Omega  \left(v-v_*\right)^2\\
\leq&\mathcal{E}_u(1)+\mathcal{E}_v(1)+\int_1^\infty\int_\Omega h_1+\frac{1}{2(1-u_*)\eta_0}\int_1^\infty\int_\Omega \left(h_2-b\right)^2 <\infty.
\end{split}
\end{equation}
On the other hand,  for some $C>0$,  we infer from the $v$ equation that
$$
\frac{d}{dt}\int_\Omega  \left(v-v_*\right)^2  +2\int_\Omega |\nabla v|^2=2\int_\Omega (v-v_*)(-v+uv+h_2)\leq C.
$$
Based on  this and  \eqref{ulp-diff-bdd}, a direct application of Lemma \ref{le-ODId2} to \eqref{uv-lt-nonc} entails  \eqref{eq-dUulnu}.
\end{proof}

With Lemma \ref{le-dUulnu} at hand, slightly modifying Lemmas \ref{le-smdeu0} and \ref{le-usmdeu0}, we quickly  obtain  the $(L^\infty, W^{1,\infty})$ convergence of $(u_*, v_*)$  as announced in Theorem \ref{th-long}.
\begin{lemma}\label{le-pusmde}
Under Theorem \ref{th-long} (ii), it holds that
\begin{align}\label{eq-pusmde}
\|u(\cdot,t)-u_*\|_{L^\infty}+\|v(\cdot,t)-v_*\|_{W^{1,\infty}}\rightarrow0, \quad \mathrm{as}\,\,\, t\rightarrow\infty.
\end{align}
\end{lemma}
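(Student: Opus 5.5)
We follow the scheme of Lemmas \ref{le-smdeu0} and \ref{le-usmdeu0}, with $(0,b)$ replaced by $(u_*,v_*)$ and the $L^2$-decay of Lemma \ref{le-dUulnu} as the input.

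\emph{Convergence of $v$.} We first rewrite the $v$-equation around $v_*$. Using $b=(1-u_*)v_*$ from \eqref{eq-ssse}, one has
\[
-v+uv+h_2=-(1-u_*)(v-v_*)+(u-u_*)v+(h_2-b),
\]
so that
\[
(v-v_*)_t=\Delta(v-v_*)-(v-v_*)+F,\qquad F:=u_*(v-v_*)+(u-u_*)v+(h_2-b).
\]
The variation-of-constants formula and the Neumann semigroup estimates then give
\[
\|v(\cdot,t)-v_*\|_{W^{1,\infty}}\le Ce^{-t}+C\int_0^t\bigl(1+(t-s)^{-\frac58}\bigr)e^{-(t-s)}\|F(\cdot,s)\|_{L^{4n}}\,ds .
\]
By the boundedness \eqref{eq-bedds} and $L^2$–$L^\infty$ interpolation,
\[
\|F\|_{L^{4n}}\le C\Bigl(\|u-u_*\|_{L^2}^{\frac1{2n}}+\|v-v_*\|_{L^2}^{\frac1{2n}}+\|h_2-b\|_{L^2}^{\frac1{2n}}\Bigr).
\]
We split the time integral at $t/2$. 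On $(0,t/2)$ the integrand is bounded and the exponential factor makes this part tend to zero. On $(t/2,t)$, Lemma \ref{le-dUulnu} makes the first two contributions small. The $h_2$-term is handled by Hölder's inequality in time together with \eqref{eq-iva3}, exactly as in the bound for $V_2$ in Lemma \ref{le-smdeu0}.

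\emph{Convergence of $u$.} We write
\[
(u-u_*)_t=\Delta(u-u_*)-(u-u_*)+G-\chi\nabla\cdot\Bigl(\frac uv\nabla v\Bigr).
\]
Using $r=\mu u_*+v_*$, the reaction part is
\[
G=(u-u_*)+u\bigl[(v_*-v)-\mu(u-u_*)\bigr]+h_1 .
\]
We apply the semigroup estimates as in Lemma \ref{le-usmdeu0}. The divergence term is bounded by
\[
C\int_0^t\bigl(1+(t-s)^{-\frac12}\bigr)e^{-(t-s)}\|\nabla v\|_{L^\infty}\,ds,
\]
and since $\nabla v_*=0$, the first step gives $\|\nabla v\|_{L^\infty}\to0$, so this term tends to zero. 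The $G$-term is controlled in $L^{2n}$ by
\[
\|G\|_{L^{2n}}\le C\Bigl(\|u-u_*\|_{L^2}^{\frac1n}+\|v-v_*\|_{L^2}^{\frac1n}+\|h_1\|_{L^1}^{\frac1{2n}}\Bigr),
\]
again by boundedness and interpolation. The $t/2$-splitting then gives decay of the $u-u_*$ and $v-v_*$ contributions, using Lemma \ref{le-dUulnu}.

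\emph{Main obstacle.} The $h_1$ contribution needs the most care, because \eqref{eq-iva2} gives only spacetime integrability and not pointwise decay in time. For the semigroup integral with kernel $(1+(t-s)^{-\frac14})e^{-(t-s)}$, we apply Hölder's inequality in time with exponents $(2n,\frac{2n}{2n-1})$. The kernel raised to $\frac{2n}{2n-1}$ remains integrable, since $\frac14\cdot\frac{2n}{2n-1}<1$. This yields the bound
\[
C\Bigl(\int_{t/2}^t\|h_1\|_{L^1}\,ds\Bigr)^{\frac1{2n}}\to0,
\]
by \eqref{eq-iva2}. Combining the two steps yields \eqref{eq-pusmde}.
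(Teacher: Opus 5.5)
Your proposal is correct and follows essentially the same route as the paper. It uses the same rewriting of both equations around $(u_*,v_*)$ via \eqref{eq-ssse}, the same semigroup estimates in $L^{4n}$ and $L^{2n}$ with $L^2$--$L^\infty$ interpolation fed by Lemma \ref{le-dUulnu}, and the same use of $\|\nabla v\|_{L^\infty}\to0$ for the taxis term; your explicit H\"older-in-time treatment of the $h_1$ contribution just spells out the step the paper leaves as an adaptation of Lemma \ref{le-usmdeu0}.
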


\begin{proof}
The fact that $b=v_*-u_*v_*$ due to \eqref{eq-ssse} allows us to rewrite the $v$-equation as
\[
(v-v_*)_t=\Delta(v-v_*)-(v-v_*)+v(u-u_*)+u_*(v-v_*)+h_2-b.
\]
By the properties of the Neumann heat semigroup, it follows that
 \begin{align*}
\| v(\cdot,t)-v_*\|_{W^{1,\infty}}
&\le Ce^{-t}\|v_0-v_*\|_{W^{1,\infty}}\\
&+C\int_{0}^t\left(1+(t-s)^{-\frac58}\right)e^{-(t-s)}\left\|v(u-u_*)+u_*(v-v_*)+h_2-b\right\|_{L^{4n}}ds,
\end{align*}
The boundedness in \eqref{eq-bedds} and the regularity of $h_2$ in \eqref{eq-iva0}  and interpolation inequality imply
\begin{align*}
&\left\|v(u-u_*)+u_*(v-v_*)+h_2-b\right\|_{L^{4n}}\\
\leq&\|v\|_{L^\infty}\|u-u_*\|_{L^2}^{\frac{1}{2n}}\|u-u_*\|_{L^\infty}^{1-\frac{1}{2n}}
+u_*\|v-v_*\|_{L^2}^{\frac{1}{2n}}\|v-v_*\|_{L^\infty}^{1-\frac{1}{2n}}+\|h_2-b\|_{L^2}^{\frac{1}{2n}}\|h_2-b\|_{L^\infty}^{1-\frac{1}{2n}}\\
\leq&C\left(\|u-u_*\|_{L^2}^{\frac{1}{2n}}+\|v-v_*\|_{L^2}^{\frac{1}{2n}}+\|h_2-b\|_{L^2}^{\frac{1}{2n}}\right),
\end{align*}
and hereby  telescoping
 \begin{align*}
&\| v(\cdot,t)-v_*\|_{w^{1,\infty}}\\
&\le Ce^{-t}+C\int_{0}^t\big(1+(t-s)^{- \frac58}\big)e^{-(t-s)}\left(\|u-u_*\|_{L^2}^{\frac{1}{2n}}+\|v-v_*\|_{L^2}^{\frac{1}{2n}}+\|h_2-b\|_{L^2}^{\frac{1}{2n}}\right)ds.
\end{align*}
Then, given the $(L^2, L^2)$-convergence of $(u-u_*, v-v_*)$ in  Lemma \ref{le-dUulnu} and the spacetime integrability of $h_2$ in \eqref{eq-iva3}, we can proceed along the lines of the proof of Lemma \ref{le-smdeu0} to  get
\begin{align}\label{eq-pssde}
\|v(\cdot,t)-v_*\|_{W^{1,\infty}}\rightarrow0,\quad  \mathrm{as}\,\,\, t\rightarrow\infty.
\end{align}

On the other hand,  we use the relation  $r=v_*+\mu u_*$  due to \eqref{eq-ssse} to rewrite the $u$-equation as
\[
(u-u_*)_t= \Delta (u-u_*)-\chi\nabla\cdot\left(u\nabla\ln v\right)-u(v-v_*)-\mu u(u-u_*)+h_1.
\]
So,  the properties of the Neumann heat semigroup show that
\begin{align*}
&\|u(\cdot, t)-u_*\|_{L^\infty}\\
\leq& Ce^{-t}+C\int_{0}^{t} \left(1+(t-s)^{-\frac12}\right)e^{-(t-s)}\left\|\frac uv\nabla v\right\|_{L^\infty}ds\\
&+C\int_{0}^{t} \left(1+(t-s)^{-\frac14}\right)e^{-(t-s)}\|u-u_*-u(v-v_*)-\mu u(u-u_*)+h_1\|_{L^{2n}}ds.
\end{align*}
As before,  the interpolation inequality, \eqref{eq-iva0} and \eqref{eq-bedds} imply  that
\begin{align*}
\|u-u_*-u(v-v_*)-\mu u(u-u_*)+h_1\|_{L^{2n}}\leq  C\left(\|u-u_*\|_{L^2}^\frac{1}{n}+\|v-v_*\|_{L^2}^\frac{1}{n}+\|h_1\|_{L^1}^\frac{1}{2n}\right)
\end{align*}
and that
\begin{align*}
\left\|\frac uv\nabla v\right\|_{L^\infty}\leq \|u\|_{L^\infty}\|v^{-1}\|_{L^\infty}\|\nabla v\|_{L^\infty}\leq C\|\nabla v\|_{L^\infty}.
\end{align*}
Collecting these, we arrive at
\begin{align*}
&\|u(\cdot, t)-u_*\|_{L^\infty}\\
\leq& Ce^{-t}+C\int_{0}^{t} \left(1+(t-s)^{-\frac12}\right)e^{-(t-s)}\|\nabla v\|_{L^\infty}ds\\
&+C\int_{0}^{t} \left(1+(t-s)^{-\frac{1}{4}}\right)e^{-(t-s)}\left(\|u-u_*\|_{L^2}^{\frac{1}{n}}+\|v-v_*\|_{L^2}^\frac{1}{n}+\|h_1\|_{L^1}^\frac{1}{2n}\right)ds.
\end{align*}
Consequently, the  $L^\infty$-convergence of $\nabla v$  in \eqref{eq-pssde},  the $(L^2, L^2)$-convergence of $(u-u_*, v-v_*)$ in  Lemma \ref{le-dUulnu} and the spacetime integrability of $h_1$ in \eqref{eq-iva2}, we can proceed along the lines of the proof of Lemma \ref{le-usmdeu0} to obtain
\begin{align*}
\|u(\cdot,t)-u_*\|_{L^\infty}\rightarrow0,\quad  \mathrm{as}\,\,\, t\rightarrow\infty.
\end{align*}
This together with \eqref{eq-pssde} gives rise to \eqref{eq-pusmde}.
\end{proof}

Stability statements  in Theorem \ref{th-long} now become  immediate.

\begin{proof}[Proof of Theorem \ref{th-long}]
The stability of $(0,b)$ described in Theorem \ref{th-long}  follows from  Lemmas \ref{le-smdeu0} and  \ref{le-usmdeu0}; while the stability of  $(u_*, v_*)$ has been exactly shown in   Lemma \ref{le-pusmde}.
\end{proof}

\section{Global existence and smoothness of generalized solution}\label{se-esm}

In fact,  the generalized solvability was established by Heihoff  in the two-dimensional setting (\cite{Heihoff2020zamp}), where
$u\in L^2(\Omega\times(0,\infty))$  ensuring $v(\cdot,t)\in L^q(\Omega）$ for any $q<\infty$  played a crucial role in the analyses. However, this is unavailable in the $n$-dimensional settings as long as $n\geq3$. Hence, we first need to find
an appropriate generalized framework and  specify the concept of a global generalized super-solution, which is inspired by \cite{LIXIE2022,TW2025SCM,Lankeit2017} in some sense.

\begin{definition}\label{def-gs}
Let $r\in\R$, $\chi>0$ and $\mu>0$.
A pair $(u,v)$ is called a global generalized super-solution to  the IBVP \eqref{eq-r01} if for  any $T>0$

\noindent (1)\quad it holds that
\begin{equation}\label{eq-wdef1}
\left\{
\begin{split}
&u\in L^2(\Omega\times(0,T)),\,\,\,\nabla\ln(1+u)\in L^2(\Omega\times(0,T)),\\
&v\in L^q(\Omega\times(0,T))\quad \mathrm{with\,\,some} \,\,q>1,\\
&uv\in L^1(\Omega\times(0,T)),\,\,\,\nabla\ln v\in L^2(\Omega\times(0,T)),\\
&u(x,t)\geq0,\,\,\, v(x,t)>0,\,\,\, a.e.\,\, \mathrm{on}\,\,\Omega\times[0,T];
 \end{split}
  \right.
  \end{equation}
\noindent (2)\quad    it holds (by spacetime integration) that
\begin{align}\label{eq-wdef1-1}
\int_\Omega u+ \int_0^t\int_\Omega uv+\mu\int_0^t\int_\Omega u^2\leq\int_\Omega u_0+r\int_0^t\int_\Omega u+\int_0^t\int_\Omega h_1,\,\,\, a.e.\,\, \mathrm{in}\,\, [0,T],
  \end{align}
 and
  \begin{equation}\label{eq-wdef1-2}
  \begin{split}
&\int_\Omega (u+v)+\int_0^t\int_\Omega v+\mu\int_0^t\int_\Omega u^2\\
&\leq \int_\Omega (u_0+v_0)+r\int_0^t\int_\Omega u+\int_0^t\int_\Omega (h_1+  h_2),\,\,\, a.e.\,\, \mathrm{in}\,\, [0,T];
\end{split}
 \end{equation}
\noindent (3)\quad    it holds (by test procedure) that for each nonnegative $\varphi \in \mathcal{C}_0^\infty(\overline{\Omega}\times[0,T))$,
\begin{equation}\label{eq-wdef2}
\begin{split}
&-\int_\Omega\varphi|_{t=0}\ln (u_0+1)-\int_0^T\int_\Omega \ln(u+1) \varphi_t\\
&-\frac{\chi^2}{4}\left(\int_\Omega\ln v_0\varphi|_{t=0}+\int_0^T\int_\Omega \ln v\varphi_t\right) \\
\ge& \int_0^T\int_\Omega\left(\frac{|\nabla u|^2\varphi}{(1+u)^2}- \frac{\nabla u\cdot\nabla\varphi}{1+u}+\chi\frac{u\nabla v\cdot\nabla\varphi}{(1+u)v}-\chi\frac{u\varphi\nabla u\cdot\nabla v}{(1+u)^2v}\right)\\
&+\int_0^T\int_\Omega \frac{\varphi}{1+u}\left(-uv+ru-\mu u^2+h_1\right)\\
&+\frac{\chi^2}{4}\int_0^T\int_\Omega\left(\frac{|\nabla v|^2\varphi}{v^2}-\frac{\nabla v\cdot\nabla\varphi}{v}\right)+\frac{\chi^2}{4}\int_0^T\int_\Omega \frac{\varphi}{v}\left(-v+uv+h_2\right);
\end{split}
\end{equation}
\noindent (4)\quad   it holds (by test procedure) that for each nonnegative $\varphi \in \mathcal{C}_0^\infty(\overline{\Omega}\times[0,T))$,
\begin{align}\label{eq-wdef4}
-\int_\Omega\ln v_0\varphi|_{t=0}-\int_0^T\int_{\Omega}\ln v\varphi_t\geq
\int_0^T\int_{\Omega}\left(\frac{|\nabla v|^2\varphi}{v^2}-\frac{\nabla v\cdot\nabla\varphi}{v}-\varphi+u\varphi+\frac{\varphi h_2}{v}\right).
\end{align}
 \end{definition}
\begin{remark}
  As well-established in the literature (e.g., \cite{Lankeit2017,Winkler2015SIAML}), if $(u,v)$ is a global generalized solution in the above sense that additionally fulfills $(u,v)\in \left(\mathcal{C}^{2,1}(\bar{\Omega}\times (0, \infty))\right)^2$,then  $(u,v)$  actually
solves \eqref{eq-r01}
 classically on $\Omega\times (0, \infty)$.
\end{remark}

\subsection{Global existence  of generalized solution}
Next, we aim to prove  the
  generalized solvability of the IBVP \eqref{eq-r01}  by an approximation procedure. Accordingly, for each $\varepsilon\in(0, 1)$,  we study the following approximate problem
\begin{equation}\label{eq-app}
\left\{
\begin{split}
&u_{\varepsilon t}=\Delta u_{\varepsilon}-\chi\nabla \cdot\left(u_{\varepsilon}\nabla\ln v_{\varepsilon}\right)- u_\varepsilon v_\varepsilon+ru_\varepsilon-\mu u_\varepsilon^2+h_1, & x \in \Omega, \,t>0,\\
 &v_{\varepsilon t}=\Delta v_{\varepsilon}-v_\varepsilon+\frac{u_\varepsilon v_\varepsilon}{1+\varepsilon u_\varepsilon v_\varepsilon}+h_2, & x \in \Omega,\, t>0,\\
  &\frac{\partial u_{\varepsilon}}{\partial \nu}=\frac{\partial v_{\varepsilon}}{\partial \nu}=0, & x \in \partial \Omega, \,t>0,\\
  &u_{\varepsilon}(x, 0)=u_{0}(x), \quad v_{\varepsilon}(x, 0)=v_{0}(x), & x \in \Omega.
\end{split}
\right.
\end{equation}
Some basic properties of this approximate system are collected in the following lemma.

\begin{lemma}\label{le-glex}
Let  $\chi>0$,  $\mu>0$, $r\in\R$, $n\geq2$ and let \eqref{eq-iva}-\eqref{eq-iva0} hold.
For each $\varepsilon\in(0,1)$, there exists  a unique  $(u_\varepsilon,v_\varepsilon)$ of positive functions with the properties that for any $T>0$ and $p>n$
 \begin{equation*}
\left\{
\begin{split}
&u_\varepsilon\in \mathcal{C}^0\big(\overline{\Omega}\times[0,T]\big)\cap \mathcal{C}^{2,1}\big(\overline{\Omega}\times(0,T]\big),\\
&v_\varepsilon\in \mathcal{C}^0\big([0,T]; W^{1,p}(\overline{\Omega})\big)\cap \mathcal{C}^{2,1}\big(\overline{\Omega}\times(0,T]\big),
\end{split}
\right.
  \end{equation*}
such that $(u_\varepsilon,v_\varepsilon)$ solves the  approximate problem \eqref{eq-app} classically on $\Omega\times(0,\infty)$, and fulfills that
\begin{align}\label{eq-0ul11}
\|u_\varepsilon(\cdot,t)\|_{L^1}+\|v_\varepsilon(\cdot,t)\|_{L^1}\leq C, \quad t>0,\\
\int_0^t\int_\Omega u_\varepsilon v_\varepsilon +\int_0^t\int_\Omega u_\varepsilon^2 \leq C(1+t), \quad t>0\label{eq-0vL-1}
\end{align}
for some $C>0$ independent of $\varepsilon$.
Moreover, it holds that
\begin{align}\label{eq-0velbdd}
v_\varepsilon(\cdot, t)\geq C_T,\quad  \forall t\in(0,T),
\end{align}
where
\begin{align}\label{eq-0bddca}
\widetilde{\eta}:=\inf_{T>0}C_T>0,\quad  \mathrm{if}\,\,\, \eqref{eq-iva1}\,\,\,\mathrm{is\,\, valid}.
\end{align}
\end{lemma}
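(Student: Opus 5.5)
Since the production term $\frac{u_\varepsilon v_\varepsilon}{1+\varepsilon u_\varepsilon v_\varepsilon}$ is bounded by $\varepsilon^{-1}$, the plan is to obtain global existence for each fixed $\varepsilon$ first, and then to copy the proofs of Lemmas \ref{le-glex0} and \ref{le-uL1} with $\varepsilon$-independent constants.

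\textbf{Step 1: local existence and extensibility.} Local existence, uniqueness and positivity come from the same fixed-point and parabolic-regularity argument as in Lemma \ref{le-lolex}. This yields a maximal time $T_{\max,\varepsilon}$ and the extensibility criterion \eqref{eq-blowup} for $(u_\varepsilon,v_\varepsilon)$.

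\textbf{Step 2: global existence for fixed $\varepsilon$.}
\begin{itemize}
\item Since $0\le \frac{u_\varepsilon v_\varepsilon}{1+\varepsilon u_\varepsilon v_\varepsilon}+h_2\le \varepsilon^{-1}+\|h_2\|_{L^\infty}$, semigroup estimates bound $\|v_\varepsilon\|_{L^\infty}$ and $\|\nabla v_\varepsilon\|_{L^\infty}$ on $(0,\min\{T,T_{\max,\varepsilon}\})$. These bounds depend on $\varepsilon$ and $T$ only.
\item The comparison principle gives $v_\varepsilon\ge (\inf v_0)e^{-t}$, exactly as in Lemma \ref{le-glex0}.
\item Hence $\nabla\ln v_\varepsilon$ is bounded in $L^\infty$ on finite time intervals. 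The $u_\varepsilon$-equation then becomes a linear drift equation with bounded drift and damping $-\mu u_\varepsilon^2$.
\item A standard $L^p$-testing argument, in which the term $-\mu u_\varepsilon^2$ absorbs the contribution $\frac{\chi^2(p-1)}{4}\int u_\varepsilon^p|\nabla\ln v_\varepsilon|^2$, followed by the semigroup iteration of Lemma \ref{le-inequi}, bounds $\|u_\varepsilon\|_{L^\infty}$ on finite intervals.
\item Therefore $T_{\max,\varepsilon}=\infty$.
\end{itemize}

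\textbf{Step 3: uniform bounds.}
\begin{itemize}
\item Add the two equations. Since $\frac{u_\varepsilon v_\varepsilon}{1+\varepsilon u_\varepsilon v_\varepsilon}\le u_\varepsilon v_\varepsilon$, the argument of Lemma \ref{le-uL1} yields \eqref{eq-0ul11} with $C$ independent of $\varepsilon$. The neglected difference $u_\varepsilon v_\varepsilon-\frac{u_\varepsilon v_\varepsilon}{1+\varepsilon u_\varepsilon v_\varepsilon}$ is nonnegative, so it only helps.
\item For \eqref{eq-0vL-1}, integrate the $u_\varepsilon$-equation alone:
\[
\frac{d}{dt}\int_\Omega u_\varepsilon+\int_\Omega u_\varepsilon v_\varepsilon+\mu\int_\Omega u_\varepsilon^2=r\int_\Omega u_\varepsilon+\int_\Omega h_1 .
\]
Integrate in time and use the uniform $L^1$ bound on the right-hand side to get $\int_0^t\int_\Omega(u_\varepsilon v_\varepsilon+\mu u_\varepsilon^2)\le C(1+t)$, with $\mu>0$ fixed.
\end{itemize}

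\textbf{Step 4: lower bound for $v_\varepsilon$.}
\begin{itemize}
\item The comparison bound $(\inf v_0)e^{-T}$ gives \eqref{eq-0velbdd}.
\item For \eqref{eq-0bddca}, drop the nonnegative production term and use $v_\varepsilon(\cdot,t)\ge e^{t(\Delta-1)}v_0+\int_0^t e^{(t-s)(\Delta-1)}h_2\,ds$.
\item Then apply the pointwise positivity of the Neumann heat semigroup together with \eqref{eq-iva1}, as in Lemma \ref{le-glex0}. This gives a positive lower bound independent of both $\varepsilon$ and $T$.
\end{itemize}

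\textbf{Main obstacle.} I expect the main obstacle to be the $L^\infty$ bound for $u_\varepsilon$ in Step 2. The plan there is to test the $u_\varepsilon$-equation with $u_\varepsilon^{p-1}$ and use the $L^\infty$ bound on $\nabla\ln v_\varepsilon$ together with Young's inequality. This gives $\frac{d}{dt}\int u_\varepsilon^p\le C(\varepsilon,T,p)\int u_\varepsilon^p+C$, and the bound then follows by the semigroup bootstrap already used in Lemma \ref{le-inequi}.
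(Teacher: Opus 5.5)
Your proposal is correct and follows the paper's route. The paper likewise gets global existence from the saturation $\frac{u_\varepsilon v_\varepsilon}{1+\varepsilon u_\varepsilon v_\varepsilon}\le\varepsilon^{-1}$ together with contraction-mapping and semigroup arguments, and then obtains the $\varepsilon$-independent bounds by repeating Lemma \ref{le-uL1} (the saturated production is dominated by $u_\varepsilon v_\varepsilon$) and Lemma \ref{le-glex0}. You in fact supply more detail than the paper's brief sketch, for example the explicit time integration of the $u_\varepsilon$-equation giving \eqref{eq-0vL-1} and the finite-time $L^p$/$L^\infty$ bootstrap for $u_\varepsilon$.
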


\begin{proof}
Due to the saturation effect in the second equation,  the contraction mapping principle and  the well-known  properties of the Neumann heat
semigroup (\cite[Lemma 2.1]{LIXIE2022})  enable one to quickly infer  that  the approximate  problem \eqref{eq-app} admits a global  and positive classical solution.

Similar to Lemma \ref{le-uL1},  we see  that
 \eqref{eq-0ul11},
\eqref{eq-0vL-1} and \eqref{eq-0velbdd} hold as well as \eqref{eq-0bddca}   if \eqref{eq-iva1} is true.
\end{proof}

Next, we focus on  some  uniform-in-$\varepsilon$ estimates on the  solution $(u_\varepsilon,v_\varepsilon)$.
\begin{lemma}\label{le-ulnvg}
For any $T>0$, there exists an $\varepsilon$-independent $ C_T>0$  such that
 \begin{align}\label{eq-ulnvg}
\int_0^T\|\ln v_\varepsilon(\cdot,s)\|_{H^1}^2ds+
\int_0^T\| v_\varepsilon^{-1}(\cdot,s)\|_{H^1}^2ds\leq C_T,\\
\label{eq-ulnug}
\int_0^T\|\ln(u_\varepsilon+1)(\cdot,s)\|_{H^1}^2ds\leq C_T,
\end{align}
and that
 for $q>n$,
\begin{align}
\int_{0}^T\left\|\partial_s \ln  v_{\varepsilon}(\cdot, s)\right\|_{\left(W^{1,q}\right)^{\star}} d s+\int_{0}^T\left\|\partial_s v_\varepsilon^{-1}(\cdot,s)\right\|_{\left(W^{1,q}\right)^{\star}} d s \leq C_T\label{eq-tve-1},\\
\int_{0}^T\left\|\partial_s \ln \left(u_{\varepsilon}(\cdot, s)+1\right)\right\|_{\left(W^{1,q}\right)^{\star}} d s \leq C_T.\label{eq-tue}
\end{align}
\end{lemma}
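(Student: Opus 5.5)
Our plan is to test the approximate equations \eqref{eq-app} with suitable functions of $v_\varepsilon$ and $u_\varepsilon$, integrate over $(0,T)$, and combine the resulting identities with the $\varepsilon$-independent bounds of Lemma \ref{le-glex}. The time-derivative estimates then follow by duality from the same identities.

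\emph{The $v_\varepsilon$-quantities.} Testing the $v_\varepsilon$-equation by $-1/v_\varepsilon$ and using $\frac{u_\varepsilon v_\varepsilon}{1+\varepsilon u_\varepsilon v_\varepsilon}\le u_\varepsilon v_\varepsilon$ and $h_2\ge0$, we obtain
\[
-\frac{d}{dt}\int_\Omega \ln v_\varepsilon+\int_\Omega|\nabla\ln v_\varepsilon|^2=|\Omega|-\int_\Omega\frac{u_\varepsilon}{1+\varepsilon u_\varepsilon v_\varepsilon}-\int_\Omega\frac{h_2}{v_\varepsilon}\le|\Omega|.
\]
Since $\ln v_\varepsilon\le v_\varepsilon$, \eqref{eq-0ul11} bounds $\int_\Omega\ln v_\varepsilon(\cdot,t)$ from above. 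Integrating in time therefore yields $\int_0^T\int_\Omega|\nabla\ln v_\varepsilon|^2\le C_T$. The lower bound \eqref{eq-0velbdd} together with $\ln v_\varepsilon\le v_\varepsilon$ gives $|\ln v_\varepsilon|\le C_T+v_\varepsilon$, so the $L^2$ part of $\|\ln v_\varepsilon\|_{H^1}$ only needs $\int_0^T\int_\Omega(\ln v_\varepsilon)_+^2$. This is controlled by Poincaré's inequality together with the $L^1$ bound on $\ln v_\varepsilon$. For $v_\varepsilon^{-1}$, the lower bound gives $v_\varepsilon^{-1}\le C_T^{-1}$ and $|\nabla v_\varepsilon^{-1}|=v_\varepsilon^{-1}|\nabla\ln v_\varepsilon|\le C_T^{-1}|\nabla\ln v_\varepsilon|$, so \eqref{eq-ulnvg} follows.

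\emph{The $u_\varepsilon$-quantities.} Test the $u_\varepsilon$-equation by $\frac{1}{1+u_\varepsilon}$. This gives
\[
\frac{d}{dt}\int_\Omega\ln(1+u_\varepsilon)=\int_\Omega\frac{|\nabla u_\varepsilon|^2}{(1+u_\varepsilon)^2}-\chi\int_\Omega\frac{u_\varepsilon\nabla u_\varepsilon\cdot\nabla v_\varepsilon}{(1+u_\varepsilon)^2v_\varepsilon}+\int_\Omega\frac{-u_\varepsilon v_\varepsilon+ru_\varepsilon-\mu u_\varepsilon^2+h_1}{1+u_\varepsilon}.
\]
The cross term is bounded by $\frac12\int_\Omega|\nabla\ln(1+u_\varepsilon)|^2+\frac{\chi^2}{2}\int_\Omega|\nabla\ln v_\varepsilon|^2$. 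The reaction terms are bounded in $L^1(\Omega\times(0,T))$ by \eqref{eq-0vL-1}. We also have $0\le\int_\Omega\ln(1+u_\varepsilon)\le\int_\Omega u_\varepsilon\le C$. Integrating over $(0,T)$ therefore gives $\int_0^T\int_\Omega|\nabla\ln(1+u_\varepsilon)|^2\le C_T$, using the bound for $\nabla\ln v_\varepsilon$ just obtained. The $L^2$ part of \eqref{eq-ulnug} follows from $\ln(1+u_\varepsilon)\le u_\varepsilon$ and $\int_0^T\int_\Omega u_\varepsilon^2\le C(1+T)$.

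\emph{Time derivatives.} Fix $\psi\in W^{1,q}(\Omega)$ with $q>n$, so that $\|\psi\|_{L^\infty}\le C\|\psi\|_{W^{1,q}}$. From the equation for $\ln v_\varepsilon$ we have
\[
\int_\Omega\partial_t\ln v_\varepsilon\,\psi=-\int_\Omega\nabla\ln v_\varepsilon\cdot\nabla\psi+\int_\Omega|\nabla\ln v_\varepsilon|^2\psi+\int_\Omega\Big(-1+\frac{u_\varepsilon}{1+\varepsilon u_\varepsilon v_\varepsilon}+\frac{h_2}{v_\varepsilon}\Big)\psi.
\]
Hence $\|\partial_t\ln v_\varepsilon\|_{(W^{1,q})^\star}$ is at most $C(\|\nabla\ln v_\varepsilon\|_{L^2}+\|\nabla\ln v_\varepsilon\|_{L^2}^2+\|u_\varepsilon\|_{L^1}+1+\|v_\varepsilon^{-1}\|_{L^\infty})$, which is integrable on $(0,T)$ by the bounds above. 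Likewise $\partial_t v_\varepsilon^{-1}=-v_\varepsilon^{-1}\partial_t\ln v_\varepsilon$. Expanding, the extra terms are of the form $v_\varepsilon^{-1}|\nabla\ln v_\varepsilon|^2$ and $\nabla v_\varepsilon^{-1}\cdot\nabla\ln v_\varepsilon$, and these are handled the same way using $v_\varepsilon^{-1}\le C_T^{-1}$. For $\ln(1+u_\varepsilon)$ we use the identity from the previous paragraph tested against $\psi$. The terms are $\nabla\ln(1+u_\varepsilon)\cdot\nabla\psi$, $|\nabla\ln(1+u_\varepsilon)|^2\psi$, the flux $\chi\frac{u_\varepsilon}{1+u_\varepsilon}\nabla\ln v_\varepsilon\cdot\nabla\psi$ together with its companion term containing $\nabla u_\varepsilon$, and the reactions. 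All of these lie in $L^1(0,T)$ after pairing, by \eqref{eq-ulnvg}, \eqref{eq-ulnug} and \eqref{eq-0vL-1}.

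\emph{Main obstacle.} The delicate point is the $L^2$ control of $\ln v_\varepsilon$. Since $v_\varepsilon$ is only bounded in $L^1$ for $n\ge3$, the upper bound on $\ln v_\varepsilon$ must come from the Poincaré argument combined with the gradient bound, rather than from any pointwise estimate.
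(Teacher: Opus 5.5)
Your proposal is correct and follows the paper's route. You obtain \eqref{eq-ulnug} and \eqref{eq-tue} exactly as the paper does, by testing the $u_\varepsilon$-equation with $\frac{1}{1+u_\varepsilon}$ (resp.\ $\frac{\varphi}{1+u_\varepsilon}$). For \eqref{eq-ulnvg} and \eqref{eq-tve-1} the paper only cites \cite[Lemma 3.1]{LIXIE2022}, and your self-contained argument, testing the $v_\varepsilon$-equation with $1/v_\varepsilon$, is the standard proof behind that citation.

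Your closing remark overstates the difficulty, however. The pointwise bound $(\ln s)_+^2\le s$ gives $\int_\Omega(\ln v_\varepsilon)_+^2\le\int_\Omega v_\varepsilon\le C$ directly, just as the paper uses $\ln^2(1+s)\le 2s$ for $u_\varepsilon$. So Poincaré's inequality is one valid option, not a necessity.
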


\begin{proof}
The estimates \eqref{eq-ulnvg} and \eqref{eq-tve-1} follow from \cite[Lemma 3.1]{LIXIE2022} directly. To get \eqref{eq-ulnug},
we multiply  the first equation in \eqref{eq-app} by $\frac{1}{1+u_\varepsilon}$ and integrate by parts to obtain
\begin{align*}
\frac{d}{dt}\int_\Omega \ln(1+u_\varepsilon)&+\int_\Omega\frac{u_\varepsilon v_\varepsilon}{1+u_\varepsilon}+\mu \int_\Omega\frac{u_\varepsilon^2}{1+u_\varepsilon}\\
=&\int_{\Omega} \frac{\left|\nabla u_{\varepsilon}\right|^{2}}{\left(u_{\varepsilon}+1\right)^{2}}-\chi\int_{\Omega} \frac{u_{\varepsilon}\nabla u_{\varepsilon}\cdot\nabla\ln v_{\varepsilon} }{1+u_\varepsilon}+r\int_\Omega\frac{u_\varepsilon }{1+u_\varepsilon}+\int_\Omega\frac{h_1}{1+u_\varepsilon},
\end{align*}
which, together with Young's inequality and the facts that  $u_{\varepsilon}, v_{\varepsilon}, h_1\geq0$, warrants
\begin{align*}
&\frac12\int_{\Omega} \frac{\left|\nabla u_{\varepsilon}\right|^{2}}{\left(u_{\varepsilon}+1\right)^{2}}\\
\leq&\frac{d}{dt}\int_\Omega \ln(1+u_\varepsilon)+\frac{\chi^2}{2}\int_{\Omega} \frac{u_{\varepsilon}^{2}}{\left(u_{\varepsilon}+1\right)^{2}}\left|\nabla\ln v_{\varepsilon}\right|^{2}+\int_\Omega\frac{u_\varepsilon v_\varepsilon}{1+u_\varepsilon}+\mu \int_\Omega\frac{u_\varepsilon^2}{1+u_\varepsilon}-r\int_\Omega\frac{u_\varepsilon}{1+u_\varepsilon}\\
\leq&\frac{d}{dt}\int_\Omega \ln(1+u_\varepsilon)+\frac{\chi^2}{2}\int_{\Omega}\left|\nabla\ln v_{\varepsilon}\right|^{2}+ \int_\Omega v_\varepsilon+\mu \int_\Omega u_\varepsilon+|r||\Omega|.
\end{align*}
Notice the facts  $\ln (1+u_0)\geq 0$, $\ln (1+s)\leq s$  and $\ln^2(1+s)\leq 2s$ for all $s\geq 0$; by  the $L^1$-boundedness of $u_\varepsilon$ and $v_\varepsilon$ in \eqref{eq-0ul11}, we integrate the above inequality from $0$ to $t$ to see that
\begin{align*}
&\int_0^t\int_{\Omega}  |\ln (1+u_\varepsilon)|^2+\int_0^t\int_{\Omega} |\nabla \ln (1+u_\varepsilon)|^2
\le\chi^2\int_0^t\int_{\Omega}\left|\nabla\ln v_{\varepsilon}\right|^{2}+C(1+t).
\end{align*}
which along with \eqref{eq-ulnvg} entails  \eqref{eq-ulnug}.

Next, multiplying the first equation in \eqref{eq-app} by $\frac{\varphi}{1+u_\varepsilon}$ with $\varphi \in \mathcal{C}^\infty(\overline{\Omega})$ and integrating  by parts, using
  H\"{o}lder's inequality  and Young's inequality,   we arrive at
 \begin{align*}
\left|\int_\Omega\varphi \partial_t\ln(1+u_\varepsilon)\right|\leq &\|\varphi\|_{L^\infty}\|\nabla\ln(1+u_\varepsilon)\|_{L^2}^2+\chi\|\nabla\ln(1+u_\varepsilon)\|_{L^2}\|\nabla\ln v_\varepsilon\|_{L^2}\|\varphi\|_{L^\infty}
\\
&+\|\nabla\ln(1+u_\varepsilon)\|_{L^2}\|\nabla\varphi\|_{L^2}
+\chi\|\nabla\ln v_\varepsilon\|_{L^2}\|\nabla\varphi\|_{L^2}\\
&+ \|v_\varepsilon\|_{L^1}\|\varphi\|_{L^\infty}+|r|\|\varphi\|_{L^1}+\mu\|u_\varepsilon\|_{L^1}\|\varphi\|_{L^\infty}+\|h_1\|_{L^\infty}\|\varphi\|_{L^1}.
\end{align*}
Then for $q>n$, H\"{o}lder's inequality  and  Sobolev's inequality show that
 \begin{align*}
&\left|\int_\Omega\varphi \partial_t\ln(1+u_\varepsilon)\right|\\ \leq&C\left(\|\nabla\ln(1+u_\varepsilon)\|_{L^2}^2+\|\nabla\ln v_\varepsilon\|_{L^2}^2+\|v_\varepsilon\|_{L^1}+\|u_\varepsilon\|_{L^1}+\|h_1\|_{L^\infty}+1\right)\|\varphi\|_{W^{1,q}}.
\end{align*}
After an integration in time, it follows from  \eqref{eq-iva0}, \eqref{eq-0vL-1}, \eqref{eq-ulnvg} and \eqref{eq-ulnug} that  \eqref{eq-tue} holds.
\end{proof}

Based on the uniform-in-$\varepsilon$ estimates established above, a straightforward compactness argument enables us to extract convergent subsequences in the following
sense.

\begin{lemma}\label{le-coL1}
There exist functions $u\geq0$ and $v>0$ defined on
$\Omega\times(0,T)$ for any $T>0$ and a sequence
$\{\varepsilon_j\}_{j=1}^\infty\subset(0,1)$ such that $\varepsilon_j\rightarrow0$ as $j\rightarrow\infty$, with the properties that for any $T>0$, as $\varepsilon=\varepsilon_j\rightarrow0$,
\begin{align}
&\ln v_\varepsilon\rightarrow \ln v \quad  \mathrm{in}\quad L^2\big(\Omega\times(0,T)\big),\label{eq-dplim31}\\
&\ln v_\varepsilon\rightharpoonup\ln v \quad  \mathrm{in}\quad L^2\big(0,T; H^1(\Omega)\big),\label{eq-dplim5}\\
&v_\varepsilon\rightarrow v \quad a.e.\,\,\,\mathrm{in}\quad \Omega\times(0,T),\label{eq-dplim2v}\\
&v_\varepsilon^{-1}\rightarrow v^{-1} \quad \mathrm{in}\quad L^2\big(\Omega\times(0,T)\big),\label{eq-dplim30}\\
&\ln(1+u_\varepsilon)\rightarrow \ln(1+u) \quad  \mathrm{in}\quad L^2\big(\Omega\times(0,T)\big),\label{eq-dplim}\\
&\ln(1+u_\varepsilon)\rightharpoonup \ln(1+u) \quad  \mathrm{in}\quad L^2\big(0,T; H^1(\Omega)\big),\label{eq-dplim1}\\
&u_\varepsilon\rightarrow u \quad a.e.\,\,\,\mathrm{in}\quad \Omega\times(0,T),\label{eq-dplim2}\\
&u_\varepsilon \rightharpoonup u \quad  \mathrm{in}\quad L^2\big(\Omega\times(0,T)\big),\label{eq-dpL2}\\
&u_\varepsilon\rightarrow u \quad  \mathrm{in}\quad L^\gamma\big(\Omega\times(0,T)\big),\quad \gamma\in[1,2).\label{eq-dplim0}
\end{align}
\end{lemma}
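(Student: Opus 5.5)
The plan is to combine the uniform-in-$\varepsilon$ bounds of Lemma \ref{le-ulnvg} with an Aubin--Lions type compactness argument, and then a diagonal extraction over $T\in\mathbb{N}$. Fix $T>0$ and $q>n$. By \eqref{eq-ulnvg} and \eqref{eq-tve-1}, the family $\{\ln v_\varepsilon\}$ is bounded in $L^2(0,T;H^1(\Omega))$, and $\{\partial_t\ln v_\varepsilon\}$ is bounded in $L^1(0,T;(W^{1,q})^\star)$. Since $H^1(\Omega)\hookrightarrow\hookrightarrow L^2(\Omega)\hookrightarrow (W^{1,q})^\star$, the Aubin--Lions lemma (in Simon's version, which allows $L^1$ time derivatives) yields relative compactness in $L^2(\Omega\times(0,T))$. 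We thus extract a subsequence along which $\ln v_\varepsilon\to z$ strongly in $L^2$, weakly in $L^2(0,T;H^1)$, and a.e. Setting $v:=e^z>0$, we get \eqref{eq-dplim31}, \eqref{eq-dplim5} and \eqref{eq-dplim2v}.

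The same reasoning applied to $v_\varepsilon^{-1}$ (using \eqref{eq-ulnvg} and \eqref{eq-tve-1}) gives strong $L^2$ convergence of $v_\varepsilon^{-1}$ along a further subsequence. Its limit must coincide with $v^{-1}$ because of the a.e. convergence, which is \eqref{eq-dplim30}. Likewise, \eqref{eq-ulnug} and \eqref{eq-tue} give strong $L^2$, weak $L^2(0,T;H^1)$ and a.e. convergence of $\ln(1+u_\varepsilon)$ to some $w\geq0$. Defining $u:=e^w-1\ge0$, we obtain \eqref{eq-dplim}, \eqref{eq-dplim1} and \eqref{eq-dplim2}; the weak limits are identified with the strong ones by uniqueness of distributional limits.

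For the remaining convergences of $u_\varepsilon$, I use \eqref{eq-0vL-1}, which bounds $u_\varepsilon$ in $L^2(\Omega\times(0,T))$ uniformly in $\varepsilon$. A further subsequence therefore converges weakly in $L^2$, and its limit equals $u$ by the a.e. convergence (e.g. via Egorov), giving \eqref{eq-dpL2}. For \eqref{eq-dplim0}, the $L^2$ bound makes $\{|u_\varepsilon|^\gamma\}$ uniformly integrable for $\gamma<2$, since
\[
\int_E u_\varepsilon^\gamma\le |E|^{1-\gamma/2}\|u_\varepsilon\|_{L^2}^{\gamma}.
\]
Combined with a.e. convergence, Vitali's theorem gives strong $L^\gamma$ convergence.

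Finally, a diagonal argument over $T=1,2,\dots$ produces a single sequence $\varepsilon_j\to0$ and limit functions defined on $\Omega\times(0,\infty)$ that are consistent across $T$, because a.e. limits agree on overlaps. The main technical point is the Aubin--Lions step with only $L^1$-in-time control of the time derivatives in a dual space. I would handle it by citing Simon's compactness theorem (or the version used in \cite{LIXIE2022}) rather than reproving it. Everything else is standard identification of limits.
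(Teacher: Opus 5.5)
Your proposal is correct and follows essentially the same route as the paper. Both arguments apply Simon's version of the Aubin--Lions lemma to $\ln v_\varepsilon$, $v_\varepsilon^{-1}$ and $\ln(1+u_\varepsilon)$ using the bounds of Lemma \ref{le-ulnvg}, then combine the $L^2$ bound \eqref{eq-0vL-1} with a.e.\ convergence to get \eqref{eq-dpL2}, and use Vitali's theorem for \eqref{eq-dplim0}; your explicit identification of the limits and your diagonal extraction over $T\in\mathbb{N}$ simply make precise what the paper leaves implicit.
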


\begin{proof}
Based on the well-known  Aubin-Lions  compactness theorem (\cite{Simon1986}), a combination of \eqref{eq-0velbdd},
 \eqref{eq-ulnvg} and \eqref{eq-tve-1} implies
there exist a subsequence of
$\{\varepsilon_j\}_{j=1}^\infty$ (still expressed as $\{\varepsilon_j\}_{j=1}^\infty$) and a positive function $v$ satisfying $\ln v\in L^2(0,T; H^1(\Omega))$ such  that  \eqref{eq-dplim31}, \eqref{eq-dplim5}, \eqref{eq-dplim2v}  and \eqref{eq-dplim30} hold.

Similarly, thanks to \eqref{eq-ulnug} and \eqref{eq-tue}, the convergence \eqref{eq-dplim}, \eqref{eq-dplim1} and \eqref{eq-dplim2} hold. Moreover, the uniform $L^2$-bound \eqref{eq-0vL-1}, together with \eqref{eq-dplim2},
 ensures the validity of \eqref{eq-dpL2}.
The verification of \eqref{eq-dplim0} can thereupon be completed by  \eqref{eq-0vL-1} and  \eqref{eq-dplim2} via the Vitali convergence theorem (roughly speaking, a.e. convergence along with uniform integrability implies $L^1$-convergence).
\end{proof}

For our purpose, we need to verify that $(u,v)$ satisfies Definition \ref{def-gs}, with particular attention to the energy inequality \eqref{eq-wdef2}. To this end, we shall derive the following  convergence.

\begin{lemma}\label{le-couvL1}
 For  any $T>0$ there exists  a sequence
$\{\varepsilon_j\}_{j=1}^\infty\subset(0,1)$ such that $\varepsilon_j\rightarrow0$ as $j\rightarrow\infty$, with the properties that as $\varepsilon=\varepsilon_j\rightarrow0$,
\begin{align}
&\frac{u_\varepsilon}{1+u_\varepsilon}\rightarrow \frac{u}{1+u} \quad  \mathrm{in}\quad L^p\big(\Omega\times(0,T)\big),\quad 1\leq p<\infty,\label{eq-dpL12}\\
&\frac{\sqrt{1+2u_\varepsilon}}{1+u_\varepsilon}\rightarrow \frac{\sqrt{1+2u}}{1+u} \quad  \mathrm{in}\quad L^p\big(\Omega\times(0,T)\big),\quad 1\leq p<\infty,\label{eq-dpL12+}\\
&\frac{u_\varepsilon^2}{1+u_\varepsilon}\rightarrow \frac{u^2}{1+u} \quad  \mathrm{in}\quad L^1\big(\Omega\times(0,T)\big),\label{eq-dpL120}\\
&\frac{u_\varepsilon v_\varepsilon}{1+u_\varepsilon}\rightarrow \frac{uv}{1+u} \quad  \mathrm{in}\quad L^1\big(\Omega\times(0,T)\big).\label{eq-dpL11}
\end{align}
\end{lemma}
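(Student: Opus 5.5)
The first three convergences follow from the a.e. convergence \eqref{eq-dplim2} and uniform bounds, via dominated convergence or Vitali's theorem. The fourth, \eqref{eq-dpL11}, is the only one that needs an extra uniform integrability estimate, and that estimate is the main obstacle.

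For \eqref{eq-dpL12} and \eqref{eq-dpL12+}, the maps $s\mapsto \frac{s}{1+s}$ and $s\mapsto\frac{\sqrt{1+2s}}{1+s}$ are continuous on $[0,\infty)$ with values in $[0,1]$. The latter is bounded by $1$ because $(1+s)^2\ge 1+2s$. By \eqref{eq-dplim2}, $u_\varepsilon\to u$ a.e. in $\Omega\times(0,T)$, so the compositions converge a.e. They are dominated by the constant $1$, which lies in $L^p(\Omega\times(0,T))$ since the domain has finite measure. Lebesgue's dominated convergence theorem then gives \eqref{eq-dpL12} and \eqref{eq-dpL12+} for every $p\in[1,\infty)$.

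For \eqref{eq-dpL120}, we again have a.e. convergence of $\frac{u_\varepsilon^2}{1+u_\varepsilon}$ to $\frac{u^2}{1+u}$. We also have the pointwise bound $0\le \frac{u_\varepsilon^2}{1+u_\varepsilon}\le u_\varepsilon$. By \eqref{eq-dplim0} with $\gamma=1$, $u_\varepsilon\to u$ in $L^1(\Omega\times(0,T))$, so the family $\{u_\varepsilon\}$ is uniformly integrable. Hence so is the dominated family $\frac{u_\varepsilon^2}{1+u_\varepsilon}$. Alternatively, one can use the generalized dominated convergence theorem with dominating functions $u_\varepsilon\to u$ in $L^1$. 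Vitali's theorem then yields \eqref{eq-dpL120}.

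For \eqref{eq-dpL11}, a.e. convergence of $\frac{u_\varepsilon v_\varepsilon}{1+u_\varepsilon}$ follows from \eqref{eq-dplim2} and \eqref{eq-dplim2v}. What remains is uniform integrability, and the naive bound $\frac{u_\varepsilon v_\varepsilon}{1+u_\varepsilon}\le v_\varepsilon$ only gives a uniform $L^1$ bound from \eqref{eq-0ul11}. That is not enough. I would try to obtain a uniform bound in $L\log L$, or in $L^{q}$ for some $q>1$, for $v_\varepsilon$ on $\Omega\times(0,T)$.

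The first approach is to test the $v_\varepsilon$-equation against $\ln v_\varepsilon$, or to consider $\frac{d}{dt}\int_\Omega v_\varepsilon\ln v_\varepsilon$. The production term is bounded by $\frac{u_\varepsilon v_\varepsilon}{1+\varepsilon u_\varepsilon v_\varepsilon}\ln v_\varepsilon$, which still needs control. An easier route goes through the spacetime bound $\int_0^T\int_\Omega u_\varepsilon v_\varepsilon\le C(1+T)$ from \eqref{eq-0vL-1}. The right-hand side of the $v_\varepsilon$-equation is bounded in $L^1(\Omega\times(0,T))$, namely $\frac{u_\varepsilon v_\varepsilon}{1+\varepsilon u_\varepsilon v_\varepsilon}+h_2\le u_\varepsilon v_\varepsilon+h_2$. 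Standard $L^1$ parabolic regularity (Neumann heat semigroup estimates, e.g. $\|e^{t\Delta}f\|_{L^q}\le C(1+t^{-\frac n2(1-\frac1q)})\|f\|_{L^1}$) then yields a uniform bound for $v_\varepsilon$ in $L^q(\Omega\times(0,T))$ for every $q<\frac{n+2}{n}$. In particular some $q>1$ is admissible, which also matches the requirement $v\in L^q$ in Definition \ref{def-gs}.

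Given this bound, we have $0\le \frac{u_\varepsilon v_\varepsilon}{1+u_\varepsilon}\le v_\varepsilon$ with $v_\varepsilon$ bounded in $L^q$ for some $q>1$. So the family is uniformly integrable, and Vitali's theorem gives \eqref{eq-dpL11}. At each step we pass to a further subsequence of the $\{\varepsilon_j\}$ from Lemma \ref{le-coL1} where needed.
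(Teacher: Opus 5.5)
Your proposal is correct. It differs from the paper mainly in how you get uniform integrability of $v_\varepsilon$ for \eqref{eq-dpL11}.

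For \eqref{eq-dpL12}--\eqref{eq-dpL120}, the paper uses pointwise Lipschitz bounds such as $\bigl|\frac{u_\varepsilon}{1+u_\varepsilon}-\frac{u}{1+u}\bigr|\le|u_\varepsilon-u|$ together with the strong $L^1$ convergence \eqref{eq-dplim0}. You use dominated convergence and Vitali from the a.e. convergence \eqref{eq-dplim2}. The two arguments are interchangeable.

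For \eqref{eq-dpL11}, the paper tests the $v_\varepsilon$-equation with $v_\varepsilon^{q-1}$, $q\in(1-\frac2n,1)$. This lets it simply drop the nonnegative production term and $h_2$. Using only the $L^1$ bound \eqref{eq-0ul11}, it gets $\int_0^T\int_\Omega|\nabla v_\varepsilon^{q/2}|^2\le C(1+T)$. Gagliardo--Nirenberg then bounds $v_\varepsilon$ in $L^{q+\frac2n}(\Omega\times(0,T))$. The paper deduces $v_\varepsilon\to v$ in $L^1$ by Vitali and concludes with a H\"older splitting.

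You instead feed the spacetime $L^1$ bound on the source from \eqref{eq-0vL-1} into the $L^1\to L^q$ smoothing of $e^{t(\Delta-1)}$. Young's convolution inequality in time, which you leave implicit, closes the estimate: the kernel $(t-s)^{-\frac n2(1-\frac1q)}$ lies in $L^q(0,T)$ exactly when $q<\frac{n+2}{n}$. You then apply Vitali directly to $0\le\frac{u_\varepsilon v_\varepsilon}{1+u_\varepsilon}\le v_\varepsilon$. Both routes give the same integrability range $(1,\frac{n+2}{n})$.

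Your route is a more off-the-shelf linear estimate, and its final step is slightly cleaner. The paper's energy route needs no control on $u_\varepsilon v_\varepsilon$ at all. It also yields the gradient bound on $\nabla v_\varepsilon^{q/2}$ over unit time intervals, which the paper reuses in Lemma \ref{le-vLk1} for the three-dimensional eventual-smoothness argument.
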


\begin{proof}
For $p\geq 1$,  since $u, u_\varepsilon\geq 0$, we estimate
\begin{align*}
   \left|\frac{u_\varepsilon}{1+u_\varepsilon}-\frac{u}{1+u}\right|^p\leq \left|\frac{u_\varepsilon}{1+u_\varepsilon}-\frac{u}{1+u}\right|\leq \left|u_\varepsilon-u\right|,
\end{align*}
 which, in conjunction with the $L^1$-convergence  \eqref{eq-dplim0}, directly gives
\eqref{eq-dpL12}.
Similarly, we have
\begin{align*}
   \left|\frac{\sqrt{1+2u_\varepsilon}}{1+u_\varepsilon}-\frac{\sqrt{1+2u}}{1+u}\right|^p\leq \left|\frac{\sqrt{1+2u_\varepsilon}}{1+u_\varepsilon}-\frac{\sqrt{1+2u}}{1+u}\right|\leq \left|u_\varepsilon-u\right|,
\end{align*}
which together with  the $L^1$-convergence  \eqref{eq-dplim0} yields
\eqref{eq-dpL12+}, as wished.

For the convergence \eqref{eq-dpL120}, just note
\begin{align*}
   \left|\frac{u_\varepsilon^2}{1+u_\varepsilon}-\frac{u^2}{1+u}\right|=\frac{u+u_\varepsilon+uu_\varepsilon}{1+u+u_\varepsilon+uu_\varepsilon}\left|u_\varepsilon-u\right|\leq \left|u_\varepsilon-u\right|.
\end{align*}
To obtain \eqref{eq-dpL11},  for $q\in(1-\frac2n,1)$, we compute from the $v_\varepsilon$-equation that
\begin{align}\label{eq-Lqve}
\frac1q\frac{d}{dt}\int_\Omega v_\varepsilon^q+\int_\Omega v_\varepsilon^{q}
\geq(1-q)\int_\Omega v_\varepsilon^{q-2}|\nabla v_\varepsilon|^2.
\end{align}
Upon integration and the fact $\|v_\varepsilon(\cdot,t)\|_{L^q}^q\leq\|v_\varepsilon(\cdot,t)\|_{L^1}^q|\Omega|^{1-q}\leq C$ due to \eqref{eq-0ul11},  we see  that
\begin{align*}
\int_0^t\int_\Omega |\nabla v_\varepsilon^\frac q2|^2=\frac{q^2}{4}\int_0^t\int_\Omega v_\varepsilon^{q-2}|\nabla v_\varepsilon|^2\leq \frac{q}{4(1-q)}\int_\Omega v_\varepsilon^q+\frac{q^2}{4(1-q)}\int_0^t\int_\Omega v_\varepsilon^{q}\leq C(1+t).
\end{align*}
Then, by the  $L^1$ bound in  \eqref{eq-0ul11}, we employ  the  Gagliardo-Nirenberg inequality to deduce that
\begin{align*}
\int_0^t\int_\Omega v_\varepsilon^{q+\frac2n}=\int_0^t\left\|v^\frac q2_\varepsilon\right\|_{L^{2+\frac{4}{nq}}}^{2+\frac{4}{nq}}\leq C\int_0^t\left(\left\|v^\frac q2_\varepsilon\right\|_{L^\frac2q}^{\frac{4}{nq}}\left\|\nabla v^\frac q2_\varepsilon\right\|_{L^2}^2+\left\|v^\frac q2_\varepsilon\right\|_{L^\frac2q}^{2+\frac{4}{nq}}\right)\leq C(1+t).
\end{align*}
Observing $q+\frac2n>1$, for any $T>0$, combining the a.e. convergence \eqref{eq-dplim2v} and
the Vitali convergence theorem,  we infer, up to  a subsequence $\varepsilon=\varepsilon_j\rightarrow0$,
\begin{align}\label{veps-v-l1}
v_\varepsilon \rightarrow v \quad  \mathrm{in}\quad L^1\big(\Omega\times(0,T)\big).
\end{align}
Therefore, by  H\"{o}lder's inequality, we conclude
\begin{align*}
&\int_0^t\int_\Omega \left|\frac{u_\varepsilon v_\varepsilon}{1+u_\varepsilon}-\frac{uv}{1+u}\right|\\
&\leq \left(\int_0^t\int_\Omega v_\varepsilon^{q+\frac2n}\right)^\frac{n}{qn+2}\left(\int_0^t\int_\Omega \left|\frac{u_\varepsilon}{1+u_\varepsilon}-\frac{u}{1+u}\right|^\frac{qn+2}{qn+2-n}\right)^\frac{qn+2-n}{qn+2}+\int_0^t\int_\Omega \left|v_\varepsilon-v\right|\\
&\leq C\left(1+t\right)^\frac{n}{qn+2}\left(\int_0^t\int_\Omega \left|\frac{u_\varepsilon}{1+u_\varepsilon}-\frac{u}{1+u}\right|^\frac{qn+2}{qn+2-n}\right)^\frac{qn+2-n}{qn+2}+\int_0^t\int_\Omega \left|v_\varepsilon-v\right|,
\end{align*}
which, together with \eqref{eq-dpL12} and \eqref{veps-v-l1}, guarantees \eqref{eq-dpL11}.
\end{proof}

We are now prepared to suitably pass to the limit in  the approximate problem  \eqref{eq-app}, to achieve the  generalized
solvability.

\begin{lemma}\label{le-vw}
Let $u$ and $v$ be given in Lemma \ref{le-coL1}. For any $T>0$, the IBVP \eqref{eq-r01}  admits at least one global generalized solution $(u,v)$
in the sense of Definitions \ref{def-gs}.
\end{lemma}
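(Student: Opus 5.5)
We verify the four items of Definition \ref{def-gs} for the limit $(u,v)$ from Lemma \ref{le-coL1}, passing to the limit $\varepsilon=\varepsilon_j\to0$ in identities satisfied by $(u_\varepsilon,v_\varepsilon)$.

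\emph{Item (1).} The regularity requirements follow from the convergences already available. We have $u\in L^2$ by \eqref{eq-dpL2}, $\nabla\ln(1+u)\in L^2$ by \eqref{eq-dplim1}, and $\nabla\ln v\in L^2$ by \eqref{eq-dplim5}. The bound $v\in L^{q+\frac2n}(\Omega\times(0,T))$ with $q+\frac2n>1$ comes from the proof of Lemma \ref{le-couvL1} together with Fatou's lemma. For $uv\in L^1$ we use Fatou's lemma, the a.e. convergences \eqref{eq-dplim2}, \eqref{eq-dplim2v} and the uniform bound \eqref{eq-0vL-1}. Positivity of $v$ comes from \eqref{eq-0velbdd}.

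\emph{Item (2).} Integrate the $u_\varepsilon$-equation, and the sum of both equations (using $\frac{u_\varepsilon v_\varepsilon}{1+\varepsilon u_\varepsilon v_\varepsilon}\le u_\varepsilon v_\varepsilon$), over $\Omega\times(0,t)$. On the right-hand side $\int_0^t\!\int_\Omega u_\varepsilon\to\int_0^t\!\int_\Omega u$ by \eqref{eq-dplim0}. On the left-hand side apply Fatou's lemma to $\int_0^t\!\int_\Omega(u_\varepsilon v_\varepsilon+\mu u_\varepsilon^2)$ and to $\int_0^t\!\int_\Omega v_\varepsilon$.

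The pointwise terms $\int_\Omega u_\varepsilon(\cdot,t)$ and $\int_\Omega v_\varepsilon(\cdot,t)$ need more care. Multiply by a nonnegative $\zeta(t)$, integrate in time, and use $L^1(\Omega\times(0,T))$ convergence of $u_\varepsilon$ and $v_\varepsilon$ (\eqref{eq-dplim0}, \eqref{veps-v-l1}). Lebesgue's differentiation theorem then gives the inequalities for a.e. $t$.

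\emph{Item (4).} Test the $v_\varepsilon$-equation with $\varphi/v_\varepsilon$. This gives
\[
-\int_\Omega\ln v_0\varphi(\cdot,0)-\int_0^T\!\!\int_\Omega\ln v_\varepsilon\varphi_t=\int_0^T\!\!\int_\Omega\Big(\frac{|\nabla v_\varepsilon|^2\varphi}{v_\varepsilon^2}-\frac{\nabla v_\varepsilon\cdot\nabla\varphi}{v_\varepsilon}-\varphi+\frac{u_\varepsilon\varphi}{1+\varepsilon u_\varepsilon v_\varepsilon}+\frac{\varphi h_2}{v_\varepsilon}\Big).
\]
The terms pass to the limit as follows.
\begin{itemize}
\item $\ln v_\varepsilon\varphi_t$ converges by \eqref{eq-dplim31}.
\item $\nabla\ln v_\varepsilon\cdot\nabla\varphi$ converges weakly by \eqref{eq-dplim5}.
\item $\varphi h_2/v_\varepsilon$ converges by \eqref{eq-dplim30}.
\item $|\nabla\ln v_\varepsilon|^2\varphi$ is handled by weak lower semicontinuity of $\int|\nabla\ln v_\varepsilon|^2\varphi$.
\item $\frac{u_\varepsilon}{1+\varepsilon u_\varepsilon v_\varepsilon}\varphi$ is the delicate term. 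It converges a.e. to $u\varphi$ and is dominated by $u_\varepsilon\varphi$, with $u_\varepsilon\to u$ in $L^1$, so the generalized dominated convergence theorem applies.
\end{itemize}
Only an inequality survives the lower semicontinuity step, which is why item (4) is stated as ``$\ge$''.

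\emph{Item (3).} Test the $u_\varepsilon$-equation with $\varphi/(1+u_\varepsilon)$, add $\frac{\chi^2}{4}$ times the $v_\varepsilon$-identity above, and pass to the limit. The linear terms converge as in item (4), using \eqref{eq-dplim}, \eqref{eq-dplim1}, \eqref{eq-dpL12} and \eqref{eq-dplim30}. The reaction terms converge by \eqref{eq-dpL120} and \eqref{eq-dpL11}. The mixed term $\frac{u_\varepsilon}{1+u_\varepsilon}\nabla\ln v_\varepsilon\cdot\nabla\varphi$ is a product of a strong $L^p$ limit \eqref{eq-dpL12} (bounded) with a weak $L^2$ limit, hence converges.

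\emph{Main obstacle.} The hard part of item (3) is the quadratic gradient expression
\[
\int\varphi\Big(\frac{|\nabla u_\varepsilon|^2}{(1+u_\varepsilon)^2}-\chi\frac{u_\varepsilon\nabla u_\varepsilon\cdot\nabla v_\varepsilon}{(1+u_\varepsilon)^2v_\varepsilon}+\frac{\chi^2}{4}\frac{|\nabla v_\varepsilon|^2}{v_\varepsilon^2}\Big),
\]
which only converges weakly, so lower semicontinuity is needed. I plan to complete the square: with $a_\varepsilon=\nabla\ln(1+u_\varepsilon)$ and $b_\varepsilon=\nabla\ln v_\varepsilon$, the integrand equals
\[
\Big|a_\varepsilon-\frac{\chi}{2}\frac{u_\varepsilon}{1+u_\varepsilon}b_\varepsilon\Big|^2+\frac{\chi^2}{4}\Big(1-\frac{u_\varepsilon^2}{(1+u_\varepsilon)^2}\Big)|b_\varepsilon|^2
=\Big|a_\varepsilon-\frac{\chi}{2}\frac{u_\varepsilon}{1+u_\varepsilon}b_\varepsilon\Big|^2+\frac{\chi^2}{4}\Big|\frac{\sqrt{1+2u_\varepsilon}}{1+u_\varepsilon}b_\varepsilon\Big|^2.
\]
Both vector fields converge weakly in $L^2$, since each is a bounded strongly convergent coefficient (\eqref{eq-dpL12} and \eqref{eq-dpL12+}) times a weakly convergent gradient. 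Weak lower semicontinuity of the weighted $L^2$ norm with weight $\varphi\ge0$ then gives the required ``$\ge$''. This explains why \eqref{eq-dpL12+} was prepared beforehand.
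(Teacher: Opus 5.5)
Your proposal is correct and matches the paper's proof. The paper likewise tests with $\varphi/(1+u_\varepsilon)$ and $\varphi/v_\varepsilon$, adds the two identities with weight $\frac{\chi^2}{4}$, and rewrites the gradient terms as the same sum of two squares (which is exactly why \eqref{eq-dpL12+} was prepared). It then concludes by weak lower semicontinuity, and uses Fatou's lemma for item (2). Beyond the paper, you spell out item (1), give a direct limit passage for item (4) where the paper only defers to an earlier reference, and handle the a.e.-in-$t$ issue in item (2), which the paper leaves implicit.
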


\begin{proof}
An integration of the
first equation in \eqref{eq-app} over $\Omega$ shows
\begin{align}\label{eq-xlj}
\frac{d}{dt}\int_\Omega u_\varepsilon + \int_\Omega u_\varepsilon v_\varepsilon+\mu  \int_\Omega u_\varepsilon^2=r \int_\Omega u_\varepsilon+\int_\Omega h_1,
\end{align}
which, upon being integrated over  time,  ensures that
\begin{align*}
\int_\Omega u_\varepsilon +\int_0^t \int_\Omega u_\varepsilon v_\varepsilon +\mu\int_0^t \int_\Omega u_\varepsilon^2=\int_\Omega u_0+r\int_0^t \int_\Omega u_\varepsilon +\int_0^t\int_\Omega h_1.
\end{align*}
Invoking \eqref{eq-0vL-1}, \eqref{eq-dplim2v}, \eqref{eq-dplim2}-\eqref{eq-dplim0}, the lower semi-continuity of norm and Fatou's lemma, up to a subsequence $\varepsilon=\varepsilon_j\to 0$ as $j\to \infty$, we get \eqref{eq-wdef1-1} immediately.

Similarly,  we infer from the second equation in \eqref{eq-app} that for any $t>0$,
\begin{align}\label{eq-xlj0}
\frac{d}{dt}\int_\Omega v_\varepsilon+ \int_\Omega v_\varepsilon = \int_\Omega \frac{u_\varepsilon v_\varepsilon}{1+\varepsilon u_\varepsilon v_\varepsilon} + \int_\Omega h_2.
\end{align}
Adding \eqref{eq-xlj} and \eqref{eq-xlj0} and integrating over time,
 it holds that
\begin{align*}
&\int_\Omega (v_\varepsilon+u_\varepsilon)+
\int_0^t\int_\Omega  v_\varepsilon+\int_0^t \int_\Omega  \frac{\varepsilon u_\varepsilon^2 v_\varepsilon^2}{1+\varepsilon u_\varepsilon v_\varepsilon} +\mu\int_0^t \int_\Omega u_\varepsilon^2 \\
=&\int_\Omega   (u_0 +v_0)+r\int_0^t \int_\Omega u_\varepsilon +\int_0^t\int_\Omega (h_1+h_2),
\end{align*}
which, along with  \eqref{eq-0vL-1}, \eqref{eq-dplim2v}, \eqref{eq-dplim2}-\eqref{eq-dplim0}, the lower semi-continuity of norm  and Fatou's lemma, ensures that \eqref{eq-wdef1-2} holds,  as desired.

To show  \eqref{eq-wdef2}, for $0\leq\varphi\in C_0^\infty(\overline{\Omega}\times [0,T))$ we   test the first equation in \eqref{eq-app} by $\frac{\varphi}{1+u_\varepsilon}$   to get
\begin{align*}
&\frac{d}{dt}\int_\Omega\varphi \ln(1+u_\varepsilon)-\int_\Omega\varphi_t \ln(1+u_\varepsilon)\\
=&-\int_\Omega\frac{\nabla\varphi\cdot\nabla u_\varepsilon}{1+u_\varepsilon}+\int_\Omega\frac{\varphi|\nabla u_\varepsilon|^2}{(1+u_\varepsilon)^2}
+\chi\int_\Omega\frac{u_\varepsilon\nabla\varphi\cdot\nabla\ln v_\varepsilon}{1+u_\varepsilon}-\chi\int_\Omega\frac{\varphi u_\varepsilon\nabla u_\varepsilon\cdot\nabla\ln v_\varepsilon}{(1+u_\varepsilon)^2}\\
&+\int_\Omega\frac{\varphi}{1+u_\varepsilon}\left(- u_\varepsilon v_\varepsilon+ru_\varepsilon-\mu u_\varepsilon^2+h_1\right).
\end{align*}
Similarly, testing the second equation in \eqref{eq-app} by $\frac{\varphi}{v_\varepsilon}$ with any $0\leq\varphi\in C_0^\infty(\overline{\Omega}\times [0,T))$, we have
\begin{align*}
&\frac{d}{dt}\int_\Omega\varphi \ln v_\varepsilon -\int_\Omega\varphi_t \ln v_\varepsilon
=-\int_\Omega\frac{\nabla\varphi\cdot\nabla v_\varepsilon}{v_\varepsilon}+\int_\Omega\frac{\varphi|\nabla v_\varepsilon|^2}{v_\varepsilon^2}
+\int_\Omega\frac{\varphi}{v_\varepsilon}\left(-v_\varepsilon+\frac{u_\varepsilon v_\varepsilon}{1+\varepsilon u_\varepsilon v_\varepsilon}+h_2\right).
\end{align*}
A linear combination  of these two identities, it follows that
\begin{align*}
&\frac{d}{dt}\int_\Omega\varphi \ln(1+u_\varepsilon)-\int_\Omega\varphi_t \ln(1+u_\varepsilon)+\frac{\chi^2}{4}\left(\frac{d}{dt}\int_\Omega\varphi \ln v_\varepsilon -\int_\Omega\varphi_t \ln v_\varepsilon \right)\\
=&\int_\Omega\frac{\varphi|\nabla u_\varepsilon|^2}{(1+u_\varepsilon)^2}-\chi\int_\Omega\frac{\varphi u_\varepsilon\nabla u_\varepsilon\cdot\nabla\ln v_\varepsilon}{(1+u_\varepsilon)^2}
+\frac{\chi^2}{4}\int_\Omega\frac{\varphi|\nabla v_\varepsilon|^2}{v_\varepsilon^2}\\
&-\int_\Omega\frac{\nabla\varphi\cdot\nabla u_\varepsilon}{1+u_\varepsilon}+\chi\int_\Omega\frac{u_\varepsilon\nabla\varphi\cdot\nabla\ln v_\varepsilon}{1+u_\varepsilon}+\int_\Omega\frac{\varphi}{1+u_\varepsilon}\left(- u_\varepsilon v_\varepsilon+ru_\varepsilon-\mu u_\varepsilon^2+h_1\right)\\
&+\frac{\chi^2}{4}\left\{-\int_\Omega\frac{\nabla\varphi\cdot\nabla v_\varepsilon}{v_\varepsilon}
+\int_\Omega\frac{\varphi}{v_\varepsilon}\left(-v_\varepsilon+\frac{u_\varepsilon v_\varepsilon}{1+\varepsilon u_\varepsilon v_\varepsilon}+h_2\right)\right\},
\end{align*}
which, by integrating  in time and noticing $\varphi|_{t=T}\equiv 0$,  implies that
\begin{align*}
&\quad-\int_\Omega\left(\varphi|_{t=0} \ln(1+u_0)+\frac{\chi^2}{4} \varphi|_{t=0} \ln v_0\right)-\int_0^T\int_\Omega\left(\varphi_t \ln(1+u_\varepsilon)+\frac{\chi^2}{4}\varphi_t \ln v_\varepsilon \right)\\
&=\int_0^T\int_\Omega\left(\frac{\varphi|\nabla u_\varepsilon|^2}{(1+u_\varepsilon)^2}
-\chi\int_\Omega\frac{\varphi u_\varepsilon\nabla u_\varepsilon\cdot\nabla\ln v_\varepsilon}{(1+u_\varepsilon)^2}+\frac{\chi^2}{4}\frac{\varphi|\nabla v_\varepsilon|^2}{v_\varepsilon^2}\right)\\
&\quad+\int_0^T\int_\Omega\left(-\frac{\nabla\varphi\cdot\nabla u_\varepsilon}{1+u_\varepsilon}+\chi\frac{u_\varepsilon\nabla\varphi\cdot\nabla\ln v_\varepsilon}{1+u_\varepsilon}+\frac{\varphi}{1+u_\varepsilon}\left(- u_\varepsilon v_\varepsilon+ru_\varepsilon-\mu u_\varepsilon^2+h_1\right)\right)\\
&\quad+\frac{\chi^2}{4}\int_0^T\int_\Omega\left(-\frac{\nabla\varphi\cdot\nabla v_\varepsilon}{v_\varepsilon}
+\frac{\varphi}{v_\varepsilon}\left(-v_\varepsilon+\frac{u_\varepsilon v_\varepsilon}{1+\varepsilon u_\varepsilon v_\varepsilon}+h_2\right)\right)\\
&=:P_1+P_2+P_3.
\end{align*}
By the uniform bounds in \eqref{eq-ulnvg} and \eqref{eq-ulnug},  upon rearrangements, we find
\begin{align*}
P_1=\int_0^T\int_\Omega\left|\sqrt{\varphi}\nabla \ln (1+u_\varepsilon)-\frac{\chi u_\varepsilon\sqrt{\varphi}}{2(1+u_\varepsilon)}\nabla \ln v_\varepsilon\right|^2+\int_0^T\int_\Omega\left|\frac{\chi\sqrt{\varphi(1+2u_\varepsilon)}}{2(1+u_\varepsilon)}\nabla \ln v_\varepsilon \right|^2\leq C_T.
\end{align*}
Thus, by reflexivity, up to a subsequence of $\varepsilon=\varepsilon_j\rightarrow0$, one has, for some $f, g$, that
\begin{align}\label{gsex-con1}
    \sqrt{\varphi}\nabla \ln (1+u_\varepsilon)-\frac{\chi u_\varepsilon\sqrt{\varphi}}{2(1+u_\varepsilon)}\nabla \ln v_\varepsilon  \rightharpoonup  f \text{ in  } \left(L^2(\Omega\times (0, T))\right)^n,
\end{align}
and
\begin{align}\label{gsex-con2}
\frac{\chi\sqrt{\varphi(1+2u_\varepsilon)}}{2(1+u_\varepsilon)}\nabla \ln v_\varepsilon  \rightharpoonup  g  \text{ in  } \left(L^2(\Omega\times (0, T))\right)^n.
\end{align}
On the other hand, by the convergence in  \eqref{eq-dplim5}, \eqref{eq-dplim1}  and  \eqref{eq-dpL12}, one can readily infer that
\begin{align*}
    \sqrt{\varphi}\nabla \ln (1+u_\varepsilon)-\frac{\chi u_\varepsilon\sqrt{\varphi}}{2(1+u_\varepsilon)}\nabla \ln v_\varepsilon  \rightharpoonup   \sqrt{\varphi}\nabla \ln (1+u)-\frac{\chi u\sqrt{\varphi}}{2(1+u)}\nabla \ln v \text{ in  } \left(L^1(\Omega\times (0, T))\right)^n,
\end{align*}
and,  by the convergence in  \eqref{eq-dplim5}, \eqref{eq-dplim1}  and  \eqref{eq-dpL12+},  that
\begin{align*}
\frac{\chi\sqrt{\varphi(1+2u_\varepsilon)}}{2(1+u_\varepsilon)}\nabla \ln v_\varepsilon  \rightharpoonup  \frac{\chi\sqrt{\varphi(1+2u)}}{2(1+u)}\nabla \ln v \text{ in  } \left(L^1(\Omega\times (0, T))\right)^n.
\end{align*}
Therefore, by testing the indicator  function of any measurable subset of $\Omega\times (0, T)$, we see
\[
f=\sqrt{\varphi}\nabla \ln (1+u)-\frac{\chi u\sqrt{\varphi}}{2(1+u)}\nabla \ln v, \  \   \  g= \frac{\chi\sqrt{\varphi(1+2u)}}{2(1+u)}\nabla \ln v.
\]
This, together with \eqref{gsex-con1} and \eqref{gsex-con2} and the lower  semi-continuity of $L^2$-norms with respect to weak  convergence, entails
\begin{align*}
\liminf _{\varepsilon=\varepsilon_{j} \rightarrow 0} P_1
\ge \int_0^T\int_\Omega\left(\frac{\varphi|\nabla u|^2}{(1+u)^2}
-\chi\int_\Omega\frac{\varphi u\nabla u\cdot\nabla\ln v}{(1+u)^2}+\frac{\chi^2}{4}\frac{\varphi|\nabla v|^2}{v^2}\right).
\end{align*}
Upon a subsequence, as $\varepsilon=\varepsilon_j\rightarrow0$, it follows from the weak convergence \eqref{eq-dplim1} that
\begin{align*}
-\int_0^T\int_\Omega\frac{\nabla\varphi\cdot\nabla u_\varepsilon}{1+u_\varepsilon}\rightarrow-\int_0^T\int_\Omega\frac{\nabla\varphi\cdot\nabla u}{1+u},
\end{align*}
and from the weak convergence \eqref{eq-dplim5} and the strong convergence \eqref{eq-dpL12} that
\[
\chi\int_0^T\int_\Omega \frac{u_\varepsilon\nabla\varphi\cdot\nabla\ln v_\varepsilon}{1+u_\varepsilon}\rightarrow \chi\int_0^T\int_\Omega \frac{u\nabla\varphi\cdot\nabla\ln v}{1+u},
\]
as well as from \eqref{eq-dplim0}, \eqref{eq-dpL12}, \eqref{eq-dpL120} and  \eqref{eq-dpL11}   that
\[
\int_0^T\int_\Omega\frac{\varphi}{1+u_\varepsilon}\left(- u_\varepsilon v_\varepsilon+ru_\varepsilon-\mu u_\varepsilon^2+h_1\right)\rightarrow \int_0^T\int_\Omega\frac{\varphi}{1+u}\left(- u v+ru-\mu u^2+h_1\right).
\]
Similarly,  it holds from \eqref{eq-dplim5}, \eqref{eq-dplim30} and \eqref{eq-dplim0} that
\begin{align*}
\lim _{\varepsilon=\varepsilon_{j} \rightarrow 0} P_3=\frac{\chi^2}{4}\int_0^T\int_\Omega\left(-\frac{\nabla\varphi\cdot\nabla v}{v}
+\frac{\varphi}{v}\left(-v+uv+h_2\right)\right).
\end{align*}
Hence, by collecting these estimates, we arrive at  \eqref{eq-wdef2} in the Definition \ref{def-gs}, as desired.

Finally, we simply proceed along the lines of  \cite[Lemma 3.4]{LIXIE2022}  to establish   the validity of  \eqref{eq-wdef4}.
\end{proof}

\subsection{Eventual smoothness of generalized solutions}

 Here, our goal is  to study the eventual smoothness of
the generalized  solution $(u,v)$   in the two- and three-dimensional settings, under the additional assumptions \eqref{eq-iva1}  and \eqref{eq-iva2}.
We would like to remark that although our generalized solution framework in two-dimensional setting is different from that in \cite{Heihoff2020zamp}, by a small adaptation of  the proof of \cite[Theorem 1.1]{QLI2023ERA}, we can readily show  that such generalized solution is eventually smooth  in two-dimensional setting. Therefore, we here only show the eventual smoothness of generalized solutions in three-dimensional setting by
 beginning with the uniform in $ \varepsilon$ boundedness  of the approximate solutions after a waiting time.

\begin{lemma}\label{le-vLk1}
Let $n=3$ and $b$ be given in \eqref{eq-iva3}. Then for any $k_0\in(1,3)$ and $\varepsilon\in(0,1)$ there exists $C=C(k_0)>0$, independent of $\varepsilon$, such that
\begin{equation}\label{eq-vLk1}
\int_t^{t+1}\|v_\varepsilon(\cdot,s)-b\|_{L^{k_0}}^{\frac {k_0}3}ds\leq C,\quad \forall t>0.
\end{equation}
\end{lemma}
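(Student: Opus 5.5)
Since $b$ is a constant, we have $\|v_\varepsilon(\cdot,s)-b\|_{L^{k_0}}\le \|v_\varepsilon(\cdot,s)\|_{L^{k_0}}+b|\Omega|^{1/k_0}$. It therefore suffices to show
\[
\int_t^{t+1}\|v_\varepsilon(\cdot,s)\|_{L^{k_0}}^{k_0/3}\,ds\le C
\]
uniformly in $t>0$ and $\varepsilon\in(0,1)$. The plan is to obtain this from the uniform $L^1$-bound \eqref{eq-0ul11} on $v_\varepsilon$ together with a space–time gradient estimate for a small power of $v_\varepsilon$, in the spirit of the computation \eqref{eq-Lqve} from the proof of Lemma \ref{le-couvL1}. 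The one change is that the time integral is now taken over unit windows $(t,t+1)$ instead of $(0,t)$.

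The first step is the $L^q$-testing with $q\in(0,1)$. Using the $v_\varepsilon$-equation and the nonnegativity of $\frac{u_\varepsilon v_\varepsilon}{1+\varepsilon u_\varepsilon v_\varepsilon}$ and $h_2$, we get, exactly as in \eqref{eq-Lqve},
\[
\frac1q\frac{d}{dt}\int_\Omega v_\varepsilon^q+\int_\Omega v_\varepsilon^q\ge (1-q)\int_\Omega v_\varepsilon^{q-2}|\nabla v_\varepsilon|^2 .
\]
Integrating over $(t,t+1)$ and using $\int_\Omega v_\varepsilon^q\le |\Omega|^{1-q}\|v_\varepsilon\|_{L^1}^q\le C$ gives
\[
\int_t^{t+1}\int_\Omega|\nabla v_\varepsilon^{q/2}|^2\le C(q)\quad\text{for all } t>0 .
\]
The constant is independent of $\varepsilon$ and $t$, because the only ingredient, \eqref{eq-0ul11}, is uniform in both.

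The second step is a Gagliardo–Nirenberg interpolation for $w=v_\varepsilon^{q/2}$ with $n=3$. We have $\|v_\varepsilon\|_{L^{k_0}}=\|w\|_{L^{2k_0/q}}^{2/q}$ and $\|w\|_{L^{2/q}}=\|v_\varepsilon\|_{L^1}^{q/2}\le C$. Gagliardo–Nirenberg then yields
\[
\|w\|_{L^{2k_0/q}}\le C\|\nabla w\|_{L^2}^{\theta}\|w\|_{L^{2/q}}^{1-\theta}+C\|w\|_{L^{2/q}} .
\]
Here $\theta\in(0,1)$ is fixed by $\frac{q}{2k_0}=\theta\bigl(\frac12-\frac13\bigr)+(1-\theta)\frac q2$, so $\theta=\frac{q(1-1/k_0)}{q-1/3}$, which requires $q>1/3$. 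Consequently
\[
\|v_\varepsilon\|_{L^{k_0}}^{k_0/3}\le C\|\nabla w\|_{L^2}^{2\theta k_0/(3q)}+C .
\]
The last step, which I expect to be the only real constraint, is to choose $q$ so that the exponent satisfies $\frac{2\theta k_0}{3q}\le 2$, i.e. $\theta k_0\le 3q$. Then Young's inequality and the first step bound the time integral over $(t,t+1)$. Substituting $\theta$, the requirement becomes $\frac{k_0-1}{q-1/3}\le 3$, i.e. $q\ge \frac{k_0}{3}$. Since $k_0<3$, we may pick $q\in[\frac{k_0}{3},1)$ with $q>\frac13$ (note $\frac{k_0}3>\frac13$ because $k_0>1$), and both conditions hold. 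This is precisely where $k_0<3$ and the exponent $\frac{k_0}{3}$ in \eqref{eq-vLk1} come from. With this choice, $\int_t^{t+1}\|v_\varepsilon\|_{L^{k_0}}^{k_0/3}\le C\int_t^{t+1}\bigl(1+\|\nabla w\|_{L^2}^2\bigr)\le C$, and adding the constant contribution of $b$ completes the argument.
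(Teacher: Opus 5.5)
Your proposal is correct and follows essentially the paper's route. The paper integrates \eqref{eq-Lqve} over $(t,t+1)$ with the specific choice $q=\frac{k_0}{3}$ and then applies the 3D Sobolev embedding $\|v_\varepsilon^{q/2}\|_{L^6}^2\le C(\|\nabla v_\varepsilon^{q/2}\|_{L^2}+\|v_\varepsilon^{q/2}\|_{L^{2/q}})^2$, which is exactly your endpoint choice $q=\frac{k_0}{3}$. One small correction: at that endpoint your $\theta$ equals $1$, so it does not lie in $(0,1)$. In fact your exponent condition $q\ge\frac{k_0}{3}$ is the same as the admissibility condition $\theta\le 1$ for Gagliardo--Nirenberg, so nothing is lost.
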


\begin{proof}
For any $q:=\frac{k_0}{3}\in (\frac13,1)$,  integrating  \eqref{eq-Lqve}  from $t$ to $t+1$ and using H\"{o}lder's inequality and the uniform $L^1$-norm of $v_\varepsilon$ in \eqref{eq-0ul11},   we discover
\begin{align*}
 \int_t^{t+1}\int_\Omega |\nabla v_{\varepsilon}^{\frac q2}|^2\leq C,\ \ \  \forall  t>0.
\end{align*}
Then, by the 3D  Poinca\'{e}-Sobolev's inequality
\begin{align*}
\left\|v_{\varepsilon}\right\|_{L^{3q}}^q=\left\|v_{\varepsilon}^{\frac q2}\right\|_{L^6}^2\leq C\left(\left\|\nabla v_{\varepsilon}^{\frac q2}\right\|_{L^2}+\left\|v_{\varepsilon}^{\frac q2}\right\|_{L^\frac2q}\right)^2,
\end{align*}
the uniform $L^1$-bound of $v_\varepsilon$ \eqref{eq-0ul11} implies
\begin{align*}
\int_t^{t+1}\left\|v_{\varepsilon}\right\|_{L^{3q}}^{q}\leq C, \ \  \  \forall t>0.
\end{align*}
Therefore,
\begin{align*}
\int_t^{t+1}\left\|v_\varepsilon-b\right\|_{L^{3q}}^q\leq \int_t^{t+1}\left(\left\|v_\varepsilon\right\|_{L^{3q}}^q+b^q|\Omega|^{\frac13}\right)\leq C, \ \ \    \forall t>0.
\end{align*}
yielding \eqref{eq-vLk1} directly.
\end{proof}

The following  lemma provides some useful information on
the large-time behavior of solutions.

\begin{lemma}\label{le-ul2int}
Let all conditions in Lemma \ref{le-vLk1}, \eqref{eq-iva1}   and \eqref{eq-iva2} be in force, and let $r<\widetilde{\eta}$ with $\widetilde{\eta}$ given in \eqref{eq-0bddca} and $k\in(1,k_0)$ with $k_0$ given in Lemma \ref{le-vLk1}. Then when $t\rightarrow \infty$, it holds that
\begin{equation}\label{eq-xld}
\int_\Omega u_\varepsilon+\int_t^{t+1}\left(\int_\Omega u^2_\varepsilon+ \|v_\varepsilon-b\|_{L^k}^{\frac{k_0}{3}}\right)\rightarrow0
\end{equation}
uniformly in $\varepsilon$.
Consequently, for any $\delta>0$,  there exist $T_\delta=T(\delta)>0$ (large enough, independent of $\varepsilon$) and $t_0\in[T_\delta,T_\delta+1]$ (may depend on $\varepsilon$)  such that
\begin{align}\label{eq-xlds}
\int_\Omega u^2_\varepsilon(\cdot,t_0)+\|v_\varepsilon(\cdot,t_0)-b\|_{L^{k}}^{\frac{k_0}{3}}\leq \delta.
\end{align}
\end{lemma}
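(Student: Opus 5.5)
We plan to prove the statement in three steps: first the decay of the $u_\varepsilon$-quantities, then the decay of the $v_\varepsilon$-quantity by interpolation, and finally the choice of $t_0$ by a mean-value argument.

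\emph{Step 1: the $u_\varepsilon$-quantities.} We integrate the first equation of \eqref{eq-app} over $\Omega$ and use the uniform lower bound $v_\varepsilon\geq\widetilde\eta$ from \eqref{eq-0bddca}, which holds under \eqref{eq-iva1}. Writing $y_\varepsilon(t):=\int_\Omega u_\varepsilon$ and $\gamma:=\widetilde\eta-r>0$, identity \eqref{eq-xlj} gives
\begin{equation*}
y_\varepsilon'(t)+\gamma y_\varepsilon(t)+\mu\int_\Omega u_\varepsilon^2\leq \int_\Omega h_1(\cdot,t).
\end{equation*}
Variation of constants yields
\begin{equation*}
y_\varepsilon(t)\leq e^{-\gamma t}\int_\Omega u_0+\int_0^t e^{-\gamma(t-s)}\int_\Omega h_1(\cdot,s)\,ds.
\end{equation*}
The right-hand side is $\varepsilon$-independent and tends to $0$. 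For the convolution term this follows from \eqref{eq-iva2}: split the integral at $t/2$, use $\|h_1\|_{L^1(\Omega\times(0,\infty))}<\infty$ on the first half, and use the tail $\int_{t/2}^\infty\int_\Omega h_1\to0$ on the second half. Integrating the differential inequality over $(t,t+1)$ then gives
\begin{equation*}
\mu\int_t^{t+1}\int_\Omega u_\varepsilon^2\leq y_\varepsilon(t)+\int_t^{t+1}\int_\Omega h_1\to0
\end{equation*}
uniformly in $\varepsilon$.

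\emph{Step 2: the $v_\varepsilon$-quantity.} Here we need $\int_t^{t+1}\|v_\varepsilon-b\|_{L^k}^{k_0/3}\to0$, and we plan to get it by interpolating between $L^1$ and $L^{k_0}$. Set $w:=v_\varepsilon-b$ and interpolate $\|w\|_{L^k}\leq\|w\|_{L^1}^{\theta}\|w\|_{L^{k_0}}^{1-\theta}$ with $\theta\in(0,1)$. Raise this to the power $k_0/3$ and apply H\"older in time with exponents $\frac1\theta$ and $\frac1{1-\theta}$. The factor involving $\|w\|_{L^{k_0}}^{k_0/3}$ is bounded uniformly by Lemma \ref{le-vLk1}, so it remains to show $\int_t^{t+1}\|v_\varepsilon-b\|_{L^1}^{k_0/3}\to0$. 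By Hölder in time it suffices to show $\int_t^{t+1}\|v_\varepsilon-b\|_{L^1}\to0$. To that end we test the $v_\varepsilon$-equation by $\mathrm{sign}(v_\varepsilon-b)$, or more precisely by a smooth approximation of it, and use Kato's inequality:
\begin{equation*}
\frac{d}{dt}\int_\Omega|v_\varepsilon-b|+\int_\Omega|v_\varepsilon-b|\leq \int_\Omega u_\varepsilon v_\varepsilon+\int_\Omega|h_2-b|.
\end{equation*}
The term $\int_\Omega|h_2-b|$ is controlled via \eqref{eq-iva3}, after applying Cauchy--Schwarz, in the time-averaged sense. The term $\int_t^{t+1}\int_\Omega u_\varepsilon v_\varepsilon$ tends to $0$ uniformly by integrating \eqref{eq-xlj} over $(t,t+1)$:
\begin{equation*}
\int_t^{t+1}\int_\Omega u_\varepsilon v_\varepsilon\leq y_\varepsilon(t)+|r|\int_t^{t+1}y_\varepsilon+\int_t^{t+1}\int_\Omega h_1\to0,
\end{equation*}
using Step 1. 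A Gronwall argument with the forcing decaying in this time-averaged sense then gives $\int_t^{t+1}\|v_\varepsilon-b\|_{L^1}\to0$ uniformly in $\varepsilon$.

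\emph{Step 3: choosing $t_0$.} Given $\delta>0$, the decay from Steps 1--2 provides $T_\delta$ such that the time integral over $[T_\delta,T_\delta+1]$ of the nonnegative function $\int_\Omega u_\varepsilon^2+\|v_\varepsilon-b\|_{L^k}^{k_0/3}$ is at most $\delta$. Since the integral over an interval of length one does not exceed $\delta$, there is a $t_0\in[T_\delta,T_\delta+1]$ at which the integrand is at most $\delta$; this gives \eqref{eq-xlds}. Note that $t_0$ may depend on $\varepsilon$, as allowed in the statement.

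The main obstacle is Step 2. The decay of $\|v_\varepsilon-b\|_{L^1}$ is only available in a time-averaged form, and the interpolation exponents must be compatible with the power $k_0/3<1$ that appears in Lemma \ref{le-vLk1}. Both Hölder steps have to be checked for concavity in the time variable so that they remain valid with this sublinear power.
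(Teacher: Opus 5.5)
Your proposal is correct and follows the paper's proof. The shared steps are: decay of $\int_\Omega u_\varepsilon$ and of $\int_t^{t+1}\int_\Omega u_\varepsilon^2$ from the integrated $u_\varepsilon$-equation with $v_\varepsilon\geq\widetilde\eta$; the $L^1$--$L^{k_0}$ interpolation combined with H\"older/Jensen in time (using $k_0/3<1$ and Lemma \ref{le-vLk1}); and the mean-value choice of $t_0$. The one difference is in Step 2: you derive $\int_t^{t+1}\|v_\varepsilon-b\|_{L^1}\to0$ yourself, via a sign-test/Kato estimate plus Gronwall, relying on \eqref{eq-iva3}, which the statement implicitly assumes through Lemma \ref{le-vLk1}. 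The paper instead outsources this step to a cited lemma from earlier work.
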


\begin{proof}
Notice that $v_\varepsilon\geq \widetilde{\eta}$ by  \eqref{eq-0bddca}, upon integrating the $u_\varepsilon$-equation, we deduce from \eqref{eq-xlj} that
\begin{align}\label{eq-uL2s}
\frac{d}{dt}\int_\Omega u_\varepsilon + (\widetilde{\eta}-r)\int_\Omega u_\varepsilon  +\mu  \int_\Omega u_\varepsilon^2 \leq\int_\Omega h_1,
\end{align}
which, by  $\int_\Omega h\in L^\infty([0,\infty))$ due to \eqref{eq-iva2} and  $\widetilde{\eta}-r>0$, yields  (see \cite[Lemma 2.5]{LX2023M3as} for more details)
\begin{equation}\label{eq-uL2s1}
\int_\Omega u_\varepsilon(\cdot,t)\rightarrow0 \quad\textrm{as}\quad t\rightarrow \infty\quad \mathrm{uniformly\,\, in}\,\, \varepsilon.
\end{equation}
Integrating \eqref{eq-uL2s} once again further shows
\begin{align*}
 \int_\Omega u_\varepsilon(\cdot,t+1)+\mu  \int_t^{t+1}\int_\Omega u_\varepsilon^2 \leq\int_\Omega u_\varepsilon(\cdot,t) +\int_t^{t+1}\int_\Omega h_1,
\end{align*}
which, by means of \eqref{eq-uL2s1}  and \eqref{eq-iva2}, leads to
\begin{align}\label{eq-uL2v}
 \int_t^{t+1}\int_\Omega u_\varepsilon^2\rightarrow0 \quad\textrm{as}\quad t\rightarrow \infty\quad \mathrm{uniformly\,\, in}\,\, \varepsilon.
\end{align}

Then applying the arguments in \cite[Lemma 4.5]{LIXIE2022} to the $v_\varepsilon$-equation, we get
\[
\int_t^{t+1}\|v_\varepsilon-b\|_{L^1}ds\to 0 \quad\textrm{as}\quad t\rightarrow \infty\quad \mathrm{uniformly\,\, in}\,\, \varepsilon.
\]
Now,  for  $k\in(1,k_0)$, by the interpolation inequality
\[
\|v_\varepsilon-b\|_{L^k}^{\frac{k_0}{3}}\leq \|v_\varepsilon-b\|_{L^1}^{\frac{k_0(1-\theta)}{3}}\|v_\varepsilon-b\|_{L^{k_0}}^{\frac{k_0\theta}{3}},\quad \quad \theta=\frac{k_0(k-1)}{k(k_0-1)},
\]
 we use  Lemma \ref{le-vLk1} and H\"{o}lder's inequality as well as the fact that $\frac{k_0}{3}<1$ to see that
  \begin{align*}
\int_{t}^{t+1}\|v_\varepsilon-b\|_{L^k}^{\frac{k_0}{3}}ds\leq& \left(\int_{t}^{t+1}\|v_\varepsilon-b\|_{L^1}^{\frac{k_0}{3}}ds\right)^{1-\theta}\left(\int_{t}^{t+1}\|v_\varepsilon-b\|_{L^{k_0}}^{\frac{k_0}{3}}ds\right)^\theta\\
\leq&C \left(\int_{t}^{t+1}\|v_\varepsilon-b\|_{L^1}^{\frac{k_0}{3}}ds\right)^{1-\theta}\\
\leq & C\left(\int_{t}^{t+1}\|v_\varepsilon-b\|_{L^1}ds\right)^{\frac{k_0(1-\theta)}{3}} \rightarrow0 \quad\textrm{as}\quad t\rightarrow \infty\quad \mathrm{uniformly\,\, in}\,\, \varepsilon.
\end{align*}
This together with \eqref{eq-uL2s1} and \eqref{eq-uL2v} verifies
 \eqref{eq-xld}.
 Consequently,   for any $\delta>0$, there exists $T_\delta>0$ (large enough, independent of $\varepsilon$) such that
  \begin{align*}
\int_{T_\delta}^{T_\delta+1}\int_\Omega u_\varepsilon^2 + \int_{T_\delta}^{T_\delta+1}\|v_\varepsilon(\cdot,s)-b\|_{L^{k}}^{\frac{k_0}{3}}ds\leq \delta,
\end{align*}
 which upon the use of mean value theorem   directly gives rise to  \eqref{eq-xlds}.
\end{proof}

For our lateral purpose,  we shall first  derive a crucial $L^2$-estimate of $u_{\varepsilon}$  as follows.

\begin{lemma}\label{le-gu2L2} Let all conditions in Lemma \ref{le-vLk1}, \eqref{eq-iva1}   and \eqref{eq-iva2} be in force, and let $r<\widetilde{\eta}$ with $\widetilde{\eta}$ given in \eqref{eq-0bddca}.  For $q\in(3,6)$  there exists  $C=C(q)>0$  such that, for any $\varepsilon\in (0,1)$ and $t_1> 0$,
\begin{equation}\label{eq-jgu2L2}
\begin{split}
\int_\Omega u_{\varepsilon}^2(\cdot,t) &\leq e^{-(\widetilde{\eta}-r)(t-t_1)}\int_\Omega u_{\varepsilon}^2(\cdot,t_1)+C\int_{t_1}^t\int_\Omega h_1\\[0.25cm]
&\ \ +C\left(1+\sup_{s\in (t_1, t)} \|\nabla v_\varepsilon(\cdot,s)\|_{L^q}^{\frac{2q}{q-3}}\right)\int_{t_1}^t\int_\Omega u_\varepsilon^2,\quad \quad  \forall t\geq t_1.
\end{split}
\end{equation}
\end{lemma}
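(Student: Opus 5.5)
We will test the $u_\varepsilon$-equation in \eqref{eq-app} by $u_\varepsilon$ and work on $(t_1,t)$. Integrating by parts gives
\begin{align*}
\frac12\frac{d}{dt}\int_\Omega u_\varepsilon^2+\int_\Omega|\nabla u_\varepsilon|^2+\int_\Omega u_\varepsilon^2v_\varepsilon+\mu\int_\Omega u_\varepsilon^3
=\chi\int_\Omega \frac{u_\varepsilon}{v_\varepsilon}\nabla u_\varepsilon\cdot\nabla v_\varepsilon+r\int_\Omega u_\varepsilon^2+\int_\Omega h_1u_\varepsilon .
\end{align*}
By \eqref{eq-0bddca} we have $v_\varepsilon\ge\widetilde\eta$ uniformly in $\varepsilon$ and $t$. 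Hence $\int_\Omega u_\varepsilon^2v_\varepsilon-r\int_\Omega u_\varepsilon^2\ge(\widetilde\eta-r)\int_\Omega u_\varepsilon^2$, and this supplies the exponential damping rate. For the $h_1$-term we only have $h_1\in L^\infty$, and we need a bound that is linear in $\int h_1$. We write $h_1u_\varepsilon\le \|h_1\|_{L^\infty}^{1/2}h_1^{1/2}u_\varepsilon\le \frac{\mu}{2}u_\varepsilon^3+C h_1^{3/4}$, which is not quite linear. An easier route is $h_1u_\varepsilon\le \frac{\widetilde\eta-r}{2}u_\varepsilon^2+Ch_1^2\le \frac{\widetilde\eta-r}{2}u_\varepsilon^2+C\|h_1\|_{L^\infty}h_1$. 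This gives a term $C\int_\Omega h_1$ at the cost of half the damping. To keep the exact rate $\widetilde\eta-r$ stated in \eqref{eq-jgu2L2}, we instead bound $h_1u_\varepsilon\le Ch_1+Ch_1u_\varepsilon^2$ by splitting according to $u_\varepsilon\le1$ or $u_\varepsilon>1$. The extra piece $\|h_1\|_{L^\infty}\int u_\varepsilon^2$ is then absorbed into the last term on the right of \eqref{eq-jgu2L2}, since that term already carries the factor $C\int_{t_1}^t\int_\Omega u_\varepsilon^2$.

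The key step is the chemotactic term. Using $v_\varepsilon\ge\widetilde\eta$ and Hölder with exponents $2$, $\frac{2q}{q-2}$ and $q$, we get
\begin{align*}
\chi\int_\Omega \frac{u_\varepsilon}{v_\varepsilon}\nabla u_\varepsilon\cdot\nabla v_\varepsilon\le \frac12\|\nabla u_\varepsilon\|_{L^2}^2+C\|\nabla v_\varepsilon\|_{L^q}^2\|u_\varepsilon\|_{L^{\frac{2q}{q-2}}}^2 .
\end{align*}
Since $q\in(3,6)$, we have $\frac{2q}{q-2}\in(3,6)$. The 3D Gagliardo--Nirenberg inequality gives $\|u_\varepsilon\|_{L^{2q/(q-2)}}\le C\|\nabla u_\varepsilon\|_{L^2}^{a}\|u_\varepsilon\|_{L^2}^{1-a}+C\|u_\varepsilon\|_{L^2}$ with $a=\frac3q$, since $\frac{q-2}{2q}=\frac12-\frac a3$. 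Because $2a<2$, Young's inequality yields
\begin{align*}
C\|\nabla v_\varepsilon\|_{L^q}^2\|u_\varepsilon\|_{L^{\frac{2q}{q-2}}}^2\le\frac12\|\nabla u_\varepsilon\|_{L^2}^2+C\left(\|\nabla v_\varepsilon\|_{L^q}^{\frac{2}{1-a}}+\|\nabla v_\varepsilon\|_{L^q}^2\right)\int_\Omega u_\varepsilon^2 .
\end{align*}
Here $\frac{2}{1-a}=\frac{2q}{q-3}$, which is exactly the exponent in \eqref{eq-jgu2L2}, and $\|\nabla v_\varepsilon\|_{L^q}^2\le 1+\|\nabla v_\varepsilon\|_{L^q}^{2q/(q-3)}$.

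Putting these bounds together, we arrive at
\begin{align*}
\frac{d}{dt}\int_\Omega u_\varepsilon^2+(\widetilde\eta-r)\int_\Omega u_\varepsilon^2\le C\int_\Omega h_1+C\left(1+\sup_{s\in(t_1,t)}\|\nabla v_\varepsilon(\cdot,s)\|_{L^q}^{\frac{2q}{q-3}}\right)\int_\Omega u_\varepsilon^2
\end{align*}
on $(t_1,t)$. If we instead keep the factor $2$ from the $\frac12\frac{d}{dt}$, the damping rate is $2(\widetilde\eta-r)$, which is at least $\widetilde\eta-r$. We then multiply by $e^{(\widetilde\eta-r)s}$, integrate over $(t_1,t)$, and bound $e^{-(\widetilde\eta-r)(t-s)}\le1$ inside the integrals; this gives \eqref{eq-jgu2L2}.

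The main obstacle is the chemotactic term: we have to close it without any a priori control of $\|u_\varepsilon\|_{L^\infty}$ and keep the dependence on $\nabla v_\varepsilon$ at exactly the power $\frac{2q}{q-3}$. This works only because $q>3$, which makes the Gagliardo--Nirenberg exponent $a=\frac3q<1$. The $h_1$ splitting is a minor technical point.
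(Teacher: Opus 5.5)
Your proposal is correct and follows the paper's proof essentially step by step: you test with $u_\varepsilon$, use $v_\varepsilon\ge\widetilde\eta$ to get the damping $(\widetilde\eta-r)$, apply H\"{o}lder with exponents $2,\frac{2q}{q-2},q$, then the 3D Gagliardo--Nirenberg inequality with $\vartheta=\frac3q$ and Young to produce the power $\frac{2}{1-\vartheta}=\frac{2q}{q-3}$, and finally integrate the linear ODE using $e^{-(\widetilde\eta-r)(t-s)}\le 1$. The only deviation is the $h_1$-term: the paper bounds it by $\|u_\varepsilon\|_{L^2}\|h_1\|_{L^2}$, applies Young (spending part of the doubled damping you also noticed is available), and uses $\|h_1\|_{L^2}^2\le\|h_1\|_{L^1}\|h_1\|_{L^\infty}$, whereas your pointwise splitting $h_1u_\varepsilon\le h_1+\|h_1\|_{L^\infty}u_\varepsilon^2$ is equally valid.
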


\begin{proof}
A direct computations from the $u_\varepsilon$-equation shows that
 \begin{align*}
&\frac{1}{2}\frac{d}{dt}\int_\Omega u^2_{\varepsilon}  + \int_\Omega  |\nabla u_{\varepsilon}|^2+\int_\Omega u^2_{\varepsilon} v_{\varepsilon} +\mu\int_\Omega u^3_{\varepsilon} \\
=&\chi\int_\Omega u_{\varepsilon}\nabla u_{\varepsilon}\cdot v_{\varepsilon}^{-1}\nabla v_{\varepsilon}  +r\int_\Omega u^2_{\varepsilon}+\int_\Omega u_{\varepsilon} h_1,
\end{align*}
which, by fact that $v_{\varepsilon}\geq \widetilde{\eta}$ on $\Omega\times(0, \infty)$, implies, for $q\in(3,6)$, that
 \begin{align*}
&\frac{1}{2}\frac{d}{dt}\int_\Omega u^2_{\varepsilon}  + \int_\Omega  |\nabla u_{\varepsilon}|^2+(\widetilde{\eta}-r)\int_\Omega u^2_{\varepsilon} +\mu\int_\Omega u^3_{\varepsilon} \\
\leq&\chi\int_\Omega u_{\varepsilon}\nabla u_{\varepsilon}\cdot v_{\varepsilon}^{-1}\nabla  v_{\varepsilon}  +\int_\Omega u_{\varepsilon} h_1\\
\leq &\frac{\chi}{\widetilde{\eta}}\|u_{\varepsilon}\|_{L^{\frac{2q}{q-2}}}\|\nabla u_{\varepsilon}\|_{L^2}\|\nabla v_{\varepsilon}\|_{L^{q}}+\|u_{\varepsilon}\|_{L^2}\|h_1\|_{L^2}.
\end{align*}
 Then  Young's inequality  and H\"{o}lder's inequality warrant further
 \begin{align*}
\frac{d}{dt}\int_\Omega u^2_{\varepsilon} + \int_\Omega  |\nabla u_{\varepsilon}|^2+(\widetilde{\eta}-r)\int_\Omega u^2_{\varepsilon}
\leq &\frac{\chi^2}{\eta^2}\|\nabla v_\varepsilon\|_{L^q}^2\|u_{\varepsilon}\|_{L^{\frac{2q}{q-2}}}^2 +\frac{\|h_1\|_{L^2}^2}{\widetilde{\eta}-r},\quad \forall t>0.
\end{align*}
We employ the 3D  Gagliardo-Nirenberg inequality and the Young inequality to estimate
 \begin{align*}
\frac{\chi^2}{\eta^2}\|\nabla v_\varepsilon\|_{L^q}^2\|u_{\varepsilon}\|_{L^{\frac{2q}{q-2}}}^2
&\leq C_1\|\nabla v_\varepsilon\|_{L^q}^2\left(\|u_{\varepsilon}\|_{L^2}^{2(1-\vartheta)}\|\nabla u_{\varepsilon}\|_{L^2}^{2\vartheta}+\|u_{\varepsilon}\|_{L^2}^2\right), \ \ \  \vartheta=\frac{3}{q}\\
&\leq \|\nabla u_{\varepsilon}\|_{L^2}^2+C_2\left(\|\nabla v_\varepsilon\|_{L^q}^2+ \|\nabla v_\varepsilon\|_{L^q}^{\frac{2}{1-\vartheta}}\right)\|u_{\varepsilon}\|_{L^2}^2\\
&\leq \|\nabla u_{\varepsilon}\|_{L^2}^2+C_3\left(1+ \|\nabla v_\varepsilon\|_{L^q}^{\frac{2}{1-\vartheta}}\right)\|u_{\varepsilon}\|_{L^2}^2.
\end{align*}
Since $\|h_1\|_{L^2}^2\leq \|h_1\|_{L^1} \|h_1\|_{L^\infty}$,  collecting these estimates,  with $a=\widetilde{\eta}-r>0$,  we obtain
 \begin{align*}
\frac{d}{dt}\int_\Omega u^2_{\varepsilon}  +a\int_\Omega u^2_{\varepsilon}
\leq b(t):=C_4\left(1+ \|\nabla v_\varepsilon\|_{L^q}^{\frac{2}{1-\vartheta}}\right)\|u_{\varepsilon}\|_{L^2}^2 +C_4\|h_1\|_{L^1},\quad \forall t>0.
\end{align*}
Multiplying this by the factor $e^{at}$ and integrating from $t_1$ to $t$, we use the fact $a>0$ to get
 \begin{align*}
\int_\Omega u^2_{\varepsilon}(\cdot,t)\leq& e^{-a(t-t_1)}\int_\Omega u^2_{\varepsilon}(\cdot,t_1) +e^{-at}\int_{t_1}^te^{as}b(s)ds\\
\leq & e^{-a(t-t_1)}\int_\Omega u^2_{\varepsilon}(\cdot,t_1)+\int_{t_1}^tb(s)ds,\quad \forall t\geq t_1.
\end{align*}
Notice  that
\[
\int_{t_1}^tb(s)ds\leq C_4\left(1+\sup_{s\in (t_1, t)} \|\nabla v_\varepsilon(\cdot,s)\|_{L^q}^{\frac{2}{1-\vartheta}}\right)\int_{t_1}^t\int_\Omega u_\varepsilon^2+C_4\int_{t_1}^t\int_\Omega h_1.
\]
Then substituting back the definitions of $a$ and $\vartheta$, we immediately obtain \eqref{eq-jgu2L2},  as desired.
\end{proof}

Given $\varepsilon\in(0,1)$ and  $\delta\in (0,1]$, for  $t_0$  determined by \eqref{eq-xlds} in Lemma \ref{le-ul2int}, define
\begin{align}\label{eq-gu2L2a}
T_\varepsilon:=T_\varepsilon(t_0, \delta):=\sup\left\{t:\sup_{t_0\leq s\le t}\left\|u_\varepsilon(\cdot,s)\right\|_{L^2}^2\leq 4\delta\right\}.
\end{align}
It follows from continuity  that $T_\varepsilon>t_0$. Our main concern is to show  $T_\varepsilon=\infty$ by contradictory argument.  We will make good use of  \eqref{eq-jgu2L2} in Lemma \ref{le-gu2L2}. To this end, we  next  derive an $L^q$- bound of $\nabla v_\varepsilon(\cdot,t)$ for  $q\in (3,6)$ in the interval $(t_0, T_\varepsilon)$.

\begin{lemma}\label{le-gvL2}
For $\varepsilon\in (0,1)$ and $\delta\in (0,1]$, let $t_0$, determined by \eqref{eq-xlds} in Lemma \ref{le-ul2int}, be large enough,  and let $T_\varepsilon$ be defined by \eqref{eq-gu2L2a}. Given $k\in (1, k_0)$ with $k_0$ given in Lemma \ref{le-vLk1}, for any $q\in(3,6)$  there exists  $C=C(q)>0$  such that
\begin{align}\label{eq-gvL2}
\|\nabla v_\varepsilon(\cdot,t)\|_{L^{q}} \leq C\delta^\frac12\left(1+\left(t-t_0\right)^{-\frac12-\frac32(\frac1k-\frac{1}{q})} e^{-(t-t_0)}\right),\quad  \forall t\in(t_0,T_\varepsilon).
\end{align}
As a result, if $T_\varepsilon<\infty$, then \eqref{eq-gvL2} is also true for $t=T_\epsilon$.
\end{lemma}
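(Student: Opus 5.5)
We start from the variation-of-constants representation of the $v_\varepsilon$-equation on $(t_0,t)$, written for the deviation $v_\varepsilon-b$:
\[
v_\varepsilon(\cdot,t)-b=e^{(t-t_0)(\Delta-1)}\bigl(v_\varepsilon(\cdot,t_0)-b\bigr)+\int_{t_0}^t e^{(t-s)(\Delta-1)}\Bigl(\frac{u_\varepsilon v_\varepsilon}{1+\varepsilon u_\varepsilon v_\varepsilon}+h_2-b\Bigr)ds .
\]
Since $\nabla$ annihilates constants, $\nabla v_\varepsilon(\cdot,t)$ is the gradient of the right-hand side. We then apply the $L^k$–$W^{1,q}$ smoothing estimate of the Neumann heat semigroup,
\[
\|\nabla e^{\tau(\Delta-1)}w\|_{L^q}\le C\bigl(1+\tau^{-\frac12-\frac32(\frac1k-\frac1q)}\bigr)e^{-\tau}\|w\|_{L^k}.
\]
For the initial term we use \eqref{eq-xlds}. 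It gives $\|v_\varepsilon(\cdot,t_0)-b\|_{L^k}\le \delta^{3/k_0}\le\delta^{1/2}$, since $\delta\le1$ and $3/k_0>1$. This already produces the singular part $C\delta^{1/2}(t-t_0)^{-\frac12-\frac32(\frac1k-\frac1q)}e^{-(t-t_0)}$ in \eqref{eq-gvL2}. The exponent is $<1$ for $k$ close to $k_0$ and $q<6$; if it is not, we shrink the exponent using the smoothing in two steps.

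For the Duhamel integral we split the source. The nonlinear part satisfies $\frac{u_\varepsilon v_\varepsilon}{1+\varepsilon u_\varepsilon v_\varepsilon}\le u_\varepsilon v_\varepsilon$. On $(t_0,T_\varepsilon)$ we know $\|u_\varepsilon\|_{L^2}\le 2\delta^{1/2}$, so we aim to estimate it in $L^{p}$ for some $p<2$ close to $2$. By H\"older, $\|u_\varepsilon v_\varepsilon\|_{L^p}\le\|u_\varepsilon\|_{L^2}\|v_\varepsilon\|_{L^{2p/(2-p)}}$. Here a bound on $v_\varepsilon$ in a high Lebesgue norm is required. We obtain it by a bootstrap inside the same Duhamel formula: first $L^\infty$-type bounds of $v_\varepsilon$ from $\|u_\varepsilon v_\varepsilon\|_{L^p}\le 2\delta^{1/2}\|v_\varepsilon\|_{L^\infty}$ with $p$ close to $2>3/2$, so that $\frac{3}{2p}<1$. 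With $M(t):=\sup_{(t_0,t)}\|v_\varepsilon\|_{L^\infty}$ and $\delta\le1$ small, the argument of Lemma~\ref{le-inequ4} gives $M\le C+C\delta^{1/2}M$, hence $M\le C$ uniformly in $\varepsilon$ once $\delta$ is small. Otherwise we use the sublinear interpolation against the uniform $L^1$ bound \eqref{eq-0ul11} exactly as in Lemma~\ref{le-inequ4}. The initial contribution to $\|v_\varepsilon\|_{L^\infty}$ would require $v_\varepsilon(t_0)\in L^\infty$. To avoid this, we bound $\sup\|v_\varepsilon\|_{L^\infty}$ only after a unit time, or we work instead with an $L^\rho$ norm, $\rho$ large, reached from $L^k$ by smoothing with an integrable singularity. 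This bookkeeping is the main obstacle.

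With $\|v_\varepsilon\|$ controlled, we get $\|u_\varepsilon v_\varepsilon\|_{L^p}\le C\delta^{1/2}$. Since $q<6$, we can choose $p<2$ with $\frac12+\frac32(\frac1p-\frac1q)<1$, so the integral of the nonlinear part is at most $C\delta^{1/2}\int_0^\infty(1+\tau^{-\gamma})e^{-\tau}d\tau$.

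For the remaining part $h_2-b$ we need smallness as well. Taking $t_0$ large (as the statement permits), we would use \eqref{eq-iva3}. Via H\"older in time against the integrable kernel, as in Lemma~\ref{le-smdeu0}, we bound it by $C\bigl(\int_{t_0}^\infty\|h_2-b\|_{L^2}^2\bigr)^{1/2}$. We also interpolate $L^2$ against $L^\infty$ when a higher $L^p$ is needed. This quantity is $\le\delta^{1/2}$ for $t_0\ge T_\delta$ large. Adding the three pieces yields \eqref{eq-gvL2}. If $T_\varepsilon<\infty$, continuity of $t\mapsto\nabla v_\varepsilon(\cdot,t)$ in $L^q$ (Lemma~\ref{le-glex}) extends the bound to $t=T_\varepsilon$.
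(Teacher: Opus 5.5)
Your outline agrees with the paper on the routine parts: the Duhamel formula for $v_\varepsilon-b$ started at $t_0$, the bound $\|v_\varepsilon(\cdot,t_0)-b\|_{L^k}\le\delta^{3/k_0}\le\delta^{1/2}$ from \eqref{eq-xlds}, the bound $\|u_\varepsilon\|_{L^2}\le 2\delta^{1/2}$ on $(t_0,T_\varepsilon)$, and smallness of the $h_2-b$ part for large $t_0$. However, the step you call ``the main obstacle'' is the real content of the lemma, and it is not resolved.

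With your H\"older splitting you need $v_\varepsilon$ bounded, uniformly in $\varepsilon$, in $L^\rho$ with $\rho=\frac{2p}{2-p}>\frac{6q}{6-q}>6$. This range is forced by the requirement $\frac12+\frac32(\frac1p-\frac1q)<1$. The bound must hold at all $s\in(t_0,t)$, including $s$ near $t_0$. The reason is that the Duhamel integral runs over that whole range, and \eqref{eq-gvL2} is needed for every $t\in(t_0,T_\varepsilon)$, where $T_\varepsilon-t_0$ may be less than $1$. In Claim~\ref{le-claim} it is used on $(t_1,T_{\varepsilon_0})$ with $t_1=\frac12(t_0+T_{\varepsilon_0})$.

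None of your three options supplies this bound:
\begin{itemize}
\item The bootstrap you write for $M(t)=\sup_{(t_0,t)}\|v_\varepsilon\|_{L^\infty}$ fails, as you note yourself, because $\|v_\varepsilon(\cdot,t_0)\|_{L^\infty}$ is not controlled uniformly in $\varepsilon$.
\item Working only after time $t_0+1$ ignores the initial layer, which is exactly where the difficulty sits.
\item The $L^\rho$ variant produces the factor $(s-t_0)^{-\frac32(\frac1k-\frac1\rho)}$. This is integrable only when $\frac1k<\frac12+\frac1q$, a compatibility condition you do not check. Even when it holds, the self-referential bound for $\|v_\varepsilon\|_{L^\rho}$ has to be run with a time-weighted supremum, plus a sublinear interpolation against \eqref{eq-0ul11} when $\delta$ is not small. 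None of this is set up.
\end{itemize}

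The idea you are missing, and the reason the lemma assumes $q>3$, is that no separate bound on $v_\varepsilon$ is needed. The paper puts the source in $L^2$, exactly the norm in which $u_\varepsilon$ is small. It then uses Gagliardo--Nirenberg together with the uniform $L^1$ bound \eqref{eq-0ul11}:
\[
\|v_\varepsilon\|_{L^\infty}\le C\|v_\varepsilon\|_{L^1}^{1-\theta}\|\nabla v_\varepsilon\|_{L^q}^{\theta}+C\|v_\varepsilon\|_{L^1}\le C\bigl(\|\nabla v_\varepsilon\|_{L^q}^{\theta}+1\bigr),\qquad \theta=\frac{3q}{4q-3}\in(0,1).
\]
The $L^2\to\nabla L^q$ kernel $(t-s)^{-\frac54+\frac{3}{2q}}$ is integrable because $q<6$. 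Hence the Duhamel estimate becomes a sublinear inequality for $\|\nabla v_\varepsilon\|_{L^q}$ itself. It closes for every $\delta\in(0,1]$ and yields the factor $\delta^{1/2}$.

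The initial-layer issue you spotted is present in the paper's argument too. There the supremum should carry the weight $\min\{1,(t-t_0)^{\gamma}\}$ with $\gamma=\frac12+\frac32(\frac1k-\frac1q)$. This closes when $\gamma\theta<1$, for example for $(k,q)=(2,4)$ as used in Claim~\ref{le-claim}.

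Finally, your bound $C\bigl(\int_{t_0}^\infty\|h_2-b\|_{L^2}^2\bigr)^{1/2}$ is not what H\"older in time gives. With $\|h_2-b\|_{L^2}$ the kernel $(t-s)^{-\frac54+\frac{3}{2q}}$ is not square integrable near $0$ when $q>3$. You need a small power of $\|h_2-b\|_{L^2}$, as in the paper's $\alpha$-trick or in Lemma~\ref{le-smdeu0}.
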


\begin{proof}
For $\varepsilon\in (0,1)$, notice from the $v_\varepsilon$-equation in \eqref{eq-app} that
\begin{align*}
(v_\varepsilon-b)_t=\Delta (v_{\varepsilon}-b)-(v_{\varepsilon}-b)+\frac{u_{\varepsilon} v_{\varepsilon}}{1+\varepsilon u_{\varepsilon} v_{\varepsilon}}+(h_2-b).
\end{align*}
 For $k\in (1, k_0)$  and $t\in(t_0, T_{\varepsilon})$,  the properties of the Neumann heat semigroup indicate,  for  $q\in(3,6)$,
\begin{align*}
\|\nabla v_{\varepsilon}(\cdot,t)\|_{L^{q}}
\le&C_1\left(1+\left(t-t_0\right)^{-\frac12-\frac32(\frac1k-\frac{1}{q})}\right)e^{-(t-t_0)}\left\|v_{\varepsilon}(\cdot,t_0)-b\right\|_{L^k}\\
&+C_1\int_{t_0}^t\left(1+(t-s)^{-\frac54+\frac{3}{2q}}\right)e^{-(t-s)}\left\|\frac{u_{\varepsilon} v_{\varepsilon}}{1+\varepsilon u_{\varepsilon} v_{\varepsilon}}+h_2-b\right\|_{L^2}ds.
\end{align*}
Due to the uniform $L^1$-bound of $v_{\varepsilon}$ in  \eqref{eq-0ul11}, the Gagliardo-Nirenberg inequality shows
\begin{align*}
&\left\|\frac{u_{\varepsilon} v_{\varepsilon}}{1+\varepsilon u_{\varepsilon} v_{\varepsilon}}+h_2-b\right\|_{L^2}\\[0.25cm]
&\leq \|u_{\varepsilon}\|_{L^2}\|v_{\varepsilon}\|_{L^\infty}+\|h_2-b\|_{L^2}\\
&\leq C_2\|u_{\varepsilon}\|_{L^2}
\left(\|v_{\varepsilon}\|_{L^1}^{1-\theta}\|\nabla v_{\varepsilon}\|_{L^q}^\theta+\|v_{\varepsilon}\|_{L^1}\right)+\|h_2-b\|_{L^2},  \  \  \theta=\frac{3q}{4q-3}\in (0,1)  \\
&\leq C_3\|u_{\varepsilon}\|_{L^2}\left( \|\nabla v_{\varepsilon}\|_{L^q}^\theta+1\right)+\|h_2-b\|_{L^2}.
\end{align*}
 Recalling  $\delta\in (0,1]$ and $q\in (3,6)$,  the smallness at $t_0$ in  \eqref{eq-xlds} and the fact that $\|u_\varepsilon\|_{L^2}\leq 2\sqrt{\delta}$ on $(t_0, T_\varepsilon)$,  and  the facts that $-\frac54+\frac{3}{2q}>-1$ and $\delta^\frac{3}{k_0}<\delta^\frac12$, we thus conclude that
\begin{align*}
\|\nabla v_{\varepsilon}(\cdot,t)\|_{L^{q}}
\le&C_4\delta^\frac12\left(1+(t-t_0)^{-\frac12-\frac32(\frac1k-\frac{1}{q})}\right)e^{-(t-t_0)}+C_4\delta^\frac12\\
&+C_4\int_{t_0}^t\left(1+(t-s)^{-\frac54+\frac{3}{2q}}\right)e^{-(t-s)}\left(\delta^\frac12\left\|\nabla v_{\varepsilon}\right\|_{L^q}^\theta+\|h_2-b\|_{L^2}\right)ds.
\end{align*}
We now take $\alpha\in(0,\frac{6-q}{2q})$ so that $\left(\frac54-\frac{3}{2q}\right)\times\frac{2}{2-\alpha}<1$.
Then the fact that $\alpha<1$ and  the boundedness of $h_2$ from
  \eqref{eq-iva0}  show  that
\[
\|h_2-b\|_{L^2}=\|h_2-b\|_{L^2}^\alpha\|h_2-b\|_{L^2}^{1-\alpha}\leq C_5\|h_2-b\|_{L^2}^\alpha.
\]
Hence, by invoking H\"{o}lder's inequality,  we have
\begin{align*}
&\int_{t_0}^t\left(1+(t-s)^{-\frac54+\frac{3}{2q}}\right)e^{-(t-s)} \|h_2-b\|_{L^2} ds\\
\leq&C_5\int_{t_0}^t\left(1+(t-s)^{-\frac54+\frac{3}{2q}}\right)e^{-(t-s)} \|h_2-b\|_{L^2}^\alpha ds\\
\leq&C_5\left\{\int_{t_0}^t\left(1+(t-s)^{-\left(\frac54-\frac{3}{2q}\right)\times\frac{2}{2-\alpha}}\right)e^{-\frac{2}{2-\alpha}(t-s)}ds\right\}^{\frac{2-\alpha}{2}}\left\{\int_{t_0}^t \|h_2-b\|_{L^2}^2ds\right\}^{\frac\alpha2}\\
\leq&C_6\left\{\int_{t_0}^t \|h_2-b\|_{L^2}^2ds\right\}^{\frac\alpha2},
\end{align*}
which, along with the fact $\int_0^\infty\int_\Omega |h_2-b|^2<\infty$ in \eqref{eq-iva3} and the fact that $t_0$ is large enough, implies
\begin{align*}
\int_{t_0}^t\left(1+(t-s)^{-\frac54+\frac{3}{2q}}\right)e^{-(t-s)} \|h_2-b\|_{L^2} ds
\leq&C_7\delta^\frac12.
\end{align*}
Collecting these estimates, we arrive at
\begin{align*}
\|\nabla v_{\varepsilon}(\cdot,t)\|_{L^{q}}
\le&C_4\delta^\frac12\left(1+\left(t-t_0\right)^{-\frac12-\frac32(\frac1k-\frac{1}{q})}\right)e^{-(t-t_0)}+C_8\delta^\frac12\\
&+C_4\delta^\frac12\int_{t_0}^t\left(1+(t-s)^{-\frac54+\frac{3}{2q}}\right)e^{-(t-s)}\left\|\nabla v_{\varepsilon}\right\|_{L^q}^\theta ds.
\end{align*}
For any $T\in (t_0, T_\varepsilon)$, setting $K(T):=\sup\left\{\|\nabla v_{\varepsilon}(\cdot, t)\|_{L^{q}}:\  t\in(t_0,T)\right\}$ (it is finite), it follows that
\begin{align*}
\|\nabla v_{\varepsilon}(\cdot,t)\|_{L^{q}}
\le&C_8\delta^\frac12+C_4\delta^\frac12\left(t-t_0\right)^{-\frac12-\frac32(\frac1k-\frac{1}{q})} e^{-(t-t_0)} +C_{9}\delta^\frac12 K^\theta(T),\quad \forall t\in(t_0,T)
\end{align*}
If $C_{9}\delta^\frac12 K^\theta(T) \leq C_8\delta^\frac12+C_4\delta^\frac12\left(t-t_0\right)^{-\frac12-\frac32(\frac1k-\frac{1}{q})} e^{-(t-t_0)}$, then
 \begin{align}\label{eq-gvlk}
\|\nabla v_{\varepsilon}(\cdot,t)\|_{L^{q}}
\le&2C_8\delta^\frac12+2C_4\delta^\frac12\left(t-t_0\right)^{-\frac12-\frac32(\frac1k-\frac{1}{q})} e^{-(t-t_0)},\quad \forall t\in(t_0, T);
\end{align}
otherwise
\begin{align*}
\|\nabla v_{\varepsilon}(\cdot,t)\|_{L^{q}}
\le&2C_{9}\delta^\frac12 K^\theta(T),\quad \forall t\in(t_0,T),
\end{align*}
which, together with the facts that $\delta\in (0,1]$ and $\theta\in (0,1)$, further leads to
\begin{align*}
K(T)
\le&\left(2C_{9}\delta^\frac12 \right)^{\frac{1}{1-\theta}}\leq C_{10}\delta^\frac12.
\end{align*}
This, combining with \eqref{eq-gvlk} and the abitriness of $T\in (t_0, T_\varepsilon)$, indeed ensures \eqref{eq-gvL2}.
\end{proof}

With the key ingredients provided in Lemmas \ref{le-ul2int}, \ref{le-gu2L2}  and \ref{le-gvL2}, we can now show and prove our main concern as below.

\begin{claim}\label{le-claim}
Under Lemmas \ref{le-ul2int} and \ref{le-gvL2}, for every  $\varepsilon\in(0,1)$ and  $\delta\in (0,1]$, there exists  $t_0>0$ satisfying  \eqref{eq-xlds}  such  that $T_\varepsilon=T_\varepsilon(t_0, \delta) =\infty$, where $T_\varepsilon$ is defined by \eqref{eq-gu2L2a}.
\end{claim}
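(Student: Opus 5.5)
We argue by contradiction. Fix $\delta\in(0,1]$ and suppose that $T_\varepsilon<\infty$ for some $\varepsilon\in(0,1)$. By continuity of $t\mapsto\|u_\varepsilon(\cdot,t)\|_{L^2}^2$ and the definition \eqref{eq-gu2L2a}, we then have $\|u_\varepsilon(\cdot,T_\varepsilon)\|_{L^2}^2=4\delta$. We aim to show that, for $t_0$ chosen suitably large (uniformly in $\varepsilon$), the estimate \eqref{eq-jgu2L2} forces $\|u_\varepsilon(\cdot,T_\varepsilon)\|_{L^2}^2<4\delta$, which is the desired contradiction. The free parameter is the waiting time $T_{\delta'}$ from Lemma \ref{le-ul2int}, applied with a possibly smaller tolerance than $\delta$ in the $\int\!\!\int u_\varepsilon^2$ and $\int h_1$ terms.

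\textbf{Step 1: avoid the singularity at $t_0$.} The bound \eqref{eq-gvL2} blows up like $(t-t_0)^{-\frac12-\frac32(\frac1k-\frac1q)}$ as $t\downarrow t_0$. We choose $k\in(1,k_0)$ and $q\in(3,6)$ so that this exponent is less than $1$. This is possible since $k_0<3$ may be taken close to $3$, $k$ close to $k_0$, and $q$ close to $3$, which makes the exponent close to $\frac12$. We will not apply \eqref{eq-jgu2L2} from $t_1=t_0$. Instead, on a short initial interval $[t_0,t_0+\tau]$ we apply the $L^2$ differential inequality from the proof of Lemma \ref{le-gu2L2} directly, but keep $\|\nabla v_\varepsilon\|_{L^q}^{2q/(q-3)}$ inside the time integral. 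If the resulting power $\frac{2q}{q-3}\bigl(\frac12+\frac32(\frac1k-\frac1q)\bigr)$ is not below $1$, we first replace the Gagliardo--Nirenberg split by one using a smaller power of $\|\nabla v_\varepsilon\|_{L^q}$. Alternatively, we apply a Gronwall-type argument: $y'\le C(1+\|\nabla v_\varepsilon\|_{L^q}^{\frac{2q}{q-3}})y+C\|h_1\|_{L^1}$, which with an integrable coefficient on $[t_0,t_0+1]$ gives $y(t)\le C(y(t_0)+\int h_1)\le C\delta'$ there. I expect this balancing of exponents to be the main obstacle. If it fails, the fallback is to enlarge $t_0$, using the uniform-in-$\varepsilon$ time-averaged control of Lemma \ref{le-ul2int} on the whole window $[T_\delta,T_\delta+2]$ rather than at a single time.

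\textbf{Step 2: the remaining interval.} For $t\ge t_0+\tau$, \eqref{eq-gvL2} gives $\sup\|\nabla v_\varepsilon\|_{L^q}\le C\delta^{1/2}\le C$. We then apply \eqref{eq-jgu2L2} with $t_1=t_0+\tau$ on successive unit windows $[t_1+m,t_1+m+1]$. The factor $e^{-(\widetilde\eta-r)}$ contracts the initial value. The source terms are bounded by $C\sup_{s\ge T_\delta}\int_s^{s+1}\!\int_\Omega(u_\varepsilon^2+h_1)$, which by \eqref{eq-xld} and \eqref{eq-iva2} is below any prescribed $\delta'$ once $T_\delta$ is large, independently of $\varepsilon$. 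To avoid accumulation over long times, a direct application on $[t_1,t]$ would involve $\int_{t_1}^t\int u_\varepsilon^2$, which grows with $t$. We therefore iterate window by window: if $y_m:=\|u_\varepsilon(\cdot,t_1+m)\|_{L^2}^2$, then
\[
y_{m+1}\le e^{-(\widetilde\eta-r)}y_m+C\delta',
\]
and within each window $\sup y\le y_m+C\delta'$. This yields $\sup_{t\ge t_0}\|u_\varepsilon\|_{L^2}^2\le C(\delta'+\delta/4)$ once $\delta'\ll\delta$ and $\int_\Omega u_\varepsilon^2(\cdot,t_0)\le\delta'$ (again choosing \eqref{eq-xlds} with $\delta'$). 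In particular this bound is below $4\delta$ at $T_\varepsilon$, contradicting $\|u_\varepsilon(\cdot,T_\varepsilon)\|_{L^2}^2=4\delta$, and hence $T_\varepsilon=\infty$.
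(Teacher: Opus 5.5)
\noindent Step~1, the initial layer $(t_0,t_0+\tau]$, is a genuine gap, and it is not a matter of balancing exponents. Near $t_0$, \eqref{eq-gvL2} only gives $\|\nabla v_\varepsilon(\cdot,t)\|_{L^q}\le C(t-t_0)^{-\beta}$ with $\beta=\frac12+\frac32(\frac1k-\frac1q)$. Since $k<3$, we have $\beta>1-\frac{3}{2q}$. Your Gronwall coefficient $\|\nabla v_\varepsilon\|_{L^q}^{\frac{2q}{q-3}}$ therefore behaves like $(t-t_0)^{-\gamma}$ with
\[
\gamma=\beta\,\frac{2q}{q-3}>\frac{2q-3}{q-3}=2+\frac{3}{q-3}>3\qquad\text{for every }q\in(3,6).
\]
This is never integrable at $t_0$, whatever $k$ and $q$ you choose. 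Integrability would need $\beta<\frac12-\frac{3}{2q}$, whereas always $\beta>\frac12$. Another Gagliardo--Nirenberg split cannot lower the power either. $\frac{2q}{q-3}$ is the scaling-critical (Serrin-type) exponent, $\frac2r+\frac3q=1$, needed to absorb $\int_\Omega u_\varepsilon^2|\nabla v_\varepsilon|^2$ into $\|\nabla u_\varepsilon\|_{L^2}^2$, and also using $\mu\int_\Omega u_\varepsilon^3$ only raises it.

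Your fallback is not an argument. Time-averaged smallness of $\int\!\!\int u_\varepsilon^2$ does not stop $\|u_\varepsilon\|_{L^2}^2$ from touching $4\delta$inside a short window. It also cannot replace the pointwise bound $\|u_\varepsilon\|_{L^2}\le 2\sqrt\delta$ in the Duhamel estimate behind Lemma \ref{le-gvL2}, because the kernel $(t-s)^{-\frac54+\frac{3}{2q}}$ is not square integrable.

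Your Step~2 does not even need Step~1 for a small starting value. You can restart it at a mean-value time $s_1\in[t_0+\frac\tau2,t_0+\tau]$ with $\|u_\varepsilon(\cdot,s_1)\|_{L^2}^2\le 2\delta'/\tau$, by \eqref{eq-xld}. So everything reduces exactly to excluding $T_\varepsilon\in(t_0,t_0+\tau]$, i.e.\ a rise of $\|u_\varepsilon\|_{L^2}^2$ from $\delta'$ to $4\delta$ within time $\tau$, uniformly in $\varepsilon$. The proposal gives no argument for that.

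What is missing is subcritical control of $\nabla v_\varepsilon$ at the starting time. For example, you would need a time $t_0$, uniform in $\varepsilon$, at which, besides \eqref{eq-xlds}, $\|\nabla v_\varepsilon(\cdot,t_0)\|_{L^q}$ is bounded for some $q>3$. Then the initial term in the semigroup estimate carries no singular factor, $\sup_{(t_0,T_\varepsilon)}\|\nabla v_\varepsilon\|_{L^q}$ is bounded, and your window-by-window iteration closes directly from $t_0$. Nothing in Lemmas \ref{le-ul2int}--\ref{le-gvL2} provides this, since \eqref{eq-xlds} controls $v_\varepsilon(\cdot,t_0)-b$ only in $L^k$ with $k<3$.

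The paper's proof does not supply it either. It avoids integrating through $t_0$ by applying \eqref{eq-jgu2L2} from the midpoint $t_1=\frac12(t_0+T_\varepsilon)$, where $\|u_\varepsilon(\cdot,t_1)\|_{L^2}^2\le 4\delta$, and gets strictness from $e^{-(\widetilde\eta-r)(T_\varepsilon-t_1)}<1$. But then the error terms must be of size at most $\delta\bigl(1-e^{-(\widetilde\eta-r)(T_\varepsilon-t_0)/2}\bigr)$, while the gradient bound carries $\bigl(\frac{T_\varepsilon-t_0}{2}\bigr)^{-7/8}$. Moreover, $t_0$ is chosen in terms of $T_\varepsilon$, which itself depends on $t_0$. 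That route therefore tacitly needs $T_\varepsilon-t_0$ bounded away from $0$ as $t_0$ grows. Because $\int_{t_1}^{T_\varepsilon}\!\int_\Omega u_\varepsilon^2$ is not localized to unit windows, it also needs $T_\varepsilon-t_0$ bounded from above. Your Step~2 iteration genuinely repairs the second point; the first is precisely your Step~1 gap.
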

\begin{proof}
Suppose there exist $\varepsilon_0\in (0,1)$ and $\delta_0\in (0,1]$ such that, for any $t_0$ satisfying  \eqref{eq-xlds}, we have $T_{\varepsilon_0}=T_{\varepsilon_0}(\delta_0, t_0)<\infty$. Then we set
$t_1=\frac12(t_0+T_{\varepsilon_0})$, and then apply Lemma \ref{le-ul2int} with $\varepsilon=\varepsilon_0$ and $q=4$ to find some $A>0$ such that
\begin{equation}\label{eq-jgu2L2-te}
\begin{split}
\int_\Omega u_{\varepsilon_0}^2(\cdot,t) &\leq e^{-(\widetilde{\eta}-r)(t-t_1)}\int_\Omega u_{\varepsilon_0}^2(\cdot,t_1)+A\int_{t_1}^t\int_\Omega h_1\\[0.25cm]
&\ \ +A\left(1+\sup_{s\in (t_1, t)} \|\nabla v_{\varepsilon_0}(\cdot,s)\|_{L^4}^8\right)\int_{t_1}^t\int_\Omega u_{\varepsilon_0}^2\\
&\leq 4\delta_0e^{-(\widetilde{\eta}-r)(t-t_1)}+A\int_{t_1}^{T_{\varepsilon_0}}\int_\Omega h_1\\\
&\ \ +A\left(1+\sup_{s\in (t_1, {T_{\varepsilon_0}})} \|\nabla v_{\varepsilon_0}(\cdot,s)\|_{L^4}^8\right)\int_{t_1}^{T_{\varepsilon_0}}\int_\Omega u_{\varepsilon_0}^2,\quad  \forall t\in [t_1, {T_{\varepsilon_0}}].
\end{split}
\end{equation}
Here, we also used the fact $t_1\in (t_0, {T_{\varepsilon_0}})$ to amplify the first term on the right-hand side.

Next, since $\int_0^\infty\int_\Omega h_1<\infty$ due to  \eqref{eq-iva2}, we choose  $t_0>0$ large enough so that
\[
A\int_{t_1}^{T_{\varepsilon_0}}\int_\Omega h_1\leq \delta_0\left(1-e^{-\frac{(\eta-r)}{2}(T_{\varepsilon_0}-t_0)}\right),
\]
and,  apply   Lemma \ref{le-gvL2}  with $(k,q)=(2,4)$ to find some $B>0$ such that
\begin{align*}
\sup_{t\in (t_1, T_{\varepsilon_0})}\|\nabla v_{\varepsilon_0}(\cdot,t)\|_{L^4} \leq B\delta_0^\frac12\left(1+\left(\frac{1}{2}(T_{\varepsilon_0}-t_0)\right)^{-\frac78} \right)<\infty.
\end{align*}
Then, due to \eqref{eq-xld} of Lemma \ref{le-ul2int},  such large $t_0$ (further enlarging if necessary) also  fulfills
\[
A\left(1+\sup_{s\in (t_1, T_{\varepsilon_0})} \|\nabla v_{\varepsilon_0}(\cdot,s)\|_{L^4}^8\right)\int_{t_1}^{T_{\varepsilon_0}}\int_\Omega u_{\varepsilon_0}^2\leq \delta_0\left(1-e^{-\frac{(\eta-r)}{2}(T_{\varepsilon_0}-t_0)}\right).
\]
 Substituting these estimates into \eqref{eq-jgu2L2-te}, we achieve, for sufficiently large $t_0$ satisfying  \eqref{eq-xlds}, that
 \begin{align*}
\int_\Omega u_{\varepsilon_0}^2(\cdot,t)
\leq 4\delta_0e^{-(\widetilde{\eta}-r)(t-t_1)}+2\delta_0\left(1-e^{-\frac{(\eta-r)}{2}(T_{\varepsilon_0}-t_0)}\right),\quad  \forall t\in [t_1, {T_{\varepsilon_0}}].
\end{align*}
This along wih the fact $t_1=\frac12(t_0+T_{\varepsilon_0})$ in particular shows
 \begin{align*}
\int_\Omega u_{\varepsilon_0}^2(\cdot,T_{\varepsilon_0})
\leq 2\delta_0+2\delta_0e^{-\frac{(\eta-r)}{2}(T_{\varepsilon_0}-t_0)}<4\delta_0,
\end{align*}
a clear contradiction to the definition of $T_{\epsilon_0}$ in \eqref{eq-gu2L2a} with $\varepsilon=\varepsilon_0$, and  thus finishing the proof.
\end{proof}

Thanks to Claim \ref{le-claim},  we can
proceed to derive more uniform in $\varepsilon$-estimates for $u_\varepsilon$ and $v_\varepsilon$.

\begin{lemma}\label{le-Vubedp}
 Under Lemmas \ref{le-ul2int} and \ref{le-gvL2},  there exists $\widehat{t}_1>0$,  independent of  $\varepsilon\in (0, 1)$, fulfilling that
whenever $t\geq \widehat{t}_1$, then for any $p\in (1,3)$ and $q\in (1, 6) $ one can find $C=C(p,q)>0$, independent of  $\varepsilon$, such that
\begin{align}\label{eq-gveLp}
\|u_\varepsilon(\cdot,t)\|_{L^p}+\|v_\varepsilon(\cdot,t)\|_{W^{1,q}} \leq C.
\end{align}
\end{lemma}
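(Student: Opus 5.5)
The plan is to start from Claim \ref{le-claim}. Fix $\delta=1$ (any fixed $\delta\in(0,1]$ will do) and choose $t_0$ as in Lemma \ref{le-ul2int} and Claim \ref{le-claim}. Then $T_\varepsilon=\infty$, so
\[
\|u_\varepsilon(\cdot,t)\|_{L^2}^2\leq 4\delta \quad\text{for all } t\geq t_0 .
\]
Although $t_0$ may depend on $\varepsilon$, it lies in $[T_\delta,T_\delta+1]$ with $T_\delta$ independent of $\varepsilon$. Lemma \ref{le-gvL2} then holds on all of $(t_0,\infty)$. Evaluating at $t\geq T_\delta+2$, where $t-t_0\geq1$, removes the singular factor, so $\|\nabla v_\varepsilon(\cdot,t)\|_{L^q}\leq C$ uniformly in $\varepsilon$ for every $q\in(3,6)$. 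Hence $\widehat t_1:=T_\delta+2$ is our candidate.

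The first step is the bound on $v_\varepsilon$ in $W^{1,q}$. Since $q>3$, the Gagliardo--Nirenberg inequality together with the uniform $L^1$ bound \eqref{eq-0ul11} controls $\|v_\varepsilon\|_{L^\infty}$ by $C(\|\nabla v_\varepsilon\|_{L^q}^\theta+1)$. Therefore $\|v_\varepsilon\|_{L^\infty}$ is uniformly bounded for $t\geq\widehat t_1$. For $q\in(1,3]$ the bound follows from the case $q\in(3,6)$ by Hölder's inequality. This gives $\|v_\varepsilon\|_{W^{1,q}}\leq C$ for all $q\in(1,6)$.

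The second step is the $L^p$ bound on $u_\varepsilon$ for $p\in(1,3)$. For $p\leq2$ the $L^2$ bound already suffices. For $p\in(2,3)$ I would test the $u_\varepsilon$-equation with $u_\varepsilon^{p-1}$ and use $v_\varepsilon\geq\widetilde\eta$ (Lemma \ref{le-glex}):
\[
\frac1p\frac{d}{dt}\int_\Omega u_\varepsilon^p+\frac{2(p-1)}{p^2}\int_\Omega|\nabla u_\varepsilon^{p/2}|^2+\mu\int_\Omega u_\varepsilon^{p+1}\leq C\int_\Omega u_\varepsilon^{p}|\nabla v_\varepsilon|^2+r_+\int_\Omega u_\varepsilon^p+\int_\Omega h_1u_\varepsilon^{p-1}.
\]
The taxis term is handled with Hölder's inequality using $\nabla v_\varepsilon\in L^{q}$, $q<6$ close to $6$:
\[
\int_\Omega u_\varepsilon^p|\nabla v_\varepsilon|^2\leq \|u_\varepsilon^{p/2}\|_{L^{2q/(q-2)}}^2\|\nabla v_\varepsilon\|_{L^q}^2 .
\]
Since $2q/(q-2)<6$, the 3D Gagliardo--Nirenberg inequality interpolates $\|u_\varepsilon^{p/2}\|_{L^{2q/(q-2)}}$ between $\|\nabla u_\varepsilon^{p/2}\|_{L^2}$ and a low norm of $u_\varepsilon^{p/2}$. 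That low norm, $\|u_\varepsilon^{p/2}\|_{L^{4/p}}=\|u_\varepsilon\|_{L^2}^{p/2}$, is finite because $4/p>1$. The resulting exponent on the gradient is strictly less than $2$ precisely when $q>3$. Young's inequality then absorbs that piece into the dissipation. The remaining terms are bounded using $\mu\int_\Omega u_\varepsilon^{p+1}\geq c\int_\Omega u_\varepsilon^p-C$. This yields an ODI $y'+cy\leq C$ for $y=\int_\Omega u_\varepsilon^p$ on $[\widehat t_1-1,\infty)$.

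The main obstacle I expect is the initial value $y(\widehat t_1-1)$, which is not a priori bounded uniformly in $\varepsilon$. I would first try to remove it through the smoothing effect of the $\mu u_\varepsilon^{p+1}$ term, which gives
\[
y'\leq -c\,y^{(p+1)/p}+C,
\]
a superlinear damping inequality. A standard comparison with the solution that starts from $+\infty$ then gives $y(t)\leq C(1+(t-\widehat t_1+1)^{-p})$ independently of the initial value, hence uniform bounds for $t\geq\widehat t_1$. If this fails, I would instead pick a starting time in $[\widehat t_1-1,\widehat t_1]$ by the mean value theorem. The time-integrated dissipation $\int\int|\nabla u_\varepsilon|^2$ obtained from the $L^2$ computation of Lemma \ref{le-gu2L2}, combined with Sobolev embedding, gives $u_\varepsilon\in L^6$ at some such time, which is more than enough for any $p<3$.
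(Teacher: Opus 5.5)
Your proposal is correct. For the bound on $u_\varepsilon$ it takes a different route from the paper; the bound on $v_\varepsilon$ in $W^{1,q}$ is the same.

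\textbf{The paper's route.} After getting $\|\nabla v_\varepsilon(\cdot,t)\|_{L^q}\le C$ for $t\ge t_1:=T_\delta+2$ exactly as you do, the paper does not test with $u_\varepsilon^{p-1}$. It writes $u_\varepsilon$ by the variation-of-constants formula from $t_1$, dropping the nonpositive terms $-u_\varepsilon v_\varepsilon-\mu u_\varepsilon^2$. It bounds the flux by
\[
\Bigl\|\frac{u_\varepsilon}{v_\varepsilon}\nabla v_\varepsilon\Bigr\|_{L^{q_1}}\le\|u_\varepsilon\|_{L^2}\,\widetilde{\eta}^{-1}\|\nabla v_\varepsilon\|_{L^{2q_1/(2-q_1)}},\qquad q_1\in\Bigl(\frac{3p}{3+p},\frac32\Bigr),
\]
and then applies $L^{q_1}$--$L^p$ smoothing of the Neumann semigroup. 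The initial-value issue you worried about does not arise there. One has $\|u_\varepsilon(\cdot,t_1)\|_{L^2}\le 2$, and the factor $(t-t_1)^{-\frac32(\frac12-\frac1p)}$ is harmless once $t\ge t_1+1$. The restriction $p<3$ is exactly what makes such a $q_1$ exist. It reconciles the available integrability of $\nabla v_\varepsilon$ (only in $L^q$ with $q<6$) with integrability of the kernel singularity $(t-s)^{-\frac12-\frac32(\frac1{q_1}-\frac1p)}$. This step uses nothing from the damping term.

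\textbf{Your route.} Your energy argument relies on the logistic term instead. The superlinear inequality $y'\le -c\,y^{(p+1)/p}+C$ gives a bound independent of $y$ at the starting time. This is rigorous, since for fixed $\varepsilon$ the function $y$ is finite and continuous, so your mean-value-theorem fallback is not needed. What your route buys is that it is not tied to $p<3$. The Gagliardo--Nirenberg exponent on $\|\nabla u_\varepsilon^{p/2}\|_{L^2}^2$ is below $1$ exactly when $q>3$, for every $p$. With the low norm $\|u_\varepsilon^{p/2}\|_{L^{4/p}}=\|u_\varepsilon\|_{L^2}^{p/2}$, the same computation therefore gives uniform $L^p$ bounds for all $p\in(2,4)$ in one step. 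That shortcuts part of the bootstrap in Lemma \ref{le-Vubed}.

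\textbf{A bookkeeping slip.} The uniform bound on $\|\nabla v_\varepsilon\|_{L^q}$ is only available for $t\ge T_\delta+2$. On $[T_\delta+1,T_\delta+2]$ the factor $(t-t_0)^{-\frac12-\frac32(\frac1k-\frac1q)}$ in \eqref{eq-gvL2} may be unbounded, since $t_0$ can equal $T_\delta+1$. So the ODI must be run on $[T_\delta+2,\infty)$, and you should take $\widehat t_1:=T_\delta+3$. Because your comparison argument does not need a bounded initial value, this costs nothing.
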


\begin{proof}
 For $\varepsilon\in (0,1)$ and  $\delta=1$, Claim \ref{le-claim} guarantees there exists $t_0>0$ satisfying  \eqref{eq-xlds}  such  that $T_\varepsilon=T_\varepsilon(t_0, 1) =\infty$. Then for $q\in(3,6)$, it follows from  \eqref{eq-gvL2} that
 \begin{align*}
\|\nabla v_{\varepsilon}(\cdot, t)\|_{L^q}
\leq C_q,\quad t\geq t_0+1.
\end{align*}
Recalling $t_0\in [T_\delta, T_\delta+1]$  with $T_\delta$ independent of $\varepsilon$, this further shows
 \begin{align}\label{eq-gveL4}
\|\nabla v_{\varepsilon}(\cdot, t)\|_{L^q}
\leq &C_q,\quad t\geq T_\delta+2=:t_1.
\end{align}
Utilizing the well-known properties the Neumann heat semigroup, for any $t>t_1$, $p\in[2,3)$ and $q_1\in (\frac{3p}{3+p},\frac32)$, we deduce from the $u_\varepsilon$-equation that
\begin{align*}
\|u_\varepsilon(\cdot,t)\|_{L^p} \leq &C\left(1+(t-t_1)^{-\frac32(\frac12-\frac1p)}\right)e^{-(t-t_1)}\|u_\varepsilon(\cdot,t_1)\|_{L^2}\\
&+C\int_{t_1}^t\left(1+(t-s)^{-\frac12-\frac{3}{2}(\frac{1}{q_1}-\frac1p)}\right)e^{-(t- s)}\left\|\frac{u_\varepsilon}{v_\varepsilon}\nabla  v_\varepsilon\right\|_{L^{q_1}}ds\\
&+C\int_{t_1}^t\left(1+(t-s)^{-\frac{3}{2}(\frac12-\frac1p)}\right)e^{-(t- s)}\|(r+1)u_\varepsilon+h_1\|_{L^2}ds,
\end{align*}
which, by combining with the regularity of $h_1$ in \eqref{eq-iva0} and  the fact $\|u_\varepsilon\|_{L^2}\leq 2$ on $(t_0, \infty)$, entails
\begin{align*}
\|u_\varepsilon(\cdot,t)\|_{L^p} \leq &C\left(1+(t-t_1)^{-\frac32(\frac12-\frac1p)}\right) +C\int_{t_1}^t\left(1+(t-s)^{-\frac12-\frac{3}{2}(\frac{1}{q_1}-\frac1p)}\right)e^{- (t- s)}\left\|\frac{u_\varepsilon}{v_\varepsilon}\nabla  v_\varepsilon\right\|_{L^{q_1}}ds.
\end{align*}
We  observe the choice of $q_1$ ensures  that $3<\frac{2q_1}{2-q_1}<6$, and thereby, by \eqref{eq-gveL4} and the  fact that $v_{\varepsilon}\geq \widetilde{\eta}$ on $\Omega\times(0, \infty)$, we  use  H\"{o}lder's inequality  to deduce that
\[
\left\|\frac{u_\varepsilon}{v_\varepsilon}\nabla  v_\varepsilon\right\|_{L^{q_1}}\leq\left\|u_\varepsilon\right\|_{L^2}\left\|v_\varepsilon^{-1}\right\|_{L^\infty}\left\|\nabla v_\varepsilon\right\|_{L^{\frac{2q_1}{2-q_1}}}\leq C,\quad t\geq t_1,
\]
which, along with the fact $-\frac12-\frac{3}{2}(\frac{1}{q_1}-\frac1p)>-1$, finally enables us to conclude that
\begin{align*}
\|u_\varepsilon(\cdot,t)\|_{L^p} \leq &C_p\quad t\geq  \widehat{t}_1:= t_1+1.
\end{align*}
This, together with \eqref{eq-gveL4}, H\"{o}lder's inequality and
 Poinca\'{e}-Sobolev's inequality
\begin{align*}
\left\|v_{\varepsilon}\right\|_{L^q}\leq C\left(\left\|\nabla v_{\varepsilon}\right\|_{L^q}+\left\|v_\varepsilon\right\|_{L^1}\right),
\end{align*}
gives rise to  \eqref{eq-gveLp}, as sought.
\end{proof}

Based on the  uniform  estimate in  \eqref{eq-gveLp}, we can
derive uniform in $\varepsilon$ estimates for $u_\varepsilon$ with respect to the norm in $L^\infty$, and for $v_\varepsilon$ with respect to the norm in $W^{1,\infty}$.

\begin{lemma}\label{le-Vubed}
  Under Lemmas \ref{le-ul2int} and \ref{le-gvL2},  there exists $\widehat{t}_2>0$,  independent of  $\varepsilon$, fulfilling that
whenever $t\geq\widehat{t}_2$， one can find $C>0$, independent of  $\varepsilon$, such that
\begin{align}\label{eq-gveL2}
\|u_\varepsilon(\cdot,t)\|_{L^\infty}+\|v_\varepsilon(\cdot,t)\|_{W^{1,\infty}} \leq C.
\end{align}
\end{lemma}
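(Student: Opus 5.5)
We bootstrap from the uniform-in-$\varepsilon$ bounds of Lemma \ref{le-Vubedp}, valid for $t\geq \widehat{t}_1$: $\|u_\varepsilon(\cdot,t)\|_{L^p}\leq C$ for every $p\in(1,3)$, $\|\nabla v_\varepsilon(\cdot,t)\|_{L^q}\leq C$ for every $q\in(1,6)$, and $v_\varepsilon\geq\widetilde{\eta}$. The strategy is to first upgrade $v_\varepsilon$ to $W^{1,\infty}$ (and $L^\infty$), and then run a Duhamel argument for $u_\varepsilon$ as in the proof of Lemma \ref{le-inequi}, with the time origin shifted to $\widehat{t}_1$. All constants come from Neumann semigroup estimates together with bounds that are uniform in $\varepsilon$, so the resulting bounds are uniform in $\varepsilon$.

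\textbf{Step 1: $L^\infty$ and $W^{1,\infty}$ bounds for $v_\varepsilon$.} Fix $p\in(2,3)$. Since $q$ may be chosen close to $6>n=3$, the Sobolev embedding $W^{1,q}\hookrightarrow L^\infty$ gives $\|v_\varepsilon(\cdot,t)\|_{L^\infty}\leq C$ for $t\geq\widehat t_1$. Hence the source term satisfies
\[
\Bigl\|\frac{u_\varepsilon v_\varepsilon}{1+\varepsilon u_\varepsilon v_\varepsilon}+h_2\Bigr\|_{L^p}\leq \|v_\varepsilon\|_{L^\infty}\|u_\varepsilon\|_{L^p}+C\leq C .
\]
This alone is not enough for $\nabla v_\varepsilon\in L^\infty$, because that requires $\frac12+\frac{3}{2p}<1$, i.e.\ $p>3$. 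We therefore use the intermediate gradient bound $\|\nabla v_\varepsilon\|_{L^{q}}\leq C$ for $q<\frac{3p}{3-p}$. This range exceeds $6$ for $p$ close to $3$, which improves Lemma \ref{le-Vubedp}.

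\textbf{Step 2: an $L^\infty$ bound for $u_\varepsilon$.} Writing the $u_\varepsilon$-equation with the operator $\Delta-1$ on $[\widehat t_1+1,\infty)$, we estimate the taxis term through $\|\frac{u_\varepsilon}{v_\varepsilon}\nabla v_\varepsilon\|_{L^{\rho}}\leq \widetilde\eta^{-1}\|u_\varepsilon\|_{L^{r_1}}\|\nabla v_\varepsilon\|_{L^{q}}$. This needs $\frac1\rho=\frac1{r_1}+\frac1q<\frac13$, so that the kernel exponent $-\frac12-\frac{3}{2\rho}>-1$. The main obstacle is that Lemma \ref{le-Vubedp} only supplies $r_1<3$. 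We handle it exactly as in Lemma \ref{le-inequi}: interpolate $\|u_\varepsilon\|_{L^{r_1}}\leq \|u_\varepsilon\|_{L^\infty}^{1-p/r_1}\|u_\varepsilon\|_{L^p}^{p/r_1}$ with $r_1$ large, and choose $q$ large as allowed by Step 1. Since $p>2$ and $q$ close to $\frac{3p}{3-p}>6$, we can make $\frac1{r_1}+\frac1q<\frac13$. The terms $(r+1)u_\varepsilon+h_1$ are controlled in $L^p$ with the kernel $(t-s)^{-3/(2p)}$, which is integrable.

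Setting $S(T)=\sup_{t\in(\widehat t_1+1,T)}\|u_\varepsilon(\cdot,t)\|_{L^\infty}$, which is finite for each fixed $\varepsilon$ because the approximate solution is classical, we obtain
\[
S(T)\leq C+CS(T)^{1-p/r_1},
\]
where the initial contribution at time $\widehat t_1+1$ is handled by smoothing from the $L^p$ bound. Young's inequality then gives $S(T)\leq C$ uniformly in $T$ and $\varepsilon$.

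\textbf{Step 3: closing the $v_\varepsilon$ estimate.} With $u_\varepsilon$ and $v_\varepsilon$ now bounded in $L^\infty$, the source of the $v_\varepsilon$-equation is bounded in $L^\infty$. One further semigroup estimate, with kernel $(1+(t-s)^{-1/2})e^{-(t-s)}$ and starting at $\widehat t_1+1$, yields $\|\nabla v_\varepsilon(\cdot,t)\|_{L^\infty}\leq C$ for $t\geq\widehat t_2:=\widehat t_1+2$. Together with the $L^\infty$ bound on $v_\varepsilon$ this gives \eqref{eq-gveL2}.
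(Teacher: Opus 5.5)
Your plan (bound $v_\varepsilon$ in $L^\infty$, then $u_\varepsilon$ in $L^\infty$ by Duhamel absorption as in Lemma \ref{le-inequi}, then $\nabla v_\varepsilon$ in $L^\infty$) is viable, but the absorption step in Step 2 does not close as you wrote it.

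\textbf{The gap.} In Lemma \ref{le-inequi} the Duhamel formula starts at $t=0$, where $\|e^{t(\Delta-1)}u_0\|_{L^\infty}\le\|u_0\|_{L^\infty}$ is harmless. You start at $t_*:=\widehat t_1+1$. There the only $\varepsilon$-uniform information on $u_\varepsilon(\cdot,t_*)$ is an $L^p$ bound with $p<3$. Smoothing then gives only $\|e^{(t-t_*)(\Delta-1)}u_\varepsilon(\cdot,t_*)\|_{L^\infty}\le C(1+(t-t_*)^{-\frac{3}{2p}})$, which blows up as $t\downarrow t_*$.

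Hence, with $S(T)=\sup_{t\in(t_*,T)}\|u_\varepsilon(\cdot,t)\|_{L^\infty}$, the inequality $S(T)\le C+CS(T)^{1-p/r_1}$ does not follow. This is exactly what ``handled by smoothing from the $L^p$ bound'' glosses over. Starting the Duhamel formula earlier does not help either: the integral then involves $\|u_\varepsilon\|_{L^\infty}$ on a time interval not controlled by $S(T)$.

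\textbf{A repair.} Use a time-weighted supremum,
$S(T):=\sup_{t\in(t_*,T)}\min\{1,t-t_*\}^{\gamma}\|u_\varepsilon(\cdot,t)\|_{L^\infty}$ with $\gamma=\frac{3}{2p}<1$.
\begin{itemize}
\item The weight neutralizes the initial term.
\item The taxis kernel exponent $a=\frac12+\frac{3}{2\rho}$ and the power $\gamma(1-\frac{p}{r_1})$ are both less than $1$. So the Beta-type integral $\int_{t_*}^t(t-s)^{-a}(s-t_*)^{-\gamma(1-p/r_1)}\,ds$, multiplied by $(t-t_*)^\gamma$, stays bounded.
\end{itemize}
Then $S(T)\le C+CS(T)^{1-p/r_1}$ holds and yields the $L^\infty$ bound for $t\ge t_*+1$. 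Your Step 3 then goes through.

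\textbf{The paper's route.} The paper avoids the self-referential estimate altogether by reordering the bootstrap.
\begin{itemize}
\item The flux $\|\frac{u_\varepsilon}{v_\varepsilon}\nabla v_\varepsilon\|_{L^{q_2}}$, $q_2\in(\frac32,2)$, is already bounded by H\"older and Lemma \ref{le-Vubedp}. A linear Duhamel estimate therefore lifts $u_\varepsilon$ to $L^{p_1}$ for all $p_1<6$.
\item Then $u_\varepsilon\in L^4$ and $v_\varepsilon\in W^{1,4}\hookrightarrow L^\infty$ put the $v_\varepsilon$-source in $L^4$, giving $\nabla v_\varepsilon\in L^\infty$.
\item Finally $\|\frac{u_\varepsilon}{v_\varepsilon}\nabla v_\varepsilon\|_{L^4}\le C$ gives $u_\varepsilon\in L^\infty$.
\end{itemize}
In each step the forcing is bounded independently of the norm being estimated, so the singular initial factors cost only a unit time layer.

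Incidentally, your Step 1 upgrade of $\nabla v_\varepsilon$ beyond $L^6$ is unnecessary. With $q\in(3,6)$ from Lemma \ref{le-Vubedp}, you can already take $r_1$ large enough that $\frac1{r_1}+\frac1q<\frac13$.
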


\begin{proof} The proof is similar to the proceeding lemma.  Indeed, for $q_2\in(\frac32,2)$, we can choose $k_1\in (1,3)$ and $k_2\in (1,6) $ such that $\frac{1}{q_2}=\frac{1}{k_1}+\frac{1}{k_2}$. Hence,   thanks to  \eqref{eq-gveLp}    and the  fact that $v_{\varepsilon}\geq \widetilde{\eta}$ on $\Omega\times(0, \infty)$, we  see  that
\[
\left\|\frac{u_\varepsilon}{v_\varepsilon}\nabla  v_\varepsilon\right\|_{L^{q_2}}\leq\left\|u_\varepsilon\right\|_{L^{k_1}}\left\|v_\varepsilon^{-1}\right\|_{L^\infty}\left\|\nabla v_\varepsilon\right\|_{L^{k_2}}\leq C,\quad t\geq \widehat t_1.
\]
Based on this, for $3
\leq  p_1<\frac{3q_2}{3-q_2}$, which imlies $-\frac12-\frac{3}{2}(\frac{1}{q_2}-\frac{1}{p_1})>-1$ , as before, we use Neumann semigroup estimates to infer that
\begin{align*}
\|u_\varepsilon(\cdot,t)\|_{L^{p_1}} \leq &C\left(1+(t-\widehat t_1)^{-\frac32(\frac12-\frac{1}{p_1})}\right)e^{-(t-\widehat t_1)}\|u_\varepsilon(\cdot,\widehat t_1)\|_{L^2}\\
&+C\int_{\widehat t_1}^t\left(1+(t-s)^{-\frac12-\frac{3}{2}(\frac{1}{q_2}-\frac{1}{p_1})}\right)e^{-(t- s)}\|\frac{u_\varepsilon}{v_\varepsilon}\nabla  v_\varepsilon\|_{L^{q_2}}ds\\
&+C\int_{\widehat t_1}^t\left(1+(t-s)^{-\frac{3}{2}(\frac12-\frac{1}{p_1})}\right)e^{-(t- s)}\|(r+1)u_\varepsilon+h_1\|_{L^2}ds\\
\leq& C\left(1+(t-\widehat t_1)^{-\frac32(\frac12-\frac{1}{p_1})}\right),\quad t>\widehat t_1,
\end{align*}
which implies
\begin{align*}
\|u_\varepsilon(\cdot,t)\|_{L^{p_1}}
\leq&C,\quad t\geq t_2:=\widehat t_1+1.
\end{align*}
By taking $q_2$ sufficiently closed to $2$,  then $p_1$ can be close to $6$ , and thereby for any $p_1\in (1,6)$,
\begin{align}\label{eq-upeL6}
\|u_\varepsilon(\cdot,t)\|_{L^{p_1}}
\leq&C,\quad t\geq t_2.
\end{align}

Notice that $W^{1,4}(\Omega)\hookrightarrow L^\infty(\Omega)$, we use the derived $(L^4, W^{1,4})$-boundedness of $(u,v)$  to get
\begin{align*}
\left\|\frac{u_\varepsilon v_\varepsilon}{1+\varepsilon u_\varepsilon v_\varepsilon}+h_2\right\|_{L^4}\leq& \|u_\varepsilon\|_{L^4}\|v_\varepsilon\|_{L^\infty}+\|h_2\|_{L^4}\\
\leq& C\|u_\varepsilon\|_{L^4}\|v_\varepsilon\|_{W^{1,4}}+\|h_2\|_{L^4}\leq C,\quad t\geq t_2,
\end{align*}
Hence, by employing the semigroup representation of $v_\varepsilon$, we deduce  that
\begin{align*}
\|\nabla v_\varepsilon(\cdot,t)\|_{L^\infty} \leq &C\left(1+(t-t_2)^{-\frac3 4}\right)\|\nabla v_\varepsilon(\cdot,t_2)\|_{L^4}\\
&+C\int_{t_2}^t\left(1+(t-s)^{-\frac78}\right)e^{-(t-s)}\left\|\frac{u_\varepsilon v_\varepsilon}{1+\varepsilon u_\varepsilon v_\varepsilon}+h_2\right\|_{L^4}ds\\
\leq& C\left(1+(t-t_2)^{-\frac34}\right),\quad \forall t\geq t_2,
\end{align*}
giving directly
\begin{align}\label{eq-vgLinf}
\|\nabla v_\varepsilon(\cdot,t)\|_{L^\infty}
\leq& C,\quad t\geq t_3:=t_2+1.
\end{align}

Finally, based on  the $(L^4, L^\infty)$-boundedness of $(u_\varepsilon, \nabla  v_\varepsilon)$ in \eqref{eq-upeL6} and \eqref{eq-vgLinf},  and the fact that $v_{\varepsilon}\geq \widetilde{\eta}$ on $\Omega\times(0, \infty)$  by \eqref{eq-0bddca},  we see that
\begin{align*}
\|u_\varepsilon(\cdot,t)\|_{L^\infty} \leq &C\left(1+(t-t_3)^{-\frac34}\right)e^{-(t-t_3)}\|u_\varepsilon(\cdot,t_3)\|_{L^2}\\
&+C\int_{t_3}^t\left(1+(t-s)^{-\frac{7}{8}}\right)e^{- (t-s)}\|\frac{u_\varepsilon}{v_\varepsilon}\nabla v_\varepsilon\|_{L^4}ds\\
&+C\int_{t_3}^t\left(1+(t-s)^{-\frac{3}{4}}\right)e^{-(t- s)}\|(r+1)u_\varepsilon+h_1\|_{L^2}ds\\
\leq&C\left(1+(t-t_3)^{-\frac34}\right),\quad \forall t\geq t_3.
\end{align*}
This simply  ensures that
\begin{align*}
\|u_\varepsilon(\cdot,t)\|_{L^\infty}
\leq&C,\quad t\geq \widehat t_2:=t_3+1,
\end{align*}
which, combining with  \eqref{eq-0ul11}, \eqref{eq-vgLinf} and the interpolation inequality, entails  \eqref{eq-gveL2}.
\end{proof}

A straightforward consequence of Lemma \ref{le-Vubed} is that the
generalized solution $(u,v)$ established in Lemma \ref{le-vw} enjoys  the desired regularity
in Theorem \ref{th-ggs}.

\begin{lemma}\label{le-uvcla}
 Under Lemmas \ref{le-ul2int} and \ref{le-gvL2},  there exist $\widehat{t}_3>0$ and $C>0$  (independent of  $\varepsilon$) such that
\begin{align}
\|u\|_{\mathcal{C}^{2,1}(\Omega\times[t,t+1])}+\|v\|_{\mathcal{C}^{2,1}(\Omega\times[t,t+1])}\leq C,\quad t\geq \widehat{t}_3,\label{eq-LBouC}\\
\|u(\cdot,t)\|_{L^\infty}+\|v(\cdot,t)\|_{W^{1,\infty}} \leq C,\quad t\geq \widehat{t}_3.\label{eq-ULinWo}
\end{align}
 Moreover,  $(u,v)$ solves the  boundary value problem in \eqref{eq-r01} classically in $\Omega\times[\widehat{t}_3,\infty)$.
\end{lemma}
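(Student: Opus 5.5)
The plan is to bootstrap the uniform-in-$\varepsilon$ bounds of Lemma \ref{le-Vubed} into uniform Hölder and then $\mathcal{C}^{2+\alpha,1+\alpha/2}$ bounds for $(u_\varepsilon,v_\varepsilon)$ on every slab $\overline\Omega\times[t,t+1]$ with $t$ beyond some $\varepsilon$-independent time. After that, pass to the limit via Arzelà--Ascoli and identify the limit with the generalized solution $(u,v)$ from Lemma \ref{le-coL1}. Finally, check that the limit solves \eqref{eq-r01} classically.

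\textbf{Step 1 (Hölder regularity).} By Lemma \ref{le-Vubed}, for $t\ge\widehat t_2$ we have $\|u_\varepsilon\|_{L^\infty}+\|v_\varepsilon\|_{W^{1,\infty}}\le C$, and by \eqref{eq-0bddca} also $v_\varepsilon\ge\widetilde\eta$. Hence the $v_\varepsilon$-equation has a right-hand side $-v_\varepsilon+\frac{u_\varepsilon v_\varepsilon}{1+\varepsilon u_\varepsilon v_\varepsilon}+h_2$ that is bounded uniformly in $\varepsilon$. The $u_\varepsilon$-equation can be written as $u_{\varepsilon t}=\nabla\cdot(\nabla u_\varepsilon - \chi u_\varepsilon v_\varepsilon^{-1}\nabla v_\varepsilon)+f_\varepsilon$, where both the flux correction and $f_\varepsilon$ are bounded. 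Standard parabolic Hölder regularity (e.g.\ Porzio--Vespri) therefore gives $\theta\in(0,1)$ with
\[
\|u_\varepsilon\|_{\mathcal{C}^{\theta,\theta/2}(\overline\Omega\times[t,t+1])}+\|v_\varepsilon\|_{\mathcal{C}^{\theta,\theta/2}(\overline\Omega\times[t,t+1])}\le C
\]
for all $t\ge\widehat t_2+1$. For $v_\varepsilon$, maximal Sobolev regularity applied on overlapping unit intervals with a smooth cutoff in time yields $W^{2,1}_p$ bounds for every $p<\infty$, and hence $\nabla v_\varepsilon\in\mathcal{C}^{\theta,\theta/2}$ uniformly. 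The source term $\frac{u_\varepsilon v_\varepsilon}{1+\varepsilon u_\varepsilon v_\varepsilon}$ is now Hölder uniformly in $\varepsilon$, because $\varepsilon\in(0,1)$ and all quantities are bounded, and $h_2\in\mathcal{C}^1$. Parabolic Schauder theory then gives $\|v_\varepsilon\|_{\mathcal{C}^{2+\theta,1+\theta/2}}\le C$ on later slabs. Expanding the $u_\varepsilon$-equation as $u_{\varepsilon t}=\Delta u_\varepsilon-\chi\frac{\nabla v_\varepsilon}{v_\varepsilon}\cdot\nabla u_\varepsilon-\chi u_\varepsilon\frac{\Delta v_\varepsilon v_\varepsilon-|\nabla v_\varepsilon|^2}{v_\varepsilon^2}+\dots$ shows that its coefficients are now Hölder. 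One more Schauder step, after first obtaining Hölder continuity of $\nabla u_\varepsilon$ via $W^{2,1}_p$ estimates, yields $\|u_\varepsilon\|_{\mathcal{C}^{2+\theta,1+\theta/2}(\overline\Omega\times[t,t+1])}\le C$ for $t\ge\widehat t_3$, with $\widehat t_3$ independent of $\varepsilon$.

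\textbf{Step 2 (passage to the limit).} By Arzelà--Ascoli and a diagonal argument over slabs $[\widehat t_3+k,\widehat t_3+k+1]$, a further subsequence of $\varepsilon_j$ gives $u_\varepsilon\to\tilde u$ and $v_\varepsilon\to\tilde v$ in $\mathcal{C}^{2,1}_{loc}(\overline\Omega\times[\widehat t_3,\infty))$. The a.e.\ convergences \eqref{eq-dplim2v} and \eqref{eq-dplim2} force $\tilde u=u$ and $\tilde v=v$ a.e. Lower semicontinuity then transfers the $\varepsilon$-independent bounds, giving \eqref{eq-LBouC} and \eqref{eq-ULinWo}. Since $\varepsilon u_\varepsilon v_\varepsilon\to0$ uniformly, we can pass to the limit pointwise in \eqref{eq-app}, together with its Neumann conditions, and obtain that $(u,v)$ solves \eqref{eq-r01} classically on $\Omega\times[\widehat t_3,\infty)$.

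The main obstacle is the bootstrap for $u_\varepsilon$. The taxis term involves $\Delta v_\varepsilon$, so Schauder estimates require Hölder continuity of $D^2v_\varepsilon$, which in turn needs a Hölder source $u_\varepsilon v_\varepsilon$. The circularity is broken by handling the equations in order: first Hölder continuity for $u_\varepsilon$ from the divergence form, then full regularity of $v_\varepsilon$, then full regularity of $u_\varepsilon$. At each stage the time shift needed to drop the initial data at the start of a slab must stay bounded, say $+1$, so that $\widehat t_3$ remains independent of $\varepsilon$.
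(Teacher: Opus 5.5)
Your proposal is correct and takes the same route as the paper. Using $v_\varepsilon\ge\widetilde\eta$ and the $\varepsilon$-independent bounds of Lemma \ref{le-Vubed}, a parabolic H\"{o}lder/Schauder bootstrap gives uniform $\mathcal{C}^{2+\alpha,1+\alpha/2}$ bounds on the slabs $\overline\Omega\times[t,t+1]$ for $t\ge\widehat t_3$; Arzel\`{a}--Ascoli, identification of the limit through the a.e.\ convergences of Lemma \ref{le-coL1}, and passage to the limit in \eqref{eq-app} then give the claim. The only difference is that you spell out the bootstrap order (H\"{o}lder continuity of $u_\varepsilon$ from the divergence form, then $W^{2,1}_p$ and Schauder estimates for $v_\varepsilon$, then Schauder for $u_\varepsilon$), which the paper simply calls a ``standard bootstrap technique''.
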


\begin{proof}
Note that   $v_{\varepsilon}\geq \widetilde{\eta}$ on $\Omega\times(0, \infty)$, then invoking Lemma \ref{le-Vubed} and
 the parabolic Schauder estimates (e.g., \cite{Lady1968}),  a   standard bootstrap technique ensures that
there exist $C>0$ (independent of $\varepsilon$), $\widehat t_3>0$ and $\alpha\in(0,1)$ such that
\begin{align*}
\|u_\varepsilon\|_{C^{2+\alpha,1+\frac{\alpha}{2}}(\bar\Omega\times[t,t+1])}+\|v_\varepsilon\|_{C^{2+\alpha,1+\frac{\alpha}{2}}(\bar\Omega\times[t,t+1])}\leq C,\quad t\geq \widehat t_3,
\end{align*}
which, in joint with the convergence properties provided by  Lemma \ref{le-coL1} and the Arzel\`{a}-Ascoli theorem, implies that for any $t\geq \widehat t_3$ there exists a subsequence of
$\{\varepsilon_j\}_{j=1}^\infty$ (still expressed as $\{\varepsilon_j\}_{j=1}^\infty$) such that  as $\varepsilon=\varepsilon_j\rightarrow0$,
\begin{align}
u_{\varepsilon} \rightarrow u\quad \mathrm{in}\quad C^{2,1}(\bar\Omega\times[t,t+1]),\label{eq-uc21}\\
v_{\varepsilon} \rightarrow v\quad \mathrm{in}\quad C^{2,1}(\bar\Omega\times[t,t+1]).\label{eq-vc21}
\end{align}
Collecting these and Lemma \ref{le-Vubed},    the desired  properties   \eqref{eq-LBouC} and \eqref{eq-ULinWo} holds immediately. Also, we observe  that $(u,v)$ solves the  boundary value problem in \eqref{eq-r01} classically in $\Omega\times[\widehat t_3,\infty)$.
\end{proof}

Finally, we   verify the long-time behavior of the generalized solution described in Theorem \ref{th-ggs}.

\begin{lemma}\label{le-de2}
Besides the conditions in  Lemmas \ref{le-ul2int} and \ref{le-gvL2}, let   \eqref{eq-iva3} also be in force,   the  eventual smooth  solution  $(u,v)$ satisfies that
\begin{align}\label{eq-gdeu}
\|u(\cdot,t)\|_{L^\infty}+\|v(\cdot,t)-b\|_{W^{1,\infty}}\rightarrow0,\quad  \mathrm{as}\,\,\, t\rightarrow\infty.
\end{align}
\end{lemma}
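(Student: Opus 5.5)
The plan is to reduce Lemma \ref{le-de2} to the argument already carried out for bounded classical solutions in Lemmas \ref{le-UL1de}--\ref{le-usmdeu0}. The generalized solution is a classical solution on $\Omega\times[\widehat t_3,\infty)$ by Lemma \ref{le-uvcla}. There it obeys the uniform bounds \eqref{eq-ULinWo} together with the lower bound $v\geq\widetilde\eta$, which is inherited from $v_\varepsilon\geq\widetilde\eta$ through the a.e. convergence \eqref{eq-dplim2v}. So the solution restricted to $[\widehat t_3,\infty)$ and shifted in time is a bounded classical solution in the sense of \eqref{eq-bedds}, with initial data $(u(\cdot,\widehat t_3),v(\cdot,\widehat t_3))$. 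Its $v$-component is bounded below by $\widetilde\eta$, and by hypothesis $r<\widetilde\eta$.

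\textbf{Step 1 ($L^2$ decay).} Integrate the $u$-equation on $(\widehat t_3,\infty)$ and use $v\geq\widetilde\eta>r$:
\[
\frac{d}{dt}\int_\Omega u+\mu\int_\Omega u^2\leq\int_\Omega h_1 .
\]
Together with \eqref{eq-iva2} this gives $\int_{\widehat t_3}^\infty\int_\Omega u^2<\infty$. The one-sided Lipschitz bound \eqref{ulp-diff-bdd} holds because of \eqref{eq-ULinWo} and $v\geq\widetilde\eta$. Lemma \ref{le-ODId2} then yields $\int_\Omega u^2\to0$. An alternative route is to pass to the limit in \eqref{eq-xld} via Fatou's lemma, since that convergence is uniform in $\varepsilon$. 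Next, test the $v$-equation by $2(v-b)$ exactly as in Lemma \ref{le-VgL2}. This uses \eqref{eq-iva3}, the $L^\infty$ bound on $v$ and the spacetime integrability of $u^2$, and gives $\int_\Omega(v-b)^2\to0$.

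\textbf{Step 2 (upgrade to uniform convergence).} Apply the Neumann semigroup representations started at $\widehat t_3$ instead of $0$, as in Lemma \ref{le-smdeu0}. Interpolating $\|uv+h_2-b\|_{L^{4n}}$ between $L^2$ and $L^\infty$ and splitting the time integral at $t/2$ (or at $(t+\widehat t_3)/2$) gives $\|v-b\|_{W^{1,\infty}}\to0$. Here the $h_2$ contribution is controlled by Hölder's inequality in time and \eqref{eq-iva3}. The same splitting in the $u$-equation, as in Lemma \ref{le-usmdeu0}, gives $\|u\|_{L^\infty}\to0$. It uses $\|\frac uv\nabla v\|_{L^\infty}\leq C\|\nabla v\|_{L^\infty}\to0$, the $L^2$ decay of $u$, and \eqref{eq-iva2} for $h_1$.

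\textbf{Main obstacle.} The delicate point is the $h_1$ term in the $u$-estimate. Assumption \eqref{eq-iva2} gives only spacetime integrability, so $\|h_1(\cdot,s)\|_{L^1}^{1/(2n)}$ need not tend to zero pointwise in $s$. I would handle it by Hölder's inequality in time on $(t/2,t)$, as was done for $h_2$ in Lemma \ref{le-smdeu0}. Since $\|h_1\|_{L^1}^{1/(2n)}$ with $\frac1{2n}<1$ is integrable against the weakly singular kernel, its contribution tends to zero. The remaining step is routine: check that the bounded-solution hypotheses used in Section \ref{se-ape} hold on $[\widehat t_3,\infty)$ with $\eta_0$ replaced by $\widetilde\eta$.
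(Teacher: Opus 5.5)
Your argument is correct, but it takes a different route from the paper's.

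\textbf{The paper's route.} It never reruns the Section~\ref{se-ape} argument on the limit. Instead it derives decay at the level of the approximations, uniformly in $\varepsilon$:
\begin{itemize}
\item $\int_\Omega u_\varepsilon\to0$ comes directly from \eqref{eq-xld};
\item $\int_\Omega(v_\varepsilon-b)^2\to0$ comes from testing the $v_\varepsilon$-equation with $2(v_\varepsilon-b)$, feeding in $\int_t^{t+1}\int_\Omega u_\varepsilon^2\to0$ and \eqref{eq-iva3}, and invoking an ODE lemma from \cite{LIXIE2022}.
\end{itemize}
It then passes to the limit via Lemma~\ref{le-coL1} and \eqref{eq-uc21}--\eqref{eq-vc21}. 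Finally it upgrades $L^1/L^2$ decay to $L^\infty/W^{1,\infty}$ decay by interpolating against the uniform $C^{2,1}$ bounds \eqref{eq-LBouC}.

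\textbf{Your route.} You treat $(u,v)$ on $[\widehat t_3,\infty)$ as a bounded classical solution with $v\geq\widetilde\eta>r$. You then rerun Lemmas~\ref{le-UL1de}--\ref{le-usmdeu0} there, upgrading through Duhamel and semigroup estimates started at $\widehat t_3$.

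\textbf{Comparison.} The paper's upgrade is shorter: the higher regularity makes it immediate, and the source terms $h_1,h_2$ never enter it. Yours needs only the $L^\infty\times W^{1,\infty}$ bounds \eqref{eq-ULinWo} and the lower bound on $v$. It also avoids both the $\varepsilon$-uniform decay and the external lemma. The price is that you must handle $h_1$ and $h_2$ inside the Duhamel integrals.

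Your H\"older-in-time treatment of $h_1$ and $h_2$ is right. The conjugate exponents work because $\frac58\cdot\frac{4n}{4n-1}<1$ and $\frac14\cdot\frac{2n}{2n-1}<1$. The tail integrals $\int_{t/2}^t\|h_1\|_{L^1}$ and $\int_{t/2}^t\|h_2-b\|_{L^2}^2$ vanish by \eqref{eq-iva2}--\eqref{eq-iva3}. This in fact supplies the ``slight adaptation'' that the proof of Lemma~\ref{le-usmdeu0} leaves implicit.
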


\begin{proof}
Testing the second equation in the approximate system \eqref{eq-app} by $2(v_\varepsilon-b)$, we obtain
\begin{align*}
&\frac{d}{dt}\int_\Omega(v_\varepsilon-b)^2+2\int_\Omega|\nabla v_\varepsilon|^2+2\int_\Omega(v_\varepsilon-b)^2\\
=&2\int_\Omega\frac{u_\varepsilon v_\varepsilon}{1+\varepsilon u_\varepsilon v_\varepsilon}(v_\varepsilon-b)+2\int_\Omega (h_2-b)(v_\varepsilon-b)\\
\leq&2\|u_\varepsilon\|_{L^2}\|v_\varepsilon\|_{L^\infty}\|v_\varepsilon-b\|_{L^2}+2\|v_\varepsilon-b\|_{L^2}\|h_2-b\|_{L^2},
\end{align*}
which, by employing Young's inequality, gives us
\begin{align*}
\frac{d}{dt}\int_\Omega(v_\varepsilon-b)^2+2\int_\Omega|\nabla v_\varepsilon|^2+\int_\Omega(v_\varepsilon-b)^2\leq&2\|u_\varepsilon\|_{L^2}^2\|v_\varepsilon\|_{L^\infty}^2+2\|h_2-b\|_{L^2}^2.
\end{align*}
By means of \eqref{eq-gveL2}, one can pick $C>0$ (independent of $\varepsilon$) and $t_4>0$ such that
\[
\|v_\varepsilon(\cdot,t)\|_{L^\infty}\leq C, \quad t\geq t_4,
\]
which further shows that
\begin{align*}
\frac{d}{dt}\int_\Omega(v_\varepsilon-b)^2+2\int_\Omega|\nabla v_\varepsilon|^2+\int_\Omega(v_\varepsilon-b)^2\leq&C\int_\Omega u_\varepsilon^2 +2\int_\Omega (h_2-b)^2, \quad t\geq t_4.
\end{align*}
In light of \eqref{eq-xld} and \eqref{eq-iva3}, it holds that
\[
C\int_t^{t+1}\int_\Omega u_\varepsilon^2+2\int_t^{t+1}\int_\Omega (h_2-b)^2\rightarrow0 \quad\textrm{as}\quad t\rightarrow \infty\quad \mathrm{uniformly\,\, in}\,\, \varepsilon.
\]
Hence, invoking \eqref{eq-iva}, \eqref{eq-gveL2} and \cite[Lemma 4.2]{LIXIE2022}, it follows that
\begin{align*}
\int_\Omega|v_\varepsilon(\cdot,t)-b|^2\rightarrow0 \quad\textrm{as}\quad t\rightarrow \infty\quad \mathrm{uniformly\,\, in}\,\, \varepsilon.
\end{align*}
This, together with the convergence in  Lemma \ref{le-coL1} and \eqref{eq-vc21}, entails that
\begin{align*}
\int_\Omega|v(\cdot,t)-b|^2 \rightarrow0 \quad\textrm{as}\quad t\rightarrow \infty,
\end{align*}
which, in conjunction with  the regularity \eqref{eq-LBouC}, yields that
\begin{align}\label{eq-w1qv}
\|v(\cdot,t)-b\|_{W^{1,\infty}}\rightarrow0 \quad\textrm{as}\quad t\rightarrow \infty.
\end{align}

On the other hand, recalling \eqref{eq-xld}, Lemma \ref{le-coL1} and \eqref{eq-uc21}, we conclude that
\begin{align*}
\int_\Omega u(\cdot,t) \rightarrow0 \quad\textrm{as}\quad t\rightarrow \infty,
\end{align*}
which, with the aid of \eqref{eq-LBouC} and \eqref{eq-ULinWo}, implies that
\begin{align*}
\|u(\cdot,t)\|_{L^\infty}\rightarrow0 \quad\textrm{as}\quad t\rightarrow \infty.
\end{align*}
This, together with \eqref{eq-w1qv}, ensures \eqref{eq-gdeu} immediately.
\end{proof}

By combining with the results of Lemma \ref{le-uvcla} and  Lemma \ref{le-de2}, we thus complete the proof of Theorem \ref{th-ggs}.

\begin{proof}[Proof of Theorem \ref{th-ggs}]
The eventual smoothness of generalized solution described in Theorem \ref{th-ggs}  follows  from Lemma \ref{le-uvcla} directly, and the asymptotic behaviors follow from  Lemma \ref{le-de2}.
\end{proof}

\section*{Acknowledgements}
 BL was supported by  Natural Science Foundation of Ningbo Municipality (No. 2025J199). TX was supported by  National Natural Science Foundation of China ( Nos. 12371214 and  12071476).

\section*{Data Availability}
No new data were created or analysed during this study. Data sharing is not applicable to this article.

\section*{Declarations: Conflict of interest}
On behalf of all authors, the corresponding author states that there is no conflict of interest.

\end{document}